\newtheorem{thm}{Theorem}[section]
\newtheorem{cor}[thm]{Corollary}
\newtheorem{lem}[thm]{Lemma}
\newtheorem{prop}[thm]{Proposition}
\theoremstyle{definition}
\newtheorem{defn}[thm]{Definition}
\theoremstyle{remark}
\newtheorem{rem}[thm]{Remark}
\numberwithin{equation}{section}
\newenvironment{proofofp}{{ \it Proof of proposition }}{\hfill $\square$}
\newcommand{\Z}{\mathbb Z}
\newcommand{\C}{\mathbb C}
\newcommand{\Pro}{\mathbb P}
\newcommand{\ai}{\mathfrak{a}}
\newcommand{\n}{\mathfrak{n}}
\newcommand{\bi}{\mathfrak{b}}
\newcommand{\R}{\mathbb R}
\newcommand{\N}{\mathbb N}
\newcommand{\Q}{\mathbb Q}
\newcommand{\Hom}{\hbox{Hom}}
\newcommand{\Rp}{\mathcal{R}}
\newcommand{\mot}{\mathsf{mot}}
\newcommand{\Hyp}{\mathbb H}
\newcommand{\Euc}{\mathbb E}
\newcommand{\X}{\mathbb X}
\newcommand{\SO}{\mathsf{SO}}
\newcommand{\GL}{\mathsf{GL}}
\newcommand{\PSL}{\mathsf{PSL}}
\newcommand{\U}{\mathsf{U}}
\newcommand{\quat}{\mathfrak{H}}
\newcommand{\A}{\mathbb {A}}
\newcommand{\Or}{\mathcal{O}}
\newcommand{\Fo}{\mathcal{F}}
\newcommand{\id}{\mathrm{id}}
\newcommand{\comp}{\mathrm{comp}}
\newcommand{\Mod}{\mathfrak{M}}
\newcommand{\F}{\mathbb F}
\newcommand{\To}{\longrightarrow}
\newcommand{\Gal}{\mathrm{Gal}}
\newcommand{\Sym}{\mathfrak{S}}
\newcommand{\Image}{\mathrm{Im\,}}
\newcommand{\Real}{\mathrm{Re\,}}
\newcommand{\vol}{\mathrm{vol}}
\newcommand{\MT}{\mathrm{MT}}
\newcommand{\gr}{\mathrm{gr}}
\newcommand{\Am}{\mathfrak{A}}
\newcommand{\Ext}{\mathrm{Ext}}
\newcommand{\Lo}{\mathcal{L}}
\newcommand{\K}{\mathbb{K}}
\newcommand{\Up}{\mathbb{U}}
\begin{document}

\title[Dedekind zeta motives for totally real  fields.]{  Dedekind zeta motives for totally real  number fields}%
\author{Francis C.S. Brown}%
\address{CNRS, IMJ}
\email{brown@math.jussieu.fr}

\maketitle

\vspace{-0.2in}

\begin{abstract}
Let $k$ be a totally real number field. For every odd $n\geq 3$, we construct an element  in the category $\MT(k)$
of mixed Tate motives over $k$ out of the quotient of a product of hyperbolic spaces by  an arithmetic group.   By  a volume calculation, we prove that its period is  a  rational multiple of $\pi^{n[k:\Q]}\zeta^*_k(1-n)$, where $\zeta^*_k(1-n)$ denotes
the special value of the Dedekind zeta function of $k$.
We deduce that
 the group $\Ext^1_{\MT(k)} (\Q(0),\Q(n))$ is generated by the cohomology of a quadric relative to hyperplanes, and that 
  $\zeta^*_k(1-n)$ is a determinant of volumes of geodesic hyperbolic simplices defined over $k$.
\end{abstract}

\section{Introduction}
\subsection{Background}
This paper was motivated by a desire to reconcile two important and complementary  results due to Zagier and Goncharov. Firstly, in 1986, Zagier obtained a formula for the value $\zeta_K(2)$, where $K$ is any number field, as a linear combination of products
of values of the dilogarithm function at algebraic points \cite{Z}.  The idea is to compute the volume of an arithmetic hyperbolic manifold in two different ways. Suppose that $K$ is quadratic imaginary,
and let $\Gamma$ be a torsion-free subgroup of finite index of the Bianchi group $\PSL_2(\Or_K)$, where $\Or_K$ is the ring of integers of $K$. Then $\Gamma$ acts  on
 hyperbolic 3-space $\Hyp^3$, and a classical theorem due to Humbert states that
$$\vol(\Hyp^3/\Gamma)={|d|^{3/2}\,r \over 4\pi^2}\zeta_K(2)\ ,  $$
where $d$ is the discriminant of $K$, and $r$ is the index of $\Gamma$ in $\PSL_2(\Or_K)$.
On the other hand,   such  a  hyperbolic 3-manifold can  be triangulated using  geodesic simplices,  whose volume, by a calculation originally due to  Milnor,
 can be expressed in terms of the Bloch-Wigner dilogarithm function
$$D(z)= \Image (\mathrm{Li}_2(z) + \log|z| \log(1-z))\ , \hbox{ where } \mathrm{Li}_2(z) = \sum_{k\geq 1} {z^k \over k^2}\ .$$
 The volume of $\Hyp^3/\Gamma$ is therefore an integer linear combination of $D(z)$ with algebraic  arguments.  Equating the two
volume calculations  yields Zagier's formula for $\zeta_K(2)$, which  in turn motivated his conjectures relating polylogarithms to special values of zeta and $L$-functions \cite{ZG}.  He also obtained a formula for $\zeta_K(2)$ for any number field
 by considering groups acting on products of hyperbolic 3-space in \cite{Z}. However, this geometric argument is insufficient to prove the final form of Zagier's conjecture for two reasons: one is that  the dilogarithms are evaluated
over some finite extension of $K$, rather than $K$ itself, and secondly there is no way to prove that the sums of products of dilogarithms are 
a determinant, as the conjecture demands.

 Zagier's results were subsequently reinterpreted in terms of algebraic K-theory by many   authors. See,  for example,  \cite{Bl,ZG,NY, NZ,Th} and the references therein.
Essentially, the set of   simplices in a  triangulation  of $\Hyp^3/\Gamma$
gives  a well-defined element in the Bloch group $B(\overline{\Q})$
via their gluing equations, and in turn defines an  element in the $K$-theory group $K_3(\overline{\Q})\otimes \Q$. The dilogarithm can be interpreted as (an explicit multiple of) the  Borel regulator map \cite{Bu}.

Secondly, in \cite{Go1},  Goncharov generalized these constructions for any discrete torsion-free group $\Gamma$ of finite covolume acting on hyperbolic
 $n$-space $\Hyp^n$, for $n=2m-1$ odd. For any such
 manifold $M=\Hyp^n/\Gamma$, he constructed an element
$$\xi_M \in K_{2m-1}(\overline{\Q})\otimes \Q\ ,$$
such that the volume of $M$ is given by the regulator map on $\xi_M$. In fact, he gave two such constructions, but it is the motivic version which is of interest
here.
The main idea is that a finite geodesic simplex in hyperbolic space (with algebraic vertices) defines a mixed Tate motive in the sense of \cite{DG}.   By triangulating the manifold $M$, and combining
the motives of each simplex, one obtains a total motive for $M$, which is non-trivial if $n$ is odd.  He then proves that the  motive of $M$ is an extension of $\Q(0)$ by $\Q(m)$, and identifies the  group
of such extensions  in the category $MT(\overline{\Q})$ of mixed Tate motives over $\overline{\Q}$ with $K_{2m-1}(\overline{\Q})\otimes \Q$ (see below).
The proof of this fact is analytic: it uses the fact that  (solid) angles around faces in a triangulation  sum to multiples of $2\pi$, and uses the full faithfullness
of the Hodge realization for mixed Tate motives  over $\overline{\Q}$ (\cite{DG}, \S2.13).

In this paper, we give a common generalization of both Zagier and Goncharov's results. 
 First  we  list the arithmetic groups which act on products of
hyperbolic spaces $\Hyp^{n_1}\times \ldots  \times \Hyp^{n_N}$, and give their covolumes, up to a rational multiple, in terms of zeta functions 
using well-known Tamagawa number arguments \cite{On1}. 
Next, we show that any such complete `product-hyperbolic' manifold $M$ of finite volume admits a triangulation using products
of geodesic simplices, and use this to construct a well-defined mixed Tate motive $\mot(M)$.
In the case when $N=1$, and $M$ is non-compact,  our construction differs from Goncharov's. 
We then prove that
\begin{equation}\label{introMinext1}
\mot(M) \in \Ext_{\MT(\overline{\Q})}^1(\Q(0),\Q(n_1)) \otimes_{\Q} \ldots \otimes_{\Q} \Ext_{\MT(\overline{\Q})}^1(\Q(0), \Q(n_N))\end{equation}
using a  geometric argument. Thus the construction is completely motivic.
Now let $k$ be a totally real number field, and $n$ an odd integer $\geq 3$. The Dedekind zeta motive is obtained in the case
 where $\Gamma$ is a torsion-free subgroup of the restriction of scalars  $R_{k/\Q}\SO(2n-1,1)$ acting on $(\Hyp^{2n-1})^{[k:\Q]}$, and its  period is directly related to $\zeta_k(n)$.
 Using  the action of a symmetric group on the framing of $\mot(M)$, we show that it is an alternating element of $(\ref{introMinext1})$, and applying Borel's calculation of the rank of algebraic K-groups, we deduce that $\mot(M)$
is a determinant of  $ \Ext_{\MT(k)}^1(\Q(0),\Q(n))\cong K_{2n-1}(k)\otimes \Q$.  As a corollary, we obtain  generators for $K_{2n-1}(k)\otimes \Q$ in terms of
hyperbolic geometry
and deduce  that   $\zeta_k(n)$ is a determinant of volumes of hyperbolic simplices.

\subsection{Explicit categories of mixed Tate motives and zeta values}\label{sectintroexpl}
 Let $k$ be a number field, and  let $\MT(k)$ be the abelian tensor category  of mixed Tate motives over $k$  \cite{DG}. Its  simple objects are the Tate motives
$\Q(n)$, for $n\in \Z$, and its structure  is determined by its relation to algebraic $K$-theory:
\begin{equation}\label{introext1}
\Ext^1_{\MT(k)}(\Q(0),\Q(n)) \cong \left\{
                             \begin{array}{ll}
                               0 & \hbox{ if  } \ n\leq 0\ ,  \\
                               K_{2n-1}(k)\otimes \Q & \hbox{ if }\  n\geq 1 \ ,
                             \end{array}
                           \right.
\end{equation}
and the fact that all higher extension groups vanish.  One has $K_1(k)=k^\times$, and a theorem due to Borel \cite{Bo1},
states that  for $n>1$,
\begin{equation} \label{introKtheoryrank}
\dim_{\Q} (K_{2n-1}(k)\otimes \Q) =\left\{
                                     \begin{array}{ll}
                                     n_+=  r_1+r_2, & \hbox{ if  } n \hbox{ is odd } ; \\
                                      n_-= r_2, & \hbox{ if } n \hbox{
is even}\ ,
                                     \end{array}
                                   \right.
\end{equation}
where $r_1$ is the number of real places of $k$, and $r_2$ is the
number of complex places of $k$.
The Hodge realisation functor on $\MT(k)$ gives rise to a regulator map
\begin{equation} \label{introhodge}
r_H:\Ext^1_{\MT(k)}(\Q(0),\Q(n)) \To \R^{n_\pm}  \qquad (n>1) \ . 
\end{equation}
  Its covolume should be related to the  values of the Dedekind zeta function $\zeta_k(s)
$ at $s=n$ or $s=1-n$ via the Beilinson and Bloch-Kato conjectures \cite{Be}, \cite{BK}.
\vspace{0.1in}

It is an important open problem  to construct  $\MT(k)$ explicitly 
out of simple geometric building blocks.  Concretely, one  writes down a category 
$C\subseteq \MT(k)$ constructed from certain diagrams of   varieties with Tate cohomology, and one wants to show  that $C=\MT(k)$. To do this, firstly one must 
 show that   the groups  $\Ext^1_{\MT(k)}(\Q(0),\Q(n))$ are  spanned by elements of $C$ (generation), and secondly, one must show
that $C$ is closed under extensions (freeness).  Via such a construction  one hopes to compute the periods of $\MT(k)$, and these include the numbers $\zeta_k(n)$ for $n \geq 2$, in particular.  Natural candidates for $C$ come from smooth hypersurfaces with Tate cohomology (or their blow-ups along linear subspaces), as follows.
\vspace{0.1in}

\emph{Hyperplanes.}
  The  approach of \cite{BGSV} uses 
 arrangements of hyperplanes in projective space. Consider   a set of $2n+2$ hyperplanes
$L_0,\ldots, L_n,M_0,\ldots, M_n\subset \Pro^n$ in general position, and write $L=\cup_{i=0}^n L_i$, $M=\cup_{i=0}^n M_i$.  Let $k$ be a number field.  If $L,M$ are defined over $k$,  the   pair $(L,M)$ defines a mixed Tate motive:
\begin{equation} \label{introhyperplanemotive}
H^n\big(\Pro^{n}\backslash M, L\backslash (L\cap M)\big)\in \MT(k)\ ,
\end{equation}
with weights in $[0,2n]$. Let $C^h(k)\subset \MT(k) $ denote the full subcategory spanned by the elements $ (\ref{introhyperplanemotive})$ and closed under tensor products, subquotients and Tate twists. The  first  part of the problem described above is to show that the natural map
\begin{equation}\label{introphih}
 \phi^h:\Ext^1_{C^h(k)} (\Q(0),\Q(n))  \To \Ext^1_{\MT(k)}(\Q(0),\Q(n))\ 
\end{equation}
is surjective. Stefan M\"uller-Stach informed us  that this follows from the work of Gerdes \cite{Ge} (conjecture 4.7, remark 4.11) along
with the  proof of the rank conjecture for algebraic $K$-theory \cite{Y}.
A  variant  of the  above construction  would be to  consider blow-ups  of degenerate configurations of hyperplanes. The  motivic fundamental group  of the punctured line minus roots of unity  \cite{DG} is a special case of this latter construction  in the particular case when $k$ is abelian.
\vspace{0.1in}

\emph{Quadrics and hyperplanes \cite{Go1}.}
  Let $Q$ denote a smooth quadric in $\Pro^{2n-1}$, and let $L_1,\ldots, L_{2n}$
denote $2n$ hyperplanes in general position with respect to $Q$.  Let $L=\cup_{i=1}^{2n} L_i$. 
 If  $Q$ and $L$ are defined over $\overline{\Q}$, this defines a \emph{quadric motive}:
\begin{equation} \label{introquadricmotive}
H^{2n-1}\big(\Pro^{2n-1}\backslash Q, L\backslash (Q\cap L)\big)\in \MT(\overline{\Q})\ ,
\end{equation}
with weights in $[0,4n-2]$.
As above, let $C^q(\overline{\Q})\subset \MT(\overline{\Q}) $ denote the full subcategory spanned by the elements $ (\ref{introquadricmotive})$, and let 
$C^q(k) \subset \MT(k) $ be the subcategory which is  invariant under $\Gal(\overline{k}/k)$.
The generation part of the problem is now to show that 
\begin{equation}\label{introphiq}
 \phi^q: \Ext^1_{C^q(k)} (\Q(0),\Q(n))  \To \Ext^1_{\MT(k)}(\Q(0),\Q(n))
\end{equation}
is surjective.  In this paper, we shall prove this  when $k$ is  totally real. 
\vspace{0.1in}

A  much  more precise statement should follow from    Beilinson and Deligne's reformulation \cite{BD} of Zagier's conjecture  \cite{ZG}.
For every $z\in k\backslash \{0,1\}$, and $n\geq 1$, one can construct  a polylogarithm motive
$P_n(z)\in \MT(k)$ as a subquotient  of a (degenerate) hyperplane motive  $(\ref{introhyperplanemotive})$. If $K_z$ denotes the Kummer motive (extension of 
$\Q(0)$ by $\Q(1)$) corresponding to   $\log z$, then $P_n(z)$ is part of an exact sequence
$$0 \To \big(\mathrm{Sym}^{n-1} K_z\big) (1) \To P_n(z) \To \Q(0)\To 0$$
and satisfies $P_n(z)/W_{-4} P_n(z) \cong K_{1-z}$.  The objects $P_n(z)$ span a  strict subcategory $C^p(k)\subset \MT(k)$   and the classical polylogarithm $\mathrm{Li}_n(z)=\sum_{k\geq 1} {z^k\over k^n}$ is a period of $P_n(z)$.
A version of  Zagier's conjecture  would state that 
\begin{equation}\label{introphip}
\phi^p: \Ext^1_{C^p(k)}(\Q(0),\Q(n)) \To \Ext^1_{\MT(k)}(\Q(0),\Q(n))\ .
\end{equation}
 is surjective, i.e.,  
$(\ref{introext1})$ should be generated by polylogarithm motives.   This is known only for $n\leq 3$ \cite{Go3} but is supported by extensive numerical evidence 
\cite{ZG}.

\subsubsection{Zeta values}
The surjectivity of $(\ref{introphih})$, $(\ref{introphiq})$, or  $(\ref{introphip})$  would  imply a statement about values of zeta functions, as  follows.
Let $\zeta_k(s)$ denote the Dedekind zeta function of $k$, and let $\zeta^*_k(1-n)$ denote the leading coefficient in the Taylor expansion of $\zeta_k(s)$ at $s=1-n$, for $n\geq 2$.
The image of the  Borel regulator
\begin{equation}\label{introBorelreg}
r_{B}:K_{2n-1}(k)\otimes \Q \To \R^{n_{\pm}}\qquad (n>1)\  ,\end{equation}
is a $\Q$-lattice $\Lambda_n(k)$ whose covolume is well-defined up to multiplication by an element in $\Q^\times$.
Using the isomorphism  of rational $K$-theory with the stable cohomology of the linear group,  Borel proved by an analytic argument  in \cite{Bo2} that
\begin{equation} \label{introBorelzeta}
\zeta^*_k(1-n) = \alpha\, \mathrm{covol} (\Lambda_n(k)) \quad \hbox{ for some } \alpha\in \Q^{\times} \ .
\end{equation}
If one combines this  result with the isomorphism $(\ref{introext1})$,   then  the  surjectivity of one of the maps
$\phi^{\bullet}$ above yields a conjectural  formula for $\zeta_k^*(1-n)$ modulo rationals (assuming the compatibility between the various regulators involved).
 In the case of the map $\phi^h$, this would express
$\zeta_k^*(1-n)$ as a determinant of Aomoto polylogarithms \cite{BGSV}, and in the case of the map
$\phi^p$, this would express
$\zeta_k^*(1-n)$ as a determinant of classical polylogarithms \cite{ZG}.   

\subsection{Main Results} Let $k$ be a totally real number field.  If $n$ is even, the group $(\ref{introext1})$ vanishes. For every odd $n\geq 3$, we construct
a canonical Dedekind zeta motive
\begin{equation} \label{introxidef}
\mot_n(k) \in \MT(k)\ ,\end{equation}
using arithmetic subgroups of  $\SO(2n-1,1)$ (see below).
The motive $\mot_n(k)$ is a  subquotient of a  sum   of  products of quadric motives $(\ref{introquadricmotive})$.

\begin{thm} \label{Intromaintheorem}
The element $\mot_n(k)$  satisfies
$$\mot_n(k) \in \textstyle{\det}\,\,\Ext^1_{\MT(k)} (\Q(0),\Q(n))\ ,$$
and its image under the regulator  $(\ref{introhodge})$  is a non-zero rational multiple  of $\zeta^*_k(1-n)$.
\end{thm}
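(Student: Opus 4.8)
The plan is to prove Theorem~\ref{Intromaintheorem} in two largely independent parts: a \emph{motivic} part, showing that $\mot_n(k)$ lands in the determinant line of $\Ext^1_{\MT(k)}(\Q(0),\Q(n))$, and an \emph{analytic/volume} part, identifying its regulator with $\zeta^*_k(1-n)$ up to $\Q^\times$. For the motivic part, I would first realise the relevant arithmetic manifold: take $\Gamma \subset R_{k/\Q}\SO(2n-1,1)$ torsion-free of finite index, so that $M = \Gamma \backslash (\Hyp^{2n-1})^{[k:\Q]}$ is a complete finite-volume product-hyperbolic manifold. Using the triangulation of $M$ by products of geodesic simplices (asserted earlier in the introduction) and the construction of $\mot(M)$, one gets an element of $\bigotimes_{\sigma} \Ext^1_{\MT(\overline{\Q})}(\Q(0),\Q(n))$ as in $(\ref{introMinext1})$, with each tensor factor indexed by an embedding $\sigma \colon k \hookrightarrow \R$. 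The key structural input is that the Galois group $\Gal(\overline{\Q}/\Q)$ (equivalently, the symmetric group permuting the $[k:\Q]$ archimedean factors) acts on the framing, and $\mot_n(k)$ is obtained by anti-symmetrising; I must check this anti-symmetrised element actually descends to $\MT(k)$ rather than merely $\MT(\overline{\Q})$ — this is where the hypothesis that $k$ is totally real is used, since all factors are copies of $\Hyp^{2n-1}$ with no complex places to spoil the symmetry.

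Granting that, the next step is to invoke Borel's rank computation $(\ref{introKtheoryrank})$: for $k$ totally real and $n$ odd, $\dim_{\Q}\Ext^1_{\MT(k)}(\Q(0),\Q(n)) = r_1 = [k:\Q]$. So the anti-symmetrisation of an element of a $[k:\Q]$-fold tensor power of a $[k:\Q]$-dimensional space lands precisely in the top exterior power, i.e.\ the determinant line $\det \Ext^1_{\MT(k)}(\Q(0),\Q(n))$ — provided I first know that the individual framings $\mot(M)$ span (or at least generate a finite-index sublattice of) $\Ext^1_{\MT(k)}(\Q(0),\Q(n)) \cong K_{2n-1}(k)\otimes\Q$. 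This surjectivity/generation statement is itself substantial: it should follow from comparing the quadric-motive construction with Goncharov's $\xi_M \in K_{2n-1}(\overline{\Q})\otimes\Q$ and the fact that volumes of arithmetic hyperbolic $(2n-1)$-manifolds exhaust (up to $\Q^\times$) the Borel regulator lattice; equivalently, that the composite $\Ext^1_{C^q(k)} \to \Ext^1_{\MT(k)}$ in $(\ref{introphiq})$ is surjective. I would package this as a separate proposition established before the main proof. Because $\mot_n(k)$ is \emph{defined} to be the anti-symmetrisation of a \emph{single} well-chosen $\mot(M)$, the determinant-line claim reduces to the linear-algebra fact that the regulator of $\mot(M)$, a vector in $\R^{[k:\Q]}$ together with its Galois translates, has nonzero determinant — which is exactly the analytic part.

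For the analytic part, I would compute $\vol(M)$ in two ways, following the Zagier/Humbert template generalised to products. On one hand, a Tamagawa-number / Ono-style computation expresses $\vol(M)$ as an explicit rational multiple of $\zeta_k(n) \cdot \pi^{n[k:\Q]}$ (up to powers of the discriminant and the index $[\,R_{k/\Q}\SO(2n-1,1)\!:\!\Gamma\,]$, all of which are rational factors); via the functional equation for $\zeta_k$ this is the same, up to $\Q^\times$, as $\zeta^*_k(1-n)$. On the other hand, the regulator of $\mot_n(k)$ under the Hodge realisation $(\ref{introhodge})$ is, by construction and the compatibility of the motivic volume with the Borel regulator (cf.\ the remark that the dilogarithm, resp.\ its higher analogues, is a multiple of the Borel regulator), a nonzero rational multiple of the determinant $\det\big(\mathrm{vol}(\sigma(\Delta_j))\big)_{\sigma,j}$ of volumes of the constituent geodesic simplices $\Delta_j$ and their Galois conjugates — and the sum of these volumes along each row is precisely $\vol$ of the corresponding hyperbolic factor, so the anti-symmetrised determinant reproduces $\vol(M)$ up to sign and rational factors. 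Equating the two gives $r_H(\mot_n(k)) \in \Q^\times \cdot \zeta^*_k(1-n)$, and in particular this is nonzero, which retroactively confirms the determinant-line statement.

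The main obstacle, I expect, is the generation/surjectivity statement $(\ref{introphiq})$: showing that quadric motives (equivalently the framed motives $\mot(M)$ coming from arithmetic $\SO(2n-1,1)$-manifolds over $k$) already span $\Ext^1_{\MT(k)}(\Q(0),\Q(n)) \cong K_{2n-1}(k)\otimes\Q$. The geometry only manifestly produces volumes, i.e.\ the \emph{image} of the Borel regulator, so I must argue that this image has full rank $r_1$ over $\Q$ — which requires knowing that there are ``enough'' arithmetic manifolds, and that the map from geometric configurations to $K$-theory is compatible with Borel's regulator. A secondary difficulty is the precise bookkeeping of rational constants in the volume computation (discriminants, the congruence-subgroup index, normalisation of Tamagawa measures, and the constant in Borel's theorem $(\ref{introBorelzeta})$): none of these affect the statement since it is only up to $\Q^\times$, but one must be careful that no transcendental factor beyond the stated $\pi^{n[k:\Q]}$ creeps in, and in particular that the powers of $\pi$ on the two sides of the volume identity match after applying the functional equation. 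A final subtlety is verifying that the anti-symmetrised framing genuinely defines an object of $\MT(k)$ and not merely a Galois-fixed vector in an $\Ext$ group over $\overline{\Q}$; this is clean when $k$ is totally real but would fail for fields with complex places, which is why the theorem is stated in the totally real case.
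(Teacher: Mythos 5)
Your overall template --- construct $M = (\Hyp^{2n-1})^{[k:\Q]}/\Gamma$, triangulate by product simplices, build $\mot(M)$, place it in a tensor product of $\Ext^1$ groups, invoke Borel's rank formula $(\ref{introKtheoryrank})$, and compare the Hodge regulator with the Tamagawa-number volume computation --- does match the paper. But there are two substantial gaps, the first of which is a genuine logical inversion. You treat the surjectivity of $\phi^q$ in $(\ref{introphiq})$ (that quadric motives generate $\Ext^1_{\MT(k)}(\Q(0),\Q(n))$) as a prerequisite, and propose to establish it separately by comparison with Goncharov's $\xi_M$ and the Borel regulator lattice. This is backwards: the surjectivity of $\phi^q$ is Theorem $\ref{Intromaintheorem2}$, which the paper deduces \emph{from} Theorem $\ref{Intromaintheorem}$, not before it. No spanning statement is needed as input. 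Once one knows (i) that the reduced coproduct of $\mot(M)$ vanishes, so $\mot(M)\in\bigotimes_\sigma \Ext^1_{\MT(k)}(\Q(0),\Q(n))$; (ii) that $\mot(M)$ is alternating under the $\Sym_N$-action, so it lies in $\bigwedge^\Sigma\Ext^1$; (iii) that $\dim_\Q\Ext^1=[k:\Q]$ by Borel, so this top exterior power is one-dimensional; and (iv) that $R_\Sigma(\mot(M))$ is a nonzero multiple of $\vol(M)$, then $\mot(M)$ automatically generates $\det\Ext^1$ and surjectivity of $\phi^q$ follows for free. Building your proof around establishing $(\ref{introphiq})$ first risks circularity and imports $K$-theoretic input the paper deliberately avoids.

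The second gap is that you never prove step (i), which is the central motivic content. Vanishing of the reduced coproduct of $\mot(M)$ (the paper's Theorem $\ref{thmMinext}$) is not automatic and is not obtained by comparison with Goncharov's $K$-theory construction. The paper's argument is geometric: lift the tiling to the symmetric space, fill large balls $B_r$ with $\Gamma$-translates, observe that coproduct contributions from internal faces cancel in pairs so only faces near $\partial B_r$ survive; these grow like $r^{\dim -1}$ while the number of tiles grows like $r^{\dim}$, forcing the coproduct to be zero. Relatedly, your appeal to "compatibility of the motivic volume with the Borel regulator" is an extra nontrivial input the paper avoids on purpose; it works entirely with the Hodge realisation $r_H$ and computes the real period of a quadric motive directly as a hyperbolic volume. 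A smaller but real point: $\mot_n(k)$ is not \emph{defined} by anti-symmetrising a single $\mot(M)$ --- rather one proves $\mot(M)$ is already alternating, because a permutation $\pi\in\Sym_N$ maps a tiling to a tiling (equivariance), fixes the fundamental-class framing, but acts by $\varepsilon(\pi)$ on the framing $\omega_{Q_1}\wedge\cdots\wedge\omega_{Q_N}$ since each $\omega_{Q_i}$ is of odd degree. Your proposal would still owe the argument that the anti-symmetrised object descends to $\MT(k)$ (which follows from the tiling being defined over $k$ plus Galois descent, not from the totally-real hypothesis per se) and has nonzero period.
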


Note that the regulator $(\ref{introhodge})$ is normalised so that the powers of $\pi$ drop out in the previous theorem.
Theorem $\ref{Intromaintheorem}$ is a motivic analogue of Borel's theorem $(\ref{introBorelzeta})$
in the totally real case. However, the proof of theorem $\ref{Intromaintheorem}$ is entirely different, as it uses $r_H$ rather than $r_B$, and works completely inside the category $\MT(k)$. This opens up
the possibility of studying  other realisations of $\mot_n(k)$.

\begin{thm}  \label{Intromaintheorem2} If $k$ is totally real, the map $\phi^q$ of $(\ref{introphiq})$ is surjective.
\end{thm}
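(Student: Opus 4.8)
The plan is to deduce Theorem \ref{Intromaintheorem2} from Theorem \ref{Intromaintheorem} together with Borel's rank formula $(\ref{introKtheoryrank})$, so that the real content lies in the construction of $\mot_n(k)$ and not in the deduction. Since $k$ is totally real we have $r_2=0$, so by $(\ref{introext1})$ and $(\ref{introKtheoryrank})$ the group $\Ext^1_{\MT(k)}(\Q(0),\Q(n))\cong K_{2n-1}(k)\otimes\Q$ has $\Q$-dimension $n_+=r_1$ for every odd $n\geq 3$. Hence the one-dimensional space $\det\Ext^1_{\MT(k)}(\Q(0),\Q(n))=\bigwedge^{r_1}\Ext^1_{\MT(k)}(\Q(0),\Q(n))$ is spanned by any one of its nonzero elements.

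The first step is to observe that $\mot_n(k)\neq 0$. By Theorem \ref{Intromaintheorem} its image under the regulator $(\ref{introhodge})$ is a nonzero rational multiple of $\zeta^*_k(1-n)$, so it is enough to know that $\zeta^*_k(1-n)\neq 0$. This follows from Borel's theorem $(\ref{introBorelzeta})$: up to $\Q^\times$, $\zeta^*_k(1-n)$ is the covolume of the full $\Q$-lattice $\Lambda_n(k)\subset\R^{n_+}$, which is nonzero because $n_+=r_1\geq 1$. (Equivalently, the functional equation forces $\zeta_k(s)$ to have a zero of order exactly $r_1$ at the even negative integer $s=1-n$, produced by the poles of the $r_1$ archimedean $\Gamma$-factors.) Thus $\mot_n(k)$ is a nonzero element of the line $\det\Ext^1_{\MT(k)}(\Q(0),\Q(n))$.

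The second step uses a property of $\mot_n(k)$ supplied by its construction, beyond Theorem \ref{Intromaintheorem}: $\mot_n(k)$ is a subquotient of a finite sum of products of quadric motives $(\ref{introquadricmotive})$, all defined over $k$. Since $C^q(k)$ is closed under direct sums, tensor products, subquotients and Tate twists, each of the $r_1$ extensions of $\Q(0)$ by $\Q(n)$ that assemble, after the $\Sym_{r_1}$-antisymmetrization of the framing, into $\mot_n(k)$ defines a class in $\Ext^1_{C^q(k)}(\Q(0),\Q(n))$. Hence $\mot_n(k)$ lies in the subspace $\bigwedge^{r_1}(\Image\phi^q)\subseteq\bigwedge^{r_1}\Ext^1_{\MT(k)}(\Q(0),\Q(n))$, where $\phi^q$ is the map of $(\ref{introphiq})$. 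Because $\mot_n(k)\neq 0$ by the first step, $\bigwedge^{r_1}(\Image\phi^q)\neq 0$, so $\dim_\Q\Image\phi^q\geq r_1=\dim_\Q\Ext^1_{\MT(k)}(\Q(0),\Q(n))$. As $\Image\phi^q$ is a subspace of the target, $\Image\phi^q=\Ext^1_{\MT(k)}(\Q(0),\Q(n))$, that is, $\phi^q$ is surjective.

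The main obstacle is therefore situated entirely upstream of this deduction, inside the construction of $\mot_n(k)$ and the proof of Theorem \ref{Intromaintheorem}. One must verify that a complete finite-volume product-hyperbolic manifold attached to a torsion-free $\Gamma\subset R_{k/\Q}\SO(2n-1,1)$ acting on $(\Hyp^{2n-1})^{[k:\Q]}$ can be triangulated by products of geodesic simplices with vertices defined over $k$, so that the resulting quadric motives $(\ref{introquadricmotive})$ and their $\Gal(\overline{k}/k)$-stable combinations genuinely lie in $C^q(k)$ and not merely in $C^q(\overline{\Q})$, and that the antisymmetrized framed object remains a subquotient of a product of such quadric motives. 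The last point is formal given that $C^q(k)$ is closed under subquotients; the rationality of the vertices over $k$ and the descent of the framing to an element of $\det\Ext^1_{\MT(k)}(\Q(0),\Q(n))$ are exactly what Theorem \ref{Intromaintheorem} (and the volume computation behind it) are designed to supply. Granting those, Theorem \ref{Intromaintheorem2} follows at once.
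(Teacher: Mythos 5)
Your proposal is correct and is essentially the paper's own argument, which appears in the proof of Theorem~\ref{thmquaddecomp}: one observes that $\mot(k,1,n)$ lies in $\bigwedge^\Sigma V_0$ where $V_0$ is the span of quadric-motive classes inside $\Ext^1_{\MT(k)}(\Q(0),\Q(n))$, that it is nonzero because its regulator is a nonzero multiple of $\zeta_k^*(1-n)$, and then invokes Borel's rank bound $\dim_\Q\Ext^1_{\MT(k)}(\Q(0),\Q(n))=r_1$ to conclude $V_0$ is everything. The only slightly loose point in your write-up is the sentence ``each of the $r_1$ extensions \dots that assemble into $\mot_n(k)$'': the tensor factors $\mot(\Delta_j^{(i)})$ in the defining sum are quadric motives with many nontrivial weight-graded pieces, not individually extensions of $\Q(0)$ by $\Q(n)$; what one actually shows is that the framed class $\mot(M)$ lies simultaneously in $V^{\otimes r_1}$ (span of quadric motives) and in $(\ker\widetilde{\Delta}_n)^{\otimes r_1}$, and hence in $(V\cap\ker\widetilde{\Delta}_n)^{\otimes r_1}=V_0^{\otimes r_1}$, which gives your containment $\mot_n(k)\in\bigwedge^{r_1}(\Image\phi^q)$ without any claim that the individual tensor factors are extensions.
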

Thus every element in $\Ext^1_{\MT(k)}(\Q(0),\Q(n))$, for $k$ a totally
real number field, is a subquotient of a direct sum  of quadric motives.
Theorems $\ref{Intromaintheorem}$ and $\ref{Intromaintheorem2}$ also hold for certain quadratic extensions of $k$. A  similar construction when $n=2$ holds  in fact for any number field.

The element $\mot_n(k)$ is defined in the following way. Let $\Hyp^m$ denote hyperbolic space of dimension $m$, and
let $\Or_k$ denote the ring of integers of $k$. Then  any torsion-free subgroup  $\Gamma$ of finite index of $\SO(2n-1,1)(\Or_k)$
acts properly discontinuously on $r=[k:\Q]$ copies of $\Hyp^{2n-1}$.  The quotient is a \emph{product-hyperbolic} manifold
$$M= \Hyp^{2n-1} \times \ldots \times \Hyp^{2n-1} / \Gamma \ .$$
Then, using a trick due to Zagier, one can show that $M$ can be triangulated using products of hyperbolic geodesic simplices defined over $k$.
By an idea due to Goncharov, a hyperbolic geodesic simplex in the Klein model for $\Hyp^{2n-1}$ is a Euclidean simplex $L$ inside a smooth quadric $Q$, and so
defines a quadric motive $(\ref{introquadricmotive})$. The total motive $\mot(M)$ is defined to be a certain subquotient of the direct sum of  tensor products of quadric  motives 
corresponding to all the simplices in the triangulation of $M$, and is well-defined. Technically, this subquotient is extracted using  
framed equivalence classes (\S5.1).
 The gluing relations between simplices imply that $\mot_n(k)\in \bigotimes_{[k:\Q]} \Ext^1_{\MT(k)}(\Q(0),\Q(n))$,  generalizing
a result due to Goncharov for ordinary hyperbolic manifolds. A further symmetry argument is required to show that $\mot_n(k)$ is alternating.
Theorem $\ref{Intromaintheorem}$ is then proved using the fact that
$$\vol(M) = \alpha\, \pi^{nr}\zeta_k^*(1-n)\ ,$$
for some  $\alpha\in\Q^\times$, which follows from  a Tamagawa number argument.
This in turn implies theorem $\ref{Intromaintheorem2}$, using the fact that the rank of $\Ext^1_{\MT(k)}(\Q(0),\Q(n))$ is exactly $r$,
which follows from $(\ref{introKtheoryrank})$. 
There are  two corollaries.

\begin{cor} Let $M\in \Ext^1_{\MT(k)}(\Q(0),\Q(n)).$ The periods of $M$ under $\pi^nr_H$ are $\Q$-linear combinations of volumes of hyperbolic $(2n-1)$-simplices
defined over $k$.
\end{cor}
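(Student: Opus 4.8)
The plan is to combine Theorem~\ref{Intromaintheorem} with the general structure theory of $\MT(k)$ to deduce the statement about arbitrary extensions. First I would recall the setup: by Theorem~\ref{Intromaintheorem}, the motive $\mot_n(k)$ lies in $\det \Ext^1_{\MT(k)}(\Q(0),\Q(n))$ and its image under the normalised regulator $r_H$ is a non-zero rational multiple of $\zeta^*_k(1-n)$. Since $\zeta^*_k(1-n)\neq 0$, this means $\mot_n(k)$ is a nonzero element of the top exterior power of the $r$-dimensional $\Q$-vector space $V_n := \Ext^1_{\MT(k)}(\Q(0),\Q(n))$, where $r=[k:\Q]=n_+$ by $(\ref{introKtheoryrank})$. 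Equivalently, $\mot_n(k)$ is a wedge $e_1\wedge\cdots\wedge e_r$ of a $\Q$-basis $e_1,\dots,e_r$ of $V_n$, since any nonzero element of $\det V_n$ has this form up to scalar (one may rescale one basis vector). Unwinding the construction of $\mot_n(k)$ as a framed equivalence class built from the products of quadric motives attached to the simplices in a triangulation of $M=(\Hyp^{2n-1})^r/\Gamma$, each $e_i$ is itself realised (after applying the projection to $\bigotimes_{[k:\Q]} V_n$ and extracting a wedge factor) as a subquotient of a direct sum of quadric motives; in particular each $e_i$, viewed as an extension of $\Q(0)$ by $\Q(n)$, has a representative whose periods under $\pi^n r_H$ are volumes of hyperbolic $(2n-1)$-simplices defined over $k$ — this is exactly what the volume-versus-triangulation computation produces, since $\vol$ of each geodesic simplex is (up to the power of $\pi$ absorbed into the normalisation and a rational factor) the single nontrivial period of the corresponding quadric motive.

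Next I would address an arbitrary $M\in V_n = \Ext^1_{\MT(k)}(\Q(0),\Q(n))$. Writing $M = \sum_{i=1}^r c_i\, e_i$ with $c_i\in\Q$, the period matrix of $M$ under $\pi^n r_H$ is, up to the $\Q$-span, a $\Q$-linear combination of the period data of the $e_i$. More precisely: $M$ sits in an exact sequence $0\to\Q(n)\to M\to\Q(0)\to0$, and its class in $\Ext^1$ is the obstruction; the "period" to be computed is the value of the extension class against the relevant de Rham/Betti comparison, which is $\Q$-linear in the class. Since each $e_i$ has a period which (under $\pi^n r_H$) is a $\Q$-linear combination of volumes of hyperbolic $(2n-1)$-simplices over $k$ — and the finite set of simplices occurring in the fixed triangulation of $M$ furnishes a common finite list of such volumes — it follows that the period of $M$ under $\pi^n r_H$ is a $\Q$-linear combination of volumes of hyperbolic $(2n-1)$-simplices defined over $k$. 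Concretely, the volumes of the geodesic simplices appearing in the triangulation span a $\Q$-subspace of $\R$ containing the regulator images of all of $V_n$ after multiplication by $\pi^n$, because that $\Q$-span has dimension $\geq r = \dim_\Q V_n$ (it surjects onto $r_H(V_n)\otimes\Q$ by the non-vanishing in Theorem~\ref{Intromaintheorem}) and $r_H$ is injective on $V_n$ by Borel (comparison of $r_H$ with $r_B$ via $(\ref{introBorelreg})$, which has the lattice $\Lambda_n(k)$ as image).

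The one point requiring care — and the main obstacle — is the precise sense in which "the periods of $M$" are meant, and hence why linearity in the extension class is legitimate. A mixed Tate motive $M$ that is an extension of $\Q(0)$ by $\Q(n)$ has a period matrix of the shape $\begin{pmatrix} 1 & p \\ 0 & (2\pi i)^n \end{pmatrix}$ (in suitable bases), and the content of the corollary is that $p$, after dividing by the appropriate power of $2\pi i$ and multiplying by $\pi^n$ — i.e., the quantity captured by $\pi^n r_H(M)$ — lies in the $\Q$-span of simplex volumes. Since the period $p$ is additive under Baer sum of extensions and scales under $\Q$-multiplication of the class (these are standard properties of the period pairing on $\Ext^1$ in a Tannakian category of mixed Tate motives), the reduction to the basis $\{e_i\}$ is valid, and one only needs the corollary for the generators, which is the output of the volume calculation underlying Theorem~\ref{Intromaintheorem}. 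I would therefore structure the proof as: (1) $\{e_i\}$ spans $V_n$ and each $e_i$ has simplex-volume periods; (2) periods are $\Q$-linear in the extension class; (3) conclude for general $M$ by linearity, invoking Borel's injectivity only to confirm the span of simplex volumes is exactly $\pi^n r_H(V_n)$ up to $\Q$-scalars, which is not strictly needed for the stated inclusion but clarifies that nothing smaller suffices.
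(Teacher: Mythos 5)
The core idea you are reaching for is the right one, and is in essence how the paper proceeds (via Theorem~\ref{thmquaddecomp}): show that $\Ext^1_{\MT(k)}(\Q(0),\Q(n))$ is spanned by $\Q$-linear combinations of framed quadric motives, note that the real period of such a motive is $(2\pi)^{-n}$ times the volume of the corresponding hyperbolic simplex, and conclude by $\Q$-linearity of the real period in the extension class. However, your execution of the spanning step has a gap that makes the argument circular.

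You write $\mot_n(k)=e_1\wedge\cdots\wedge e_r$ for some $\Q$-basis $e_1,\ldots,e_r$ of $V_n:=\Ext^1_{\MT(k)}(\Q(0),\Q(n))$, and then assert that ``unwinding the construction, each $e_i$ is realised (after applying the projection and extracting a wedge factor) as a subquotient of a direct sum of quadric motives.'' But a nonzero element of $\det V_n$ determines its wedge factors $e_1,\ldots,e_r$ only up to a determinant-one change of basis; there is no canonical ``extraction.'' If you simply pick an arbitrary basis of $V_n$ rescaled to give the right determinant, the claim that each $e_i$ is a linear combination of quadric motives is exactly what the corollary is meant to prove, and cannot be read off the construction without further argument.

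The paper closes this gap by working with the subspace $V_0\subseteq \Ext^1_{\MT(k)}(\Q(0),\Q(n))$ of those classes that \emph{are} $\Q$-linear combinations of quadric motives (namely $V_0=V\cap\ker\widetilde{\Delta}_n$, where $V\subset\Am_n(k)$ is the $\Q$-span of quadric motives). The key point is that the explicit construction $\mot_n(k)=\sum_i \mot(\Delta^{(i)}_1)\otimes\cdots\otimes\mot(\Delta^{(i)}_r)$, together with the vanishing of the partial coproducts (Theorem~\ref{thmMinext}) and the antisymmetry under $\Sym_r$ (Theorem~\ref{thmMmotiveisdet}), places $\mot_n(k)$ not merely in $\bigwedge^{\Sigma}\Ext^1$ but in $\bigwedge^{\Sigma} V_0$. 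Its nonvanishing then forces $\dim_\Q V_0\geq r$; and since $V_0\subseteq\Ext^1_{\MT(k)}(\Q(0),\Q(n))$, which has dimension exactly $r$ by Borel's theorem, one gets $V_0=\Ext^1_{\MT(k)}(\Q(0),\Q(n))$. This dimension count has to be done on the motivic side, inside $\Am_n(k)$.

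Your final paragraph instead attempts a dimension count on the period side, arguing that the $\Q$-span of the simplex volumes has dimension $\geq r$ and hence contains $\pi^n r_H(V_n)$. That inference is invalid: two $\Q$-subspaces of a real vector space with the same $\Q$-dimension need not be comparable, so a lower bound on the dimension of the span of volumes yields no inclusion, even granting Borel's injectivity of $r_H$. Moreover the stated justification, that the span of volumes ``surjects onto $r_H(V_n)\otimes\Q$,'' already presupposes the corollary. To repair the proposal, replace the period-side count with the motivic dimension argument sketched above.
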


\begin{cor} The special value  $\zeta^*_k(1-n)$ is  a rational multiple times a determinant of ($\pi^{-n}$-times) volumes of hyperbolic $(2n-1)$-simplices defined over $k$.
\end{cor}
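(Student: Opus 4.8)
The plan is to deduce this from Theorem~\ref{Intromaintheorem} and the first Corollary above by a dimension count and linear algebra. Since $k$ is totally real and $n\geq 3$ is odd, the rank formula $(\ref{introKtheoryrank})$ gives $\dim_{\Q}\Ext^1_{\MT(k)}(\Q(0),\Q(n))=r_1+r_2=r$, where $r=[k:\Q]$, and the regulator $r_H$ of $(\ref{introhodge})$ is therefore an injection with image a full $\Q$-lattice $\Lambda\subset\R^{r}$. Consequently the one-dimensional $\Q$-vector space $\det\Ext^1_{\MT(k)}(\Q(0),\Q(n))=\bigwedge^{r}\Ext^1_{\MT(k)}(\Q(0),\Q(n))$ is carried by the induced map $\det r_H$ injectively into $\bigwedge^{r}\R^{r}\cong\R$.

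Next I would choose a $\Q$-basis $f_1,\dots,f_r$ of $\Ext^1_{\MT(k)}(\Q(0),\Q(n))$ realised by quadric motives of simplices. By Theorem~\ref{Intromaintheorem2} this group is spanned by subquotients of quadric motives $(\ref{introquadricmotive})$; by the first Corollary above the coordinates of $\pi^{n}r_H$ of any such class are $\Q$-linear combinations of volumes of hyperbolic $(2n-1)$-simplices defined over $k$; and conversely each hyperbolic $(2n-1)$-simplex $\Delta$ defined over $k$ contributes, via its quadric motive and the gluing construction underlying $\mot$, the vector $(\vol(\sigma_1\Delta),\dots,\vol(\sigma_r\Delta))$, where $\sigma_1,\dots,\sigma_r$ are the real places of $k$ and $\sigma_j\Delta$ is the simplex obtained by applying $\sigma_j$ to its coordinates. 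Hence $\pi^{n}\Lambda$ is spanned over $\Q$ by such ``volume vectors'', and we may choose the $f_i$ so that $\pi^{n}r_H(f_i)=(\vol(\sigma_1\Delta_i),\dots,\vol(\sigma_r\Delta_i))$ for suitable simplices $\Delta_i$ defined over $k$. By Theorem~\ref{Intromaintheorem}, $\mot_n(k)$ lies in the one-dimensional space $\det\Ext^1_{\MT(k)}(\Q(0),\Q(n))$, so $\mot_n(k)=\mu\, f_1\wedge\cdots\wedge f_r$ for some $\mu\in\Q$, with $\mu\neq 0$ because $r_H(\mot_n(k))\neq0$.

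Applying $\det r_H$ then gives $r_H(\mot_n(k))=\mu\,\det\!\big(r_H(f_i)_j\big)_{1\le i,j\le r}=\mu\,\pi^{-nr}\det\!\big(\vol(\sigma_j\Delta_i)\big)_{i,j}=\mu\,\det\!\big(\pi^{-n}\vol(\sigma_j\Delta_i)\big)_{i,j}$, while Theorem~\ref{Intromaintheorem} identifies the left-hand side with $\alpha\,\zeta^*_k(1-n)$ for some $\alpha\in\Q^{\times}$. Hence $\zeta^*_k(1-n)=(\mu/\alpha)\det\!\big(\pi^{-n}\vol(\sigma_j\Delta_i)\big)_{i,j}$, which is the asserted rational multiple of a determinant of ($\pi^{-n}$-times) volumes of hyperbolic $(2n-1)$-simplices defined over $k$. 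The step I expect to be the main obstacle is the one producing an honest $r\times r$ matrix of individual volumes, rather than merely $\Q$-linear combinations of volumes: this relies on the explicit geometric construction of $\mot_n(k)$ from a triangulation of $M=(\Hyp^{2n-1})^{r}/\Gamma$ and on the alternating (determinantal) structure of its framing, which already exhibits $\vol(M)$ itself as a determinant of volumes of the simplices occurring in that triangulation; the remaining work is bookkeeping of the normalisation of $r_H$ and of the powers of $\pi$ it removes.
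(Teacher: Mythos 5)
Your overall strategy is the same as the paper's and is sound up to a certain point: you use the rank formula $(\ref{introKtheoryrank})$ to see that $\det \Ext^1_{\MT(k)}(\Q(0),\Q(n))$ is one-dimensional, you expand $\mot_n(k)$ as $\mu\, f_1\wedge\cdots\wedge f_r$ in a $\Q$-basis realised by (linear combinations of) quadric motives (which exists by Theorem~\ref{thmquaddecomp}), and you apply the regulator to produce a Gram-type determinant. This correctly proves that $\pi^{nr}\zeta^*_k(1-n)$ is a rational multiple of a determinant whose $(i,j)$-entry is a $\Q$-linear combination of $\pi^{-n}$-times volumes of hyperbolic $(2n-1)$-simplices defined over $k$ --- which is precisely the form of the statement proved in the body of the paper (Corollary~\ref{thmzetakisdethyp}: ``a determinant of \emph{sums} of volumes'').

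Where your argument has a genuine gap is in the ``converse'' step, where you claim that each individual simplex $\Delta$ over $k$ contributes a vector $(\vol(\sigma_1\Delta),\dots,\vol(\sigma_r\Delta))$ to $\pi^n\Lambda$, and hence that one may choose the basis $f_i$ so that $\pi^n r_H(f_i)$ is the volume vector of a single simplex $\Delta_i$. This is false in general: the framed motive $\mot(\Delta)$ of one simplex lies in $\Am_n(k)$, but it lies in $\ker\widetilde\Delta_n = \Ext^1_{\MT(k)}(\Q(0),\Q(n))$ only when its reduced coproduct vanishes, i.e.\ when the corresponding Dehn-type invariants are zero, which for a single simplex is typically not the case. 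Only the $\Gamma$-equivariant \emph{sum} of simplex motives coming from a tiling, or more generally a finite sum with matching gluing data, lies in the kernel. So the volume vectors of single simplices need not lie in $\Lambda$, and your claimed choice of basis is unjustified. Your proposed fix --- that the determinantal structure already visible in the triangulation of $M$ exhibits $\vol(M)$ as a determinant of simplex volumes --- is also incorrect: the triangulation only writes $\vol(M)$ as a \emph{sum} of products of simplex volumes, and the paper explicitly remarks that the determinant structure ``uses the rank calculation $(\ref{introKtheoryrank})$ in an essential way, and does not seem to follow directly from the geometry of the triangulation.'' If you delete the converse step and content yourself with entries that are $\Q$-linear combinations of simplex volumes, your proof becomes correct and coincides with the paper's.
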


This result is in the spirit of, but weaker than, Zagier's conjecture.
The proof  that the zeta value is a determinant uses the rank calculation $(\ref{introKtheoryrank})$ in an essential way, and does not seem to follow directly from the geometry
of the triangulation.
  A similar argument to the above also works for $n=2$, and holds for   all number fields $L$. It is not hard to show that quadric motives in $\Pro^3$ are dilogarithm motives  $P_2(z)$,
thereby giving another
proof of Zagier's conjecture on zeta values for $n=2$.

\subsection{Plan}
In $\S2$ we introduce notations and define product-hyperbolic manifolds.
In $\S3$, we  list the non-exceptional arithmetic product-hyperbolic manifolds, and state their
covolumes up to a rational multiple. In $\S4$, we explain how to decompose a product-hyperbolic manifold $M$ into a finite union  of products of hyperbolic geodesic simplices $ \Delta^{(i)}_1 \times \ldots \times \Delta^{(i)}_N$, $1\leq i \leq R$, 
where each simplex $\Delta^{(i)}_j$ has at most one vertex at infinity.
In $\S5$, we  recall  properties of framed mixed Tate motives and study the framed mixed Tate motive defined by such a geodesic simplex and its periods, with examples in 
$\S\ref{Sectexamples3space}$.
The total (framed) motive $\mot(M)$ is defined  by 
\begin{equation}\label{intromotdef}
\mot(M) = \sum_{i=1}^R \mot(\Delta^{(i)}_1)\otimes \ldots \otimes \mot(\Delta^{(i)}_N)\ .\end{equation}
and is well-defined and non-zero if $M$ is modelled on products of hyperbolic spaces with  odd-dimensional components only.  In $\S\ref{sectcoproductvanishes}$,  we prove  $(\ref{introMinext1})$ from a geometric argument which uses only the fact that $M$ has no boundary.
Theorem  $\ref{thmmotiveMAIN}$ covers the case when  $M$ is defined arithmetically. In $\S6$,  these results are combined 
 to prove the main theorems.  Analogous
results in the exceptional case $n=2$ are given in $\S\ref{sectZagiersconj}$, and some open questions  are discussed in $\S\ref{sectOpen}$.
Finally, in $\S7$ we compute
an explicit
example of the motive of an Artin $L$-value at $3$ over the field $\Q(\sqrt{5})$, which is  based on a remarkable computation due to Bugaenko. 
\vspace{0.1in}

This work was written during my thesis at the University of Bordeaux, in 2004-2005. Many thanks to 
C. Soul\'e for his comments on  a very early version of this paper, and also to H. Gangl and M. Belolipetsky for helpful discussions.
At the time of publication, I am partially supported by ERC grant number 257638.

\section{Product-hyperbolic manifolds.} 

\subsection{Euclidean and hyperbolic spaces}
We use the following notations.
For $n \geq 1$,  Euclidean space $\Euc^n$ is $\R^n$ equipped with the scalar product
 $$(x,y)= x_1 y_1 +\ldots +x_ny_n\ .$$
Let $n\geq 2$, and let $\R^{n,1}$ denote  $\R^{n+1}$ equipped with the inner product
$$(x,y)= -x_0y_0+ x_1y_1+\ldots +x_ny_n\ .$$
Hyperbolic space $\Hyp^n$ is defined to be the half-hyperboloid:
$$\Hyp^n= \{ x\in \R^{n,1}: (x,x)=-1, \,\, x_0>0\}\ .$$
Let $\SO(n,1)(\R)$ denote the group of matrices preserving this scalar
product. It has two components. Let $\SO^+(n,1)(\R)$ denote
the connected
component of the identity, which is the group of
orientation-preserving symmetries of $\Hyp^n$.
 The invariant metric on $\Hyp^n$ is given by $ds^2=-dx_0^2+dx_1^2 +\ldots +dx_n^2$.

 We will consider  complete manifolds $M$
which are locally modelled on products of the above, {\it i.e.}, Riemannian products of the form
\begin{equation}\label{Xtype}
\X^{\n}=\prod_{1\leq i \leq N} \Hyp^{n_i}\times \prod_{N+1 \leq i\leq N'} \Euc^{n_i} \ ,
\end{equation} 
where $\n=(n_1,\ldots, n_{N'})$.  A   complete,
orientable manifold modelled on $\X^\n$ of finite volume is of the form
 $M = \X^\n/\Gamma,$
where $\Gamma$ is a discrete torsion-free subgroup of the group of motions of $\X^\n$.
 In this case, we say that $M$ is a \emph{flat-hyperbolic} manifold.
If it is modelled only on products of hyperbolic spaces ($N'=N$), it will be called \emph{product-hyperbolic}.
It follows from Margulis' theorem on the arithmeticity of lattices of rank $>1$ \cite{Marg} that any
irreducible discrete group of finite covolume $\Gamma$ acting
on $\X^\n$ is  arithmetic, or else $\X^\n= \Hyp^{n}$ and
$\Hyp^n/\Gamma$ is an ordinary hyperbolic manifold.

\subsection{Models of hyperbolic spaces}
 We also need to consider the following models for hyperbolic space (\cite{AVS}, I,\S2) and the absolute, which we denote by $\partial \Hyp^n$.

\subsubsection{The Klein (projective) model}
The map $(x_0,\ldots, x_n) \mapsto (y_1,\ldots, y_n)$, where $y_i=x_i/x_0$, gives an isomorphism of the hyperboloid model with
the Klein model:
\begin{equation} \label{Kmodel} \K^n=\{(y_1,\ldots, y_n)\in \R^n:
r^2=\sum_{i=1}^n y_i^2 <1\}
\end{equation}
 equipped with the
metric $ds^2 = (1-r^2)^{-2}[ (1-r^2)\sum_{i=1}^n dy_i^2 +
(\sum_{i=1}^n y_i dy_i)^2]$.
 The action of $\SO^+(n,1)$ on $\K^n$ is by projective
transformations, and extends continuously to the boundary
$\partial \K^n$, which is the unit sphere in $\R^n$. Geodesics in this model are Euclidean
lines,  and in particular, Euclidean hyperplanes are totally geodesic, but note that 
 hyperbolic and Euclidean angles do not agree. 
\subsubsection{The Poincar\'e upper-half space
model}  \label{sectPUPM} Let
\begin{equation} \label{Umodel} \Up^n=\{(z_1,\ldots,z_{n-1}, t) :\, (z_1,\ldots,
z_{n-1})\in \R^{n-1}, \,t>0 \}\ ,
\end{equation}
equipped with the metric $t^{-2} (\sum_{i=1}^{n-1} dz_i^2 +dt^2).$ The
absolute $\partial \Hyp^n$ is identified with the Euclidean plane
$\partial \Up^n = \R^{n-1}\times \{0\} \subset \R^{n-1}\times \R_{\geq 0}$ at height $t=0$, compactified by
adding the single point at infinity $\infty$. Geodesics in this
model are vertical line segments (which go to $\infty$) or
segments of  circles which meet the absolute at right angles. In
this model, hyperbolic angles coincide with Euclidean angles.
\section{Volumes of product-hyperbolic manifolds and  $L$-functions}

\subsection{Arithmetic  groups acting on product-hyperbolic space}\label{sect4types}
There are four basic types of arithmetic groups which act on products of hyperbolic spaces.
\\

\textbf{{\it Type (I)\ :\,}} Let $k/\Q$ be a totally real number field of degree $r$, and let $\Or$ denote an order in $k$. 
  Let $1\leq t\leq r$. Consider a non-degenerate quadratic form
  \begin{equation} \label{qdef} q(x_0,\ldots,x_n) = \sum_{i,j=0}^n a_{ij} x_i x_j\quad \hbox{where } \quad a_{ij} \in k\ ,\,\, a_{ij}=a_{ji}\ ,\end{equation}
  and suppose that $^{\sigma}\! q=\sum_{i,j=0}^n {}^\sigma \!a_{ij}\, x_i x_j$
    has signature $(n,1)$ for $t$ infinite places $\sigma\in\{\sigma_1,\ldots, \sigma_t\}$ of $k$, and is positive definite
  for the remaining $r-t$ infinite places of $k$. Let
  $\SO^+(q,\Or)$ be the group of  linear transformations with coefficients in
  $\Or$ preserving $q$, which map each connected component of $\{x\in \R^{n+1}:
  \,{ }^{\sigma_i}\!q(x)<0\}$ to itself for $1\leq i\leq t$.
  Let $\Gamma$ be a  torsion-free subgroup of  $\SO^+(q,\Or)$ of finite index.
 It acts properly discontinuously on $\prod_{i=1}^t \Hyp^n$ via the map
\begin{eqnarray}
\Gamma &\hookrightarrow& \prod_{i=1}^t \SO^+(n,1)(\R) \nonumber \\
A & \mapsto & ( \,{ }^{\sigma_1}\!A,\ldots,  \,{ }^{\sigma_t}\!A)\ . \nonumber
\end{eqnarray}

\textbf{{\it Type (II)\ :\,}}
Suppose that $n=2m-1\geq 3$ is odd. Let $k/\Q$ be a totally real number field of degree $r$, and let $1\leq t\leq r$.
   Consider a quaternion  algebra $D$ over $k$ such that $D_{\sigma}=D\otimes_{k,\sigma}\R$
   is  isomorphic to $M_{2\times 2}(\R)$ for all
   embeddings  $\sigma$ of $k$. Let
\begin{equation}\label{Qdef}
Q(x,y)= \sum_{i,j=1}^{m} \overline{x}_i a_{ij} y_j \quad \hbox{ where } \quad  a_{ij}
\in D, \,\, a_{ij} = -\overline{a}_{ji}\ , \end{equation} be a non-degenerate
skew-Hermitian form on $D^{m}$, where $x\mapsto \overline{x}$
denotes the conjugation map on $D$, which is an anti-homomorphism. For each embedding $\sigma$ of $k$,  the signature
of ${}^\sigma \!Q$, where ${}^\sigma \!Q(x,y)= \sum_{i,j=1}^{m} \overline{x}_i \,{}^\sigma  \!a_{ij} y_j$, is defined as follows.
Let
$D_\sigma\cong M_{2\times 2}(\R)= \R\oplus \mathsf{i}\R \oplus \mathsf{j}\R \oplus \mathsf{k}\R$, where
$\mathsf{i}^2=\mathsf{j}^2=1$ and $\mathsf{i}\mathsf{j}=-\mathsf{j}\mathsf{i}=\mathsf{k}$. The endomorphism of $D_\sigma$ given by right
multiplication by $\mathsf{i}$ is  of order two and has eigenvalues $\pm1$.
Let
$$D_{\sigma,\pm}= \{ x\in D: x\mathsf{i}=\pm x\}\ .$$
It follows that $D^m_\sigma= D_{\sigma,+}^m \oplus D_{\sigma,-}^m$, and because one has $D_{\sigma,-}^m = D_{\sigma,+}^m
\mathsf{j}$, the dimension of $D^m_{\sigma, +}$ is $2m$. Writing $x=x_++x_-$, where $x_+\mathsf{i}=x_+$ and $x_-\mathsf{i}=-x_-$, one verifies
that
${}^\sigma \!Q(x_+,y_+)={}^\sigma \!Q(x_+\mathsf{i},y_+)= -\mathsf{i}\, {}^\sigma \!Q(x_+,y_+)$,  and  ${}^\sigma \!Q(x_+,y_+)={}^\sigma \!Q(x_+,y_+\mathsf{i})=
{}^\sigma \!Q(x_+,y_+)\mathsf{i}$.
 It follows that
$${}^\sigma \!Q(x_+,y_+) = f_\sigma(x_+,y_+) (\mathsf{j}-\mathsf{k})\ , \quad \hbox{where }f_\sigma(x_+,y_+)\in k\ .$$
Since  ${}^\sigma \!Q$ is  skew-hermitian, $f_\sigma$ is a non-degenerate symmetric bilinear form on $D^m_{\sigma,+}$. The signature of ${}^\sigma \!Q$ is defined to be the signature
of $f_\sigma$. Let $\phi_\sigma$ denote the map which to
$Q$ associates the bilinear form $f_\sigma$.
The form $f_\sigma$ uniquely determines ${}^\sigma \!Q$, for instance, via
 the formula $-\mathsf{j}\, {}^\sigma \!Q(x_-,y_+)= {}^\sigma \!Q(x_-\mathsf{j}, y_+)= f_\sigma(x_-\mathsf{j}, y_+)(\mathsf{j}-\mathsf{k})$.

Suppose, therefore,  that $^\sigma\! Q$ has
signature $(2m-1,1)$ for $t$ embeddings $\sigma \in \{\sigma_1,\ldots,\sigma_t\}$ of $k$, and is positive
definite for  the remaining $r-t$  embeddings.
 Let $\Or$ be an
order in $D$, and let $\U^+(Q,\Or)$ denote
the group of $\Or$-valued points of the unitary group
 which preserves the connected components of $\{x\in
D^m_{\sigma_i,+} : f_{\sigma_i}(x,x)<0\}$ for $1\leq i\leq t$.
If  $\Gamma$ is a torsion-free subgroup of $\U^+(Q,\Or)$, then
it  acts properly
discontinuously on $\prod_{i=1}^t \Hyp^n$ via the map
$(\phi_{\sigma_1},\ldots, \phi_{\sigma_t}):\Gamma \hookrightarrow \prod_{i=1}^t \SO^+(n,1)(\R).$
\\

\textbf{{\it Type (III)\ :\,}}
Let $L/\Q$ denote a number field of degree $n$ with $r_1$ real places and $r_2\geq 1$ complex
  places, and let $0\leq t \leq r_1$. Let $B$ denote a quaternion algebra over $L$ which is
  unramified at $t$ real places, and ramified at  the
  other $r_1-t$ real places of $L$.
Then $\prod_{v|\infty} (B\otimes_L L_v)^* = (\quat^*)^{r_1-t}
\times \GL_2(\R)^t \times \GL_2(\C)^{r_2}$, where $\quat$ denotes Hamilton's quaternions.
  Let $\Or$ denote an order in
  $B$ and let $\Gamma$ be a torsion-free subgroup of finite index
  of the group of elements in $B$ of reduced norm $1$.  Then $\Gamma$ defines
a discrete subgroup of $\PSL_2(\R)^{t}\times \PSL_2(\C)^{r_2}$ and acts properly  discontinuously on $(\Hyp^2)^t
\times (\Hyp^3)^{r_2}$ \cite{Ch}.
\\

\textbf{{\it Type (IV)\ :\,}} There are further exceptional cases for $\Hyp^7$  which are related 
to Cayley's octonions. These will not be considered here.

\begin{rem}\label{remtype1istype2} When $n$ is odd, every arithmetic group of type $(I)$ can also be expressed as an arithmetic group of type $(II)$. So in $(I)$ we can assume $n$ is even.

 Our main results  only use the trivial  case $q=-x_0^2+x_1^2+\ldots + x_n^2$. Then
we obtain the
standard orthogonal group $\SO(n,1)$, and $\Gamma$ is  any
torsion-free subgroup of finite index of $R_{k/\Q} \SO^+(n,1)(\Z) = \SO^+(n,1)(\Or_k)$.
In the case $n=2$, the main example is given by $B=M_{2\times 2}(L)$, where $L$ is a  number field as in $(III)$.  Then $\Gamma$ is  a torsion-free subgroup of finite
index of the Bianchi group $\PSL_2(\Or_L)$.
\end{rem}

\subsection{Covolumes of arithmetic product-hyperbolic manifolds.}
For $k$  a number field, let $\zeta_k(s)$ denote the Dedekind
zeta function of $k$, and let $d_k$ denote the absolute value of
the discriminant of $k$. If $\chi$ is the non-trivial character of
a quadratic extension $L/k$, let $L(\chi,s)=
\zeta_L(s)/\zeta_k(s)$ denote its Artin
$L$-function, and let $d_{L/k}=d_L/d_k^2$. For any $\alpha,\alpha'  \in \R$, we write $\alpha \sim_{\Q^\times} \alpha'$ if $\alpha =\beta  \alpha'$ for some $\beta \in \Q^{\times}$. 

\begin{thm} \label{thmvolumeslist} The covolumes of   arithmetic groups of types $(I)- (III)$
acting on products of hyperbolic spaces  are as  follows.

\begin{enumerate}
  \item Let $n=2m,$ let $k$ be a totally real  field of degree $r$, and let $\Gamma$ be of type (I) acting on $\prod_{i=1}^t \Hyp^n$, where
   $1\leq t \leq r$. Let $M=(\prod_{i=1}^t \Hyp^n)/\Gamma$. Then
$$
\vol(M)  \sim_{\Q^\times}
     |d_k|^{n(n+1)/4}\,
   \pi^{-m^2r+m(t-r)} \, \zeta_k(2)\zeta_k(4) \ldots \zeta_k(2m)\ ,
$$

  \item Let $n=2m-1\geq 3$ be odd and let $k$ denote a totally real field of degree $r$. Let $\Gamma$ be  of type (II) acting on $\prod_{i=1}^t \Hyp^n$ for
  some  $1\leq t\leq r$, defined in terms of a skew-Hermitian form $Q$ over a quaternion algebra $D$.
 Let $d\in k^\times/k^{\times 2}$
  denote the reduced norm of the discriminant of $Q$, and let $\chi$ denote the
  non-trivial character of the quadratic extension
  $L=k(\sqrt{d})$ of $k$. Let $L(\chi,m) = \zeta_{L}(m) \zeta_k(m)^{-1}$.
If $M= (\prod_{i=1}^t \Hyp^n)/\Gamma$, then
$$\vol(M)
\sim_{\Q^\times}
 \left\{
  \begin{array}{ll}
   |d_{L/k}|^{1/2}\,|d_k|^{n(n+1)/4}  \pi^{-m^2r+mt} \zeta_k(2) \zeta_k(4) \ldots
\zeta_k(2m-2)\, L(\chi,m) , & \hbox{ if  } [L:K]=2\ , \\
   |d_k|^{n(n+1)/4}  \pi^{-m^2r+mt} \zeta_k(2) \zeta_k(4) \ldots
\zeta_k(2m-2)\, \zeta_k(m)  , & \hbox{ if  } L=K\ ,
  \end{array}
\right.
 $$
In the special case where $\Gamma$ is of type $(I)$, the same formula holds, where $d$ is the discriminant of the quadratic form $q$ of $(\ref{qdef})$.
\\

  \item Let $L$ be a number field with $r_1$ real places and $r_2\geq 1$
  complex places. Let $\Gamma$ denote an arithmetic group of type (III)
   acting on $\X=\prod_{i=1}^t \Hyp^2 \times \prod_{j=1}^{r_2}
  \Hyp^3$ where $1\leq t\leq r_1$. If $M=\X/\Gamma$, then
$$\vol(M)  \sim_{\Q^\times} |d_L|^{3/2}
\pi^{t-2r_1-2r_2}\zeta_L(2)\ . $$
\end{enumerate}
\end{thm}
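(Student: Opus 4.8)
The plan is to obtain all three formulae from a single mechanism: the Tamagawa-number computation for the relevant algebraic groups, combined with Prasad's volume formula. The key simplification is that we need each covolume only up to a factor in $\Q^\times$, which lets us discard every rational constant, every index $[\SO^+(q,\Or):\Gamma]$ and class-number factor, and every ``good-prime'' Euler factor that is a ratio of integers, leaving only the powers of $\pi$, the special values of $\zeta_k$ (or of the Artin $L$-function $L(\chi,\cdot)$) and the powers of the discriminants to be pinned down. First I would realise $\Gamma$, up to commensurability, as the group of $\Z$-points of an algebraic $\Q$-group $\G=R_{\ell/\Q}\,\underline{G}$ obtained by restriction of scalars from a number field $\ell$: in types $(I)$ and $(II)$, $\ell=k$ and $\underline{G}$ is $\SO(q)$ (type $(I)$) or the special unitary group $\SU(Q)$ of the skew-Hermitian form (type $(II)$), which is a $k$-form of type $B_m$ if $n=2m$ and of type $D_m$ if $n=2m-1$ (inner when the discriminant $d$ is a square in $k$, otherwise the quasi-split outer form ${}^{2}D_{m}$ split over $L=k(\sqrt d)$); in type $(III)$, $\ell=L$ and $\underline{G}=\SL_1(B)$ is a form of the simply connected group of type $A_1$. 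Then $\G(\R)=\prod_{v\mid\infty}\underline{G}(\ell_v)$ contains $\SO^+(n,1)(\R)$ (resp.\ $\PSL_2(\R)$ or $\PSL_2(\C)$) at each of the $t$ ``non-compact'' archimedean places and is compact at the remaining ones.

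Next I would equip $\G(\A)$ with the Tamagawa measure; by Ono's computation \cite{On1} (and the Weil conjecture for the simply connected $A_1$), $\tau(\G)$ is an explicit rational number and $\vol(\G(\Q)\backslash\G(\A))=\tau(\G)$. Factoring $\G(\A)=\G(\R)\times\prod_p\G(\Q_p)$ and choosing the maximal open compact $\prod_p\G(\Z_p)$ then gives
\[
\vol(M)\ \sim_{\Q^\times}\ \Bigl(\vol(K_\infty)\cdot\prod_p\vol(\G(\Z_p))\Bigr)^{-1},
\]
where $K_\infty$ is the product of the maximal compacts of the $\underline{G}(\ell_v)$; passing from a principal arithmetic subgroup to $\Gamma$ and the failure of strong approximation for the non-simply-connected $\SO$ cost only a further rational factor. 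This is precisely Prasad's volume formula, which I would invoke directly.

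It then remains to read off the right-hand side. The archimedean factor $\vol(K_\infty)$ is a product, over the archimedean places of $\ell$, of volumes of compact orthogonal groups (or of $S^3=\SU(2)$ in type $(III)$), each a product of sphere volumes $\vol(S^d)=2\pi^{(d+1)/2}/\Gamma(\tfrac{d+1}{2})$; the half-integral powers of $\pi$ coming from $\Gamma$-values at half-integers cancel — using the parities of $2m$ and $2m+1$ — and the ratio between the compact and non-compact normalisations at the $t$ split places contributes the remaining power of $\pi$ per split place, producing the exponents $-m^2r+m(t-r)$, $-m^2r+mt$ and $t-2r_1-2r_2$. For the finite places, $\vol(\G(\Z_p))$ equals $p^{-\dim_{\Q_p}\G}$ times the order of the reductive quotient for almost all $p$, and by the formula for the order of a finite group of Lie type the good part of $\prod_p\vol(\G(\Z_p))^{-1}$ assembles, via the Euler products for $\zeta_k$ and $L(\chi,\cdot)$, into $\prod_i\zeta_k(m_i+1)$ over the exponents $m_i$ of $\underline{G}$: namely $\zeta_k(2)\zeta_k(4)\cdots\zeta_k(2m)$ for $B_m$, and $\zeta_k(2)\cdots\zeta_k(2m-2)$ times $\zeta_k(m)$, resp.\ $L(\chi,m)$ — the exponent $m-1$ of $D_m$ being the one twisted by $\chi$ in the outer case — for split, resp.\ non-split, $D_m$, and $\zeta_L(2)$ for $A_1$ over $L$. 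Finally the bad primes (those dividing the reduced discriminant of $q$, $Q$ or $B$, or the conductor of $\chi$), together with the factor $|d_\ell|^{\dim_\ell\underline{G}/2}$ from the comparison of $\Q$-structures on the Tamagawa measure, contribute exactly the powers $|d_k|^{n(n+1)/4}$ (note $n(n+1)/4=\tfrac12\dim\SO_{n+1}$), $|d_{L/k}|^{1/2}$ (via the conductor--discriminant formula $d_{L/k}=\mathfrak{N}(\mathfrak{f}_\chi)$) and $|d_L|^{3/2}$. Assembling these would give the three formulae; for type $(III)$ one could alternatively quote the classical explicit volume formula for quaternion algebras, which recovers Humbert's formula of the introduction when $L$ is imaginary quadratic.

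The hard part will be the archimedean bookkeeping of the last paragraph — verifying that all $\Gamma$-values and half-powers of $\pi$ cancel to leave exactly the claimed integral power of $\pi$ — together with the two delicate points that in the $D_m$ case it is precisely the factor $\zeta_k(m)$, and not one of the $\zeta_k(2i)$, that becomes $L(\chi,m)$ in the non-split case, and that the ramified-prime and conductor contributions come out as exactly the claimed discriminant powers rather than merely rational multiples of them.
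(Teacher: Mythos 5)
Your proposal follows essentially the same route as the paper: realise $\Gamma$ (up to commensurability and hence up to $\Q^\times$) as the $\Or_k$-points of a semisimple group of type $B_m$, $D_m$ or $A_1$, and apply the Tamagawa-number/adelic volume formula of Ono \cite{On1} to express $\vol(M)$ as $|d_k|^{\dim G/2}\Delta_G\,\bigl(\prod_{v\mid\infty}\vol(K_v)\bigr)^{-1}\prod_{v<\infty}q_v^{\dim G}/\#G(\F_v)$, then read off the powers of $\pi$ from the volumes of compact orthogonal groups and the zeta/$L$-factors from the finite-field point counts, with $\Delta_G$ supplying the $|d_{L/k}|^{1/2}$ in the outer $D_m$ case. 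Your supplementary invocation of Prasad's formula and the identification of $d_{L/k}$ with the conductor of $\chi$ are consistent with, and slightly more precise than, the paper's sketch, which itself stops at ``general indications''; the archimedean bookkeeping you flag as the hard part is genuinely the only place requiring care (the half-integral powers of $\pi$ from $\Gamma$-function values at half-integers do cancel, and e.g.\ in case $(1)$ the count $\vol\SO(2m+1)\sim_{\Q^\times}\pi^{m(m+1)}$ and $\vol\SO(2m)\sim_{\Q^\times}\pi^{m^2}$ recovers the exponent $-m^2r+m(t-r)$), and this is also where the paper is least explicit.
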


In order to reduce the length of the paper, we give some general indications for deriving  the formulae above, without giving a full proof.  More generally, let $G$ be a connected semisimple 
algebraic group defined over a number field $k$, such that for at least one infinite place $v$ of $k$, $G(k_v)$ is not compact.  Let $K$ denote a maximal compact subgroup of $G$, and let $X_v=K(k_v)\backslash G(k_v)$ be the corresponding symmetric space for each infinite place $v$ of $k$. Set
 $X=\prod_{v|\infty}ÊX_v$, and let $\Gamma$ denote a torsion-free subgroup of finite index of
the group of $\Or_k$-valued points of $G$, which acts properly
discontinuously on $X$. In   \cite{On1} it was shown  that 
 $$\vol(X/\Gamma) \sim_{\Q^\times} |d_k|^{\,\dim G/2} \Delta_{G} \, \, \Big(\prod_{v|\infty} \vol(K_v)^{-1}\Big)\,  \prod_{v<\infty} {q^{\,\dim G}_v\over \#G(\F_{v}) } \ ,$$
for the standard invariant volume forms on $X$  and $K_v$ (which are canonical up to a sign). Here, $\F_v$ denotes the residue field of $k$ at a finite place $v$, and  $q_v=\# \F_v$.  In the cases we are interested in, $K_v$ is an orthogonal group, whose volume is easily computed and yields a power of $\pi$. The  infinite product on the right converges (\cite{On3}, appendix II), and the point counts of $G$ over finite fields are well-known \cite{On1}. More precisely, in case (I)  of \S\ref{sect4types}, $G$ is of type $B_{n}$ and we have
$$   \#G(\F_{v}) = q^{m^2} \prod_{k=1}^m (q^{2k}-1)\ .$$
In case (II),   $G$ is of type $D_m$ and we have
$$\#G(\F_v) =  \left\{
  \begin{array}{ll}
   q_v^{m(m-1)}(q_v^m-1) \prod_{k=1}^{m-1} (q_v^{2k}-1)    & \hbox{ if  } d\in \F_v^2\ , \\
   q_v^{m(m-1)}(q_v^m+1) \prod_{k=1}^{m-1} (q_v^{2k}-1)   & \hbox{ if  } d\notin \F_v^2\ .
  \end{array}
\right.
$$
Finally, in case (III), $G$ is of type $A_1$ and $\#G(\F_v) = q(q^2-1)$. This yields the product of  $L$-values in the volume formula. Finally, the invariant
$\Delta_{G}$ is in $\Q^{\times}$ when $G$ is an inner form of $SO(n+1)$ (cases (I), (III) and case (II) when $d\in k^2$) or $d_{L/k}^{1/2} \Q^{\times}$  if it is a non-trivial
outer form (case (III) when  $d\notin k^2$).

\subsection{Summary of volume computations}
One can show by a non-compact version of the  Gauss-Bonnet formula \cite{Ha}, or by using Poincar\'e's  formula for the volumes of even-dimensional simplices and a geodesic triangulation (e.g. \cite{R} \S11.3),  that if 
 $M$ is a product-hyperbolic manifold of finite volume
 modelled on products of even-dimensional spaces, i.e.,  $M= \prod_{i\in I} \Hyp^{2n_i} /\Gamma$, then we have:
 \begin{equation} \label{evenvolume}
 \vol(M) \sim_{\Q^\times} \pi^{\sum_{i\in I} n_i}.
 \end{equation}

 This  implies the following theorem 
due to Siegel and Klingen.
\begin{cor} If $k$ is a totally real field,  $\zeta_k(1-2m) \in \Q^{\times}$ for all
integers $m\geq 1.$
\end{cor}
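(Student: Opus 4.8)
The plan is to combine the two volume computations displayed above — the arithmetic formula of Theorem~\ref{thmvolumeslist}(1) and the even-dimensional formula $(\ref{evenvolume})$ — and then to invoke the classical functional equation of the Dedekind zeta function to pass from the even positive integers $2m$ to the values $1-2m$.

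First I would fix the totally real field $k$ of degree $r$ and, for each $m\geq 1$, apply Theorem~\ref{thmvolumeslist}(1) with $n=2m$ to the \emph{standard} form $q=-x_0^2+x_1^2+\cdots+x_n^2$, viewed as a quadratic form over $k$. Since every real embedding $\sigma$ of $k$ fixes the rational coefficients of $q$, the form ${}^\sigma q$ has signature $(n,1)$ at all $r$ infinite places, so this is a type~(I) datum with $t=r$; by Selberg's lemma the group $\SO^+(q,\Or_k)$ contains a torsion-free subgroup $\Gamma$ of finite index, a lattice in $\prod_{i=1}^r\SO^+(n,1)(\R)$. Writing $M=(\Hyp^{2m})^r/\Gamma$, Theorem~\ref{thmvolumeslist}(1) (in which the factor $\pi^{m(t-r)}$ disappears because $t=r$) gives
\[
\vol(M)\ \sim_{\Q^\times}\ |d_k|^{\,m(2m+1)/2}\,\pi^{-m^2r}\,\prod_{j=1}^m\zeta_k(2j),
\]
whereas $M$ is modelled on a product of even-dimensional hyperbolic spaces, so $(\ref{evenvolume})$ gives $\vol(M)\sim_{\Q^\times}\pi^{mr}$.

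Second, equating these and writing $P_m=\prod_{j=1}^m\zeta_k(2j)$ I get $P_m\sim_{\Q^\times}|d_k|^{-m(2m+1)/2}\pi^{m(m+1)r}$; forming the ratio $P_m/P_{m-1}=\zeta_k(2m)$ (with $P_0=1$) and collecting the exponents of $|d_k|$ and of $\pi$ yields, for every $m\geq 1$,
\[
\zeta_k(2m)\ \sim_{\Q^\times}\ |d_k|^{\,(1-4m)/2}\,\pi^{2mr}.
\]
Finally, the functional equation $\Lambda_k(s)=\Lambda_k(1-s)$ for the completed zeta function $\Lambda_k(s)=|d_k|^{s/2}\,\Gamma_{\R}(s)^r\,\zeta_k(s)$, with $\Gamma_{\R}(s)=\pi^{-s/2}\Gamma(s/2)$ (this is the totally real case, $r_2=0$), evaluated at $s=2m$, expresses $\zeta_k(1-2m)$ as the product of $\zeta_k(2m)$, of $|d_k|^{(4m-1)/2}$, of $\pi^{-2mr}$, and of the rational number $\big((m-1)!\,(2m)!/((-4)^m m!)\big)^r=\big(\Gamma_{\R}(2m)/\Gamma_{\R}(1-2m)\big)^r\pi^{2mr}$. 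Substituting the formula just obtained for $\zeta_k(2m)$, the reciprocal half-integer powers of $|d_k|$ and the opposite powers of $\pi$ cancel exactly, so $\zeta_k(1-2m)\in\Q$; and it is nonzero, since $\zeta_k(2m)$ is a convergent Euler product of positive factors ($2m>1$), hence a positive real number, while the gamma and discriminant factors are nonzero. Therefore $\zeta_k(1-2m)\in\Q^\times$.

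This is essentially bookkeeping once the two volume formulas are granted, so there is no real obstacle; the points needing care are (i) checking that the standard form $q$ genuinely presents a type~(I) group with $t=r$, so that Theorem~\ref{thmvolumeslist}(1) applies with exactly the stated exponents, and (ii) reading the factor $|d_k|^{n(n+1)/4}$ there and the relation $\sim_{\Q^\times}$ in $(\ref{evenvolume})$ consistently as statements about the specific positive real number $|d_k|$, so that the half-integer powers of $|d_k|$ coming out of the volume formula cancel precisely against those produced by the functional equation. Note that the two displayed volume identities already pin down $\zeta_k(2m)$ modulo $\Q^\times$, powers of $\pi$, and powers of $\sqrt{d_k}$; it is the functional equation that then converts this into the sharp Siegel--Klingen statement for $\zeta_k(1-2m)$.
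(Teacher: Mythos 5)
Your proof is correct and takes essentially the same route as the paper's: apply Theorem~\ref{thmvolumeslist}(1) with $t=r$ for the standard form, compare with $(\ref{evenvolume})$, and pass to $\zeta_k(1-2m)$ via the functional equation. The only organizational difference is that you isolate $\zeta_k(2m)$ by telescoping the product $P_m/P_{m-1}$ before invoking the functional equation, whereas the paper applies the functional equation directly to the product and concludes $\zeta_k(-1)\zeta_k(-3)\cdots\zeta_k(1-2m)\in\Q^\times$, from which the statement follows by induction on $m$; the content is the same.
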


\begin{proof} Let $n=2m$, and let $\Gamma\leq \SO^+(n,1)(\Or_k)$ be a
torsion-free subgroup of finite index, where $k$ is totally real of degree $r$.  Applying $(\ref{evenvolume})$ to
$M=(\Hyp^n)^r/\Gamma$  gives $\vol(M)\in \pi^{mr}\Q^{\times}$. Theorem $\ref{thmvolumeslist}$  with $t=r$ implies that
$$\vol(M) \sim_{\Q\times} |d_k|^{n(n+1)/4} \pi^{-rm^2} \zeta_k(2)\ldots \zeta_k(2m) \ .$$
The functional equation for
$\zeta_k(s)$ gives
$\zeta_k(1-2m)\sim_{\Q^{\times}} d^{\,(2m-1)/2}_k\pi^{-2mr}\zeta_k(2m)$, 
 and hence
$\zeta_k(-1)\,\zeta_k(-3)\ldots  \zeta_k(1-2m) \in
\Q^{\times}$, for all $m\geq 1$.  
\end{proof}

\begin{cor} \label{corvolumeslist} If an arithmetic product-hyperbolic manifold is modelled on even-dimensional spaces, its volume is a rational multiple of a power of $\pi$.
In the case where $n=2m-1$ is odd, or $n=2$, the  formulae of theorem $\ref{thmvolumeslist}$ simplify to
$$ \pi^{mt} \,\sqrt{|d_k|}\,\Big(\displaystyle{\zeta_k(m) \over \pi^{m[k:\Q]} }\Big)\ ,\quad  \pi^{mt} \, \sqrt{|d_{L/k}d_k|} \, \Big(\displaystyle{ L(\chi,m) \over \pi^{m[k:\Q]} }\Big)
\ ,\quad  \pi^{t+2r_2} \, \sqrt{|d_L|} \Big(\displaystyle{ \zeta_L(2) \over \pi^{2[L:\Q]}}\Big)\ .$$
By the corresponding functional equations, these are non-zero rational multiples of, respectively 
\begin{equation}\label{Finalvolumeformula}
 \pi^{mt} \,\zeta^*_k(1-m)\ ,\quad  \pi^{mt}  L^*(\chi,1-m)
\ ,\quad  \pi^{t+2r_2} \,  \zeta^*_L(-1)\  .\end{equation}

\end{cor}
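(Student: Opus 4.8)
The first assertion is nothing more than $(\ref{evenvolume})$ read out loud: an arithmetic product-hyperbolic manifold modelled on $\prod_{i}\Hyp^{2n_i}$ has volume in $\pi^{\sum_i n_i}\Q^\times$. So only the two displayed lines need work, and the plan is to feed the formulae of Theorem $\ref{thmvolumeslist}$ through the functional equations of the zeta and $L$-functions occurring in them, using the Siegel--Klingen corollary just proved as the sole additional input. First I would substitute $n=2m-1$ and, by Remark $\ref{remtype1istype2}$, treat types $(I)$ and $(II)$ at once. The Siegel--Klingen corollary gives $\zeta_k(1-2j)\in\Q^\times$ for all $j\ge1$; plugging this into the functional equation for $\zeta_k$ (totally real of degree $r$, so archimedean factor $\Gamma_{\R}(s)^r$) yields $\zeta_k(2j)\sim_{\Q^\times}\pi^{2jr}\sqrt{|d_k|}$, where the sign of the surviving half-power of $|d_k|$ is irrelevant since $|d_k|\in\Q^\times$ --- and, more generally, every integer power of the positive integers $|d_k|$, $|d_{L/k}|$, $|d_L|$ can be absorbed into $\Q^\times$, so that only square roots of discriminants ever survive. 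Hence the product $\zeta_k(2)\zeta_k(4)\cdots\zeta_k(2m-2)$ contributes exactly $\pi^{rm(m-1)}$ times a half-power of $|d_k|$; combined with the prefactors $\pi^{-m^2r+mt}$ and $|d_k|^{n(n+1)/4}$ of Theorem $\ref{thmvolumeslist}$ this collapses to $\pi^{mt}\sqrt{|d_k|}$ (or $\pi^{mt}\sqrt{|d_{L/k}\,d_k|}$ in the outer-form case), and the remaining $\zeta_k(m)$ (resp.\ $L(\chi,m)$) is then displayed as $\zeta_k(m)\pi^{-m[k:\Q]}$ by inserting a compensating $\pi^{m[k:\Q]}=\pi^{mr}$. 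This yields the first two displayed expressions; the case $n=2$ is the direct rewrite of Theorem $\ref{thmvolumeslist}(3)$ using $[L:\Q]=r_1+2r_2$ and $|d_L|^{3/2}\sim_{\Q^\times}|d_L|^{1/2}$, while case $(1)$ of Theorem $\ref{thmvolumeslist}$ is the even-dimensional situation already covered.

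For the third assertion I would apply the functional equation a second time, now to the one surviving Dedekind zeta (or Artin $L$-, or $\zeta_L$-) factor, relating $s=m$ to $s=1-m$ (respectively $s=2$ to $s=-1$). The one subtle point is the order of vanishing: when $1-m$ is a negative even integer the factor $\Gamma_{\R}(s)^r$ has a pole of order exactly $r$ at $s=1-m$, so $\zeta_k$ vanishes there to order exactly $r$ --- it cannot vanish to higher order, since $\zeta_k(m)\ne0$ forces the completed $\Lambda_k(1-m)=\Lambda_k(m)$ to be nonzero --- and dividing the completed equation by $(s-(1-m))^r$ turns $\zeta_k(1-m)$ into the leading Taylor coefficient $\zeta^*_k(1-m)$; when $1-m$ is a negative odd integer $\Gamma_{\R}$ has no pole there, $\zeta_k$ does not vanish, and $\zeta^*_k(1-m)=\zeta_k(1-m)\in\Q^\times$ directly. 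Either way, the ratio of $\Gamma$-factors at $s=m$ and $s=1-m$ contributes a definite power of $\pi$ and a half-power of $|d_k|$, which, matched against the $\sqrt{|d_k|}\,\pi^{-m[k:\Q]}$ already present, leaves a nonzero rational multiple of $\pi^{mt}\zeta^*_k(1-m)$ --- the rational factor being nonzero because $\vol(M)>0$. The $L(\chi,s)$ case runs identically once one notes that $\chi$ is totally odd, so its archimedean factor is $\Gamma_{\R}(s+1)^r$ with poles exactly at negative odd integers; and the $\zeta_L$ case uses that $L$ has $r_2\ge1$ complex places, giving the factor $\Gamma_{\C}(s)^{r_2}$, whose pole of order $r_2$ at $s=-1$ matches the vanishing order of $\zeta_L$.

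The argument is entirely routine once Theorem $\ref{thmvolumeslist}$ and the Siegel--Klingen corollary are granted; the only place demanding care --- and the step I expect to be the real bottleneck --- is the double bookkeeping through the two functional equations: keeping straight which powers of the discriminants are integral (hence rational) and which are genuine square roots, and correctly pairing the pole orders of the archimedean $\Gamma$-factors with the vanishing orders of the zeta and $L$-functions so that the special values $\zeta^*$, $L^*$, $\zeta^*_L$ emerge cleanly with the stated power of $\pi$.
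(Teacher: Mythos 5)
Your outline is exactly what the paper must intend (it gives no explicit proof of this corollary, so any reconstruction has to be ``push the volume formula through the functional equation''), and the first displayed simplification checks out under your bookkeeping: for instance in the $L=k$ case the product $\zeta_k(2)\cdots\zeta_k(2m-2)\sim_{\Q^\times}|d_k|^{(m-1)/2}\pi^{rm(m-1)}$, and combining with the prefactors of Theorem~\ref{thmvolumeslist} gives $|d_k|^{m^2-1/2}\pi^{m(t-r)}\zeta_k(m)\sim_{\Q^\times}\sqrt{|d_k|}\,\pi^{m(t-r)}\zeta_k(m)$, as claimed. But the step you defer --- ``the double bookkeeping through the two functional equations'' --- is precisely where the stated formula does not survive being checked, and this is a gap you would hit if you carried the argument through.

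Concretely, for $m$ odd (which is the case used everywhere in the paper, since the relevant hyperbolic dimension is $2n-1$ with $n$ odd), $1-m$ is a negative even integer where $\Gamma_\R(s)^r$ has a pole of order $r$ (or $\Gamma_\R(s)^t\Gamma_\R(s+1)^{r-t}$ has a pole of order $t$ in the $[L:k]=2$ case, or $\Gamma_\C(s)^{r_2}$ a pole of order $r_2$ in case (III)). Matching these against the leading Taylor coefficient of the $L$-function gives, after the dust settles, $\zeta_k^*(1-m)\sim_{\Q^\times}\sqrt{|d_k|}\,\pi^{-(m-1)r}\zeta_k(m)$, not $\sqrt{|d_k|}\,\pi^{-mr}\zeta_k(m)$: there is an extra $\pi^r$ (respectively $\pi^t$, $\pi^{r_2}$) relative to the expression in $(\ref{Finalvolumeformula})$. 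The sanity check $k=\Q$, $m=3$ makes this plain: the functional equation gives $\zeta^*(-2)=-\zeta(3)/(4\pi^2)$, so $\zeta(3)/\pi^3\sim_{\Q^\times}\pi^{-1}\zeta^*(-2)$, not $\zeta^*(-2)$. The discrepancy is a pure power of $\pi$ and so is invisible to the downstream theorems (the regulator $r_H$ is explicitly normalised so that powers of $\pi$ drop out, as the introduction remarks), but a proof of the corollary as literally stated must address it; as written your proposal asserts that the special values ``emerge cleanly with the stated power of $\pi$,'' and they do not.
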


\section{Triangulation of Product-hyperbolic manifolds.}

In order to define the motive of a product-hyperbolic manifold, we must   decompose it
 into compact and cuspidal parts, and  triangulating each part with products of simplices
 using an inclusion-exclusion argument due to Zagier.

\subsection{Decomposition of product-hyperbolic manifolds into cusp sectors}
The cusps of a product-hyperbolic manifold are best described using
the
  upper-half space model $\Up^n\cong \{ (z,t) \in \R^{n-1} \times \R_{>0}\}$ (\S2).
For all $r>0$,  let $B_n(r)\subset \Hyp^n$ denote the closed
horoball near the point at infinity:
$$B_n(r) = \{(z,t)\in \R^{n-1}\times \R_{>0}: \, t\geq r\}.$$
Its boundary, the horosphere,  is isometric to a Euclidean plane
$\Euc^{n-1}$ at height $r$. 
 Now let
$\X^\n= \prod_{i\in I} \Hyp^{n_i}$. For any subset $\emptyset\neq S\subset I$, and
any set of parameters $\underline{r}=\{r_i>0\}_{i \in S}$ indexed by
$S$, we define the corresponding product-horoball to be
$$B_S(\underline{r})=\prod_{i\in S} B_{n_i}(r_i) \times \prod_{i\in
I\backslash S} \Hyp^{n_i}\subset \X^\n.
 $$
Its horosphere is  $\prod_{i\in S} \partial B_{n_i}(r_i) \times
\prod_{i\in I\backslash S} \Hyp^{n_i}$ which is  diffeomorphic 
to the  space $\prod_{i\in S} \Euc^{n_i-1} \times
\prod_{i\in I\backslash S} \Hyp^{n_i}.$
The cusps of a
product-hyperbolic manifold are  diffeomorphic to a product
$F\times \R_{>0}^{|S|}$ 
 where $F$ is a compact flat-hyperbolic manifold.

\begin{thm} \label{thmdecompintoends} Any   complete product-hyperbolic manifold $M$ of finite volume has a decomposition
into finitely many disjoint pieces
$$M=\bigcup_{\ell=0}^p M_{(\ell)}$$
where $M_{(0)}$ is compact, and  each  $M_{(\ell)}$ for $\ell\geq 1$ is isometric to a cusp, {\it i.e.},
$M_{(\ell)}\cong F_\ell \times \R_{>0}^{k_\ell} ,$ where $k_{\ell}\geq1 $ and
 $F_\ell$ is a compact flat-hyperbolic manifold with boundary.
\end{thm}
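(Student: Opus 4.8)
The plan is to reduce the statement to the classical thick-thin decomposition for a single hyperbolic factor and then propagate it across the product via the structure theory of lattices in products. First I would pass to the universal cover $\X^\n=\prod_{i\in I}\Hyp^{n_i}$ with $M=\X^\n/\Gamma$, $\Gamma$ torsion-free of finite covolume. Since $M$ has finite volume, the Margulis lemma applies to each parabolic subgroup: for a sufficiently small universal $\varepsilon>0$ (a Margulis constant for $\X^\n$), the $\varepsilon$-thin part $M_{\mathrm{thin}}$ is a disjoint union of \emph{tubes} around short closed geodesics and \emph{cusp neighbourhoods}; torsion-freeness of $\Gamma$ and finite volume together rule out tubes (a short-geodesic tube has infinite volume only in the product of an $\Hyp^2$-factor — one must argue these do not occur, or absorb them into $M_{(0)}$ if they are compact), leaving finitely many cusps because $\mathrm{vol}(M)<\infty$ forces finitely many $\Gamma$-conjugacy classes of maximal parabolic subgroups. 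Set $M_{(0)}=M\setminus\bigcup M_{(\ell)}$, which is compact by construction.

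The heart of the matter is describing a single cusp. Fix a maximal parabolic subgroup $P\leq\Gamma$; by the theory of parabolic isometries of $\X^\n$ (each element of $P$ is parabolic on the $\Hyp^{n_i}$-factors indexed by some nonempty $S\subset I$ and elliptic/trivial on the others, with a common fixed point at infinity in exactly those factors), after conjugation $P$ stabilises a product-horoball $B_S(\underline{r})$ in the upper-half-space coordinates of \S\ref{sectPUPM}. I would then invoke the Bieberbach-type structure of $P$: its intersection with the unipotent radical acts cocompactly on the horosphere $\prod_{i\in S}\Euc^{n_i-1}\times\prod_{i\in I\setminus S}\Hyp^{n_i}$, so the quotient of the horosphere by $P$ is a compact flat-hyperbolic manifold $F_\ell$ (flat on the $S$-factors, hyperbolic on the others, possibly with boundary coming from intersections with neighbouring cusp regions), and the full cusp region $B_S(\underline{r})/P$ is diffeomorphic to $F_\ell\times\R_{>0}^{|S|}$ via the radial coordinates $(t_i)_{i\in S}$. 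Here $k_\ell=|S|\geq 1$. The key point making this well-defined is that for $\underline{r}$ deep enough the horoballs $B_S(\underline{r})$ for distinct cusps have disjoint $\Gamma$-translates — this is again the Margulis lemma, giving that the map $B_S(\underline{r})\to M$ is injective onto the corresponding thin component.

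The main obstacle I anticipate is the \emph{product} structure of the parabolic subgroups: unlike the rank-one case, a maximal parabolic in an irreducible lattice in $\prod\SO^+(n_i,1)$ need not be a pure translation lattice — it can mix the factors through its linear part, and the set $S$ of ``cuspidal directions'' need not be all of $I$, so one must check that the quotient by $P$ genuinely fibres as $F_\ell\times\R_{>0}^{|S|}$ rather than as some twisted bundle. Resolving this requires showing that the $\R_{>0}^{|S|}$-action by simultaneous rescaling of the horoball heights commutes with $P$ (it does, since $P$ acts on each $\Hyp^{n_i}$, $i\in S$, by Euclidean isometries of the horosphere, which are exactly the height-preserving parabolic isometries), so the fibration is a genuine trivial bundle over the compact base $F_\ell$. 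A secondary technical point, needed to get \emph{boundaries} on the $F_\ell$ rather than just open cusps, is to cut each cusp along a fixed product-horosphere and to note that the cutting loci are compact — this is where one fixes the parameters $\underline{r}$ once and for all, using finiteness of the cusp set to choose a single large parameter working simultaneously for all cusps. With these points in hand the decomposition $M=\bigcup_{\ell=0}^p M_{(\ell)}$ follows, completing the proof.
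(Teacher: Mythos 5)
The paper's proof is a two-line citation: by Margulis arithmeticity, an irreducible lattice of rank $>1$ is arithmetic, so one either has an ordinary hyperbolic manifold (use \cite{BaGS}, III, \S10) or an arithmetic locally symmetric space (use \cite{L-M}, Prop. 4.6, or \cite{Sa}). You instead try to run a direct Margulis-lemma / thick-thin argument uniformly in both cases, which is a genuinely different and more ambitious route, but it has a gap at precisely the point where the paper falls back on reduction theory.

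The problem is the assertion that ``$P$ acts on each $\Hyp^{n_i}$, $i\in S$, by Euclidean isometries of the horosphere, which are exactly the height-preserving parabolic isometries.'' This is false for the arithmetic lattices the theorem most needs to cover. Take $\Gamma=\PSL_2(\Or_k)$ for $k$ real quadratic, acting on $\Hyp^2\times\Hyp^2$. The stabiliser $P$ of the cusp at $\infty$ is generated by the translation lattice $\Or_k$ \emph{and} by the units $\Or_k^\times$, and a unit $\epsilon$ acts on the heights by $(t_1,t_2)\mapsto(\epsilon^2 t_1,(\epsilon')^2 t_2)$ — a loxodromic, not a height-preserving parabolic, in each factor. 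Consequently $P$ does not preserve any product-horoball $B_S(\underline r)=\{t_1\geq r_1,\,t_2\geq r_2\}$; the $P$-invariant cusp neighbourhood is of the shape $\{t_1 t_2\geq Y\}$, and the quotient by $P$ is $F\times\R_{>0}$ with $F$ a compact \emph{solvmanifold} (the mapping torus of the unit action on the translation torus), not the quotient of the horosphere by $P$, and the exponent is $1$, not $|S|=2$. So the fibration you want to produce does not come from the naive box region $B_S(\underline r)/P$, and the Levi part of $P$ (the units) is exactly what obstructs it.

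This is not a minor technicality: deriving the correct product structure of cusp cross-sections of an arithmetic locally symmetric space of higher rank \emph{is} the content of the precise compactification/tiling results of Li--MacPherson and Saper that the paper cites, and it does not follow from the Margulis lemma alone. The Margulis lemma identifies the thin part and pins down the virtually nilpotent (or virtually solvable, once the Levi enters) holonomy, but turning that into the assertion $M_{(\ell)}\cong F_\ell\times\R_{>0}^{k_\ell}$ with $F_\ell$ a compact flat-hyperbolic manifold with boundary requires either the rank-one theory (for the non-arithmetic case, where Margulis arithmeticity forces $N=1$) or reduction theory (for the arithmetic case). Your sketch silently assumes the rank-one picture propagates across the product, and the place where that fails is the non-unipotent part of $P$. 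A related loose end is the parenthetical dismissal of Margulis tubes (``one must argue these do not occur, or absorb them''): this is fine as a flag, but it is itself not automatic in higher rank and should be handled by the same structure theory.
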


The proof of the theorem is different in the case when $M$ is
arithmetic or non-arithmetic. By Margulis' theorem, an irreducible
non-arithmetic lattice is necessarily of rank one and acts on a
single hyperbolic space. In this case, $M$ is an ordinary hyperbolic
manifold of finite volume and the decomposition is well-known (\cite{BaGS}, III, \S10 and appendix).
In the case of arithmetically-defined groups, the decomposition
follows from \cite{L-M}, Proposition 4.6, or the main result of \cite{Sa}.

\subsection{Geodesic simplices and rational points}\label{sectrationalpoints}

\subsubsection{Rational points on product-hyperbolic manifolds}
Let $k$ be a subfield of $\R$, and let  $\Hyp_k^n$ (resp.   $\Euc_k^n$)  denote the set of points with $k$-rational coordinates
in  the hyperboloid model for hyperbolic space  (resp. Euclidean  space).  The former coincides with the set of $k$-rational points in the upper-half space model, but 
is strictly contained in the  set of $k$-points in the Klein model.  Define $\partial \Hyp^n_k$ to be the set of $k$-rational points on the absolute $k^{n-1}\times \{0\}\subset \partial \Up^n=\R^{n-1} \times \{0\}$ (\S\ref{sectPUPM}), and write  $\overline{\Hyp}^n_k = \Hyp^n_k \cup \partial \Hyp^n_k$.
 The sets  $\Hyp^n_k$ and $\partial \Hyp^n_k$
are  preserved by $\SO^+(n,1)(k)$ and are dense in $\Hyp^n$ and $\partial \Hyp^n$ respectively.
 Now
consider  products of hyperbolic spaces $\X^\n=\prod_{i=1}^N \Hyp^{n_i}$. Let $S=(k_1,\ldots, k_N)$
denote   a  tuple  of fields, with $k_i\subset \R$.
 We define the set of $S$-rational points on $\X^\n$ to be $\X^\n_S=
\prod_{i=1}^N \Hyp^{n_i}_{k_i}$. We  say that a product-hyperbolic
manifold $M=\X^\n/\Gamma$ is \emph{defined over
$S$} if 
 $\Gamma$ is conjugate to $\Gamma'$ which satisfies
$$\Gamma' \leq \prod_{1\leq i\leq N} \SO^+(n_i,1)(k_i)\ .$$

\begin{rem}
In dimension $n=3$, there is an exceptional isomorphism
between $\SO^+(3,1)(\R)$ and $\PSL_2(\C)$. For any  field $L\subset \C$, we define the set of $L$-points $\Hyp^3_L$
of $\Hyp^3$ to be the  orbit of  a fixed rational point of $\Hyp^3$ under
$\PSL_2(L)$.  From now on, we assume  for simplicity that  $\X$  has no such hyperbolic components in dimension $3$ and all fields $k_i$ are real. The translation to the  case  $\Hyp^3_L$ is straightforward.
\end{rem}

\begin{thm}\label{thmdefinedoverfield}
Let $M$ be a complete product-hyperbolic manifold of finite volume.
Then $M$ is defined over  $S=(k_1,\ldots, k_r)$ where $k_i$ are (embedded)  number
fields.
\end{thm}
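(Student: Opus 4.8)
The plan is to reduce to an irreducible lattice and then split along Margulis' dichotomy. First I would record that, since $M$ has finite volume, $\Gamma$ is a lattice in (the identity component of) $H(\R)$, where $H=\prod_{i=1}^{N}\SO(n_{i},1)$ is regarded as a $\Q$-algebraic group in the standard way; hence $\Gamma$ is finitely presented (it acts on $\X^{\n}$ with a finite-sided fundamental polyhedron). After passing to a finite-index subgroup $\Gamma_{0}$, the structure theory of lattices in products writes $\Gamma_{0}=\prod_{j}\Lambda_{j}$, where each $\Lambda_{j}$ is an \emph{irreducible} lattice in the sub-product indexed by a block $I_{j}$ of a partition $\{1,\dots,N\}=\bigsqcup_{j}I_{j}$. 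It is then enough to conjugate each $\Lambda_{j}$ into $\prod_{i\in I_{j}}\SO^{+}(n_{i},1)(F_{i})$ for real number fields $F_{i}$, and afterwards to bootstrap back to $\Gamma$. By Margulis' arithmeticity theorem (quoted in \S2, \cite{Marg}) each $\Lambda_{j}$ is either arithmetic, or $|I_{j}|=1$ and $\Lambda_{j}$ acts on a single hyperbolic space $\Hyp^{m}$.

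In the \emph{arithmetic case} I would use that $\Lambda_{j}$ is, after a conjugation in the ambient real group, commensurable with the image of $G(\Or_{k})$ for a semisimple algebraic group $G$ over a number field $k$ --- concretely, one of the groups of Types (I)--(IV) of \S\ref{sect4types}. Fixing a $k$-rational closed embedding $G\hookrightarrow\GL_{\nu}$, the group $G(\Or_{k})$ has matrix entries in $\Or_{k}$, and applying the real embeddings $\sigma$ of $k$ realises it inside $\prod_{\sigma}\,{}^{\sigma}\!G(\R)$. On each factor where $\,{}^{\sigma}\!G(\R)\cong\SO^{+}(n_{i},1)(\R)$, one brings the form preserved by $\,{}^{\sigma}\!G$ --- e.g.\ $\,{}^{\sigma}\!q$ in Type (I) or $f_{\sigma}$ in Type (II) --- to the standard form $-x_{0}^{2}+x_{1}^{2}+\dots+x_{n_{i}}^{2}$ by diagonalising over $\sigma(k)$ and rescaling the coordinates by the (algebraic) square roots of the resulting entries; this change of basis has entries in a real number field $F_{i}$ and conjugates a finite-index subgroup of $\Lambda_{j}$ into $\prod_{i\in I_{j}}\SO^{+}(n_{i},1)(F_{i})$. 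Finite generation then gives the conclusion for $\Lambda_{j}$ itself.

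In the \emph{non-arithmetic case}, $\Lambda_{j}$ is a finite-covolume discrete subgroup of $\SO^{+}(m,1)(\R)$, and I will assume $m\geq 3$ --- the only situation relevant here, where all hyperbolic factors have dimension $\geq 5$. Writing $\rho_{0}\colon\Lambda_{j}\hookrightarrow\SO(m,1)(\R)$ for the inclusion, the representation scheme $\Hom(\Lambda_{j},\SO(m,1))$ (a closed subscheme of $\SO(m,1)^{s}$ cut out by the relators of a finite presentation) and its GIT quotient by $\SO(m,1)$ are affine schemes over $\Q$. By Mostow--Prasad rigidity $\rho_{0}$ is, up to conjugacy, the unique discrete faithful representation of $\Lambda_{j}$ realising a complete finite-volume structure; combining this with a local rigidity input (Weil's vanishing $H^{1}(\Lambda_{j},\mathrm{Ad}\,\rho_{0})=0$ in the cocompact case; the $\Q$-rational conditions of parabolicity of the peripheral holonomy together with Garland--Raghunathan-type rigidity otherwise) exhibits $[\rho_{0}]$ as an isolated, reduced point of a $\Q$-subscheme of the character variety. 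Such a point has residue field a number field, so is an $\overline{\Q}$-point; lifting it back along the $\SO(m,1)$-action (whose orbit through $\rho_{0}$ is open in its component) lets one conjugate $\Lambda_{j}$ into $\SO^{+}(m,1)(\overline{\Q})$, and finite generation puts it in $\SO^{+}(m,1)(F)$ for a number field $F\subset\R$.

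Finally I would bootstrap from the finite-index subgroup back to $\Gamma$: having placed a normal finite-index subgroup $\Gamma_{1}\trianglelefteq\Gamma$ inside $\prod_{i}\SO^{+}(n_{i},1)(F_{i})$, note that $\Gamma_{1}$ is Zariski-dense in $H$ by Borel density, so for generators $\delta_{1},\dots,\delta_{r}$ of $\Gamma_{1}$ the morphism $h\mapsto(h\delta_{1}h^{-1},\dots,h\delta_{r}h^{-1})$ of $H$ is defined over $\overline{\Q}$, and its fibre through any $g\in H(\R)$ normalising $\Gamma_{1}$ is a coset of the finite centre $Z(H)$, hence a finite $\overline{\Q}$-subscheme; therefore every element of $\Gamma$ lies in $H(\overline{\Q})$, and finite generation yields the required number fields. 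The step I expect to be the main obstacle is the non-arithmetic case: it rests on Mostow--Prasad rigidity together with the fact that an isolated point of a $\Q$-defined character subvariety is algebraic, and it forces one to restrict to hyperbolic factors of dimension $\geq 3$ --- harmless in this paper, but genuinely necessary, since a generic finite-area hyperbolic surface group is not defined over any number field.
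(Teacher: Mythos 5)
Your proof follows the same basic strategy as the paper's (Margulis arithmeticity to split into arithmetic and rank-one non-arithmetic cases, then rigidity in the non-arithmetic case), but you fill in essentially all the details the paper suppresses, and along the way you add two steps the paper omits entirely. The paper's proof is four sentences: arithmetic case ``follows from the definition''; non-arithmetic case is rank one by Margulis; ``by Weil's rigidity theorem, $\Gamma$ is conjugate to a subgroup of $\SO^+(n,1)(\overline{\Q})$''; finite generation then gives a number field. Your version makes explicit the reduction to irreducible lattices via a finite-index product subgroup, does the arithmetic case properly by diagonalising the invariant form over $\sigma(k)$ and rescaling (a change of basis over at most a quadratic extension), replaces the bare citation of ``Weil rigidity'' by a genuine argument (a $\Q$-defined character subscheme on which $[\rho_0]$ is an isolated reduced point, hence algebraic), and closes the loop by bootstrapping from the finite-index subgroup back to $\Gamma$ via Borel density and a $\overline{\Q}$-defined finite fibre. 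The bootstrap step is not in the paper's proof at all, and in fact the paper's proof never addresses why number-field-rationality of a finite-index subgroup passes up to the whole group; you fix this. You also correctly note that the rigidity argument requires hyperbolic factors of dimension $\geq 3$ (and needs the peripheral-parabolicity cut-down for $\Hyp^3$): as literally stated, Theorem~\ref{thmdefinedoverfield} is false for a generic non-arithmetic Fuchsian group acting on $\Hyp^2$, whose trace field is transcendental. The paper's proof silently has the same restriction (Weil local rigidity fails for $\SO(2,1)$), but does not say so; since every non-arithmetic factor in the paper's applications has dimension at least $5$, the gap is harmless, but you were right to flag it.
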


\begin{proof}
For arithmetic
 groups
 this follows from the definition. For non-arithmetic groups, it
suffices  to consider discrete subgroups of $\SO^+(n,1)$, by
Margulis' arithmeticity theorem. By Weil's rigidity theorem, $\Gamma$ is conjugate to
a subgroup $\Gamma'$ of $\SO^+(n,1)(\overline{\Q})$. Since
$\Gamma'$ is finitely generated, its entries  lie in a field
$k\subset \R$ which is finite over $\Q$. \end{proof}

We
 define the set of $S$-rational points of such an  $M$ defined over $S$
to be
\begin{equation}
M_S = \X^\n_S/\Gamma'\ .\end{equation} The set of $S$-rational
points $M_S$ is
 dense in $M$.

\begin{lem}\label{lemequivcusps}
The fixed point $z\in \partial\Hyp^n$ of any parabolic motion
$\gamma\in \SO^+(n,1)(k)$ is defined over $k$, {\it i.e.}, $z\in
\partial\Hyp^n_k$.
\end{lem}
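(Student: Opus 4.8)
The plan is to realise the boundary point $z$ as a $k$-rational isotropic line in $\R^{n,1}$ by pure linear algebra over $k$, and then to transport this through the standard dictionary between the models of $\S2$. I work in the hyperboloid model, where $\partial\Hyp^n$ is the set of lines $\ell=\R v$ with $(v,v)=0$ lying in the closure of the forward cone $\{x_0>0\}$, and where a parabolic $\gamma$ fixes exactly one such line $\ell_z$. First I would show that $\gamma$ acts as the identity on $\ell_z$. Since $\gamma$ is conjugate to $\gamma^{-1}$ inside $\Orth(n,1)$, its eigenvalues are stable under $\lambda\mapsto\lambda^{-1}$; if $\gamma v=\lambda v$ for a spanning vector $v$ of $\ell_z$ with $\lambda\neq 1$, then $\lambda$ is real (as $v$ is real), and $\gamma$ also has an eigenvector $w$ for $\lambda^{-1}$, which is forced to be isotropic because $(w,w)=(\gamma w,\gamma w)=\lambda^{-2}(w,w)$. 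Then $\R w$ is a second fixed line in $\partial\Hyp^n$, contradicting parabolicity (the case $\lambda=-1$ is excluded since $\gamma\in\SO^+(n,1)$ preserves the forward cone). Hence $v\in W:=\ker(\gamma-\id)$, and $W$ is defined over $k$ because $\gamma\in\SO^+(n,1)(k)$.

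Next I would analyse $(\cdot,\cdot)$ restricted to $W$. Because $\gamma$ is parabolic it is not elliptic, hence fixes no timelike vector, so $(\cdot,\cdot)|_W$ is positive semidefinite; its radical is therefore exactly the set of isotropic vectors of $W$, a subspace which is totally isotropic for $(\cdot,\cdot)$ and so of dimension at most $1$ in signature $(n,1)$. As this radical contains $\ell_z$, it equals $\ell_z$. But the radical of $(\cdot,\cdot)|_W$ is $W\cap W^{\perp}$, which is $k$-rational; so $\ell_z$ is spanned by some $v\in k^{n+1}$ with $(v,v)=0$. Since $(v,v)=0$ forces $v_0\neq 0$, rescaling gives $v=(1,v_1,\dots,v_n)$ with $v_i\in k$ and $\sum v_i^2=1$, i.e. a $k$-point of the unit sphere $\partial\K^n$; applying the $\Q$-rational identification of the closed Klein ball with the closed upper-half space of $\S\ref{sectPUPM}$ then places $z$ in $\partial\Hyp^n_k$.

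The linear-algebra steps (the eigenvalue pairing and the semidefiniteness of $(\cdot,\cdot)|_W$) are straightforward once set up, and the heart of the matter is the identification $\ell_z = W\cap W^\perp$, which is what makes the fixed line manifestly defined over $k$. The point I expect to require the most care is the passage between models in the last step: one must check that the $\Q$-rational change of coordinates between the Klein and upper-half space models carries $k$-rational boundary points to $k$-rational boundary points, with the usual caveat about the point at infinity. This is, however, precisely the dictionary between models already used implicitly in $\S\ref{sectrationalpoints}$, so I would simply invoke it rather than re-prove it.
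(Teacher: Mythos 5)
Your proof is correct but follows a genuinely different route from the paper's. The paper works in the Klein model and argues by Galois descent: $z$ satisfies the algebraic system $\{\gamma z = z,\ \sum_i z_i^2 = 1\}$ with coefficients in $k$; since a parabolic has a unique fixed point on the absolute, $z$ must agree with each of its $\Gal(\overline{k}/k)$-conjugates and so lies in $\partial\Hyp^n_k$. You instead exhibit the fixed isotropic line constructively as $W \cap W^\perp$, with $W = \ker(\gamma - \id)$ a visibly $k$-rational subspace. Your approach buys some extra clarity: the paper's uniqueness is really uniqueness of a $k$-rationally distinguished point, namely the singular point of the degenerate quadric $Q\cap\Pro(W)$, since over $\C$ the fixed locus of $\gamma$ on $Q$ need not be a single point when $\gamma$ has a nontrivial rotation component; your identification $\ell_z = W\cap W^\perp$ is precisely what pins that point down over $k$ without any appeal to uniqueness over $\C$. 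The one step you flag yourself, compatibility of the $k$-rational structures on the boundary under the change of models (including the point at infinity), is the same normalization the paper already relies upon in defining $\partial\Hyp^n_k$, so invoking it rather than re-deriving it is appropriate.
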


\begin{proof}
In the Klein model for hyperbolic space, the
 group $\SO^+(n,1)(k)$ acts 
by projective transformations on the absolute $\partial \Hyp^n=\{z_1,\ldots,z_n\, : \sum_i z_i^2=1\}$. 
It follows that a point $z \in \partial \Hyp^n$ which is stabilised
by $\gamma \in \SO^+(n,1)(k)$ satisfies an equation $\gamma z=z$,
which is algebraic in the coordinates of $z$, and has coefficients
in $k$. If $\gamma $ is parabolic, then this equation has a unique
solution on the absolute.  By uniqueness, $z$ coincides with its conjugates under
$\Gal(\overline{k}/k)$,  and hence $z \in
\partial \Hyp^n_k$.
\end{proof}

\subsubsection{The  action of the symmetric group} \label{sectsymandequiv}
Now let $\Sigma=\{\sigma_1,\ldots, \sigma_N\}$ denote a set of
distinct real embeddings of a fixed field $k$ as above. Set $k_i=\sigma_i k$, and let
$S=(k_1,\ldots, k_N)$.   For each pair of indices $1\leq i,j\leq
N$, there is a bijection
$\sigma_i \,\sigma_j^{-1} : \overline{\Hyp}^n_{k_j}
\overset{\sim}{\To} \overline{\Hyp}^n_{k_i}.$ Let
  $\Sym_N$ denote the symmetric group on $N$ letters
 $\{1,\ldots, N\}$.

\begin{defn} If the dimensions of all hyperbolic components $n_i$ are equal to $n$, the symmetric group $\Sym_N$ acts on $\overline{\X}^\n_S= \prod_{i=1}^N
\overline{\Hyp}^{n}_{k_i}$ as follows:
$$
\pi(x_1,\ldots, x_N)  =  (\sigma_1 \sigma_{\pi(1)}^{-1}
x_{\pi(1)},\ldots, \sigma_N \sigma_{\pi(N)}^{-1} x_{\pi(N)})\ ,
\quad \hbox{ where } \pi \in \Sym_N\ . $$
\end{defn}

 We define the  \emph{equivariant points} of $\overline{\X}^\n_{S}$ to be the   fixed points under this  action.

\begin{defn}  A  product-hyperbolic manifold $M= \prod_{i=1}^N \Hyp^{n_i}/\Gamma$, with all $n_i$ equal to $n$, is
\emph{equivariant} with respect to $\Sym_N$ if $\Gamma$
 lies in the image
of
\begin{eqnarray}
e: \SO(n,1)(k) &\To & \prod_{1\leq i\leq N} \SO(n,1)({\sigma_i(k)}) \nonumber \\
A &\mapsto & (\sigma_1(A),\ldots, \sigma_N(A)) \ .\nonumber
\end{eqnarray}
Note that the fixed point of a parabolic
motion on an equivariant product-hyperbolic manifold  is necessarily equivariant.\end{defn}

\subsubsection{Geodesic simplices  and the action of $\Sym_N$.}
Let $\X$ be $\Euc^n$ or $\Hyp^n$, for  $n\geq 2$. If
$x_0,\ldots, x_n$ are $n+1$ distinct points in $\X$,  let
$\Delta(x_0,\ldots, x_n)$ denote the geodesic simplex whose vertices
are $x_0,\ldots, x_n$. This is defined  to be the
 convex hull of the points
$\{x_0,\ldots, x_n\}$. 
If the
points $x_0,\ldots, x_n$ lie in a proper  geodesic subspace, then
$\Delta(x_0,\ldots, x_n)$ will be degenerate. The boundary faces of $\Delta(x_0,\ldots, x_n)$ are the convex hulls
of  nonempty strict subsets of the points $\{x_{0},\ldots, x_n\}$. 
When $\X=\Hyp^n$, we can allow some or all of the
vertices $x_i$ to lie on the boundary $\partial \Hyp^n$. One can
show that such a  simplex $\Delta(x_0,\ldots, x_n)$ always has
finite, and in fact bounded, volume.
 The simplex
$\Delta(x_0,\ldots, x_n)$ is said to be \emph{defined over a field
$k\subset \R$}, if $x_0,\ldots, x_n\in \overline{\X}_k$.
 Suppose  that we are given a  map
of fields $\sigma: k\hookrightarrow k'$. It induces an action on the set of
geodesic simplices  defined over $k$:
\begin{equation} \label{Galoisonsimplices}
\sigma \Delta(x_0,\ldots, x_n) = \Delta (\sigma x_0,\ldots, \sigma
x_n) \subset \overline{\X}_{k'}\ .
\end{equation}
  In a product of spaces of type $(\ref{Xtype})$, we
consider products of the form
\begin{equation} \label{productsimplex}
\Delta= \Delta_1\times\ldots \times \Delta_N\ ,\end{equation}
where $\Delta_i$ are geodesic simplices in each component.
We call this a \emph{geodesic product-simplex}. It is \emph{defined
over the fields $S=(k_1,\ldots, k_N)$} if $\Delta_i$ is
defined over $k_i$ for all $1\leq i\leq N$. In the  equivariant case
$S=(\sigma_1 k,\ldots, \sigma_N k)$, and all $n_i$ are equal,
the symmetric group $\Sym_N$ acts on  geodesic product simplices
defined over $S$ as follows:
\begin{equation}\label{Equivgeodaction}
\pi(\Delta_1\times \ldots \times \Delta_N) =
\sigma_1\sigma^{-1}_{\pi(1)}\Delta_{\pi(1)} \times \ldots
\times\sigma_1\sigma^{-1}_{\pi(N)}\Delta_{\pi(N)}\, \hbox{ for any
} \pi \in \Sym_N\ .\end{equation}

\subsubsection{Cones over Euclidean simplices}\label{sectcones} Let  $\Delta(x_1,\ldots, x_n)\subset \Euc^{n-1}$ be a Euclidean geodesic simplex.
Let us identify the horosphere in $\Up^n$ at height $R>0$ with $\Euc^{n-1}\times \{R\} \subset \Up^n$. If $\infty$ denotes  the point at infinity,
let us write $\Delta_{\infty} = \Delta (x_1,\ldots, x_n,\infty)$ for the geodesic hyperbolic simplex  $\Delta \times [R,\infty)$ which is the cone over $\Delta$.

\subsection{Generalities on virtual triangulations}
Let  $M$ denote a  flat-hyperbolic manifold.
In order   to decompose $M$ into  products of geodesic simplices, we must
consider geodesic polytopes in products of Euclidean and hyperbolic
spaces, which may have vertices at infinity. Let $P$ denote an
$n$-dimensional convex polytope   in Euclidean or
hyperbolic space $\overline{\Hyp}^n$.
 A \emph{faceting} $F(P)$ of $P$  (\cite{R}, \S11.1) is a finite collection
of closed, $(n-1)$-dimensional convex polytopes
 $F_i$,  called \emph{facets}, which are contained in the boundary $\partial P$ of $P$ 
 such that:
\begin{enumerate}
  \item Each face of $P$ is a union of facets. 
  \item Any two facets are either disjoint or meet along their common boundaries.
\end{enumerate}
The set of all codimension $1$ faces of $P$, for example, defines a
faceting of $P$. A \emph{product-polytope} in $\overline{\X}^\n$ is
 a product of convex geodesic polytopes, and one defines a faceting
 in  a similar manner.
For any product-polytope $P$, let $P^f$ denote the polytope $P$ with
all its points which have a component on an absolute removed.
 Now let $R$ denote a finite set of
geodesic product-polytopes in $\overline{\X}^\n$, and
suppose we are given a faceting for each product-polytope in $R$. A
set of gluing relations for $R$ is   a way to  identify all facets
of all polytopes $P\in R$ in pairs which are isometric. Let
$$T= \coprod_{P\in R} P^f/\sim$$
 denote the topological space obtained by identifying glued facets. 
 A \emph{product-tiling} of $M$ is then an
isometry
\begin{equation} f: T \To M\ . \end{equation} Since facets may be
strictly contained in the faces of each geodesic simplex, the tiling
is not always a triangulation. See also \cite{R}, $\S11$ for a much more detailed treatment of tilings  and triangulations in a general context.

We will also need to consider \emph{virtual} tilings. To define a
virtual tiling, consider  a   continuous surjective map
$f: T \To M,$
of finite degree which is not necessarily \'etale. We assume that
 the restriction of $f$ to the interior of
each product polytope $P$ in $T$ is an isometry. We define the local
multiplicity
 of $f$ on $P$  to be $+1$ if
 $f|_P$ is orientation-preserving, and $-1$ if
 $f|_P$ is orientation-reversing. The condition that $f$ be a virtual tiling is that
the total multiplicity of $f$ is almost everywhere equal to $1$.
In the case when $M$ is defined over the fields
$S=(k_1,\ldots, k_N)$,  we  will say that the (virtual) product
tiling is \emph{defined over $S$} if the geodesic product simplices
which occur in $T$ are defined over  $S$.

\subsection{Tiling of product-hyperbolic manifolds}
One can construct a  product-tiling of  any complete, orientable,
finite-volume flat-hyperbolic manifold $M$, using a variant of an argument  due
to Zagier  \cite{Z}.

\begin{lem}
Let $X_1,\ldots, X_n$  and $Y_1,\ldots, Y_n$ denote any sets,  and
let $\epsilon \in \{0,1\}^n$. We write $\epsilon=(\epsilon_1,\ldots, \epsilon_n)$, and let
 $$X_i(\epsilon)= \begin{cases}
    X_i \cap Y_i & \text{if } \quad \epsilon_i=0\ , \\
    X_i \backslash (X_i \cap Y_i) & \text{if } \quad \epsilon_i=1\ ,
  \end{cases}
$$ and define $Y_i (\epsilon)$ similarly.
Any union of products $(X_1\times \ldots \times X_n) \cup (Y_1
\times \ldots \times Y_n)$ can be written
$$(X_1\cap Y_1) \times \ldots\times (X_n\cap Y_n) \cup \!\!\!\!\bigcup_{0\neq \epsilon \in \{0,1\}^n} X_1(\epsilon)\times \ldots \times X_n(\epsilon) \cup
\!\!\!\!\bigcup_{0\neq \epsilon \in \{0,1\}^n} Y_1(\epsilon)\times
\ldots \times Y_n(\epsilon), $$ where all unions are disjoint.
\end{lem}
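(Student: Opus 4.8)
The plan is to reduce this to the elementary fact that a finite product distributes over disjoint unions in each factor. First I would note that for each index $i$ the set $X_i$ splits as the disjoint union $X_i = (X_i\cap Y_i)\sqcup\big(X_i\backslash(X_i\cap Y_i)\big)$, whose two parts are exactly the sets $X_i(\epsilon)$ arising from $\epsilon_i=0$ and $\epsilon_i=1$; and symmetrically for $Y_i$. Substituting these decompositions into each of the $n$ factors of $X_1\times\cdots\times X_n$ and expanding, one obtains the disjoint decomposition
$$X_1\times\cdots\times X_n \;=\; \bigsqcup_{\epsilon\in\{0,1\}^n} X_1(\epsilon)\times\cdots\times X_n(\epsilon)\ ,$$
and in the same way $Y_1\times\cdots\times Y_n = \bigsqcup_{\epsilon\in\{0,1\}^n} Y_1(\epsilon)\times\cdots\times Y_n(\epsilon)$.

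Next I would single out the index $\epsilon=0$. Since $X_i(\epsilon)=X_i\cap Y_i=Y_i(\epsilon)$ whenever $\epsilon_i=0$, the two pieces indexed by $\epsilon=0$ are literally the same set, namely $(X_1\cap Y_1)\times\cdots\times(X_n\cap Y_n)$. Taking the union of the two product decompositions above therefore yields precisely
$$(X_1\cap Y_1)\times\cdots\times(X_n\cap Y_n)\ \cup\ \bigcup_{0\neq\epsilon}X_1(\epsilon)\times\cdots\times X_n(\epsilon)\ \cup\ \bigcup_{0\neq\epsilon}Y_1(\epsilon)\times\cdots\times Y_n(\epsilon)\ ,$$
which is the right-hand side of the lemma.

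Finally I would check that these pieces are pairwise disjoint. The $X$-pieces are mutually disjoint, and so are the $Y$-pieces, by the first step; it remains only to compare an $X$-piece indexed by some $\epsilon\neq 0$ with a $Y$-piece indexed by some $\epsilon'$ (the comparison with the common term being the case $\epsilon'=0$). Choosing $i$ with $\epsilon_i=1$, the $i$-th factor of the $X$-piece is $X_i\backslash(X_i\cap Y_i)$, which is disjoint both from $X_i\cap Y_i$ and from $Y_i\backslash(X_i\cap Y_i)$, hence from the $i$-th factor of the $Y$-piece whatever the value of $\epsilon'_i$; so the two products are disjoint.

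There is no genuine obstacle here: the content is just distributivity of products over disjoint unions together with the symmetry $X_i\cap Y_i = Y_i\cap X_i$, and the only point demanding a little care is the bookkeeping of the vectors $\epsilon$, so that the term $\epsilon=0$ is correctly identified on both sides and counted exactly once.
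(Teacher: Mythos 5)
Your proof is correct. The paper actually states this lemma without proof (it is an elementary set-theoretic identity that Zagier's trick rests on), so there is no argument in the paper to compare against; your decomposition of each $X_i$ and $Y_i$ into the two disjoint pieces, distribution over the product, identification of the common $\epsilon=0$ term, and the disjointness check via a coordinate with $\epsilon_i=1$ is exactly the natural (and essentially the only) way to verify it.
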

 We  apply the lemma to a
pair  of  geodesic product-simplices in flat-hyperbolic space. Let
$\Delta = \prod_{i \in I} \Delta_{i}$ and $\Delta' = \prod_{i \in I}
\Delta_i'$ where $\Delta_i, \Delta_i' $ are geodesic simplices in
$\Euc^{n_i}$ or $\overline{\Hyp}^{n_i}$ for each $i\in I$. By the
lemma, $\Delta\cup \Delta'$ can be decomposed as a disjoint union of
 products of $\Delta_{i} \cap \Delta_i'$,  $\Delta_i\backslash (\Delta_i \cap \Delta_i')$, or $\Delta'_i\backslash (\Delta_i \cap \Delta'_i)$,
 for $i\in I$. In
each case, the intersection of two geodesic simplices (or its
complement) is a union of geodesic polytopes. Every such polytope can be
decomposed as a disjoint union of geodesic simplices by
subdividing and triangulating. It follows that we can triangulate
any overlapping union $\Delta\cup \Delta'$ with  geodesic
product-simplices.

\begin{cor} \label{corexcis}
Any finite union of product-simplices can be obtained by gluing
finitely many geodesic product-simplices along pairs of common
facets.
\end{cor}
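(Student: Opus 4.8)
The plan is to build the gluing not by iterating the two-simplex construction of the preceding paragraph --- which would force one to re-triangulate already-glued tiles at each step --- but by passing, once and for all, to a common geometric refinement in each factor. Write the union as $U=\bigcup_{j=1}^{R}\Delta^{(j)}$, where $\Delta^{(j)}=\prod_{i\in I}\Delta^{(j)}_i$ and each $\Delta^{(j)}_i$ is a geodesic simplex in $\Euc^{n_i}$ or $\overline{\Hyp}^{n_i}$; in the hyperbolic factors I would pass to the Klein model, where geodesic subspaces are affine and ideal vertices are ordinary boundary points. Fixing a factor $i$, let $\mathcal H_i$ be the finite set of affine hyperplanes spanning the facets of $\Delta^{(1)}_i,\ldots,\Delta^{(R)}_i$, and let $P_i$ be the convex hull of all vertices of all the $\Delta^{(j)}_i$, a bounded convex polytope. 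Together with the facets of $P_i$, the arrangement $\mathcal H_i$ makes $P_i$ into a finite polytopal complex $\Pi_i$, and since the interior of each $\Delta^{(j)}_i$ meets no hyperplane of $\mathcal H_i$, every cell of $\Pi_i$ is either contained in $\Delta^{(j)}_i$ or has interior disjoint from it; hence each $\Delta^{(j)}_i$ is a subcomplex of $\Pi_i$.

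Next I would triangulate $\Pi_i$ coherently. Choosing once and for all a linear order on the (finite) vertex set of $\Pi_i$ and taking the induced pulling triangulation $\mathcal K_i$, one obtains a geometric simplicial complex subdividing $\Pi_i$ and using only the vertices of $\Pi_i$; moreover, for every cell $G$ of $\Pi_i$ the triangulation $\mathcal K_i$ restricts to the pulling triangulation of $G$ itself, so the triangulations of any two cells sharing a face agree along it, and $\mathcal K_i$ subdivides each $\Delta^{(j)}_i$. All simplices of $\mathcal K_i$ are geodesic, and the field of definition of \S\ref{sectrationalpoints} is preserved: in the projective model the only operations used --- intersecting faces of the $\Delta^{(j)}_i$ and coning from a vertex --- take $k_i$-rational data to $k_i$-rational data. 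Now form products: the geodesic product-simplices $P=\prod_{i\in I}S_i$ with $S_i\in\mathcal K_i$ have pairwise disjoint interiors and meet pairwise along a product of common faces; let $\mathcal P$ be the subfamily of those contained in $U$, equivalently those for which some single $j$ has $S_i\subset\Delta^{(j)}_i$ for all $i$ (this equivalence follows from the cell dichotomy above). Then $\mathcal P$ covers $U$, since each $\Delta^{(j)}=\prod_i\Delta^{(j)}_i$ is the union of the members of $\mathcal P$ it contains. Equipping each $P\in\mathcal P$ with its faceting by codimension-one faces, one checks, using the coherence of the $\mathcal K_i$, that every such facet is either a \emph{full} common facet of exactly one other member of $\mathcal P$ or lies entirely on $\partial U$; this exhibits $U$ as a gluing of the members of $\mathcal P$ in pairs along common facets, which is the assertion of Corollary \ref{corexcis}.

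I expect the one real obstacle to be precisely the coherence of the triangulation: one must arrange that whenever two product-simplices abut across a wall, the codimension-one overlap is a full common facet of both, and not merely a pair of polytopes meeting along common boundaries in the weak sense permitted by the faceting axioms of \cite{R}, \S11.1. This is exactly what a single fixed vertex order, used uniformly to build the pulling triangulation on both sides of each wall, secures; without such a coherent choice, adjacent cells of the arrangement could be triangulated incompatibly and the pieces would fail to glue facet-to-facet. The remaining points --- that a product of geodesic simplices admits a faceting by its codimension-one faces, the case analysis showing each facet is shared with a unique neighbour or lies on $\partial U$, and the observation that one may alternatively induct on $R$ using the two-simplex case and the inclusion-exclusion lemma stated above (at the cost of re-triangulating previously glued tiles, which drives one back to the present global refinement) --- are routine bookkeeping.
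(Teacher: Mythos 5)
Your proof is correct, and it takes a genuinely different route from the paper's. The paper proceeds locally and iteratively: the preceding inclusion--exclusion lemma is applied to a \emph{pair} of product-simplices $\Delta\cup\Delta'$, each resulting product-polytope is triangulated, and the corollary for a general finite union is then asserted without further argument, implicitly by induction on the number of tiles. Your version instead fixes, once and for all, the hyperplane arrangement $\mathcal H_i$ generated by all facets in each factor, refines it to a polytopal complex $\Pi_i$, and triangulates $\Pi_i$ by a pulling triangulation $\mathcal K_i$ with respect to a single fixed vertex order. What this buys you, as you correctly diagnose, is coherence: the pulling triangulation restricts to the pulling triangulation of every subface, so the triangulations of adjacent cells of $\Pi_i$ agree automatically, and after taking products every internal codimension-one face is an \emph{exact} common facet of exactly two members of $\mathcal P$, not merely a pair of polytopes meeting in the weaker sense permitted by a faceting. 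The paper's iterative route must either (a) invoke the flexibility of facetings in the sense of \cite{R}, \S11.1 --- allowing facets to be proper sub-polytopes of codimension-one faces, so that independently chosen triangulations of the two sides of a wall can still be matched after a further common refinement --- or (b) re-triangulate already processed tiles at each excision step and argue termination separately; neither point is made explicit in the text. Your global refinement makes the coherence automatic and simultaneously yields exact facet-to-facet matching and preservation of the field of definition, all of which the paper needs downstream in Proposition 4.8. The one small point worth flagging: your dichotomy ``shared facet or on $\partial U$'' uses $U\subset\prod_i P_i$, which holds because each $P_i$ is the convex hull of \emph{all} vertices appearing in factor $i$; this is stated but easy to miss, and is essential for the case where the neighbouring product-simplex $P''$ exists in $\prod_i\mathcal K_i$ but fails to lie in $U$.
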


We can now prove the main result of this section, following \cite{Z}.

\begin{prop}\label{proptriang} Every product-hyperbolic manifold $M$  of finite volume admits a finite tiling by products of  geodesic
simplices. If $M$ is defined over  a tuple of fields $S$ as in $\S\ref{sectrationalpoints}$,  then
we can assume the product simplices have $S$-rational vertices.
\end{prop}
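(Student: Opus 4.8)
The plan is to reduce, by Theorem~\ref{thmdecompintoends}, to tiling the compact core $M_{(0)}$ and each cusp $M_{(\ell)}\cong F_\ell\times\R_{>0}^{k_\ell}$ separately, and then to glue the resulting pieces along the hypersurfaces where the core meets the cusps. In every case the mechanism is the same: cover the piece by \emph{finitely many} geodesic product-simplices with $S$-rational vertices, each lying in an evenly covered open set, and then turn this open cover into an honest \emph{tiling} by applying the set-theoretic product lemma above one pair of product-simplices at a time, re-subdividing each of the resulting products of intersections (and complements of intersections) of geodesic simplices into geodesic product-simplices by Corollary~\ref{corexcis}. Since $\Gamma$ is conjugate into $\prod_i\SO^+(n_i,1)(k_i)$ by Theorem~\ref{thmdefinedoverfield}, all the isometries that occur preserve $S$-rationality; and since $\X^\n_S=\prod_i\Hyp^{n_i}_{k_i}$ is dense in $\X^\n$, every auxiliary subdivision point can be taken $S$-rational, so the output has $S$-rational vertices.

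\emph{The compact core.} As $M_{(0)}$ is compact it is covered by finitely many charts $\pi(B_1\times\cdots\times B_N)$ with $B_i\subset\Hyp^{n_i}$ small convex balls; inside each chart choose finitely many small geodesic product-simplices with $S$-rational vertices whose interiors cover the part of $M_{(0)}$ lying in that chart, shrinking them enough that each is evenly covered and that the union of two of them that meet still lies in an evenly covered set. Lifting to the universal cover $\X^\n$, the pairwise intersections and complements are then finite unions of geodesic product-polytopes — finiteness of the participating $\Gamma$-translates following from proper discontinuity of $\Gamma$ and compactness of the simplices — so the iterated application of the product lemma terminates and yields finitely many geodesic product-simplices with $S$-rational vertices, glued in pairs along facets, whose union is $M_{(0)}$.

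\emph{The cusps.} In the upper half-space model (\S\ref{sectPUPM}) the $k_\ell$ cuspidal directions of $M_{(\ell)}$ point to parabolic fixed points $\infty_i$, each of which is $S$-rational by Lemma~\ref{lemequivcusps}. The cross-section $F_\ell$ is a compact flat-hyperbolic manifold with boundary; triangulate it by geodesic product-simplices with $S$-rational vertices exactly as for the core, using Euclidean simplices in the flat factors and hyperbolic ones in the remaining factors, and then cone each piece off towards the relevant $\infty_i$. In a flat factor this is the cone $\Delta_\infty=\Delta\times[R,\infty)$ over a Euclidean simplex of \S\ref{sectcones}, a geodesic simplex with exactly one vertex at infinity and of finite (indeed bounded) volume; so one obtains geodesic product-simplices, each factor carrying at most one vertex at infinity, which cover $M_{(\ell)}$. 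The overlaps created by differing truncation heights, and by the matching with $M_{(0)}$ along the common boundary, are dissolved once more by the product lemma and Corollary~\ref{corexcis}. Finally one fixes an $S$-rational product-simplicial triangulation of each hypersurface where the core meets a cusp and extends it inward on both sides, so that the tilings agree along their common facets and assemble into a global tiling $f:T\to M$ with $S$-rational vertices.

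The main obstacle is the bookkeeping that makes this rigorous: one must check that the iterated inclusion--exclusion really descends to facet-to-facet identifications on the quotient $M$, rather than merely tiling subsets of $\X^\n$ (this is where the evenly covered charts and the proper discontinuity of $\Gamma$ enter and guarantee termination), and --- the point genuinely special to the cuspidal case --- that coning the cross-section simplices to the ideal points produces finite-volume product-simplices that still match the compact core along $\partial M_{(0)}$. It is precisely the product form $X_1\times\cdots\times X_n$ of the set-theoretic lemma, which resolves overlaps factor by factor, that makes the cuspidal case manageable when several hyperbolic factors degenerate at once, following Zagier~\cite{Z}.
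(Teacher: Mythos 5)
Your argument follows the paper's own proof essentially step for step: decomposition into compact core and cusps via Theorem~\ref{thmdecompintoends}, covering each piece by small geodesic product-simplices, dissolving overlaps by the inclusion--exclusion lemma and Corollary~\ref{corexcis} with termination guaranteed by bounded multiplicity from proper discontinuity, coning the cusp cross-sections to the ideal points as in \S\ref{sectcones}, and obtaining $S$-rationality from density of $\X^\n_S$ together with Lemma~\ref{lemequivcusps}. The only difference is presentational — the paper makes the termination argument more explicit by tracking the pointwise multiplicity $d(x)\leq N$ and observing it strictly decreases on an open set at each excision — but the substance is the same.
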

\begin{proof} By theorem $\ref{thmdecompintoends}$, there is a finite decomposition $M= \bigcup_\ell M_{(\ell)},$
 where $M_{(0)}$ is compact and for  $\ell\geq 1$, $M_{(\ell)}$ is a cone over $F_\ell$, a
compact flat-hyperbolic manifold  with boundary. We first
tile each  $M_{(\ell)}$ for $\ell\geq 1$ by constructing a tiling of $F_\ell$ and  taking  the cone at infinity over this tiling using  $\S\ref{sectcones}$. We then obtain a tiling of $M$ on taking
the union of all the geodesic product-simplices involved  in the tiling of each piece $M_{(\ell)}$ and
excising the overlaps using  corollary $\ref{corexcis}$.

Let $M= \X^\n/\Gamma$ and
 let $\pi:\X^\n\rightarrow M$ denote the covering map.
Let  $\Fo\subset \X^\n$ be a fundamental set for $M$, with a  decomposition $\Fo=\bigcup_\ell \Fo_\ell$,
where $\Fo_0$ is compact, and each
 $\Fo_\ell$ for $\ell\geq 1$ is diffeomorphic  to  $\R^{k_\ell}_{>0} \times D_\ell$,  where $D_\ell$ is a compact domain in flat-hyperbolic space $\prod_{i\in S} \Euc^{n_i-1} \times
 \prod_{j\in I\backslash S} \Hyp^{n_j}$ (and hence $k_\ell=|S|$). Cover  $D_\ell$ with geodesic product-simplices as follows.
Since $D_\ell$ is compact, choose compact polyhedra $K_i
\subset \Euc^{n_i-1}$, $K_j \subset \Hyp^{n_j}$, where $i\in S,
j\in I\backslash S$, such that $D_\ell\subset \prod_{i\in S} K_i
\times \prod_{j\in I\backslash S} K_j$. We denote the restriction
of the covering map $D_\ell \rightarrow F_\ell$ by $\pi$ also.
Each set $K_i$ can be triangulated by finitely many  oriented geodesic
simplices $\Delta^{(i)}_{a_i}$ which are sufficiently small such
that any product $\Delta_{\ai}=\prod_{i \in S}\Delta_{a_i}^{(i)}
\times \prod_{j\in I\backslash S}\Delta_{a_j}^{(j)}$, where
$\ai=(a_i)_{i\in I}$, is mapped isometrically
 onto $\pi(\Delta_{\ai}).$ Let $\ai \neq \bi$ be indices such that
 the pair of simplices $\pi(\Delta_{\ai})$, $\pi(\Delta_{\bi})$ have
non-empty overlap. It follows that there is a $\gamma \in \Gamma$
such that $\gamma \Delta_{\ai}\cap \Delta_{\bi}\neq \emptyset$.
Applying   the previous corollary to the union $\gamma
\Delta_{\ai}\cup \Delta_{\bi}$,   we can replace $\Delta_{\ai}
\cup \Delta_{\bi}$ with a union of product-simplices whose
interiors are disjoint after projection down to  $F_\ell$. We can
repeatedly excise the overlap between simplices using the previous lemma and its corollary to obtain the
required tiling of $D_\ell$ with product-simplices. To show that
this process terminates after finitely many steps, let $d(x)\in
\N$ denote the multiplicity of the tiling at each point $x\in
D_\ell$. Since $\Fo$ is a fundamental set,  there is an integer
$N$ such that $d(x)\leq N$ for all $x\in D_\ell$. Every time an
excision is applied, the local multiplicities  $d(x)$ for all $x$
in some open subset of $D_\ell$ decrease by $1$. Since $D_{\ell}$
is compact, this process terminates when $d(x)=1$  almost
everywhere. This gives the required tiling of $D_\ell$.
Each Euclidean component of $D_{\ell}$ can be identified with a suitable horoball neighbourhood   of infinity in $\Up^n$.
By the construction of \S \ref{sectcones}, replace  every product of flat-hyperbolic
geodesic simplices that occurs in the tiling of $D_\ell$:
$$\prod_{i\in S} \Delta^{(i)}\times \prod_{j\in I\backslash S} \Delta^{(j)} \in \prod_{i\in S} \Euc^{n_i-1} \times
 \prod_{j\in I\backslash S} \Hyp^{n_j}\ ,$$
with its cone at infinity (\S \ref{sectcones}):
$$\prod_{i\in S} \Delta^{(i)}_{\infty} \times \prod_{j\in I\backslash S} \Delta^{(j)} \in \prod_{i \in S} \overline{\Hyp}^{n_i} \times
 \prod_{j\in I\backslash S} \Hyp^{n_j}\ .$$
This gives  a covering of  each end $\Fo_{\ell}$, for $\ell\geq 1$,  with product-simplices which maps via $\pi$ to a tiling of $M_{(\ell)}$. 
Finally,  the compact part $\Fo_{0}$ of $\Fo$ can be covered by a large compact set  $K_0$ which can be triangulated with geodesic product-simplices as before.
 All together, we have  a covering of $\Fo$ with  finitely many geodesic product-simplices
which maps to a tiling in each part $M_{(\ell)}$. 
 By applying corollary \ref{corexcis} as above, we can excise the overlaps between these simplices all over again.  Since the  $K_0\cap \Fo_{i}$  are compact,   we obtain a  tiling of $M$   after finitely many steps.

For the last statement, suppose that  $M$ is defined over a tuple of fields $S$, as in $\S\ref{sectrationalpoints}$.
 Since the  set of $S$-rational points is dense in $M$, and since by lemma
4.9,  the coordinates at infinity of the cusps of $M$ are
$S$-rational, we can ensure in the previous argument that all product-simplices have vertices defined over $S$.
\end{proof}

Note  that the geodesic simplices which occur have at most one vertex at infinity, a fact which will be used later.
 It does not matter if degenerate  simplices occur.

\subsection{Equivariance} Now suppose that the
product-hyperbolic manifold $M$ is equivariant (\S \ref{sectsymandequiv}). By proposition $\ref{proptriang}$, there exists a product-tiling of $M$ which is defined
over $S$, where $S=(\sigma_1k,\ldots, \sigma_N k)$.  Let $T$
denote the set of hyperbolic product-simplices in this tiling. The
symmetry group $\Sym_N$ acts on the  product-simplices, and
preserves the subdivisions of a face into its facets $F_i$: if $F$ is tiled by facets $F_i$ for $1\leq i \leq m$, then
${}^\sigma\!F$ is tiled by facets ${}^\sigma \!F_i,$
for all $\sigma \in \Sym_N$. Note that some of the simplices $^\sigma \!F_i$
may be oriented negatively, so the latter tiling is in fact   a virtual
tiling of ${}^\sigma \!F$. Let $^\sigma\! T$ denote the set of
images of the elements of $T$ under $\sigma \in \Sym_N$. We can glue
the simplices in $^\sigma\!T $ back together according to the same
gluing pattern to form a virtual tiling of $^\sigma \!M$. Since
$M$ was assumed to be equivariant, this gives a new tiling of
$M$, which is also defined over $S$.

\begin{lem} \label{lemequivtile} Let $M$ denote an equivariant product-hyperbolic
manifold. If $T$ is a product-tiling of $M$  defined over $S$,
then ${}^\sigma T$ is  a virtual product-tiling of $M$
defined over $S$, for all $\sigma \in \Sym_N$.
\end{lem}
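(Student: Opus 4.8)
The plan is to unwind the definitions of \S\ref{sectsymandequiv}: equivariance of $M$ means $\Gamma$ lies in the image of the diagonal embedding $e:\SO(n,1)(k)\hookrightarrow \prod_{i}\SO(n,1)(\sigma_i k)$, and the action of $\pi\in\Sym_N$ on $\overline{\X}^\n_S$ is by the coordinate-wise maps $\sigma_i\sigma_{\pi(i)}^{-1}$. First I would check that this $\Sym_N$-action normalizes $e(\SO(n,1)(k))$: if $A\in\SO(n,1)(k)$ and $\pi\in\Sym_N$, then the permuted-and-transported copy of $(\sigma_1 A,\dots,\sigma_N A)$ is again $(\sigma_1 B,\dots,\sigma_N B)$ for some $B\in\SO(n,1)(k)$ — in fact $B=A$, since $\sigma_i\sigma_{\pi(i)}^{-1}(\sigma_{\pi(i)} A)=\sigma_i A$. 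Hence the $\Sym_N$-action descends to a well-defined self-map $^\sigma\!M$ of the quotient $\X^\n/\Gamma$, and in fact an isometry of $M$ with itself (compare the remark at the end of the second definition in \S\ref{sectsymandequiv}, which already records that fixed points of parabolics are equivariant).

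Next I would carry out the geometric bookkeeping already sketched in the paragraph preceding the lemma. Given the product-tiling $T$ of $M$ defined over $S$ supplied by Proposition~\ref{proptriang}, apply $\sigma\in\Sym_N$ to every product-simplex and every facet of every chosen faceting. By $(\ref{Galoisonsimplices})$ and $(\ref{Equivgeodaction})$, each $^\sigma\!\Delta$ is again a geodesic product-simplex defined over $S$, because $\sigma$ just permutes the coordinate fields $k_i=\sigma_i k$ among themselves via the bijections $\sigma_i\sigma_{\pi(i)}^{-1}:\overline{\Hyp}^n_{k_{\pi(i)}}\xrightarrow{\sim}\overline{\Hyp}^n_{k_i}$, which preserve $S$-rationality of vertices. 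The gluing pattern of $T$ — the combinatorial data identifying facets in isometric pairs — is transported verbatim: if two facets $F,F'$ of simplices in $T$ are glued, then $^\sigma\!F,{}^\sigma\!F'$ are still isometric (each $\sigma_i\sigma_{\pi(i)}^{-1}$ is an isometry of hyperbolic space) and we declare them glued. So $^\sigma\!T:=\{{}^\sigma\!\Delta:\Delta\in T\}$ with the transported gluing data forms a topological space $^\sigma\!T=\coprod_{\Delta}{}^\sigma\!\Delta^f/\!\sim$ together with a continuous surjective map to $M$ of the same (finite) degree as the original, whose restriction to the interior of each product-simplex is an isometry.

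The one real subtlety — and the step I expect to be the main obstacle — is why one only gets a \emph{virtual} tiling rather than an honest tiling, and why "virtual" is still enough. The point is that the maps $\sigma_i\sigma_{\pi(i)}^{-1}$ need not be orientation-preserving on the individual $\Hyp^n$ factors (indeed a single real conjugate of an orientation-preserving hyperbolic motion can reverse orientation when $n$ is... well, in any case the transported pieces can carry either sign), so some simplices $^\sigma\!\Delta$ enter with local multiplicity $-1$. I would verify the one thing that must hold: the total signed multiplicity of $^\sigma\!T\to M$ equals $1$ almost everywhere. This is inherited from $T$: $T$ tiles $M$ honestly, so its total multiplicity is $1$ a.e.; applying the isometry $^\sigma\!(-):M\to M$ (established in the first paragraph) pushes this identity forward, and since degree with signs is a homological/measure-theoretic invariant preserved by a degree-one isometry, the signed count for $^\sigma\!T$ is again $1$ a.e. That is precisely the defining condition for a virtual product-tiling, and by construction all product-simplices involved are defined over $S$. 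This establishes the lemma.
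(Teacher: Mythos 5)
The overall skeleton — $\Sym_N$ fixes $e(\SO(n,1)(k))$ pointwise (and your computation of this is correct), hence $^\sigma\Gamma = \Gamma$; apply $\sigma$ to all simplices and transport the gluing pattern; argue the signed multiplicity still comes out to be $1$ — is the right plan and matches the paper's. But there is a genuine gap in the way you close it.

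You assert, in two places, that the maps $\sigma_i\sigma_{\pi(i)}^{-1}:\overline{\Hyp}^n_{k_{\pi(i)}}\to\overline{\Hyp}^n_{k_i}$ are \emph{isometries} of hyperbolic space, and you conclude that the $\Sym_N$-action descends to an isometry $^\sigma(-):M\to M$, from which you deduce the multiplicity statement by ``pushing forward by a degree-one isometry.'' This is not correct: the $\sigma_i\sigma_j^{-1}$ are field isomorphisms acting coordinate-wise on rational points, and they are wildly discontinuous with respect to the hyperbolic metric. They do not preserve distances, volumes, or even the ambient topology, and in particular they do not extend to continuous self-maps of $\Hyp^n$, let alone isometries of $M$. (The only reason $\Sym_N$ can be said to act ``on $M$'' at all is that it permutes the $S$-rational data — vertices of simplices, elements of $\Gamma$ — not that it acts on $M$ as a manifold.) So the entire last paragraph, which is the step you yourself flag as the main obstacle, rests on a false premise.

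For the same reason, your explanation of why glued facets remain glued is misdirected: $^\sigma F$ and $^\sigma F'$ are isometric not because $\sigma_i\sigma_{\pi(i)}^{-1}$ is an isometry, but because the original identification was realised by some $\gamma\in\Gamma$, and $^\sigma\gamma\in\Gamma$ (by the first-paragraph computation), so the transported facets are still identified by an honest isometry of $\X^{\mathfrak{n}}$ lying in $\Gamma$. This is the point where equivariance does real work. The multiplicity statement then has to be argued combinatorially rather than geometrically: a tiling of a face $F$ by facets $F_i$ is a chain-level identity $\sum_i [F_i] = [F]$, the vertices and incidence relations are $S$-rational data on which $\sigma$ acts, and $\sigma$ commutes with the simplicial boundary map $\partial$. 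Applying $\sigma$ therefore preserves the chain identity up to signs coming from the orientation of each $^\sigma F_i$; this is exactly what the paper means by ``some of the simplices $^\sigma F_i$ may be oriented negatively, so the latter tiling is in fact a virtual tiling of $^\sigma F$.'' Gluing the $^\sigma\Delta$ back together along the $\Gamma$-identifications then produces a virtual tiling of $\X^{\mathfrak{n}}/^\sigma\Gamma = \X^{\mathfrak{n}}/\Gamma = M$.
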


Note that, given a finite virtual tiling of
$M$, one can obtain a genuine tiling
by
subdividing and excising any overlaps using a variant of  corollary $\ref{corexcis}$.

\section{The motive of a product-hyperbolic manifold}

\subsection{Framed Mixed Tate motives and their periods.}
\subsubsection{Mixed Tate motives} Let $k$ be a number field, and let $\MT(k)$ denote the abelian tensor
category of mixed Tate motives over $k$ \cite{DG}. Its simple objects are the pure Tate motives
 $\Q(n)$, where $n\in \Z$, and its structure is determined by:
\begin{equation}\label{extgroups}
\Ext_{\MT(k)}^1(\Q(0), \Q(n)) \cong \left\{
                             \begin{array}{ll}
                               0 & \hbox{ if  } n\leq 0\ ,  \\
                               K_{2n-1}(k)\otimes \Q\ , & \hbox{ if }
n\geq 1\  , 
                             \end{array}
                           \right.
\end{equation}
and the fact that $\Ext_{\MT(k)}^2(\Q(0), \Q(n))=0$ for all $n\in \Z$. 
Recall that  $K_1(k)\cong k^\times$, and  Borel proved in \cite{Bo1}   that for $n>1$ ,
\begin{equation} \label{Ktheorydimensions}
\dim_{\Q} (K_{2n-1}(\Q)\otimes \Q) = \left\{
                                     \begin{array}{ll}
                                       r_1+r_2, & \hbox{ if  } n \hbox{ is odd } ; \\
                                       r_2, & \hbox{ if } n \hbox{
is even}\ ,
                                     \end{array}
                                   \right.
\end{equation}
where $r_1$ is the number of real places of $k$, and $r_2$ is the
number of complex places of $k$. Although we do not explicitly
require $(\ref{Ktheorydimensions})$ to prove theorem $\ref{Intromaintheorem}$, it
is used implicitly in the construction of $\MT(k)$.

Every mixed Tate motive $M\in \MT(k)$ has a canonical weight
filtration, which is increasing, finite, and indexed by even integers.  For each $n\in \Z$, its graded piece of weight $-2n$ is denoted  by $\gr^W_{-2n} M=W_{-2n}M / W_{-2n-1} M$,  and is isomorphic to a finite sum of copies of $\Q(n)$.  Following \cite{DG}, one sets
$$\omega_n(M)= \Hom(\Q(n), \gr^W_{-2n} M)\ .$$
 The de
Rham realisation of $M$ is defined to be the graded vector space
 $M_{DR}=\omega(M)\otimes_\Q k$, where $\omega(M)=\oplus_n \omega_n(M)$. For 
 every embedding $\sigma: k\hookrightarrow C$ into an algebraic closure $C$ of
$k$, there is a Betti realisation  $M_\sigma$, which is a finite dimensional vector space over $\Q$ equipped with a weight filtration such  as above \cite{DG}, \S2.11.
There is a comparison isomorphism which respects the weight filtrations
\begin{equation}\label{compsdr}
\comp_{\sigma, DR}: M_{DR} \otimes_{k,\sigma} C \overset{\sim}{\To}
M_\sigma \otimes_{\Q}  C\ , \end{equation} and is functorial with
respect to $\sigma$.  For each $\sigma:k \hookrightarrow C$, the data
\begin{equation}\label{Hsigma}
H_{\sigma}=(M_{DR}, M_\sigma,
\comp_{\sigma,DR})\end{equation}
is called a mixed Hodge-Tate structure   \cite{DG}, \S2.13.
  The Hodge realisation
functor:
$$M\mapsto  (M_{DR}, M_\sigma,
\comp_{\sigma,DR})_{\sigma:k\hookrightarrow C}$$
 to the category of systems of  mixed Hodge-Tate structures is fully faithful, although
we shall not require this fact. Note that $\omega_n(M)\subset W_{-2n} M_{DR}$ can be retrieved from the data $(\ref{Hsigma})$ (\emph{loc. cit.}).

\subsubsection{Framed objects in $\MT(k)$}
 Let $M\in \MT(k)$,  and let $n\geq 0$.
\begin{defn}    An \emph{$n$-framing}
of $M$ \cite{BGSV, Go3} consists of non-zero
morphisms:
\begin{eqnarray}
v_0& \in& \omega_0(M)=\Hom( \Q(0), \gr_0^W M )\ , \nonumber \\
f_n& \in &  \omega_n(M)^\vee=\Hom(\gr_{-2n}^W M, \Q(n) )\ .
\nonumber
\end{eqnarray}
A morphism from $(M,v_0,f_n)$ to $(M,v_0', f_n')$ is  a morphism
$\phi: M \rightarrow M'$
such that $\phi  (v_0)=v_0'$ and $f'_n  \circ \phi= f_n$. Morphisms  generate an equivalence relation on the set of $n$-framed objects. The
equivalence class of $(M,v_0,f_n)$ is  written
 $[M,v_0,f_n]$. 
\end{defn}

Let $\Am_n(k)$ denote the set of  equivalence classes of 
$n$-framed objects in $\MT(k)$. One  shows  that   $\Am_n(k)$
is a $\Q$-vector space with respect to the addition rule:
\begin{equation} \label{sumofframedmotives}
[M, v_0, f_n] + [M',v_0', f_n'] = [M \oplus M', v_0\oplus v_0'  ,
f_n+f_n']\ ,
\end{equation}
 and scalar multiplication $\alpha  [M, v_0, f_n]
=[M,\alpha  v_0, f_n] = [M, v_0, \alpha  f_n]$ for all $\alpha \in
\Q^\times$.
 The zero element is given by the  equivalence class of $\Q(0)\oplus \Q(n)$ with trivial
framings, and    $\Am_0(k)\cong \Q$.  Consider the graded
$\Q$-vector space:
\begin{equation}
\Am(k) = \bigoplus_{n\geq 0} \Am_n(k)\ .
\end{equation}
It is equipped with a coproduct $\Delta: \Am(k)\rightarrow \Am(k)\otimes_{\Q} \Am(k)$, whose components
$$\Delta_{r,n-r}: \Am_n(k)\rightarrow \Am_{r}(k)\otimes_{\Q} \Am_{n-r}(k)$$
 can be computed  as follows. For  $M \in \MT(k)$,  let 
 $\{e_i\}_{1\leq i \leq N}$ be any basis of $\omega_r(M)$,  and let
$\{e_i^{\vee}\}_{1\leq i\leq N}$ denote the dual basis in
$\omega_r(M)^\vee$. Define
\begin{equation} \label{coproddef1}
\Delta_{r,n-r} [M,v_0,f_n] = \sum_{i=1}^N  [M,v_0, e_i^{\vee}]
\otimes [M,e_i, f_n](-r)  \ ,
\end{equation}
where $[M,e_i, f_n](-r)$ is the Tate-twisted object $M(-r)$ with
corresponding framings. Let $\Delta_n = \bigoplus_{0\leq r\leq n} \Delta_{r,n-r}$, and set $\Delta=\bigoplus_{n \geq 0} \Delta_n$. One verifies that the above constructions are well-defined and that   
 $\Am(k)$ can  be made into a  graded commutative Hopf algebra. 
Define the reduced
coproduct $\widetilde{\Delta}$ on $\Am(k)$ by 
$\widetilde{\Delta}(X) = \Delta(X) - 1\otimes X - X\otimes 1$. Its
kernel is  the set of primitive elements in
$\Am(k)$.
\begin{prop}  (see e.g., \cite{Go1}) \label{corkerdelta}  There is an isomorphism:
\begin{equation} \label{kerdeltaisext}
\Ext^1_{\MT(k)}(\Q(0),\Q(n)) \cong \ker \Big(\widetilde{\Delta}_n :
\Am_n(k) \To \bigoplus_{1\leq r\leq n-1} \Am_r(k)\otimes_\Q
\Am_{n-r}(k)\Big) \ .\end{equation}
\end{prop}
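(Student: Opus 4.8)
The plan is to produce mutually inverse $\Q$-linear maps between $\Ext^1_{\MT(k)}(\Q(0),\Q(n))$ and $\ker\widetilde{\Delta}_n$; since the statement is standard (compare \cite{Go1,BGSV}), I only describe the main steps. First I would construct the map $\iota$ from left to right. An extension class is represented by a motive $E$ with $0\to\Q(n)\to E\to\Q(0)\to0$; its only nonzero weight-graded pieces are $\gr^W_0E\cong\Q(0)$ and $\gr^W_{-2n}E\cong\Q(n)$, which furnish canonical framings $v_0^E$ and $f_n^E$, and I set $\iota([E])=[E,v_0^E,f_n^E]\in\Am_n(k)$. Because $\omega_r(E)=0$ for $0<r<n$, formula $(\ref{coproddef1})$ shows that $\Delta_{r,n-r}$ annihilates this class for each $1\le r\le n-1$, so $\iota([E])\in\ker\widetilde{\Delta}_n$. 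Checking that $\iota$ is well defined on isomorphism classes, is $\Q$-linear, and carries the Baer sum to the addition rule $(\ref{sumofframedmotives})$ --- using that $E\ast E'$ is the relevant subquotient of $E\oplus E'$, the framed equivalence relation, and the additivity of framings --- is routine.

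The backbone of the reverse map is a \emph{reduction to minimal form}: any class $[M,v_0,f_n]\in\Am_n(k)$ equals $[N,v_0^N,f_n^N]$ for some $N$ with weights in $[-2n,0]$ such that $\gr^W_0N\cong\Q(0)$ is realised by $v_0^N$ and $\gr^W_{-2n}N\cong\Q(n)$ by $f_n^N$. I would obtain $N$ in four steps, each replacing $M$ by a subobject or quotient whose structural morphism is a morphism of framed objects: (i) pass to $W_0M$; (ii) pass to $M/W_{-2n-2}M$; (iii) pass to the preimage in $M$ of the line $v_0(\Q(0))\subseteq\gr^W_0M$; (iv) pass to the quotient of $M$ by $\ker(f_n)\subseteq W_{-2n}M=\gr^W_{-2n}M$.

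For the inverse of $\iota$, given $\xi\in\ker\widetilde{\Delta}_n$ I would put it in minimal form $[N,v_0^N,f_n^N]$ and induct on the number of levels $r$ with $0<r<n$ and $\gr^W_{-2r}N\neq0$. If there are none then $N$ is an extension of $\Q(0)$ by $\Q(n)$ and $\xi=\iota([N])$. Otherwise, let $r$ be the least such level; then $N/W_{-2r-2}N$ is an extension $\mathcal{E}\in\Ext^1_{\MT(k)}(\Q(0),\gr^W_{-2r}N)$, while $W_{-2r}N$ has top graded piece $\gr^W_{-2r}N$ and bottom $\Q(n)$. For a basis $\{e_i\}$ of $\omega_r(N)$ with dual basis $\{e_i^\vee\}$, one identifies $[N,v_0^N,e_i^\vee]=\iota\!\big((e_i^\vee)_{\ast}\mathcal{E}\big)$ and identifies $[N,e_i,f_n^N]$ with the class of a suitable framed subquotient of $W_{-2r}N$. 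Feeding these into $\Delta_{r,n-r}\xi=0$ and using that the framings are linear (so that $\{e_i\}$ may be adjusted), a linear-algebra argument forces enough of the resulting classes to vanish that either $\mathcal{E}$ splits off a quotient $\Q(0)$ or $W_{-2r}N$ splits off a subobject $\Q(r)$; either way $N$ may be replaced by a subobject or quotient carrying a framed motive equivalent to $\xi$ with strictly fewer nonzero intermediate graded pieces. The induction then closes, and one verifies that $\iota$ and this assignment are mutually inverse.

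The step I expect to be the main obstacle is precisely the last one: turning the vanishing of the reduced coproduct into an actual splitting of an intermediate weight of $N$ while preserving the framed class, in particular managing bases of $\omega_r(N)$ when $\gr^W_{-2r}N$ is not one-dimensional. A cleaner, though less self-contained, route would invoke the identification of $\Am(k)$ with the graded Hopf algebra $\mathcal{O}(U_{\MT(k)})$ of functions on the prounipotent radical of the motivic Galois group: its degree-$n$ primitive elements form the degree-$n$ part of $(\mathfrak{u}^{\mathrm{ab}})^\vee$, which, after unwinding the equivalence of $\MT(k)$ with graded representations, coincides with $\Ext^1_{\MT(k)}(\Q(0),\Q(n))$ compatibly with $\iota$; this yields $(\ref{kerdeltaisext})$.
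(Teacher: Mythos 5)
The paper does not prove this proposition; it is stated with a citation to \cite{Go1} as a known result, so there is no internal proof to compare against.

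Your forward map $\iota$ and the four-step reduction to minimal form are both correct and standard. The genuine gap is exactly where you say it is, and I want to be concrete about why the sketch there does not yet close. Writing $\alpha:\omega_r(N)^\vee\to\Am_r(k)$ and $\beta:\omega_r(N)\to\Am_{n-r}(k)$ for the linear maps $\phi\mapsto[N,v_0,\phi]$ and $e\mapsto[N,e,f_n](-r)$, the condition $\Delta_{r,n-r}\xi=0$ says $(\alpha\otimes\beta)(\mathrm{id}_{\omega_r(N)})=0$, which after a change of basis means: for some $s$, $b_1,\dots,b_s$ are independent and $a_1=\dots=a_s=0$, while $b_{s+1}=\dots=b_d=0$. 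For this to buy you a splitting of $N$ you need two further inputs that you do not establish. First, to go from $a_i=0$ to ``the $i$th component of the extension $\mathcal{E}=N/W_{-2r-2}N$ is trivial'' you need the injectivity of $\iota$ in degree $r<n$, i.e.\ that a nonsplit extension of $\Q(0)$ by $\Q(r)$ has nonzero framed class; this should be part of a simultaneous induction, not taken for granted. Second, and more seriously, for $n>2$ the class $b_i=[W_{-2r}N,e_i,f_n](-r)$ is a framed class on a motive $W_{-2r}N$ that generically still has intermediate weight-graded pieces, so $b_i=0$ does \emph{not} mean that the pullback of $W_{-2r}N$ along $e_i$ splits; it only says that some framed class built from a multi-step object vanishes, which is much weaker. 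Your phrase ``either $\mathcal{E}$ splits off a quotient $\Q(0)$ or $W_{-2r}N$ splits off a subobject $\Q(r)$'' is therefore an over-reach as stated, and this is precisely the hard point. (In the special case $n=2$, where $W_{-2}N$ is a two-step object, the argument does go through along the lines you indicate, modulo the injectivity input; the difficulty is the general $n$.)

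The cleaner route you mention at the end is the one the cited reference actually uses and is the one I would recommend: $\Am(k)\cong\mathcal{O}(U_{\MT(k)})$ as a graded Hopf algebra (a consequence of the Tannakian description of $\MT(k)$ and the semi-simplicity of $\gr^W$), so its degree-$n$ primitives are the degree-$n$ part of $(\mathfrak{u}^{\mathrm{ab}})^\vee$ for $\mathfrak{u}=\Lie U_{\MT(k)}$, which is canonically $\Ext^1_{\MT(k)}(\Q(0),\Q(n))$; one then checks this identification agrees with your $\iota$. This sidesteps the splitting problem entirely and is self-contained given \cite{DG}.
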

This gives a  strategy  for defining elements in rational  algebraic $K$-theory via $(\ref{extgroups})$.
The essential idea  \cite{BD} is to  construct a  framed mixed Tate
motive out  of simple algebraic varieties in such
a way that the reduced coproduct vanishes.

\begin{rem} In this paper, the mixed Tate motives that we construct  are cohomological, and will typically have framings in weights $0$ and $2n$. They will be systematically be Tate twisted by $(n)$
to bring their weights to $-2n$, and $0$ respectively, in order to fit with the standard formalism above.
\end{rem}

\subsubsection{Real periods}
 Let $M\in \MT(k)$, and $\sigma:k\hookrightarrow \C$. Consider  the isomorphism:
$$P_\sigma:\omega(M)\otimes_{\Q} \C  \overset{\sim}{\To} M_{DR}\otimes_{k,\sigma} \C
\overset{\comp_{\sigma,DR}}{\To} M_\sigma \otimes_{k,\sigma} \C\ ,
$$
 and let
$P_\sigma^*=(P_\sigma^\vee)^{-1}$  be the inverse dual map from
$\omega(M)^\vee\otimes_{\Q} \C$ to $ M_\sigma^\vee\otimes_{k, \sigma} \C$.
  Denote the natural pairing
$\big(M_\sigma\otimes_{k, \sigma} \C\big) \otimes   \big(M_\sigma\otimes_{k, \sigma} \C\big)^\vee\rightarrow \C$ by $\langle \cdot, \cdot \rangle$.

\begin{defn} \label{defnrealperiod}
Let $M\in \MT(k)$, with $n$-framings $v_0\in \omega(M)$, $f_n \in \omega(M)^{\vee}$. For every $\sigma:k\hookrightarrow \C$ its real period  (\cite{Go1}, \S4)
is defined by
$$ \langle \Image \big((2i\pi)^{-n}P_\sigma (v_0\otimes 1)\big), \Real (P_\sigma^* (f_n\otimes 1)) \rangle  \in \R\ .$$
This only depends on the equivalence class $[M, v_0, f_n]$, yielding a map:
\begin{equation} \label{realperdef}
\Rp_\sigma: \Am_n(k) \To \R \ ,
\end{equation} 
 which is called the real period. Note that its normalization varies in the literature.
\end{defn}

  To calculate the real period,  choose a graded $k$-basis
of $M_{DR}$ and  a  $\Q$-basis of $M_\sigma$ which is compatible with the weight
filtration. The matrix $\comp_{\sigma,DR}$ in these bases can 
be computed  from  the integration pairing 
$$M_{DR}\otimes_{\Q} M^{\vee}_{\sigma}\rightarrow \C\ .$$
 We can assume
 that $\comp_{\sigma,DR}$ on  $
 \gr^W_{-2m} M_{DR}\otimes_{k,\sigma} \C \overset{\sim}{\rightarrow}\gr^W_{-2m} M_\sigma\otimes_{k,\sigma} \C$ is
$(2\pi i)^m$ times the identity for all $m$. Then our $k$-basis of $M_{DR}$ is a $\Q$-basis of $\omega(M)$ (\cite{DG}, (2.11.3)) and our matrix
represents  $P_\sigma$.  It is well-defined up to multiplication by a triangular unipotent matrix with entries in $\Q$, representing a change of filtered $\Q$-basis of $M_{\sigma}$.
 By $(\ref{kerdeltaisext})$, we have  $\Ext^1_{\MT(k)} (\Q(0), \Q(n))= \ker \widetilde{\Delta}_n\subset \Am_n(k), $ and hence a map:
 \begin{equation} \label{Rsigmaonext}
 R_\sigma: \Ext^1_{\MT(k)} (\Q(0), \Q(n)) \To \R\ .
\end{equation}
To compute this map, we can represent an  element  $\xi\in \Ext^1_{\MT(k)} (\Q(0), \Q(n))$  by an extension  $0 \rightarrow \Q(n) \rightarrow M \rightarrow \Q(0)\rightarrow 0$.  For every $\sigma :k \hookrightarrow \C$, the prescription above yields a period
 matrix for $M$ which is of the form
$$ P_\sigma(M)=
 \left(
   \begin{array}{cc}
     1 & 0 \\
     \alpha & (2\pi i)^n  \\
   \end{array}
 \right) \hbox{ for some } \alpha \in \C.
$$ 
Therefore its real period, for the obvious framings,  is  $R_{\sigma}(\xi)=\Image ({\alpha \over (2\pi i)^n}).$

\subsubsection{Regulators}\label{subsectdetandhodge}
Let $k$ be a number field, and let  $\Sigma=\{\sigma_1,\ldots, \sigma_N\}$ denote any set of distinct  embeddings of $k$ into  $\C$.
Writing $k_i= \sigma_i(k)$, there is an isomorphism:
\begin{equation}\label{twistmap}
\rho  :\Am_n(k)\otimes_{\Q} \ldots \otimes_{\Q} \Am_n(k) \overset{\sim}{\To} \Am_n(k_1)\otimes_{\Q} \ldots \otimes_{\Q} \Am_n(k_N)\ .\end{equation}
Let $\Am_n(k)^{\otimes N}$ denote   the left-hand side of $(\ref{twistmap})$. The  symmetric group $\Sym_N$ acts upon it by 
permuting the factors.  Let $\Am_n(k)^{\otimes \Sigma}$ denote the right-hand side of $(\ref{twistmap})$, with the induced $\Sym_N$-action. If  $\xi_i \in \Am_n(k_i)$, for $1\leq i\leq N$,    $\Sym_N$   acts by:
$$ \pi(\xi_{1}\otimes
\ldots \otimes \xi_{N}) = \sigma_1
\sigma_{\pi(1)}^{-1}(\xi_{{\pi(1)}}) \otimes
\ldots \otimes \sigma_N \sigma_{\pi(N)}^{-1}(\xi_{{\pi(N)}})\quad \hbox{ for } \pi \in \Sym_N\ .$$
For any subspace $V\subset \Am_n(k)$ write $\bigwedge^N V\subset V^{\otimes N}$ for the subspace 
 of elements which are alternating with respect to the action of $\Sym_N$. Then by $(\ref{kerdeltaisext})$:
\begin{equation} \label{comdiagext}
\begin{array}{ccc}
 \textstyle{\bigwedge^N} \Ext^1_{\MT(k)}(\Q(0),\Q(n))  &  \subset   \textstyle{\bigwedge^N} \Am_n(k) & \subset  \Am_n(k)^{\otimes N}  \\ 
  \downarrow\!\! \rotatebox{90}{{$\sim$}} &  \downarrow \!\!\rotatebox{90}{{$\sim$}} & \downarrow \!\!\rotatebox{90}{{$\sim$}}  \\
\textstyle{\bigwedge^{\Sigma}} \Ext^1_{\MT(k)}(\Q(0),\Q(n))  &    \subset   \textstyle{\bigwedge^{\Sigma}} \Am_n(k) &   \subset  \Am_n(k)^{\otimes \Sigma} 
\end{array}
\end{equation}
where a superscript $\Sigma$ in the second line denotes the subspace of alternating elements with respect to the  action of $\Sym_N$ on $\Am_n(k)^{\otimes \Sigma} $.
Our construction of Dedekind zeta motives will naturally lie in the second line of $(\ref{comdiagext})$. 

\begin{defn} Suppose that $k$ has $r_1$ distinct real embeddings $\sigma_1,\ldots, \sigma_{r_1}$ and $2r_2$  distinct complex embeddings 
 $\sigma_{r_1+1},\ldots, \sigma_{r_1+r_2}, \overline{\sigma}_{r_1+1},\ldots, \overline{\sigma}_{r_1+r_2}$. Let
$$\Sigma_n=
\left\{
  \begin{array}{ll}
\{\sigma_1,\ldots,\sigma_{r_1+r_2}\} \    , & \hbox{ if } n \hbox{ is odd} \ , \\
  \{\sigma_{r_1+1},\ldots, \sigma_{r_1+r_2}\} \ , & \hbox{ if } n \hbox{ is even }\ .
  \end{array}
\right.
  $$
Let $n >1$. By $(\ref{extgroups})$ and $(\ref{Ktheorydimensions})$,  $\dim_\Q \big(\Ext^1_{\MT(k)}(\Q(0),\Q(n))\big) = |\Sigma_n|$.  In this situation $\textstyle{\bigwedge^{|\Sigma_n|}} \Ext^1_{\MT(k)}(\Q(0),\Q(n))$
 is a $\Q$-vector space of dimension 1. Let 
$$R_{\Sigma_n}= \prod_{\sigma \in \Sigma_n} R_{\sigma}:\Am_n(k)\otimes_{\Q} \ldots \otimes_{\Q} \Am_n(k) \To \R\ ,$$ be  the product of the real periods,  and 
define the  \emph{Hodge regulator  map} :
\begin{equation}\label{Hodgeregulatormap}
R_{\Sigma_n}: \textstyle{\det} \,\Ext^1_{\MT(k)}(\Q(0),\Q(n))  {\To} \R \qquad \hbox{ for } n>1\  .
\end{equation}
It factors through   $(\ref{comdiagext})$.
Its image is a  one-dimensional $\Q$-lattice in $\R$, whose covolume  is a well-defined element  $ R_n(k)\in \R^{\times}/\Q^\times\ .$
\end{defn}
The Hodge regulator $r_H: \Ext^1_{\MT(k)}(\Q(0),\Q(n))\rightarrow \R^{n_\pm}$ of  $(\ref{introhodge})$ is  the map $r_H= \bigoplus_{\sigma \in\Sigma_n} R_\sigma$. Its image
is a $\Q$-lattice whose covolume is $R_n(k) \mod \Q^\times$.

\subsection{Construction of the motive of  a product-hyperbolic
manifold} We begin by  considering  the motive of a single hyperbolic geodesic simplex;  first in the generic case following \cite{Go1} \S 3.3, and then with one  vertex at infinity.

\subsubsection{The motive of a finite geodesic simplex}  Let $m\geq 1$ and let  $\Delta\subset \Hyp^m$    be a  hyperbolic geodesic simplex with no vertices at infinity.
 In the Klein model for $\Hyp^m$, $\Delta$ is  a Euclidean simplex inside
the unit sphere $\partial \Hyp^m\subset \R^m$.
This data  can be represented by a smooth quadric $Q\subset \Pro^{m}$, and a set of  hyperplanes $\{L_0,\ldots,
L_m\}\subset \Pro^{m} $, defined over $\R$, such that  for some affine open $\A^m \subset \Pro^m$, $Q(\R)\cap \A^m$ is the unit sphere in $\R^m$, and the boundary faces of $\Delta$ 
are contained in $\bigcup_i L_i(\R)\cap \A^m$.

Let $L=\bigcup_i L_i$.  If $Q, L$ are defined over $\overline{\Q}$,  and since $L \cup Q$ is a normal crossing divisor, this data directly defines a mixed Tate motive
\begin{equation}\label{finitemotive}
h(\Delta)=H^m(\Pro^m\backslash Q, L\backslash (L\cap Q)) \in \MT(\overline{\Q})\ ,
\end{equation}
which only depends on $\Delta$.   To determine the structure of $h(\Delta)$, observe that  for any smooth quadric $Q\subset \Pro^m$, 
$$H^m(\Pro^m \backslash Q) = \left\{
                             \begin{array}{ll}
                               \Q(-n)\ , & \hbox{ if }  m=2n-1\  , \\
                                 0 \ , & \hbox{ if  } m \hbox{ is evenÊ}\ .
                             \end{array}
                           \right.$$
Whenever $m=2n-1$ is odd, one verifies that $\gr^W_{2n} h(\Delta) \cong H^{m}(\Pro^{m}\backslash Q) $, and hence we obtain a class (see \S\ref{sectFramings}  below) which we denote by
\begin{equation}\label{canclassomegaq}
\omega_Q : \Q(-n) \overset{\sim}{\To} \gr^W_{2n} h(\Delta)\ .\end{equation}
Write $L_I= \bigcap_{i\in I} L_i$ for any subset
$I\subset\{0,\ldots, m\}$, and  let  $Q_I=Q\cap L_I$.  
Every  face $F$ of  $\Delta$ is equal to  $\Delta\cap L_I$  for some such $I$ and write $Q_F$ for $Q_I$. Since $F$ is a geodesic simplex in $\Hyp^{m-|I|}$, it defines a motive
$h(F)$,  corresponding to the quadric $Q_I \subset L_I$ relative to the hyperplanes $L_I \cap L_j$, for $j \notin I$.
For each face $F$ of odd dimension $2i-1$, the inclusion $ F\hookrightarrow \Delta$ induces a map $i_F:h(F)\rightarrow h(\Delta)$ and we set
\begin{equation} \label{eFfinitedef}
e_F= i_F\circ \omega_{Q_F} : \Q(-i)\To \gr^W_{2i} h(\Delta)\ .\end{equation}
\begin{prop}  Let $m=2n-1\geq 1$. Then $\gr^W_0 h(\Delta)=\Q(0)$ and 
\begin{equation} \label{finitemotivegradedpieces}
\gr^W_{2(n-r)} h(\Delta)=
 \bigoplus_{|I|=2r}\Q(r-n) \quad \hbox{ for  } \quad 0\leq r< n \ .
\end{equation}
A basis is given by the image of    $1 \in \Q(-i)$ under the map  $e_F$, as $F$ ranges over  all odd-dimensional faces of $\Delta$.
\end{prop}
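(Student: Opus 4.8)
The plan is to compute the weight-graded pieces of $h(\Delta) = H^m(\Pro^m \backslash Q, L \backslash (L\cap Q))$ for $m = 2n-1$ by analyzing the long exact sequence of the pair, together with induction on $m$. First I would recall the structure of the relative cohomology: since $L = \bigcup_i L_i$ is a normal crossing divisor (being a simplex of $m+1$ hyperplanes in general position) and $Q$ is smooth transverse to $L$, the motive $h(\Delta)$ is computed by a spectral sequence (or iterated cone) whose $E_1$-terms are $H^{\bullet}(L_I \backslash (L_I \cap Q))$ as $I$ ranges over subsets of $\{0,\dots,m\}$, with $L_I \cong \Pro^{m-|I|}$ and $L_I \cap Q = Q_I$ a smooth quadric in $\Pro^{m-|I|}$. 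The key input is the stated computation $H^j(\Pro^{m-|I|}\backslash Q_I)$: it is $\Q(0)$ in degree $0$, $\Q(-(n-r))$ in top degree $m-|I| = 2(n-r)-1$ when $|I| = 2r$ is even, and otherwise only the even-degree algebraic classes $\Q(0), \Q(-1), \dots$ appear in degrees $0,2,4,\dots$ below the top, with the top-degree primitive class $\Q(-(n-r))$ occurring precisely for $|I|$ even (so $m-|I|$ odd).

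The main steps: (1) Set up the Gysin/excision spectral sequence $E_1^{p,q} = \bigoplus_{|I| = p} H^q(\Pro^m \backslash Q \cap \text{(stratum)})$ converging to $H^{p+q}(\Pro^m\backslash Q, L\backslash(L\cap Q))$; more precisely use that the relative cohomology fits into the complex built from $H^\bullet(L_I\backslash Q_I)$ with the face maps. (2) Observe that $h(\Delta)$ lands in weights $[0,2n]$: the bottom weight-$0$ piece is $\gr^W_0 h(\Delta) = \Q(0)$, coming from the fundamental class / the fact that $H^m$ of the pair surjects onto a copy of $\Q(0)$ detected by the simplex itself — this is where one uses that $\Delta$ is an actual simplex, not an arbitrary configuration. (3) For the intermediate pieces, isolate the weight-$2(n-r)$ graded part. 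By the spectral sequence, the only contributions to weight $2(n-r)$ come from the top-degree primitive classes of the quadrics $Q_I \subset L_I \cong \Pro^{2(n-r)-1}$, i.e. from subsets $I$ with $|I| = 2r$, each contributing one copy of $\Q(r-n)$ via the map $\omega_{Q_I}$. (4) Check that the differentials in the spectral sequence do not kill or merge these classes: this follows because they sit in distinct bidegrees from the even algebraic classes $\Q(0),\Q(-1),\dots$ which form the rest of $E_1$, and a weight/degree count shows no differential can connect a primitive class of $Q_I$ to anything else of the same weight. This identifies the basis with $\{e_F : F \text{ odd-dimensional face}\}$, since faces $F = \Delta \cap L_I$ with $\dim F = m - |I| = 2(n-r)-1$ odd correspond exactly to $|I| = 2r$ even.

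I would run step (3)-(4) by induction on $m$, using the functoriality of $\omega_Q$ and the maps $i_F: h(F) \to h(\Delta)$ to reduce the statement for $h(\Delta)$ to the already-known statements for the faces $h(F)$, so that the only genuinely new content at each stage is the top weight piece $\gr^W_{2n} h(\Delta) = \Q(-n)$ (already recorded as $\omega_Q$) and the weight-$0$ piece $\gr^W_0 h(\Delta) = \Q(0)$.

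The main obstacle I anticipate is step (4) — verifying that the spectral sequence degenerates in the relevant bidegrees, equivalently that the classes $e_F$ are linearly independent and span the asserted graded pieces rather than being annihilated by differentials or failing to lift. Concretely one must rule out that some primitive quadric class in a face is a boundary, or that two such classes become identified; I expect this to follow from a careful bookkeeping of the weight filtration (the primitive class of $Q_I$ has weight strictly larger than every algebraic class appearing alongside it in cohomology of the same total degree, once $Q_I$ is odd-dimensional) combined with the strictness of morphisms of mixed Tate motives with respect to the weight filtration, but making this airtight — rather than hand-waving via "weight reasons" — is the delicate part. A cleaner alternative, which I would pursue if the spectral-sequence bookkeeping gets unwieldy, is to argue directly via the semi-simplicity of $\gr^W$ in $\MT(\overline{\Q})$: the functor $\gr^W_{2(n-r)}$ is exact, so applying it to the defining excision/Gysin triangle for $h(\Delta)$ and inducting on the number of hyperplanes $L_i$ reduces everything to the normal-crossing combinatorics of the faces, where each facet contributes its own graded pieces and the alternating sum over faces produces exactly $\bigoplus_{|I|=2r}\Q(r-n)$.
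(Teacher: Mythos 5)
Your approach is essentially the same as the paper's: the paper computes $h(\Delta)$ via the hypercohomology spectral sequence for the complex of sheaves $\Q_{\Pro^m\backslash Q}\to\bigoplus_{|I|=1}\Q_{L_I\backslash Q_I}\to\cdots$, uses the vanishing of $H^q(\Pro^{m-|I|}\backslash Q_I)$ away from degrees $0$ and the odd top degree, reads off the $q=0$ row as the (truncated) simplicial cochain complex of $L$ giving the single $\Q(0)$, and observes degeneration at $E_2$. Your worry in step (4) is legitimate but resolves by bidegree bookkeeping rather than delicate weight arguments: for $q>0$ the only nonzero $E_1^{p,q}$ sit at isolated positions $(2r,m-2r)$ whose horizontal neighbours vanish (so $d_1=0$ there), and every higher differential out of or into such a position lands outside the support of $E_1$, so degeneration is automatic.
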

This result will  follow from our proof of proposition  $\ref{propmotiveofsimplex}$ below.

\subsubsection{The motive of a simplex with a vertex at infinity} \label{secmotiveatinf}
Let $\Delta$ be  a hyperbolic  geodesic simplex  with a single vertex $x$ on the boundary $\partial \Hyp^m$. As above, 
$\Delta$ defines a set of hyperplanes $L_0,\ldots, L_m$ and a smooth quadric $Q$ in $\Pro^m$, which do not cross normally at $x$.
Number the hyperplanes so  that $L_1,\ldots, L_m$ intersect at $x$, and $L_0$ is the hyperplane which does not contain $x$.
Let $\widetilde{\Pro}^m$ denote the blow-up of $\Pro^m$ at $x$, let $\widetilde{L}_{-1}$  denote the exceptional divisor, and let
 $\widetilde{Q}$, $\widetilde{L}_i$ be the strict transforms of $Q, L_i$ respectively.
  When $Q,L_i$ are defined over $\overline{\Q}$,    we  define a mixed Tate motive:
\begin{equation}\label{infinitemotive}
h(\Delta)=H^m(\widetilde{\Pro}^m\backslash \widetilde{Q}, \widetilde{L}\backslash (\widetilde{L}\cap \widetilde{Q})) \in \MT(\overline{\Q})\ .
\end{equation}
It  has an identical structure to the motive of a finite simplex  $(\ref{finitemotive})$, except in graded weight $2$.
This corresponds to the fact that the one-dimensional faces of $\Delta$ which meet  $x$ have infinite length.
\begin{figure}[h!]
  \begin{center}
    \epsfxsize=10.0cm \epsfbox{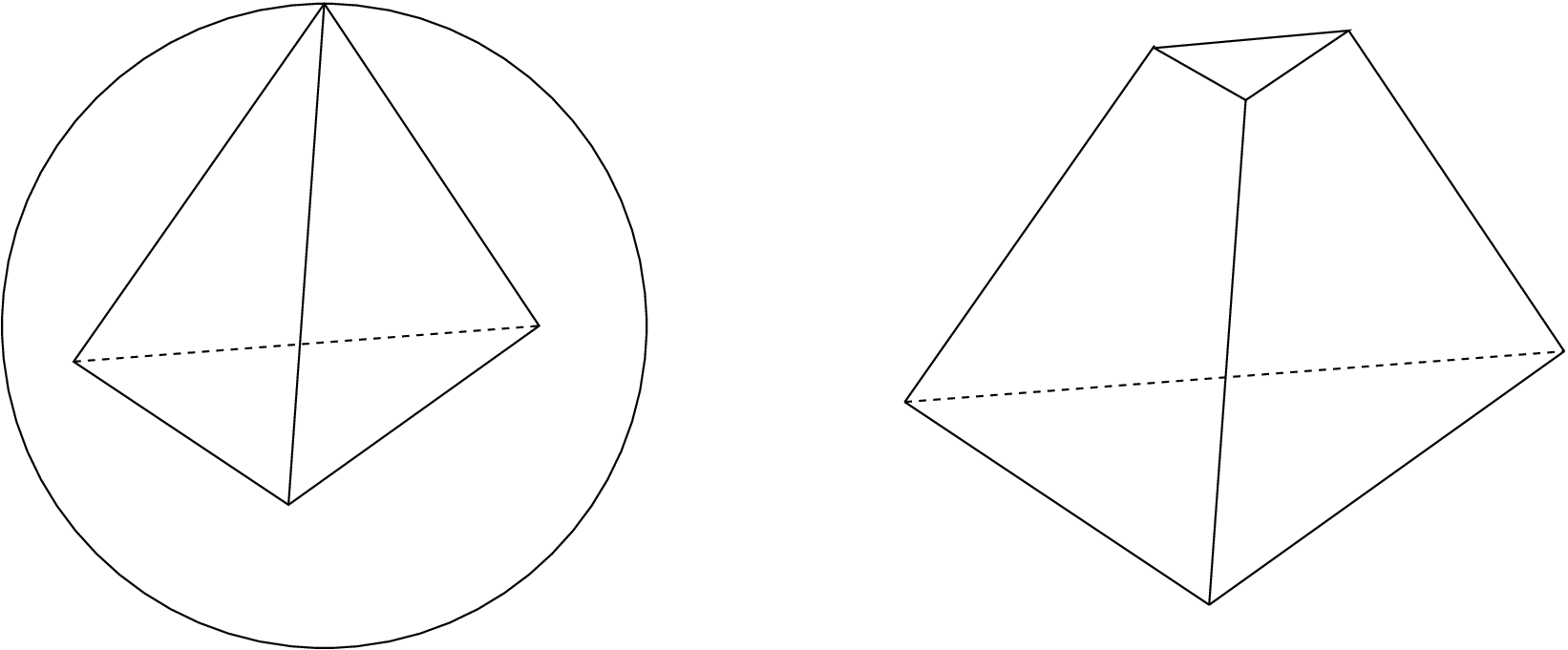}
  \label{Blownupsimplex}
\put(-230,120){$x$}
\put(-178,10){$Q$} \put(-80,115){{\small $\widetilde{L}_{-1}$}} \put(-50,75){\small{$\widetilde{L}_1$}}
\put(-85,75){\small{$\widetilde{L}_2$}}
\put(-85,25){\small{$\widetilde{L}_0$}}
 \caption{A hyperbolic 3-simplex $\Delta$ with one vertex  at infinity $x$ in the Klein model. After blowing up  $x$, the exceptional divisor
$\widetilde{L}_{-1}$ meets the faces of  its inverse image $\widetilde{\Delta}$ in a Euclidean triangle. }
  \end{center}
\end{figure}
We construct  a basis of  $\gr^W_\bullet h(\Delta)$ in the case $m=2n-1$ is odd  as follows. As in the  case of a finite simplex, there is a map
\begin{equation}
\omega_{\widetilde{Q}}: \Q(-n) \overset{\sim}{\To} \gr^W_{2n} h(\Delta) \cong  H^{2n-1}(\widetilde{\Pro}^{2n-1}\backslash \widetilde{Q})\ .
\end{equation}
Similarly, for each face $F$ of odd dimension $2i-1\geq 3$, the inclusion of the face $F\hookrightarrow \Delta$ defines a map
$i_{\widetilde{F}}:h(F) \rightarrow h(\Delta)$ and gives rise to  a map\footnote{Here, and subsequently, the tildes in the notation $e_{\widetilde{F}}$ and $i_{\widetilde{F}}$ are not strictly necessary but are there as a reminder that we are in the case when a vertex at infinity has been blown up.}
:
\begin{equation}\label{eFinfinitedef}
e_{\widetilde{F}}= i_{\widetilde{F}} \circ \omega_{\widetilde{Q}_{\widetilde{F}}}:\Q(-i)\overset{\sim}{\To} \gr^W_{2i} h(F) \To \gr^W_{2i} h(\Delta)\ .
\end{equation}
 The same holds for one-dimensional faces which do not contain $x$.   
  However, for every one-dimensional face $F$  containing  $x$, we have $\gr^W_{2} h(F)=0$ since its strict transform meets $\widetilde{Q}$ in a single point, and $H^1(\Pro^1\backslash \{1 \hbox{ point}\})=0$.

What happens instead is the following.
Let $G$ be a $2$-dimensional face of $\Delta$ which meets $x$. Its strict transform  corresponds to
the complement of the blow-up  of a smooth quadric in the projective plane
$\widetilde{\Pro}^2\backslash \widetilde{Q}_G$. There is  a   class
\begin{equation}\label{etaG}
\eta_{\widetilde{Q}_G}: \Q(-1) \overset{\sim}{\To} H^2(\widetilde{\Pro}^2 \backslash \widetilde{Q}_G)\cong \gr^W_2 h(G) \ ,\end{equation}
which we shall define below. As previously, the inclusion of the face $G\hookrightarrow \Delta$ induces a map
$i_{\widetilde{G}}: h(G) \rightarrow h(\Delta)$, and we define:
\begin{equation}\label{alphaGdef}
\alpha_{\widetilde{G}} = i_{\widetilde{G}} \circ \eta_{\widetilde{Q}_G}: \Q(-1)\To \gr^W_2 h(\Delta)\ .  \end{equation}
Now  consider the set of hyperplanes  $L_1$,\ldots,$L_m$ which contain $x$.
Their strict transforms  $\widetilde{L}_{1},\ldots, \widetilde{L}_{m}$ intersect $\widetilde{L}_{-1}$ in an  $m-1$ simplex $\Delta_\infty$. Let
$F_k(\Delta_{\infty})$ denote the set of faces of $\Delta_\infty$ of dimension $k$. From $(\ref{alphaGdef})$ we deduce a map
\begin{eqnarray} \label{qminus1map}
\alpha: \Q(-1)^{F_1(\Delta_{\infty})} \To \gr^W_2 h(\Delta)
\end{eqnarray}
since every $1$-dimensional face of $\Delta_\infty$ corresponds to a 2-dimensional face of $\Delta$ containing $x$.
Choose an orientation on $\Delta_{\infty}$ and set
\begin{equation} \label{Vxdef}
V_x= \mathrm{coker} ( \Q^{F_2(\Delta_{\infty})}\overset{\partial}{\To} \Q^{F_1(\Delta_{\infty})})
\end{equation} 
where $\partial$ is the boundary map.  We shall show that $(\ref{qminus1map})$ factors through
$$\alpha: V_x\otimes_{\Q}\Q(-1) \To \gr^W_2 h(\Delta)Ê\ .$$
Note that $V_x$ is of dimension $m-1$ and  isomorphic to the space
\begin{equation} \label{Vxasker}
V_x\cong \ker (\Q^{F_0(\Delta_{\infty})} \To \Q)\ ,\end{equation}
spanned by linear combinations of $1$-dimensional faces of $\Delta$ which meet $x$ such that the sum of coefficients is $0$. 
This corresponds to the fact
that there is a relation between the internal angles of the Euclidean simplex at infinity $\Delta_\infty$ (see \S\ref{secquotmotives}).
\begin{prop}  \label{propmotiveofsimplex}
Let $m=2n-1\geq 1$ be odd. Then  $\gr^W_0 h(\Delta) \cong  \Q(0)$ and 
\begin{eqnarray}\label{polymotivegradedpieces}
\gr^W_{2(n-r)} h(\Delta)=\bigoplus_{|I|=2r}\Q(r-n) \quad \hbox{for} \quad 0\leq r \leq n-2\ 
\end{eqnarray}
in weights $\geq 4$. A  basis is given  by  the images of $1$ under  $e_{\widetilde{F}}$ for all odd-dimensional faces $F$ of $\Delta$ of dimension $\geq 3$. 
In graded weight $2$, we have
\begin{equation}\label{infmotivegraded2piece}
\gr^W_2 h(\Delta) \cong  \Q(-1)^{m(m+1)/2 -1 }\ ,\end{equation}
which is spanned by the images of $1$ under   $e_{\widetilde{F}}$, for the $m(m-1)/2$ one-dimensional faces $F$ of $\Delta$ which do not contain $x$,  
and  the image of $1$ under $\alpha_{\widetilde{G}}$ for all two-dimensional faces $G$ of $\Delta$ which contain $x$, modulo boundaries of three-dimensional faces of $\Delta$ which meet $x$. 
\end{prop}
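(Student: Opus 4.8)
The plan is to compute $h(\Delta)$ from the spectral sequence for the relative cohomology of the pair, associated to the stratification of the divisor $\widetilde{L}=\widetilde{L}_{-1}\cup\widetilde{L}_0\cup\cdots\cup\widetilde{L}_m$. After blowing up $x$ these divisors together with $\widetilde{Q}$ cross normally in $\widetilde{\Pro}^m$, so, writing $U=\widetilde{\Pro}^m\backslash\widetilde{Q}$ and $U_I=\widetilde{L}_I\backslash(\widetilde{L}_I\cap\widetilde{Q})$ (with $U_{\emptyset}=U$), there is a spectral sequence in $\MT(\overline{\Q})$
$$E_1^{s,t}=\bigoplus_{|I|=s}H^t(U_I)\ \Longrightarrow\ H^{s+t}\big(U,\widetilde{L}\backslash(\widetilde{L}\cap\widetilde{Q})\big),$$
with $d_1$ the alternating sum of restriction maps, converging to $h(\Delta)$ in total degree $m$. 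Each $U_I$ is the complement of a smooth quadric, resp.\ of the strict transform of one, inside $\Pro^{d}$, resp.\ inside the one-point blow-up $\widetilde{\Pro}^{d}$ when $x\in L_I$, where $d=m-|I|$; by the Gysin and blow-up exact sequences its cohomology is pure Tate: $H^t(U_I)=\Q(-t/2)$ for $t$ even with $t<d$; $H^d(U_I)=\Q(-(d{+}1)/2)$ when $d$ is odd; there is an additional summand $\Q(-1)$ in $H^2(U_I)$ exactly when $x\in L_I$, generated by the exceptional class (this is how $\eta_{\widetilde{Q}_G}$ of $(\ref{etaG})$ is defined, for $d=2$, as the image of the exceptional divisor's class under $H^2(\widetilde{\Pro}^2)\twoheadrightarrow H^2(\widetilde{\Pro}^2\backslash\widetilde{Q}_G)$); and $H^t(U_I)=0$ otherwise. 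In particular every $E_1^{s,t}$ is pure of a single weight, so the higher differentials vanish on each weight-graded piece, $h(\Delta)$ is mixed Tate, and $\gr^W_{2j}h(\Delta)$ is the degree-$m$ cohomology of the weight-$2j$ subcomplex of the $E_1$-page.

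\textbf{Weights $\ge 4$.} For $j\ge 2$ the weight-$2j$ part of the $E_1$-page meeting total degree $m$ consists only of the top-degree classes $H^{2j-1}(U_I)=\Q(-j)$ with $|I|=2(n-j)$: these form the column $t=2j-1$, which, being concentrated in a single degree, carries no $d_1$; the row $t=2j$ contributes nothing at total degree $m$, since $H^{2j}$ of a $2j$-dimensional stratum vanishes ($H^{2j}(\Pro^{2j}\backslash Q)=0$ because $\deg Q=2$, and the blow-up changes nothing here, its extra cohomology living in weight $2$). This gives $(\ref{polymotivegradedpieces})$, with the basis $\{e_{\widetilde{F}}\}$ running over odd-dimensional faces of dimension $\ge 3$ as in $(\ref{eFinfinitedef})$. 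The identical computation with no blow-up (all $L_I\not\ni x$) simultaneously proves the statement for a finite simplex: then the row $t=1$ gives $\bigoplus_{\text{edges}}\Q(-1)$ with no adjacent nonzero columns, and the row $t=0$ yields $\Q(0)$ by acyclicity of the simplex.

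\textbf{Weight $2$.} The weight-$2$ part of $E_1$ at total degree $m$ has exactly two sources. First, the column $(s,t)=(m-1,1)$, contributing $H^1(U_I)$ for the edges of $\Delta$: an edge not through $x$ meets $\widetilde{Q}$ in two points, giving $\Q(-1)$ and the class $e_{\widetilde{F}}$, whereas an edge through $x$ has strict transform $\Pro^1$ meeting $\widetilde{Q}$ in a single point, so $H^1=0$ and it contributes nothing; since $H^1(\Pro^2\backslash Q)=0$, this column receives and emits no $d_1$, hence survives to give the $\binom{m}{2}=m(m-1)/2$ copies of $\Q(-1)$ indexed by the edges not through $x$. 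Second, the row $t=2$, where $H^2(U_I)=\Q(-1)$ precisely for the faces $G$ of $\Delta$ containing $x$, the two-dimensional ones giving $\alpha_{\widetilde{G}}$ of $(\ref{alphaGdef})$; this row is the complex $\cdots\to\bigoplus_{3\text{-faces}\ni x}\Q(-1)\to\bigoplus_{2\text{-faces}\ni x}\Q(-1)\to 0$. Under the bijection sending a $k$-face of $\Delta$ through $x$ to the $(k{-}1)$-face of $\Delta_\infty$ it cuts out on $\widetilde{L}_{-1}$, and using the description of the $\eta$-classes, the $d_1$-maps here are identified with the simplicial boundary operators of $\Delta_\infty$; by acyclicity of the simplex $\Delta_\infty$ all intermediate cohomology vanishes and the only surviving term is
$$\mathrm{coker}\big(\Q^{F_2(\Delta_\infty)}\overset{\partial}{\To}\Q^{F_1(\Delta_\infty)}\big)\otimes\Q(-1)=V_x\otimes\Q(-1),$$
which is precisely the assertion that $(\ref{qminus1map})$ factors through $\alpha:V_x\otimes_{\Q}\Q(-1)\to\gr^W_2 h(\Delta)$, while $(\ref{Vxasker})$ is the same acyclicity read one step earlier ($\mathrm{coker}\,\partial\cong\ker(\Q^{F_0(\Delta_\infty)}\to\Q)$). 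Together these give $\dim\gr^W_2 h(\Delta)=m(m-1)/2+(m-1)=m(m+1)/2-1$, proving $(\ref{infmotivegraded2piece})$; that there are no further contributions or higher differentials follows because rows $t\ge 3$ are pure of weight $\ge 4$ and so cannot map to or from a weight-$2$ term.

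\textbf{Main obstacle.} The hard part will be the identification in the previous step of the $d_1$-differentials on the $t=2$ row with the simplicial complex $C_\bullet(\Delta_\infty)$: one must compute, with the correct signs, the restriction maps between the quadric- and hyperplane-arrangement complements on $\widetilde{L}_{-1}\cong\Pro^{m-1}$ and on the strict transforms $\widetilde{L}_I$, and check that after passing to $E_2$ everything cancels except $\mathrm{coker}\,\partial$ — equivalently, that the sole relation among the $\alpha_{\widetilde{G}}$ is the angle relation coming from the $3$-faces of $\Delta$ meeting $x$. This is the one point where the combinatorics of the Euclidean simplex at infinity genuinely enters, and it is also where the normalisation of $\eta_{\widetilde{Q}_G}$ must be fixed precisely so that the incidences match.
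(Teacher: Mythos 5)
Your strategy---computing $h(\Delta)$ from the spectral sequence of the normal-crossings stratification of $(\widetilde{\Pro}^m\backslash\widetilde{Q}, \widetilde{L})$ and reading each weight-graded piece off the relevant row of $E_1$---is exactly the paper's approach, and the overall shape of your weight-$\geq 4$ and weight-$2$ analyses (top-degree classes on odd-dimensional strata, then the edge classes together with the simplicial chain complex of $\Delta_\infty$ sitting in the row $t=2$) is right. But the stated cohomology of the strata is wrong: for $U_I$ the complement of a smooth quadric in $\Pro^d$, the Gysin sequence gives $H^t(U_I)=0$ for \emph{all} even $t$ with $0<t<d$, since the quadric's class kills the even cohomology of $\Pro^d$; it is not $\Q(-t/2)$. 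The only nonvanishing pieces are $\Q(0)$ in degree $0$, $\Q(-(d+1)/2)$ in top degree when $d$ is odd, and an extra $\Q(-1)$ in degree $2$ precisely when the stratum has been blown up at $x$ (this is the content of the lemma preceding the proposition). Your subsequent sentences implicitly use this correct vanishing, but as written the claim would populate the $E_1$-page with spurious entries and you would then have to rule out $d_2$-differentials from the row $t=2j$ into your surviving class in row $t=2j-1$; with the correct vanishing the absence of such differentials is immediate.

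You also omit two things the paper needs. First, the strata with $-1\in I$, i.e. those contained in the exceptional divisor $\widetilde{L}_{-1}$: these are not quadric complements at all but affine spaces, contributing $\Q(0)$ in degree $0$ and nothing in positive degree; without them the index set of your spectral sequence is wrong and the $t=0$ row has the wrong combinatorics. Second, you only verify $\gr^W_0 h(\Delta)\cong\Q(0)$ in the finite-simplex aside. After blow-up the arrangement $\widetilde{L}$ has $m+2$ members with the combinatorics of a prism, and the paper computes $E_2^{m,0}=\mathrm{coker}\big(\bigoplus_{|I|=m-1}\Q(0)\to\bigoplus_{|I|=m}\Q(0)\big)\cong\Q(0)$ using that this nerve still has the homology of a sphere; this must be said. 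Your identification of the $t=2$ differentials with the simplicial boundary operator of $\Delta_\infty$, and hence of $E_2^{m-2,2}$ with $V_x\otimes\Q(-1)$, is the same step the paper takes (and which the paper likewise does not spell out in signs), so flagging it as the main obstacle is fair; the genuine gaps are the erroneous strata cohomology, the omitted exceptional strata, and the missing $\gr^W_0$ computation in the blown-up case.
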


Before proving proposition $\ref{propmotiveofsimplex}$ it is useful to state the following lemma.

\begin{lem} Let $Q\subset \Pro^m$ be  a smooth quadric,  and $x$ a point in  $Q$. Let $\widetilde{\Pro}^m$ denote the blow-up of $\Pro^m$ at $x$, and $\widetilde{Q}$ the strict transform of $Q$. Then 
\begin{equation} \label{factaboutquadrics}
H^i(\widetilde{\Pro}^m\backslash \widetilde{Q}) = \left\{
                              \begin{array}{ll}
                                \Q(-n)\ , & \hbox{ if  } i=m \hbox{ is odd and equal to } 2n-1\ ;\\
\Q(-1)\ , & \hbox{ if  } i=2\ ; \\
\Q(0)\ , & \hbox{ if  } i=0\ ; \\
                                0\ , & \hbox{ otherwise }\ .
                              \end{array}
                            \right.
\end{equation}

\end{lem}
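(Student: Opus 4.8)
The plan is to compute all the groups $H^i(\widetilde{\Pro}^m\backslash \widetilde{Q})$ at once by a single localization (Gysin) sequence, reducing everything to the cohomology of $\Pro^m\backslash Q$ recalled above: it is $\Q(0)$ in degree $0$, it is $\Q(-n)$ in degree $m$ if $m=2n-1$ is odd and $0$ in degree $m$ if $m$ is even, and it vanishes in all intermediate degrees. (This last vanishing follows, e.g., from the double cover of $\Pro^m\backslash Q$ by the smooth affine quadric $\{q=1\}\subset \A^{m+1}$, which is homotopy equivalent to $S^m$, or from Artin vanishing together with Poincar\'e duality.) I will assume $m\geq 2$; the case $m=1$ is degenerate — blowing up a point on a curve changes nothing — and is not needed.

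First I would fix the geometry of the blow-up. Write $E\subset\widetilde{\Pro}^m$ for the exceptional divisor, so that $E\cong\Pro^{m-1}$ and $\pi$ is an isomorphism over $\Pro^m\backslash\{x\}$. Since $x$ is a \emph{smooth} point of $Q$, the strict transform $\widetilde{Q}$ is the blow-up of $Q$ at $x$, and $\widetilde{Q}\cap E=\Pro(T_x Q)$ is a \emph{linear} hyperplane $\Pro^{m-2}$ in $E=\Pro(T_x \Pro^m)$; hence $E\backslash\widetilde{Q}=\Pro^{m-1}\backslash\Pro^{m-2}\cong\A^{m-1}$. Moreover, precisely because $x\in Q$, the open subvariety $\widetilde{\Pro}^m\backslash(\widetilde{Q}\cup E)$ is carried isomorphically by $\pi$ onto $\Pro^m\backslash(Q\cup\{x\})=\Pro^m\backslash Q$.

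Now set $X=\widetilde{\Pro}^m\backslash\widetilde{Q}$. It is smooth, and it contains $Z:=E\backslash\widetilde{Q}\cong\A^{m-1}$ as a smooth closed subvariety of codimension one, with open complement $V:=X\backslash Z\cong\Pro^m\backslash Q$. The localization sequence
\begin{equation*}
\cdots\to H^{i-2}(Z)(-1)\to H^i(X)\to H^i(V)\to H^{i-1}(Z)(-1)\to H^{i+1}(X)\to\cdots
\end{equation*}
has $H^\bullet(Z)$ concentrated in degree $0$ with value $\Q(0)$, so $Z$ enters only through the map $H^0(Z)(-1)=\Q(-1)\to H^2(X)$ and, as a target, through $H^1(V)\to H^0(Z)(-1)$. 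Feeding in the cohomology of $V\cong\Pro^m\backslash Q$, the sequence breaks into short pieces that give at once: $H^0(X)\cong H^0(V)=\Q(0)$; using $H^1(V)=0$, a short exact sequence $0\to\Q(-1)\to H^2(X)\to H^2(V)\to 0$ with $H^2(V)=0$, hence $H^2(X)=\Q(-1)$; $H^i(X)=0$ for $i=1$ and for $2<i<m$; and $H^i(X)\cong H^i(V)$ for $i\geq m$, which is $\Q(-n)$ exactly when $i=m=2n-1$ and $0$ otherwise (here $H^{>m}(V)=0$ since $V$ is affine of dimension $m$). When $m=2$ the degree-$2$ and top-degree contributions coincide, and the same bookkeeping still yields $H^2(X)=\Q(-1)$ with everything else vanishing. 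This is exactly $(\ref{factaboutquadrics})$.

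There is no serious obstacle here: once the strata are set up, it is a routine localization-sequence computation. The points that genuinely need care are the two geometric identifications in the second paragraph — that $\widetilde{Q}\cap E$ is a \emph{linear} $\Pro^{m-2}$ in $E$ (which uses smoothness of $Q$ at $x$) and that the open stratum is $\Pro^m\backslash Q$ rather than $\Pro^m\backslash(Q\cup\{x\})$ (which is where $x\in Q$ is used) — together with keeping track of the Tate twists. I would note in passing that going directly via the Gysin sequence for $\widetilde{Q}\subset\widetilde{\Pro}^m$, or via the blow-up formula for $\widetilde{\Pro}^m$, is more awkward, since it forces one to handle the rank-two middle cohomology of the even-dimensional quadric $Q$ when $m$ is odd; stratifying by the affine piece $E\backslash\widetilde{Q}$ sidesteps this.
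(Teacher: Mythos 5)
Your proof is correct, and it takes a genuinely different route from the one in the paper. The paper applies the Gysin sequence directly to the pair $\widetilde{Q}\subset\widetilde{\Pro}^m$, feeding in $H^\bullet(\widetilde{\Pro}^m)$ and $H^\bullet(\widetilde{Q})$ via the blow-up-at-a-point formula for both; one then has to chase through the extra $\Q(-j)$ summands that the blow-up contributes in every even degree, as well as the rank-two middle cohomology $\Q(-k)^2$ of the even-dimensional quadric when $m$ is odd. You instead stratify the open variety $X=\widetilde{\Pro}^m\backslash\widetilde{Q}$ by the closed piece $Z=E\backslash\widetilde{Q}\cong\A^{m-1}$ and the open piece $V\cong\Pro^m\backslash Q$, and use the localization sequence for $(X,Z,V)$. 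This is cleaner: $Z$ has the cohomology of a point, so the only new contribution is a single $\Q(-1)$ in degree $2$, and everything else reduces to the already-known cohomology of $\Pro^m\backslash Q$; the rank-two middle cohomology of $Q$ never appears explicitly. The two geometric facts you flag as needing care — that $\widetilde{Q}\cap E$ is a linear $\Pro^{m-2}$ in $E$ (smoothness of $Q$ at $x$) and that $V$ maps isomorphically to $\Pro^m\backslash Q$ rather than $\Pro^m\backslash(Q\cup\{x\})$ (since $x\in Q$) — are indeed exactly the points where the hypotheses enter, and you use them correctly. One small remark: you restrict to $m\geq 2$, which is fine since the lemma is only invoked for $m=2$ and for odd $m\geq 3$; and your check that the $m=2$ case (where the degree-$2$ and top-degree contributions coincide) still gives $H^2(X)=\Q(-1)$ is needed, because the paper does use the case $m=2$ for the class $\eta_{\widetilde{Q}_G}$.
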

\begin{proof}
Recall that the cohomology of a smooth quadric $Q$ of dimension $\ell$ vanishes in odd degrees, and satisfies
$H^{2i}(Q)= \Q(-i)$ for $1\leq i\leq \ell$, unless $\ell=2k$ is even, in which case
$H^{2k}(Q)=\Q(-k)\oplus \Q(-k)$ in middle degree.  When $\ell>1$, the strict transform $\widetilde{Q}$ is just the blow-up of  $Q$ in the point $x$. From  the Gysin sequence:
$$\ldots \To H^{i-2}(\widetilde{Q})(-1) \To H^i(\widetilde{\Pro}^m) \To H^i(\widetilde{\Pro}^m\backslash \widetilde{Q}) \To H^{i-1}(\widetilde{Q})(-1) \To \ldots\ ,$$
and the formula for the cohomology of a blow-up of a smooth variety in a point, we obtain the statement.
\end{proof}

\begin{proofofp}$\ref{propmotiveofsimplex}$. 
The motive $(\ref{infinitemotive})$ can be computed from the hypercohomology of $\widetilde{\Pro}^m \backslash \widetilde{Q}$ in   the complex of sheaves
\begin{equation}\label{compofsheaves}
\Q_{\widetilde{\Pro}^m \backslash \widetilde{Q}} \rightarrow \bigoplus_{|I|=1} \Q_{\widetilde{L}_I \backslash \widetilde{Q}_I} \rightarrow \ldots \rightarrow  \bigoplus_{|I|=m} \Q_{\widetilde{L}_I \backslash \widetilde{Q}_I} 
\end{equation} 
 where $I\subset \{-1,0,\ldots, m\}$, and $\Q_Y$ denotes the constant sheaf $\Q$ on $Y\subset \widetilde{\Pro}^m \backslash \widetilde{Q}$, extended by zero outside $Y$, and the morphisms are the restriction maps.
It  defines a spectral sequence  with \begin{equation} \label{E1specsequence}
E_1^{p,q} = \bigoplus_{|I|=p} H^q(\widetilde{L}_I\backslash \widetilde{Q}_I)\quad \hbox{ for
}  p\geq 1\ , \quad\hbox{ and} \quad E_1^{0,q}=H^q(\widetilde{\Pro}^m\backslash \widetilde{Q})\ ,
\end{equation}
which converges to
$H^{p+q}(   \widetilde{\Pro}^m \backslash \widetilde{Q}, \widetilde{L} \backslash (\widetilde{L} \cap  \widetilde{Q} ))$.
If $-1 \in I$ then  $\widetilde{L}_{I}\backslash \widetilde{Q}_{I}$ is isomorphic to  affine space,  and so $H^q(\widetilde{L}_I\backslash \widetilde{Q}_I)=0$ for all $q\geq 1$. If $0 \in I$ then  $\widetilde{L}_{I}\backslash \widetilde{Q}_{I}\cong L_I \backslash Q_I $ is the complement of
a smooth quadric in $\Pro^{m-|I|}$ and $H^q(L_I \backslash Q_I)$ vanishes when $q>0$, unless $|I|=2r$ is even,  and $q=m-2r$.
For all other $I$, $H^q(\widetilde{L}_I\backslash \widetilde{Q}_I)$ is given by $(\ref{factaboutquadrics})$.
The spectral sequence  $(\ref{E1specsequence})$ degenerates at $E_2$, and one  deduces that\begin{equation} \label{inproofgr}
\gr^W_{2(n-r)} h(\Delta) = \bigoplus_{-1\notin I, |I|=2r} H^{2(n-r)-1}(\widetilde{L}_I\backslash \widetilde{Q}_I)\cong  \bigoplus_{-1\notin I, |I|=2r}  \Q(r-n)\ , 
\end{equation}
for $0\leq r\leq n-2$, and  where we write $\widetilde{L}_{\emptyset} \backslash \widetilde{Q}_{\emptyset}$ for $\widetilde{\Pro}^m \backslash \widetilde{Q}$.
Now by   $(\ref{factaboutquadrics})$
$$E_1^{p,2} = \bigoplus_{I\subset \{1,\ldots, m\},\, |I|=p} \Q(-1)\cong \Q(-1)^{\binom{m}{p}} \qquad \hbox{ for } 0\leq p\leq m-2\ . $$
 The complex $0\rightarrow E_1^{0,2} \rightarrow E_1^{1,2}\rightarrow \ldots
\rightarrow E_1^{m-2,2}$ is precisely the simplicial complex corresponding to $\Delta_{\infty}$:
$$0\rightarrow \Q^{F_{m-1}(\Delta_{\infty})}\rightarrow \ldots \rightarrow \Q^{F_2(\Delta_{\infty})} \rightarrow \Q^{F_1(\Delta_{\infty})}$$
tensored with $\Q(-1)$, which is exact except in the last position. By $(\ref{Vxdef})$,
$$E_2^{m-2,2} \cong  V_x\otimes_{\Q} \Q(-1)\cong  \Q(-1)^{m-1}\ ,$$
and $E_2^{p,2}=0$ for $p<m-2$. 
Finally, we consider the one-dimensional edges. If $|I|=m-1$, then  $\widetilde{L}_I\cong \Pro^1$. It  meets $\widetilde{Q}_I$ in
two points if $0\in I$, and in exactly one point if $0\notin I$. In the latter case $H^1(\widetilde{L}_I\backslash \widetilde{Q}_I)=0$, so we have
$$E_2^{m-1,1}=E_1^{m-1,1} = \bigoplus_{0\in I,   \, |I|=m-1}  H^1(\widetilde{L}_I\backslash \widetilde{Q}_I)= \Q(-1)^{m(m-1)/2}\ .$$
 The total contributions in graded weight 2 are therefore:
$$\gr^W_{2} h(\Delta) \cong E_2^{m-2,2}\oplus E_2^{m-1,1} \cong \Q(-1)^{m(m+1)/2 -1 }\ .$$
Finally,  $E^{p,0}_1= \bigoplus_{|I|=m-p} \Q(0)$ and  $\ldots\rightarrow  E^{0,p}_1 \rightarrow E^{0,p+1}_1 \rightarrow \ldots$ is  the simplicial complex of the set of hyperplanes $\widetilde{L}$, which has the homology of a sphere. Thus
 \begin{equation} \label{inproofgrw0}
 \gr^W_0 h(\Delta) = E_2^{m,0} = 
\mathrm{coker} \big( \bigoplus_{|I|=m-1} \Q(0) \rightarrow
\bigoplus_{|I|=m} \Q(0) \big) \cong \Q(0) \ .
\end{equation} 
\end{proofofp}

\subsubsection{Framings} \label{sectFramings}  In both cases (whether $\Delta$ has a vertex at infinity or not), we have
$$\gr^W_{2n} h(\Delta) \cong H^{2n-1}(\Pro^{2n-1} \backslash Q) \cong \Q(-n) \quad \hbox{ and } \quad \gr^W_0 h(\Delta)\cong \Q(0)\ , $$
 by $(\ref{inproofgr})$ and  $(\ref{inproofgrw0})$.
 In the  first isomorphism, a generator is given by an element $\pm (1,-1)$ in  $H^{2n}(Q)(-1)\cong \Q(-n)\oplus \Q(-n)$, and is well-defined up to a sign.  In the second isomorphism, a generator  is given by the set of vertices of  $L$ (or the vertices of the  total transform  polytope $\widetilde{L}$  by $(\ref{inproofgrw0})$).
In the Hodge realisation, we can write these generators explicitly.  Let $x_0,\ldots, x_{2n-1}$ denote coordinates on $\Pro^{2n-1}$,
and let $q=\sum_{ij} a_{ij} x_ix_j$ be
a  quadratic form defining $Q$. Then, by $(4)$ in \cite{Go1}, 
\begin{equation} \label{omegaq}
\omega_Q  = \pm i^n \sqrt{\det q}{
\sum_{j=0}^{2n-1} (-1)^j x_j dx_0\wedge \ldots \wedge \widehat{dx}_j \wedge \ldots \wedge dx_{2n-1} \over q^n(x)}\ ,\end{equation}
defines a class $[\omega_Q]$ which generates $H_{DR}^{2n-1}(\Pro^{2n-1}\backslash Q)$. It does not depend on the choice of coordinates.
The framing on $\gr^W_0 h(\Delta)^\vee$ is given by the relative homology class of the real simplex $\Delta$ (or its inverse image $\widetilde{\Delta}$ in the case when it has a vertex at infinity), and  defines a class
$[\Delta] \in \gr^W_0 H_{B,2n-1}(\Pro^{2n-1}, L) \cong \gr^W_0 H^0_{B}( \Pro^{2n-1}, L)^\vee$ in both cases.
In practice, we shall consider many simplices $\Delta$ and one fixed quadric $Q$. Thus we fix  a sign of $\omega_Q$ at the outset, and all
signs of $[\Delta]$ are determined by an orientation of $\Delta$: its sign is  normalised so  that,
$$\vol(\Delta)=i^{1-n} \int_{\Delta} \omega_Q \in \R $$
is positive if $\Delta$ is positively oriented, and negative otherwise.

\begin{defn}
Let $\Delta$ be an oriented hyperbolic geodesic simplex in $\Hyp^{2n-1}$, with at most one vertex at infinity. Suppose it is defined over  $\overline{\Q}$. Let $h(\Delta)$ be defined by $(\ref{finitemotive})$ in the case where $\Delta$ is finite, and by $(\ref{infinitemotive})$ in the case where $\Delta$ has a vertex at infinity. Writing $(n)$ for the Tate twist, we set 
\begin{equation} \label{motsimplexdefn}
\mot(\Delta) = \big[h(\Delta), [\omega_Q], [\Delta]\big](n)
\in \Am_n(\overline{\Q})\ .\end{equation}
Note that it is convenient to  write the framings in terms of the de Rham and Betti classes, although they can be defined without reference to the 
Hodge realisation. 

In the case of an oriented hyperbolic geodesic triangle $\Delta$ in $\Hyp^2$ defined over $k$  with one vertex at infinity,  let $\eta_{\widetilde{Q}_{\Delta}}$ denote the class $(\ref{etaG})$,   $(\ref{factaboutquadrics})$ and set:
\begin{equation} \label{motsimplexdefn2}
\mot(\Delta) = \big[h(\Delta), [\eta_{\widetilde{Q}_{\Delta}}], [\widetilde{\Delta}]\big](1)
\in \Am_1( \overline{\Q})\ ,
\end{equation}
where $\widetilde{\Delta}$ is the  total inverse image of $\Delta$ in $\widetilde{\Pro}^2$.  

 \end{defn}

\begin{rem} \label{remmotvanishes} One can extend the definition of $h(\Delta)$ to the case where any number of vertices of $\Delta$ lie on the absolute in much the same way.
When the simplex $\Delta$ is degenerate, the resulting  framed object is equivalent to zero.
\end{rem}

\subsubsection{Definition of the framed motive of a product-hyperbolic manifold}
We first require the following subdivison lemma. Let $m$ be odd and $\geq 1$.

\begin{lem}\label{lemsubdiv} Let $x_0,\ldots, x_m$ be  distinct points in  $\overline{\Hyp}^{m}_{\overline{\Q}}$ in general position except that some may lie on the absolute, and let
$\Delta(x_0,\ldots, x_m)$ denote the geodesic simplex whose vertices are given by the $x_i$. Given any finite point $y\in \Hyp^m_{\overline{\Q}}$,
\begin{equation}\label{subdivide}
\mot(\Delta(x_0,\ldots, x_m)) = \sum_{i=0}^m \mot(\Delta(x_0,\ldots,x_{i-1}, y, x_{i+1},\ldots, x_m))\ .\end{equation}
\end{lem}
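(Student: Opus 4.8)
The plan is to express each framed motive occurring in $(\ref{subdivide})$ as the image, with a \emph{fixed} de Rham framing, of one mixed Tate motive attached to the common hyperplane configuration, and then to deduce the identity from an elementary relation between relative cycles. Write $\Delta=\Delta(x_0,\ldots,x_m)$, $\Delta_i=\Delta(x_0,\ldots,x_{i-1},y,x_{i+1},\ldots,x_m)$ for $0\le i\le m$, and let $\Theta=\Delta(x_0,\ldots,x_m,y)$ denote the (possibly degenerate) singular $(m+1)$-chain obtained by mapping the standard $(m+1)$-simplex affinely onto the convex hull of $x_0,\ldots,x_m,y$. Put $V=\{x_0,\ldots,x_m,y\}$ and let $\mathcal{L}$ be the union of the hyperplanes of $\Pro^m$ spanned by the $m$-element subsets of $V$; every facet of $\Delta$ and of each $\Delta_i$ lies on one of them. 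Choose an iterated blow-up $\pi\colon\widetilde{\Pro}^m\to\Pro^m$ resolving the non-normal crossings of $\mathcal{L}\cup Q$ and blowing up the vertices $x_j$ on the absolute, exactly as in $\S\ref{secmotiveatinf}$. Then $h:=H^m(\widetilde{\Pro}^m\setminus\widetilde{Q},\ \widetilde{\mathcal{L}}\setminus(\widetilde{\mathcal{L}}\cap\widetilde{Q}))$ is a mixed Tate motive over $\overline{\Q}$ whose top graded weight $H^m(\widetilde{\Pro}^m\setminus\widetilde{Q})\cong\Q(-n)$ carries the class $[\omega_Q]$ of $(\ref{omegaq})$. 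Since the facet divisor of each of $\Delta,\Delta_0,\ldots,\Delta_m$ is contained in $\widetilde{\mathcal{L}}$, restriction of pairs furnishes morphisms $\phi\colon h\to h(\Delta)$ and $\phi_i\colon h\to h(\Delta_i)$ in $\MT(\overline{\Q})$ --- realising each target on the same resolution $\widetilde{\Pro}^m$, which is harmless by independence of the motive on the chosen resolution --- and all of these carry the de Rham framing $[\omega_Q]$ of $h$ to the de Rham framing of the target, since the latter are all pulled back from $H^m(\widetilde{\Pro}^m\setminus\widetilde{Q})$.

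Hence, after the Tate twist by $(n)$ of $\S\ref{sectFramings}$, one has $\mot(\Delta)=[\,h,[\omega_Q],[\Delta]\circ\phi\,](n)$ and $\mot(\Delta_i)=[\,h,[\omega_Q],[\Delta_i]\circ\phi_i\,](n)$ in $\Am_n(\overline{\Q})$, where $[\Delta]\circ\phi$ and $[\Delta_i]\circ\phi_i$ are the pull-backs of the respective Betti framings, elements of $\omega_0(h)^\vee$. It then suffices to prove
$$[\Delta]\circ\phi\ =\ \sum_{i=0}^{m}[\Delta_i]\circ\phi_i\qquad\text{in }\ \omega_0(h)^\vee ,$$
and to invoke the addition rule in $\Am_n$: pulling $\sum_i[\,h,[\omega_Q],[\Delta_i]\circ\phi_i\,]$ back along the diagonal $h\to h^{\oplus(m+1)}$ collapses it to $[\,h,[\omega_Q],\sum_i[\Delta_i]\circ\phi_i\,]=[\,h,[\omega_Q],[\Delta]\circ\phi\,]=\mot(\Delta)$, twisted by $(n)$. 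Now $\omega_0(h)^\vee$ is identified with (a weight-zero piece of) the Betti homology $H_m(\widetilde{\Pro}^m\setminus\widetilde{Q},\ \widetilde{\mathcal{L}}\setminus(\widetilde{\mathcal{L}}\cap\widetilde{Q}))$, and there the displayed relation comes from the boundary of the lift $\widetilde{\Theta}$ of $\Theta$: the proper transforms $\widetilde{\Delta},\widetilde{\Delta_i}$ are genuine relative $m$-cycles of this pair (their interiors avoid $\widetilde{Q}$ because $y$ is finite and the $x_j$ lie in the closed ball, while all their facets --- including the new ones on exceptional divisors --- lie on $\widetilde{\mathcal{L}}$), whereas $\widetilde{\Theta}$ is a relative $(m+1)$-chain whose boundary equals $\pm\widetilde{\Delta}\pm\sum_i\pm\widetilde{\Delta_i}$ modulo chains supported on $\widetilde{\mathcal{L}}$, which vanish in the relative complex. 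The signs are fixed by the normalisation $\vol(\Delta)=i^{1-n}\int_{\Delta}\omega_Q$ of $\S\ref{sectFramings}$, which is precisely what turns the alternating boundary formula for $\partial\Theta$ into the all-positive sum $(\ref{subdivide})$; degenerate sub-simplices $\Delta_i$ (arising when $y$ lies on a facet hyperplane) contribute $0$ by Remark $\ref{remmotvanishes}$, so no genericity assumption on $y$ is needed.

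The main obstacle is the bookkeeping around a vertex at infinity: one must check that on the resolution the proper transform $\widetilde{\Theta}$ really is a relative chain for $(\widetilde{\Pro}^m\setminus\widetilde{Q},\ \widetilde{\mathcal{L}}\setminus(\widetilde{\mathcal{L}}\cap\widetilde{Q}))$, and that the correction terms appearing when one compares $\partial\widetilde{\Theta}$ with the proper transform of $\partial\Theta$ are all supported on exceptional divisors lying inside $\widetilde{\mathcal{L}}$ --- using that the blow-up of $\S\ref{secmotiveatinf}$ separates $\widetilde{Q}$ from the transforms of the facet hyperplanes through that vertex --- so that they disappear in relative homology. When all vertices are finite no blow-up is needed and the argument runs directly in $\Pro^m\setminus Q$. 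An alternative route that avoids the common configuration motive would be to prove $(\ref{subdivide})$ by induction on $n$, showing both sides have equal reduced coproduct $\widetilde{\Delta}_n$ (expressing the coproduct components through framed motives of faces via Proposition $\ref{propmotiveofsimplex}$, together with the lemma in lower dimension) and equal real period $R_\sigma$ for every embedding $\sigma$, and then concluding that their difference --- a primitive element of $\Ext^1_{\MT(K)}(\Q(0),\Q(n))$ for the field $K$ generated by the coordinates --- vanishes by Borel's theorem and injectivity of the full Hodge regulator; but the coproduct combinatorics make this less transparent than the chain argument.
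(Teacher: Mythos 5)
Your proof is correct and takes essentially the same route as the paper's: both arguments pass through the motive of the enlarged hyperplane configuration spanned by all $m$-element subsets of $\{x_0,\ldots,x_m,y\}$ (your $h$ is the paper's complex $(\ref{pfcmplx})$, and your morphisms $\phi,\phi_i$ are the paper's restriction maps between Cech-style complexes), and both reduce the lemma to the identity $[\Delta]=\sum_i[\Delta_i]$ in the relative Betti homology of that common pair. Your version is somewhat more explicit than the paper's — in particular in spelling out the $(m+1)$-chain $\Theta$ whose boundary realises the relation and in tracking the de Rham framings through the diagonal $h\to h^{\oplus(m+1)}$ — but this is exposition rather than a different argument. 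One small inaccuracy: the all-positive signs in $(\ref{subdivide})$ are already forced by the vertex-ordered orientations and the alternating boundary formula for $\partial\Theta$ (after accounting for the permutation moving $y$ into the $i$th slot), not by the normalisation $\vol(\Delta)=i^{1-n}\int_\Delta\omega_Q$ of $\S\ref{sectFramings}$; that normalisation only fixes the overall sign of the Betti framing against the de Rham one and plays no role in the boundary computation.
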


\begin{proof}
Let $J$ (resp. $I$) be the set of $m$-element subsets of  $\{x_0,\ldots, x_m\}$ (resp. $\{x_0,\ldots, x_m,y\}$), and let $L_{i}$ for $i\in I$  denote the hyperplane passing through $m$ points. Let $Q$ denote a smooth quadric in $\Pro^{m}$ corresponding to the absolute
of $\Hyp^m$. Let $\widetilde{\Pro}^m$ denote the blow-up of $\Pro^m$ 
at all intersections of hyperplanes which do not cross normally in increasing order of dimension so that  the strict transforms $\widetilde{L}_i$ of $L_i$, the strict transform  $\widetilde{Q}$  of $Q$,  and the exceptional loci $L_h$ for $h$ in some indexing set $H$,  are normal crossing.
  The hypercohomology of the complex of sheaves
\begin{equation}\label{pfcmplx}
\Q_{\widetilde{\Pro}^m\backslash \widetilde{Q}} \rightarrow \bigoplus_{i\in I\cup H} \Q_{\widetilde{L}_i \backslash (\widetilde{L}_i \cap \widetilde{Q})}
 \rightarrow \bigoplus_{i,j\in I\cup H} \Q_{\widetilde{L}_{ij}  \backslash (\widetilde{L}_{ij} \cap \widetilde{Q})} \rightarrow \ldots \end{equation}
defines a mixed Tate motive.  There are $m+1$ similar complexes defined relative to the vertices $\{x_0,\ldots, x_{i-1},y,x_{i+1},\ldots, x_m\}$ for each $i$,  which map 
 to $(\ref{pfcmplx})$. Via these maps, there are unique framings on the motive of $(\ref{pfcmplx})$  which make it equivalent to $\sum_{i=0}^m \mot(\Delta(x_0,\ldots,x_{i-1}, y, x_{i+1},\ldots, x_m))$. 
Likewise,  the complex
\begin{equation}\label{pfcmplx2}
\Q_{\widetilde{\Pro}^m\backslash \widetilde{Q}} \rightarrow \bigoplus_{i\in J\cup H} \Q_{\widetilde{L}_i \backslash (\widetilde{L}_i \cap \widetilde{Q})}
 \rightarrow \bigoplus_{i,j\in J\cup H}  \Q_{\widetilde{L}_{ij}  \backslash (\widetilde{L}_{ij} \cap \widetilde{Q})} \rightarrow \ldots
\end{equation}
also defines a mixed Tate motive, which can be given unique framings making it equivalent to $\mot(\Delta(x_0,\ldots, x_m))$ (after blowing-down superfluous divisors).
The natural map  from $(\ref{pfcmplx2})$ to $(\ref{pfcmplx})$ is an equivalence of framed motives, and corresponds to the fact that
the relative homology class $[\Delta(x_0,\ldots, x_m))]$ is equal to the sum of the relative homology classes
 $\sum_{i=0}^m [\Delta(x_0,\ldots,x_{i-1}, y, x_{i+1},\ldots, x_m))]$.
\end{proof}

\begin{defn}\label{defnmotiveofM}
Let $M$ be a  finite-volume product-hyperbolic manifold modelled on $\X^\n=\Hyp^{2n_1-1}\times \ldots \times\Hyp^{2n_{\!N}-1}$ and
 defined over the fields $S=(k_1,\ldots, k_N)$.
By proposition $\ref{proptriang}$, $M$ admits a product-tiling with geodesic-product simplices
$\Delta^{(i)}_{1}\times \ldots \times \Delta^{(i)}_{N}, $ for $1\leq i\leq R$, 
 where $\Delta^{(i)}_j$ are oriented geodesic simplices in $\overline{\Hyp}^{n_j}$,  defined over $k_j$, with at most one vertex at infinity. 
 
  The (framed) motive of $M$ is  defined to be the element
\begin{equation}\label{eqndefnmotiveofM}
\mot(M) = \sum_{i=1}^R \mot(\Delta^{(i)}_1)\otimes \ldots \otimes \mot(\Delta^{(i)}_N)  \in \Am_{n_1}(\overline{\Q})\otimes_{\Q} \ldots \otimes_{\Q}\Am_{n_N}(\overline{\Q})\ .
\end{equation}
\end{defn}
It follows from  lemma $\ref{lemsubdiv}$ that $\mot(M)$ is well-defined, since one can construct a common subdivision of
any two distinct product-tilings of $M$.

\subsubsection{Vanishing of the reduced coproduct} \label{sectcoproductvanishes}
We prove  that $\mot(M)$ is a product of extensions of $\Q(0)$ by $\Q(n_i)$ by  showing that it lies in the kernel of the reduced coproduct.
Let  $\Delta$  be a geodesic hyperbolic simplex defined over $\overline{\Q}$ with at most one vertex at infinity.
 The main point is that the reduced coproduct of $\mot(\Delta)$ only depends on the boundary of $\Delta$. By definition $(\ref{coproddef1})$
 \begin{equation}
 \Delta_{n-r,r} \mot(\Delta) = \sum_i  [h(\Delta), [\omega_Q], e_i^{\vee}] (n) \otimes  [h(\Delta), e_i,[\Delta] ] (r)
 \end{equation}
 where $\{e_i\}$  is a basis for $\gr^W_{2r} h(\Delta)$.  By proposition $\ref{propmotiveofsimplex}$, such a basis is provided by 
  the classes $e_{\widetilde{F}}$, where $F$ ranges over all
faces of $\Delta$ of dimension $2r-1$, or, in the case when $\Delta$ has a vertex at infinity and $r=1$, by  linear combinations of classes $\alpha_{\widetilde{G}}$ where $G$ is a  face of $\Delta$ of dimension two. For any such face $F$, we have
\begin{equation}\label{motFdef}
\mot(F)  = [h(\Delta), e_i, [\Delta]](r) \ ,
\end{equation}
if $e_i$ is  $e_{\widetilde{F}}$ or $\alpha_{\widetilde{F}}$.
Thus  we can  write the reduced coproduct:
\begin{equation}\label{coprodoverF}
\widetilde{\Delta} (\mot(\Delta)) = \sum_F  c_F   \otimes \mot(F)  \ ,
\end{equation}
where the sum is over  the boundary faces of $\Delta$. We say that $c_F$ is the coefficient of the face $F$. It is
  a   framed  motive that we will compute in $\S\ref{secquotmotives}$.

\begin{thm} \label{thmMinext} Let $M$ be a product-hyperbolic manifold defined over the number fields $(k_1,\ldots, k_N)$ as  above. Then  $\mot(M)\in \Am_{n_1}(\overline{\Q})\otimes_{\Q}\ldots \otimes_{\Q} \Am_{n_r}(\overline{\Q})$ and defines an element in
$\Ext^1_{\MT(k_1)}(\Q(0),\Q(n_1)) \otimes_{\Q} \ldots \otimes_{\Q} \Ext^1_{\MT(k_N)}(\Q(0),\Q(n_N))\ .$
\end{thm}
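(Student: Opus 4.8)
The plan is to show that $\mot(M)$ lies in $\Am_{n_1}(k_1)\otimes_\Q\cdots\otimes_\Q\Am_{n_N}(k_N)$ and in the kernel of the reduced coproduct in each tensor factor, and then to invoke Proposition~\ref{corkerdelta}. For the field of definition: by Proposition~\ref{proptriang} the tiling may be taken with vertices defined over $S$, the quadrics $Q$ (the absolutes of the $\Hyp^{2n_j-1}$) and the blow-ups involved are defined over $\Q$, and the framings $[\omega_Q]$, $[\Delta^{(i)}_j]$ of $(\ref{motsimplexdefn})$ are $\Q$-rational (the factor $i^{n}\sqrt{\det q}$ of $(\ref{omegaq})$ enters only through the comparison isomorphism, not the motivic framing); hence $\mot(\Delta^{(i)}_j)\in\Am_{n_j}(k_j)$ and $\mot(M)$ lies in the asserted tensor product. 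Writing $\widetilde\Delta^{(j)}$ for $\id^{\otimes(j-1)}\otimes\widetilde\Delta_{n_j}\otimes\id^{\otimes(N-j)}$ and using exactness of the tensor product over $\Q$, one has $\bigcap_{j}\ker\widetilde\Delta^{(j)}=\bigotimes_{j}\ker\widetilde\Delta_{n_j}$, which by Proposition~\ref{corkerdelta} equals $\bigotimes_{j}\Ext^1_{\MT(k_j)}(\Q(0),\Q(n_j))$. So it suffices to prove $\widetilde\Delta^{(j)}(\mot(M))=0$ for each $j$; by symmetry I treat $j=1$.

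By Definition~\ref{defnmotiveofM} and formula $(\ref{coprodoverF})$,
\[
\widetilde\Delta^{(1)}\big(\mot(M)\big)=\sum_{i=1}^{R}\ \sum_{F\subset\Delta^{(i)}_1}c^{(i)}_F\otimes\Big(\mot(F)\otimes\mot(\Delta^{(i)}_2)\otimes\cdots\otimes\mot(\Delta^{(i)}_N)\Big),
\]
the inner sum being over the odd-dimensional boundary faces $F$ of $\Delta^{(i)}_1$ (faces meeting a vertex at infinity contributing, at $r=1$, through the classes $\alpha_{\widetilde G}$ and the space $V_x$ of $(\ref{Vxdef})$, cf.\ \S\ref{secmotiveatinf}). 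Since the framed motive $\mot(F)$ of $(\ref{motFdef})$ depends, up to orientation, only on the simplex $F$ (see \S\ref{secquotmotives}), the factor $\mot(F)\otimes\mot(\Delta^{(i)}_2)\otimes\cdots\otimes\mot(\Delta^{(i)}_N)$ depends only on the product-face $\Phi=F\times\Delta^{(i)}_2\times\cdots\times\Delta^{(i)}_N$. I would then regroup the double sum by the image $\bar\Phi$ of $\Phi$ in the product-tiling of $M$: the pairs $(i,F)$ mapping to a fixed $\bar\Phi$ are precisely those for which $\Delta^{(i)}$ lies in the star of $\bar\Phi$, so the corresponding terms collect, with compatible orientations, into $\big(\sum_{\mathrm{star}}c^{(i)}_F\big)\otimes\mot(\bar\Phi)$.

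The crucial point — and the only place the hypothesis that $M$ has no boundary is used — is that each such coefficient sum vanishes. By the computation of $c_F$ in \S\ref{secquotmotives}, which identifies it with the framed motive of the link of $F$ in $\Delta^{(i)}_1$ (a spherical product-simplex, or a Euclidean one along cuspidal directions), the coefficients around $\bar\Phi$ add up to the framed motive of the link of $\bar\Phi$ in $M$. As $M$ is a boundaryless finite-volume manifold, that link is a closed space form: in a Euclidean direction its motive is zero because the defining quadric degenerates (Remark~\ref{remmotvanishes}); in a spherical direction it is a round sphere $\Sph^{2s-1}$, whose motive vanishes by induction on $s$ — the base case $s=1$ being exactly the statement that the sum over the star of the Kummer motives of the unit-modulus algebraic numbers $e^{i\theta_F}$ equals the Kummer motive of $\prod_{\mathrm{star}}e^{i\theta_F}=e^{2\pi i}=1$, which is trivial (this uses the geometric fact that the dihedral angles $\theta_F$ around $\bar\Phi$ sum to $2\pi$). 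Finiteness of the tiling ensures every star is finite and complete, so no leftover boundary terms survive, and thus $\widetilde\Delta^{(1)}(\mot(M))=0$; the other factors are identical.

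The main obstacle is this last step: the precise identification of $c_F$ with a link motive — the substance of \S\ref{secquotmotives} — and the inductive vanishing of $\mot(\Sph^{2s-1})$, together with the bookkeeping for simplices with a vertex at infinity, where the relevant geometric input is the linear relation among the interior angles of the Euclidean simplex at infinity $\Delta_\infty$ recorded by $(\ref{Vxdef})$ rather than a dihedral-angle identity. Everything else is formal manipulation in the Hopf algebra $\Am(k)$ and elementary properties of the tiling.
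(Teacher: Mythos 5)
Your proposal takes a genuinely different route from the paper, and it contains a real gap.

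The paper's own proof does not regroup the coproduct over faces of $M$. Instead it lifts the tiling to a $\Gamma$-equivariant tiling of the universal cover $\X^\n$, covers a large product-ball $B_r$ by $N_r\sim r^{\dim\X^\n}$ translates $\gamma_i(D)$ of a fundamental domain, and uses the subdivision lemma to show that contributions to the partial coproduct from internal faces cancel, leaving only contributions from faces contained in $B_{r+s}\setminus B_r$, of which there are only $O(r^{\dim\X^\n-1})$. Thus $\|N_r\,v\|=O(r^{\dim\X^\n-1})$ while $N_r\sim r^{\dim\X^\n}$, forcing $v=0$. This scaling argument deliberately avoids any statement about the motive of a complete link, and the introduction advertises exactly this as the point of departure from Goncharov.

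Your proposal instead regroups over faces $\bar\Phi$ in $M$ and tries to kill each coefficient sum by claiming that the total coefficient around an interior face is the framed motive of the link, and that the framed motive of a round sphere $\Sph^{2s-1}$ (or a flat torus in cuspidal directions) vanishes. This is essentially Goncharov's original strategy, but your justification of the key vanishing does not hold up. The base case ``the sum over the star is the Kummer motive of $\prod e^{i\theta_F}=e^{2\pi i}=1$'' is not meaningful as stated: the dihedral angles $\theta_F$ are transcendental in general, so $e^{i\theta_F}$ is not an algebraic number, and there is no canonical Kummer motive attached to it. What is true is that each $c_F$ is a mixed Tate motive whose \emph{real period} is the dihedral angle; the identity $\sum\theta_F=2\pi$ is an analytic fact about periods, not a formal identity of framed motives. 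Goncharov's own proof bridges this gap by invoking full faithfulness of the Hodge realization functor on $\MT(\overline{\Q})$ (which the present paper cites but explicitly chooses not to use); without invoking that, your vanishing step has no proof. The higher-dimensional induction on $\Sph^{2s-1}$ is also only asserted, and it is not clear how to reduce the motive of a spherical triangulation of $\Sph^{2s-1}$ to the lower-dimensional case without again passing through the Hodge realization. Finally, the regrouping step itself (matching pairs $(i,F)$ bijectively with simplices in the star, with coherent orientations and multiplicities, including across simplices with a vertex at infinity) needs more care than ``regroup by the image $\bar\Phi$,'' though this is plausibly repairable. The substantive gap is the vanishing of the link motive.
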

\begin{proof} Let $\X^\n= \prod_{i=1}^N \Hyp^{2n_i-1}$, and
  $M=\X^{\n}/\Gamma$ (notations from \S2).
By proposition $\ref{proptriang}$, $M$ admits a  tiling with product simplices $\Delta^{(i)}_1\times \ldots \times \Delta^{(i)}_N$ for $1\leq i\leq R$
  defined over $(k_1,\ldots, k_N)$ with at most one vertex at infinity.  It suffices to show that  
  \begin{equation}\label{motMinkerpf}
  \mot(M)\in \ker\big(\id_1 \otimes \ldots \otimes \id_{j-1}\otimes \Delta_{r,n_j-r} \otimes \id_{j+1}\otimes \ldots \otimes \id_N  \big)\ ,
  \end{equation}
 for each $1\leq j\leq N$ and all $1\leq r\leq n_j-1$. This would imply that  
$$\mot(M) \in \ker \widetilde{\Delta}_{n_1} \otimes_{\Q} \ldots \otimes_{\Q} \ker \widetilde{\Delta}_{n_N} \ ,$$
and it follows  by corollary $\ref{corkerdelta}$ that 
$$\mot(M) \in \Ext^1_{\MT(\overline{\Q})}(\Q(0),\Q(n_1)) \otimes_{\Q} \ldots \otimes_{\Q} \Ext^1_{\MT(\overline{\Q})}(\Q(0),\Q(n_N))\ .$$
Since the tiling of $M$ is defined over $(k_1,\ldots, k_N)$, it is invariant under the natural action of 
$\Gal(\overline{k}_1/k_1)\times \ldots \times \Gal(\overline{k}_N/k_N)$ on the product-simplices. It follows from  Galois descent for $K$-theory, or more precisely, \cite{DG}, 
$(2.16.2)$, that  $\mot(M)$ lies in the subspace
$\Ext^1_{\MT(k_1)}(\Q(0),\Q(n_1)) \otimes_{\Q} \ldots \otimes_{\Q} \Ext^1_{\MT(k_N)}(\Q(0),\Q(n_N)).$

To prove $(\ref{motMinkerpf})$, let $1\leq r\leq n_j-1$ and let
$$V\subset \bigotimes_{i=1}^{j-1}\Am_{n_i}(\overline{\Q})    \otimes_{\Q} \big(\Am_r(\overline{\Q})\otimes_{\Q} \Am_{n_j-r}(\overline{\Q})\big) \otimes_{\Q} \bigotimes_{i=j+1}^{N}\Am_{n_i}(\overline{\Q}) $$ denote the $\Q$-vector space spanned by all the terms 
\begin{equation}\label{facetermsoftriang}
 \bigotimes_{i=1}^{j-1}\mot(\Delta^{(k)}_i)   \otimes ( c_F \otimes \mot(F)) \otimes  \bigotimes_{i=j+1}^{N}\mot(\Delta^{(k)}_i)  \end{equation}
which can occur in   $\id_1 \otimes \ldots \otimes \id_{j-1}\otimes \Delta_{r,n_j-r} \otimes \id_{j+1}\otimes \ldots \otimes \id_N \big(\mot(M)\big)$ by $(\ref{coprodoverF})$.
Since the tiling is finite, $V$ is finite dimensional  and only depends on the set of  faces of codimension $\geq 1$. 
Let us set
$$v = \id_1 \otimes \ldots \otimes \id_{j-1}\otimes \Delta_{r,n_j-r} \otimes \id_{j+1}\otimes \ldots \otimes \id_N (\mot(M)) \in V\ .$$
 We shall  show that $v$ is zero.  For simplicitly, let us assume that $M$ is compact and that all simplices $\Delta_j^{(i)}$ have no vertices at infinity. 
  The tiling of $M$  lifts to a $\Gamma$-equivariant tiling of  the whole of $\X^\n$.  Let 
  $D=\bigcup_{i=1}^R \Delta^{(i)}_1\times \ldots \times \Delta^{(i)}_N\subset \X^\n$ be a  lift of the  tiling of $M$.
   Now there exists an $s>0$, such that  for all  $r>\!>0$ sufficiently large, there are elements $\gamma_1,\ldots, \gamma_{N_r}\in \Gamma$ such that $$B_r\subset \bigcup_{1\leq i\leq N_r} \gamma_i(D)\subset B_{r+s} $$
where    $B_r\subset B_{r+s}\subset \X^{\n}$ are products of balls of radius $r$ and $r+s$, and the translates $\gamma_i(D)$ of $D$ are distinct and tile $B_r$.
Since $\gamma_i$ is an isometry,  $\mot(\bigcup_i \gamma_i(M)) = N_r\,\mot(M)$, and so
$$  \id_1 \otimes \ldots \otimes \id_{j-1}\otimes \Delta_{r,n_j-r} \otimes \id_{j+1}\otimes \ldots \otimes \id_N (\mot( \bigcup_i\gamma_i(D)))=N_r\, v$$
 where $N_r$ is of order $r^{\dim \X{^\n}}$, by volume considerations.  On the other hand, by  a version of lemma $\ref{lemsubdiv}$ (which easily generalizes to hold for  polytopes), and the fact that the reduced coproduct of a simplex only depends on its boundary, contributions from internal faces cancel  and we can  
rewrite
$$   \id_1 \otimes \ldots \otimes \id_{j-1}\otimes \Delta_{r,n_j-r} \otimes \id_{j+1}\otimes \ldots \otimes \id_N (\mot( \bigcup_i\gamma_i(D))) $$ 
  as a sum of terms in $V$ of the form  $(\ref{facetermsoftriang})$ where all faces  are contained in 
  $B_{r+s}\backslash B_{r}$.  
By volume considerations, the number of such faces is of order $r^{(\dim\X^\n-1)}$.  
Since $V$ is finite-dimensional,  choose a norm $||.||$ on $V$. The above argument shows that 
$||N_rv|| $ is of order $r^{(\dim\X^\n-1)}$. Since $N_r$ is of order $r^{\dim\X^\n}$, and $r$ is arbitrarily large, we have $||v||=0$ and hence $v=0$ as required.
The proof in the case when the simplices have vertices at infinity is entirely similar. 
  \end{proof}

\subsubsection{The symmetric group action}
It remains to show, in the case when $M$ is equivariant, that its framed motive $\mot(M)$ is a determinant.
Assume that all hyperbolic components are of equal odd dimension $2n-1$, {\it i.e.},  $n_1=\ldots =n_N=n$, and let $k$ be a totally real number
field with  a set of embeddings $\Sigma=\{\sigma_1,\ldots, \sigma_N\}$ into $\R$.\footnote{In the exceptional
 case $n=2$, if we identify $\SO^+(3,1)$ with $\PSL_2(\C)$ we can allow $k$ be to be any number field $L$ and $\sigma_i$ to be  complex  places of $L$.}
Let  $k_i=\sigma_i(k)$ and  suppose that
$M$ is equivariant with respect to $\Sigma$.

\begin{thm} \label{thmMmotiveisdet}
Let $M$ be  an equivariant product-hyperbolic manifold as above. Then, in the notation of $\S\ref{subsectdetandhodge}$,
the framed motive of $M$ is a determinant:
$$\mot(M) \in \textstyle{\bigwedge^{\Sigma}} \Am_n(k)\ .$$
\end{thm}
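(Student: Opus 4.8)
The idea is to compare the product-tiling $T$ of $M$ with its image $^\pi T$ under an element $\pi\in\Sym_N$, using that $\mot(M)$ is independent of the chosen tiling. First I would fix, by Proposition $\ref{proptriang}$, a product-tiling $T$ of $M$ defined over $S=(\sigma_1k,\ldots,\sigma_N k)$, with geodesic product-simplices $\Delta^{(i)}_1\times\cdots\times\Delta^{(i)}_N$ ($1\le i\le R$), each factor having at most one vertex at infinity. By Definition $\ref{defnmotiveofM}$, $\mot(M)=\sum_{i=1}^R\mot(\Delta^{(i)}_1)\otimes\cdots\otimes\mot(\Delta^{(i)}_N)$, which by Theorem $\ref{thmMinext}$ lies in $\Ext^1_{\MT(k_1)}(\Q(0),\Q(n))\otimes_\Q\cdots\otimes_\Q\Ext^1_{\MT(k_N)}(\Q(0),\Q(n))\subset\Am_n(k)^{\otimes\Sigma}$ in the notation of $\S\ref{subsectdetandhodge}$. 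It then suffices to show that $\pi\cdot\mot(M)=\mathrm{sgn}(\pi)\,\mot(M)$ for every $\pi\in\Sym_N$, the action being that of $(\ref{Equivgeodaction})$ on product-simplices and the induced one of $\S\ref{subsectdetandhodge}$ on $\Am_n(k)^{\otimes\Sigma}$.

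Next I would apply $\pi$ to the tiling. By Lemma $\ref{lemequivtile}$, the family $^\pi T=\{\pi(\Delta^{(i)}_1\times\cdots\times\Delta^{(i)}_N)\}_i$, where $\pi(\Delta^{(i)}_1\times\cdots\times\Delta^{(i)}_N)=(\sigma_1\sigma_{\pi(1)}^{-1}\Delta^{(i)}_{\pi(1)})\times\cdots\times(\sigma_N\sigma_{\pi(N)}^{-1}\Delta^{(i)}_{\pi(N)})$, is a virtual product-tiling of $M$ defined over $S$. By Lemma $\ref{lemsubdiv}$ (which extends to polytopes and, by the excision procedure following Corollary $\ref{corexcis}$ and Lemma $\ref{lemequivtile}$, to virtual tilings, degenerate simplices contributing $0$ by Remark $\ref{remmotvanishes}$), any two such tilings admit a common subdivision, so the framed motive of $M$ computed from $^\pi T$ is again $\mot(M)$.

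Then I would evaluate that motive directly. The construction $\Delta\mapsto\mot(\Delta)$ — the motive $h(\Delta)$ of $(\ref{finitemotive})$ or $(\ref{infinitemotive})$ together with its framings $[\omega_Q]$, $[\Delta]$ — is functorial in the base, so an isomorphism $\tau$ of subfields sends it to $\mot(\tau\Delta)=\tau_*\mot(\Delta)$ once the vertex ordering is transported along $\tau$; the parabolic (cusp) vertices are rational by Lemma $\ref{lemequivcusps}$, so this applies to simplices with a vertex at infinity as well. Tracking orientations, each product-simplex of $^\pi T$ contributes $\mathrm{sgn}(\pi)\,(\sigma_1\sigma_{\pi(1)}^{-1})_*\mot(\Delta^{(i)}_{\pi(1)})\otimes\cdots\otimes(\sigma_N\sigma_{\pi(N)}^{-1})_*\mot(\Delta^{(i)}_{\pi(N)})$ to the motive of $M$; summing over $i$ this is $\mathrm{sgn}(\pi)\,\pi\cdot\mot(M)$. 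Comparing with the previous step, $\mathrm{sgn}(\pi)\,\pi\cdot\mot(M)=\mot(M)$, hence $\pi\cdot\mot(M)=\mathrm{sgn}(\pi)\,\mot(M)$, which gives $\mot(M)\in\bigwedge^{\Sigma}\Am_n(k)$.

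The main obstacle is precisely the appearance of $\mathrm{sgn}(\pi)$ in the third step, i.e.\ a careful orientation count. The sign arises because all hyperbolic factors have the \emph{same odd} dimension $2n-1$: the $\Sym_N$-action permutes $N$ copies of $\Hyp^{2n-1}$, and interchanging two factors of a product of odd-dimensional manifolds reverses the ambient orientation (each transposition by $(-1)^{(2n-1)^2}=-1$), so $\pi$ acts on $\X^\n$ by an orientation-$\mathrm{sgn}(\pi)$ map; relative to the fixed orientation of $M$, the simplices of $^\pi T$ thus acquire local multiplicities $\mathrm{sgn}(\pi)$. The remaining orientation contributions come from the individual conjugations $\sigma_j\sigma_{\pi(j)}^{-1}$, which need not send a positively oriented simplex to a positively oriented one and whose effect varies with the simplex; these cancel, for each $i$ separately, between the local multiplicity of $\pi(\Delta^{(i)}_1\times\cdots\times\Delta^{(i)}_N)$ and the sign by which $\mot$ of this simplex computed with the transported vertex ordering differs from the one computed with an ordering positively oriented in $\X^\n$ — and this cancellation is exactly where equivariance is used, since $\Gamma$ lies in the image of $e$, so that $\pi$ commutes with $\Gamma$ and descends. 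The same count applies verbatim in the exceptional case $n=2$ with complex places, $\Hyp^3$ being again odd-dimensional.
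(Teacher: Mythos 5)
Your proof is correct and follows essentially the same strategy as the paper's: fix a tiling $T$ defined over $S$, invoke Lemma~\ref{lemequivtile} to see that the twisted tiling ${}^\pi T$ is again a (virtual) tiling of $M$, use the well-definedness of $\mot(M)$ to equate the two computations, and extract the sign $\varepsilon(\pi)$ from the oddness of the dimension $2n-1$. The only presentational difference is where the sign is located: the paper isolates it in the action on the weight-$0$ framing (the volume form $\omega_{Q_1}\wedge\cdots\wedge\omega_{Q_N}$ is alternating because each $\omega_{Q_i}$ has odd degree, while the fundamental-class framing in weight $-2n$ is preserved by equivariance), whereas you track it through local multiplicities and orientations of the product-simplices under the factor permutation; both arguments rely on equivariance at precisely the same point to absorb the variable signs coming from the individual conjugations $\sigma_j\sigma_{\pi(j)}^{-1}$, and both leave that cancellation at the same level of informality as the paper.
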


\begin{proof}
Let $\Delta^{(i)}_1\times\ldots \times \Delta^{(i)}_N$, for $1\leq i\leq R$ be a product-tiling for $M$, where $\Delta^{(i)}_j$ is defined over $k_j$, and has at most
one vertex at infinity. Since the cusps of $M$ are equivariant  the image of this tiling under the twisted action of the symmetric group $\Sym_N$
is another tiling for $\pi(M)=M$ (lemma $\ref{lemequivtile}$):
$$M=\bigcup_{i=1}^R (\sigma_1\sigma_{\pi(1)}^{-1}\Delta^{(i)}_{\pi(1)})\times\ldots \times (\sigma_N\sigma_{\pi(N)}^{-1}\Delta^{(i)}_N)\ , \ \hbox{ for all }\pi \in \Sym_N\ ,$$
It follows that $\mot(M)$ and $\mot(\pi(M))$ are equivalent up to a sign, determined by the action of $\pi$ on the framings.
First, $\pi$ preserves the framing in $\gr^W_{-2n} \mot(M)^\vee$ corresponding to the fundamental class of $M$,  because $M$ is equivariant.
Let $Q_i$ denote the quadric in $\Pro^{2n_i-1}$ which corresponds to $\partial \Hyp^{2n_i-1}$ , for $1\leq i\leq N$. 
The framing in $\gr^W_{0} \mot(M)$ corresponds to the volume form on $\X^n$, which is alternating:
$$\pi([\omega_{Q_1}\wedge \ldots \wedge \omega_{Q_N}]) = \varepsilon(\pi) [ \omega_{Q_1} \wedge \ldots \wedge \omega_{Q_N}]\ , \quad \hbox{for all } \pi \in \Sym_N\ ,$$
since each  $\omega_{Q_i}$ is of odd degree. Here, $\varepsilon$ is the sign of a permutation. Thus,
$$\mot(M) = \varepsilon(\pi)\,\pi (\mot(M)) \quad \hbox{ for all } \pi \in \Sym_N\ ,$$
and  $\mot(M)\in \Am_{n}(\sigma_1(k)) \otimes_{\Q} \ldots \otimes_{\Q} \Am_{n}(\sigma_N(k)) $ is a determinant  (\S\ref{subsectdetandhodge}).
\end{proof}

\subsubsection{Quotient motives and spherical angles} \label{secquotmotives}
Let  $m=2n-1$, and let $\Delta\subset \overline{\Hyp}^{m}_{\overline{\Q}}$ denote a  geodesic simplex with one vertex at infinity (the  case when $\Delta$ is finite is  simpler and left to the reader).
We wish to compute the coefficient motives  $c_F$ of any two or  odd-dimensional face $F$ of $\Delta$.    Using the notations of \S\ref{secmotiveatinf},  the motive $h(\Delta)$ is computed by the hypercohomology of the complex of sheaves $(\ref{compofsheaves})$:
\begin{equation}\label{blownupcomplex}
\Q_{\widetilde{\Pro}^m \backslash \widetilde{Q}} \rightarrow \bigoplus_{|I|=1} \Q_{\widetilde{L}_I \backslash \widetilde{Q}_I} \rightarrow \ldots \rightarrow  \bigoplus_{|I|=m} \Q_{\widetilde{L}_I \backslash \widetilde{Q}_I}  
\end{equation}
 where $I\subset \{-1,0,\ldots, m\}$.  First let  $F$  be a face of $\Delta$ of odd dimension $2r-1$. We assume that $F$ does not contain the vertex at infinity if $F$ is one-dimensional.  It   corresponds to a  hyperplane $L_J$ where $J\subset \{0,\ldots, m\}$, so $F=\Delta\cap L_J$,   and   $\mot(F)$ is given by  the hypercohomology of the following subcomplex of sheaves:
 \begin{equation}
 \Q_{\widetilde{L}_J \backslash \widetilde{Q}} \rightarrow \bigoplus_{|I|=|J|+1, J\subset I } \Q_{\widetilde{L}_I \backslash \widetilde{Q}_I} \rightarrow \ldots \rightarrow  \bigoplus_{|I|=m, J\subset I } \Q_{\widetilde{L}_I \backslash \widetilde{Q}_I}  
 \end{equation}
where  $I\subset \{-1,0,\ldots, m\}$.  Consider the  complex 
\begin{equation}
\label{coskeleton}
\Q_{\widetilde{\Pro}^{m}\backslash \widetilde{Q}} \rightarrow \bigoplus_{|I|=1,\, I \subset J} \Q_{\widetilde{L}_{I}\backslash \widetilde{Q}_{I}}
\rightarrow  \ldots  
\rightarrow  \Q_{\widetilde{L}_{J}\backslash \widetilde{Q}_{J}}\ .
\end{equation}
Its hypercohomology in degree $m$ is  $H^m(\widetilde{\Pro}^m \backslash \widetilde{Q}, \widetilde{L}^J \backslash \widetilde{Q}^J)$, where $L^J= \cup_{j\in J} L_j$, $Q^J = L^J \cap Q$, and $\widetilde{L}^J$, $\widetilde{Q}^J$ are their strict transforms.
 There is a  map from $(\ref{blownupcomplex})$ to $(\ref{coskeleton})$. The divisor $Q\cup L^J$ is normal crossing, so we can blow down once again to obtain an isomorphism of  $H^m(\widetilde{\Pro}^m \backslash \widetilde{Q}, \widetilde{L}^J \backslash \widetilde{Q}^J)$ with a motive we call
\begin{equation}\label{quotmotive}
h(\Delta_F) =  H^m( \Pro^m   \backslash Q, L^J \backslash Q^J) \ .  \end{equation}
There are  framings on $h(\Delta_F)$ given by $\omega_Q:\Q(-n)\overset{\sim}{\rightarrow}  \gr^W_{2n} H^m(\Pro^m\backslash Q) \cong \gr^W_{2n} h(\Delta_F)$, and
a   class $T_F : \Q(-r)\overset{\sim}{\rightarrow} \gr^W_{2r} h(\Delta_F)^{\vee}$, which we  define as follows.
Since $Q$ is smooth, the residue map onto $Q$ gives a morphism 
\begin{equation}\label{restoQ}
h(\Delta_F)\To H^{m-1}(Q,Q\cap L^J)(-1)\ ,
\end{equation}
and  gives an identification 
 $\Q(-r)\cong \gr^W_{2r} h(\Delta_F)\cong \gr^W_{2(r-1)} H^{m-1}(Q,Q\cap L^J)$, the latter being dual to  $H_{m-1}(Q,  Q\cap L^J)$. 
 An element in the Betti realization of this group is given by 
the  relative homology class of the spherical simplex $S_F \subset Q(\R)$ cut out by the hyperplanes $(L_j\cap Q)_{j\in J}$ on the set of real points of the quadric $Q$ (which can be identified as the boundary $\partial \Hyp^m$ in the Klein model), and $T_F$ is defined to be the corresponding framing on $ \gr^W_{2r} h(\Delta_F)$. It can be represented
by a tubular neighbourhood $N_F$ of $S_F$.
The sign is determined by the orientations, and 
one can verify that $T_F$ is dual to the class induced by $e_F$. 
Finally, the  residue of $\omega_Q$  along $Q$ is a certain rational multiple of the volume form on $Q$.

\begin{lem} Let $\Delta$ be  as above. For every strict odd-dimensional subface $F$ of $\Delta$,  which is not a one-dimensional face which contains the vertex at infinity, the coefficient $c_F$ of $\mot(F)$  in $(\ref{coprodoverF})$ is  equivalent to 
the framed mixed Tate motive $$[h(\Delta_F),[\omega_{\widetilde{Q}}], T_F](n)\ .$$
Its period is computed by integrating $\omega_{\widetilde{Q}}$ over the tubular neighbourhood $N_F$. By the residue formula and $(\ref{omegaq})$, this is   $2\pi i^n$ times a certain rational multiple of the the spherical volume of $S_F$. In particular, it lies in $i^n \R$. 
\end{lem}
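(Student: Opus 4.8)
The plan is to match $c_F$ with the framed motive $[h(\Delta_F),[\omega_{\widetilde Q}],T_F](n)$ by producing an explicit morphism of framed objects, and then to extract the period from the Leray residue formula together with the explicit form $(\ref{omegaq})$ of $\omega_Q$.

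\emph{Step 1: rewriting $c_F$.} First I would expand $\Delta_{n-r,r}\mot(\Delta)$ using $(\ref{coproddef1})$: it equals $\sum_i [h(\Delta),[\omega_Q],e_i^\vee](n)\otimes[h(\Delta),e_i,[\Delta]](r)$ for any basis $\{e_i\}$ of $\gr^W_{2r}h(\Delta)$. By Proposition $\ref{propmotiveofsimplex}$ I may take this basis to consist of the classes $e_{\widetilde F'}$ attached to the faces $F'$ of dimension $2r-1$ (the extra combinations of the $\alpha_{\widetilde G}$ occurring when $r=1$ concern exactly the one‑dimensional faces through the vertex at infinity, which are excluded from the statement, so I set them aside). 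By $(\ref{motFdef})$ the second tensor factor attached to $e_{\widetilde F}$ is $\mot(F)$, so comparison with $(\ref{coprodoverF})$ yields $c_F=[h(\Delta),[\omega_Q],e_{\widetilde F}^\vee](n)$, where $e_{\widetilde F}^\vee\in\omega_r(h(\Delta))^\vee$ is the functional dual to $e_{\widetilde F}$.

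\emph{Step 2: factoring through $h(\Delta_F)$.} The morphism of complexes of sheaves $(\ref{blownupcomplex})\to(\ref{coskeleton})$ induces a morphism of mixed Tate motives $\phi\colon h(\Delta)\to h(\Delta_F)$, the target being identified with $(\ref{quotmotive})$ after blowing down the normal crossing divisor $Q\cup L^J$, $J=I_F$. I claim $\phi$ is a morphism of framed objects from $(h(\Delta),[\omega_Q],e_{\widetilde F}^\vee)$ to $(h(\Delta_F),[\omega_{\widetilde Q}],T_F)$. On $\gr^W_{2n}$: the degree‑zero term of the map of complexes is the identity of $\Q_{\widetilde\Pro^m\backslash\widetilde Q}$, so running the spectral sequence $(\ref{E1specsequence})$ for both motives shows $\phi$ is the identity on $\gr^W_{2n}h(\Delta)=\gr^W_{2n}h(\Delta_F)=H^m(\Pro^m\backslash Q)=\Q(-n)$, whence $\phi_*[\omega_Q]=[\omega_{\widetilde Q}]$. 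On $\gr^W_{2r}$: by $(\ref{inproofgr})$ one has $\gr^W_{2r}h(\Delta)=\bigoplus_{F'}\Q(-r)\,e_{\widetilde{F'}}$ over faces $F'$ of dimension $2r-1$, while the spectral sequence restricted to $I\subseteq J$ degenerates onto the single term $H^{2r-1}(\widetilde L_J\backslash\widetilde Q_J)\cong H^{2r-1}(L_J\backslash Q_J)\cong\Q(-r)=\gr^W_{2r}h(\Delta_F)$; so $\phi_*$ is projection onto the summand indexed by $F$, killing $e_{\widetilde{F'}}$ for $F'\neq F$ and sending $e_{\widetilde F}$ to a generator. Combining this with the fact (recorded just before the statement, coming from $(\ref{eFinfinitedef})$ and the residue map $(\ref{restoQ})$) that $T_F$ is dual to the class induced by $e_F$ gives $T_F\circ\phi_*=e_{\widetilde F}^\vee$. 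Hence $\phi$ is the desired morphism of framed objects and $c_F=[h(\Delta_F),[\omega_{\widetilde Q}],T_F](n)$; if $F$ or $\Delta_F$ is degenerate both sides vanish by Remark $\ref{remmotvanishes}$.

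\emph{Step 3: the period, and the main obstacle.} By Definition $\ref{defnrealperiod}$ and the recipe for computing period matrices, the period of $[h(\Delta_F),[\omega_{\widetilde Q}],T_F](n)$ is the pairing of the de Rham class $[\omega_{\widetilde Q}]$ with the Betti class dual to $T_F$, which by construction is represented by a tubular neighbourhood $N_F$ of the spherical simplex $S_F\subset Q(\R)=\partial\Hyp^m$. Since $Q$ is a smooth hypersurface and $\omega_Q$ has a pole along it, the Leray residue formula gives $\int_{N_F}\omega_{\widetilde Q}=2\pi i\int_{S_F}\mathrm{Res}_Q\,\omega_Q$, the higher‑order part of the pole contributing nothing to the pairing with a chain supported on $Q$ (Griffiths' description of residues, equivalently the residue map $(\ref{restoQ})$). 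By $(\ref{omegaq})$ the residue $\mathrm{Res}_Q\,\omega_Q$ is a rational multiple of the invariant volume form on $Q(\R)$, up to a power of $i$ which the normalisation $\pm i^n\sqrt{\det q}$ (with $\det q<0$ in signature $(2n-1,1)$) resolves so that the period equals $2\pi i^n$ times a rational multiple of $\vol(S_F)$; in particular it lies in $i^n\R$. I expect the genuinely delicate point to be the compatibility $T_F\circ\phi_*=e_{\widetilde F}^\vee$ in Step 2: this requires matching the Betti description of $T_F$ (the relative class of $S_F$, realised by the tube $N_F$) with the de Rham class $\omega_{\widetilde Q_{\widetilde F}}$ entering $(\ref{eFinfinitedef})$, pinning down orientations so that the pairing is exactly $1$ and not merely a nonzero scalar. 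The bookkeeping of the powers of $i$ and $\sqrt{\det q}$ in Step 3 is routine once the residue of the Euler form along $\{q=0\}$ is identified with the invariant volume form.
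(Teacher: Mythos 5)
Your proof follows essentially the same route as the paper: match $c_F$ with $[h(\Delta_F),[\omega_{\widetilde Q}],T_F](n)$ via the morphism of framed objects induced by the map of complexes $(\ref{blownupcomplex})\to(\ref{coskeleton})$, using the spectral sequence to check compatibility of framings on $\gr^W_{2n}$ and $\gr^W_{2r}$, and then compute the period by the Leray residue formula applied to $(\ref{omegaq})$. This is precisely what the paper does (it asserts the key points tersely in the surrounding prose—the map of complexes, that $T_F$ is dual to $e_F$, that $\mathrm{Res}_Q\,\omega_Q$ is rational times the volume form); your Step 2 usefully spells out why $\phi_*$ projects $\gr^W_{2r}h(\Delta)$ onto the summand indexed by $J$, which is the crux. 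One small imprecision in Step 1: the classes you set aside are the $\alpha_{\widetilde G}$ attached to \emph{two}-dimensional faces $G$ through $x$ (not one-dimensional faces through $x$, whose $e_F$ simply vanish since $\gr^W_2 h(F)=0$); but since both are correctly excluded from the lemma's scope and treated in the paragraph following the lemma, the conclusion is unaffected.
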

By proposition $\ref{propmotiveofsimplex}$, the coproduct   $(\ref{coprodoverF})$  also contains contributions $(\ref{infmotivegraded2piece})$ from the  two-dimensional faces $G$ of $\Delta$ which contain the point at infinity.  Such a face $G$ defines a framed submotive $\mot(G)$ as before. To obtain the coefficients, consider two  distinct such faces $G_i$ and $G_j$. They intersect  along a line $G_i\cap G_j$
which contains the point at infinity, but nonetheless the complex  $(\ref{coskeleton})$ still defines a framed motive $h(\Delta_{G_i\cap G_j})$.
Inspection of the  spectral sequence which computes the hypercohomology of  $(\ref{coskeleton})$  shows that the framing $T_{G_i\cap G_j}$ corresponds via 
$(\ref{restoQ})$ to the dual of  $\pm(\alpha_{\widetilde{G}_i} -\alpha_{\widetilde{G}_j})$.   Thus the set of classes $T_{G_i\cap G_j}$  satisfy a single relation 
and define  a dual basis to  $\alpha(V_x\otimes_{\Q}\Q(-1))$ by  $(\ref{Vxasker})$. The period, as before, is proportional to  the dihedral angle subtended by $G_i$ and $G_j$.
The relation between the $T_{G_i\cap G_j}$  corresponds to the analytic fact that there is a single relation between the angles of the even-dimensional Euclidean simplex at infinity $\Delta_\infty$.

In all cases, the periods of the coefficient motives $c_F$ are  $\pi i^n$ times a rational multiple of the spherical volumes of the lunes $S_F$ (see also \cite{Go1} \S4.7).

\subsubsection{Volume and the main theorem}
The volume of a hyperbolic simplex with at most one vertex  at infinity is determined by its real period. The proof is identical to Goncharov's proof of the same
result  for finite simplices  \cite{Go1}.

\begin{cor}
Let $\Delta$ denote an oriented hyperbolic geodesic simplex in $\overline{\Hyp}^{2n-1}_{\overline{\Q}}$ with at most one vertex at infinity.
Then there is a number field $k$  and $\sigma:k\rightarrow \R$ such that $\mot(\Delta)\in \MT(k)$ and 
 $$\vol(\Delta) = (2\pi)^n R_\sigma (\mot(\Delta))\ .$$
\end{cor}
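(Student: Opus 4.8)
The plan is to reduce to Goncharov's computation of the period of the motive of a \emph{finite} hyperbolic geodesic simplex (\cite{Go1}, \S4), checking that the two differences in our situation — allowing one vertex at infinity, and tracking the field of definition — leave the argument intact.

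\emph{Field of definition.} Since $\Delta=\Delta(x_0,\dots,x_{2n-1})$ has vertices in $\overline{\Hyp}^{2n-1}_{\overline{\Q}}$, their coordinates generate a number field, which we enlarge if necessary so that the de Rham class $[\omega_Q]$ of $(\ref{omegaq})$ is defined over it; call the result $k$, with the real embedding $\sigma$ coming from the ambient $\R$. Over $k$ are then defined $Q$, the hyperplanes $L_i$, the vertex of $\Delta$ at infinity (it lies in $\partial\Hyp^{2n-1}_k$ by the definition of a simplex over $k$), the blow-up $\widetilde{\Pro}^{2n-1}$ and $[\omega_Q]$. Hence $h(\Delta)\in\MT(k)$, both framings $[\omega_Q]$ and $[\Delta]$ are defined over $k$, and since by Proposition~\ref{propmotiveofsimplex} (or its analogue for finite simplices) $\gr^W_0 h(\Delta)\cong\Q(0)$ and $\gr^W_{2n}h(\Delta)\cong\Q(-n)$, the pair $([\omega_Q],[\Delta])$ is an $n$-framing and $\mot(\Delta)=[h(\Delta),[\omega_Q],[\Delta]](n)\in\Am_n(k)$. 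If $\Delta$ is degenerate both sides vanish (Remark~\ref{remmotvanishes}), so assume $\Delta$ is non-degenerate.

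\emph{The period.} Unwinding Definition~\ref{defnrealperiod} for $\mot(\Delta)$, the real period $R_\sigma(\mot(\Delta))$ depends only on the single entry of the period matrix of $h(\Delta)$ coupling the de Rham framing $[\omega_Q]$ to the Betti framing $[\Delta]$, namely the convergent integral $\int_\Delta\omega_Q$: the remaining weight-graded pieces — the classes $e_{\widetilde F}$ of odd faces of dimension $\geq 3$, and, when $\Delta$ has an ideal vertex, the weight-$2$ classes $\alpha_{\widetilde G}$ of $(\ref{alphaGdef})$ — lie in intermediate weights $0<2m<2n$ and drop out of the pairing $\langle\Image((2i\pi)^{-n}P_\sigma(\cdot)),\Real(P_\sigma^*(\cdot))\rangle$, exactly as in the finite case. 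Combining this with the sign normalisation of the Betti framing in \S\ref{sectFramings}, namely $\vol(\Delta)=i^{1-n}\int_\Delta\omega_Q$, and with Goncharov's normalisation of the real period — the residual phase being absorbed into the orientation convention, which is precisely the one making $\vol(\Delta)$ and $R_\sigma(\mot(\Delta))$ have the same sign — gives $\vol(\Delta)=(2\pi)^n R_\sigma(\mot(\Delta))$, word for word as in \cite{Go1}.

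\emph{The vertex at infinity.} Three points must be checked in this case: (a) $\int_\Delta\omega_Q$ converges, because a geodesic simplex with one vertex at infinity has bounded volume (\S\ref{sectrationalpoints}); (b) passing to the blow-up $\widetilde{\Pro}^{2n-1}$ does not change the period, since $\omega_Q$ has no pole along the exceptional divisor $\widetilde{L}_{-1}$ and $[\widetilde{\Delta}]$ is the total transform of $[\Delta]$; (c) the $n$-framing still involves only $\gr^W_0$ and $\gr^W_{2n}$, which by Proposition~\ref{propmotiveofsimplex} coincide with their finite-simplex counterparts. I expect the main obstacle to be (c) together with the last step above — making fully rigorous that the modified weight-$2$ part of $h(\Delta)$ and the exceptional-divisor geometry contribute nothing to the real period, and nailing down the universal constant and the signs — but since Proposition~\ref{propmotiveofsimplex} deliberately confines the new phenomena to weight $2$, strictly below the weights carrying the framings, this is a direct transcription of Goncharov's argument rather than a new one.
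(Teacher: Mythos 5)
Your overall plan is the paper's: the paper's proof consists of the single remark ``The proof is identical to Goncharov's proof of the same result for finite simplices,'' so reducing to \cite{Go1}, \S4 and checking the two modifications (ideal vertex, field of definition) is the right strategy, and your points about the exceptional divisor not supporting a pole of $\omega_Q$ and about $\int_\Delta \omega_Q$ still converging are the correct checks on the blow-up side.

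The gap is in the justification you give for why the intermediate-weight contributions vanish from $\langle\Image((2i\pi)^{-n}P_\sigma v_0),\,\Real(P^*_\sigma f_n)\rangle$. You say the classes $e_{\widetilde F}$ and $\alpha_{\widetilde G}$ ``lie in intermediate weights $0<2m<2n$ and drop out of the pairing,'' but the weight position of a class is not, by itself, a reason for its contribution to vanish: $\Image((2i\pi)^{-n}P_\sigma v_0)$ is a full vector in $M_\sigma\otimes\R$, with entries in every weight-graded slot, and $\Real(P^*_\sigma f_n)$ is likewise a full covector, so the pairing a priori receives contributions from every slot. What actually kills the intermediate terms is an \emph{arithmetic} property of the corresponding entries of the period matrix, established in \S\ref{secquotmotives}: the integral of $\omega_{\widetilde Q}$ over each tubular-neighbourhood cycle $N_F$ (resp.\ $X_{ij}$) equals, by the residue formula and $(\ref{omegaq})$, $2\pi i^n$ times a rational multiple of a spherical volume, and hence lies in $i^n\R$. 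Since $(2i\pi)^{-n}\cdot i^n\R\subset\R$, the imaginary part of each such entry is $0$, and only the top entry $i^{n-1}\vol(\Delta)$ survives. This is exactly the step you flag as ``the main obstacle,'' and the filtration-theoretic phrasing does not close it; you must invoke the spherical-volume computation (or its analogue in Goncharov's Lemma 4.7). With that substitution the rest of your argument — the field of definition, the normalisation $\vol(\Delta)=i^{1-n}\int_\Delta\omega_Q$, and the sign convention from \S\ref{sectFramings} — is correct and matches the paper.
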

\begin{proof}
Let $P_\sigma$ be a period matrix for $\mot(\Delta)$. Its
 first column is given by integrating the form $\omega_{\widetilde{Q}}$ over a basis for the homology of $h(\Delta)$. The previous discussion
proves that, in the basis given by the classes $T_F$,  all entries lie in $i^{n} \R$ except for the last, which is given by $ i^{n-1} \vol(\Delta)$.
Thus, up to signs, $\Image ((2i\pi)^{-n}P_\sigma v_0)$
 is the column vector $(0,\ldots, 0, (2\pi)^{-n} \vol(\Delta))$, and
it follows from the definition and the choice of signs  on the framings that $R_\sigma[h(\Delta), \omega_{\widetilde{Q}}, \Delta]=\langle \Image ((2i\pi)^{-n} \comp_{\sigma,DR} \omega_{\widetilde{Q}}), \Real( [\Delta]) \rangle$
which is exactly
$(2\pi)^{-n}\vol(\Delta)\times 1.$
\end{proof}

Putting the various elements together,  we obtain the following theorem.

\begin{thm} \label{thmmotiveMAIN}
Let $M$ be a product-hyperbolic manifold modelled on $\Hyp^{2n_1-1}\times \ldots \times \Hyp^{2n_N-1}$, and defined over the fields $(k_1,\ldots, k_N)$. Then the framed
motive of $M$ is a well-defined element:
$$\mot(M) \in \Ext^1_{\MT(k_1)}(\Q(0),\Q(n_1))\otimes_{\Q} \ldots \otimes_{\Q} \Ext^1_{\MT(k_N)}(\Q(0),\Q(n_N))\ ,$$
such that the volume of $M$ is given by  the Hodge regulator:
$$\vol(M) = (2\pi)^{n_1+\ldots +n_N} R_\Sigma (\mot(M))\ .$$
If $M$ is equivariant with respect to $\Sigma=\{\sigma_1,\ldots, \sigma_N\}$, and $n_i=n$ for all $i$,  then
$$\mot(M) \in \textstyle{\bigwedge^\Sigma} \Ext^1_{\MT(k)}(\Q(0),\Q(n))\ .$$
\end{thm}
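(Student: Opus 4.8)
The proof assembles the results already established in this section. The plan is: fix a suitable product-tiling, deduce well-definedness from the subdivision lemma, invoke Theorem~\ref{thmMinext} for the coproduct vanishing, derive the volume identity from the single-simplex volume corollary by multiplicativity, and finally appeal to Theorem~\ref{thmMmotiveisdet} in the equivariant case.

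First I would use Proposition~\ref{proptriang} to fix a product-tiling of $M$ by geodesic product-simplices $\Delta^{(i)}_1\times\cdots\times\Delta^{(i)}_N$, $1\le i\le R$, with each $\Delta^{(i)}_j$ defined over $k_j$ and having at most one vertex at infinity. The element $\mot(M)$ of Definition~\ref{defnmotiveofM} is independent of this choice: any two product-tilings admit a common subdivision, and $\mot$ is invariant under subdivision by Lemma~\ref{lemsubdiv} applied in each factor (degenerate simplices contributing zero by Remark~\ref{remmotvanishes}). By Theorem~\ref{thmMinext}, $\mot(M)$ then lies in $\Ext^1_{\MT(k_1)}(\Q(0),\Q(n_1))\otimes_\Q\cdots\otimes_\Q\Ext^1_{\MT(k_N)}(\Q(0),\Q(n_N))$, this last step resting on Proposition~\ref{corkerdelta} together with Galois descent for the fields $k_j$.

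Next, the volume formula. Since the Riemannian volume of a product is the product of the volumes of its factors,
\[
\vol(M)=\sum_{i=1}^R\vol\bigl(\Delta^{(i)}_1\times\cdots\times\Delta^{(i)}_N\bigr)=\sum_{i=1}^R\prod_{j=1}^N\vol(\Delta^{(i)}_j).
\]
By the preceding corollary, $\vol(\Delta^{(i)}_j)=(2\pi)^{n_j}R_{\sigma_j}\bigl(\mot(\Delta^{(i)}_j)\bigr)$, where $\sigma_j\colon k_j\hookrightarrow\R$ is the given embedding. The regulator $R_\Sigma=\prod_{j=1}^N R_{\sigma_j}$ is $\Q$-linear and takes a pure tensor $\xi_1\otimes\cdots\otimes\xi_N$ to $\prod_j R_{\sigma_j}(\xi_j)$, so applying it to $(\ref{eqndefnmotiveofM})$ gives
\[
(2\pi)^{n_1+\cdots+n_N}R_\Sigma(\mot(M))=\sum_{i=1}^R\prod_{j=1}^N(2\pi)^{n_j}R_{\sigma_j}\bigl(\mot(\Delta^{(i)}_j)\bigr)=\sum_{i=1}^R\prod_{j=1}^N\vol(\Delta^{(i)}_j)=\vol(M),
\]
which is the asserted identity. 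The only point requiring care is that the normalizations — the powers of $2\pi$ and the factors of $i^{n}$ entering the period matrices through $(\ref{omegaq})$ and the discussion of \S\ref{secquotmotives} — match exactly between the volume corollary and Definition~\ref{defnrealperiod}; this is built into those statements, so no new computation is needed.

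Finally, suppose $M$ is equivariant with respect to $\Sigma=\{\sigma_1,\ldots,\sigma_N\}$ and $n_i=n$ for all $i$. Then Theorem~\ref{thmMmotiveisdet} gives $\mot(M)\in\textstyle{\bigwedge^\Sigma}\Am_n(k)$; combining this with the membership in the tensor product of $\Ext^1$ groups obtained above and the commutative diagram $(\ref{comdiagext})$ yields $\mot(M)\in\textstyle{\bigwedge^\Sigma}\Ext^1_{\MT(k)}(\Q(0),\Q(n))$. I do not expect a substantive obstacle here: the theorem is essentially a repackaging of Lemma~\ref{lemsubdiv}, Theorem~\ref{thmMinext}, the single-simplex volume corollary, and Theorem~\ref{thmMmotiveisdet}, the one delicate spot being the consistency of the period normalizations in the volume identity.
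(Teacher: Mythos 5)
Your proof is correct and follows essentially the same route the paper takes: the paper itself gives no explicit argument for Theorem~\ref{thmmotiveMAIN} beyond the phrase "putting the various elements together", and the intended assembly is precisely what you lay out — well-definedness from Lemma~\ref{lemsubdiv} via a common subdivision, the $\Ext^1$-membership from Theorem~\ref{thmMinext}, the volume identity by applying the single-simplex corollary $\vol(\Delta)=(2\pi)^nR_\sigma(\mot(\Delta))$ termwise across the product-tiling and using that $R_\Sigma$ is multiplicative on pure tensors, and the determinant statement from Theorem~\ref{thmMmotiveisdet} combined with diagram $(\ref{comdiagext})$. Your remark that the normalization checks (powers of $2\pi$ and $i^n$) are already absorbed into the single-simplex corollary and Definition~\ref{defnrealperiod} is exactly the right place to flag the one point where a sign or power could go astray, and it does not need to be re-derived here.
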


\subsection{Examples} \label{Sectexamples3space}

\subsubsection{A hyperbolic line element} First consider the simplest  case of a hyperbolic line segment $L$ in $\Hyp^1 \cong \R$. It corresponds to  a pair of points $\{x_0,x_1\} \in \Pro^1$, and a  quadric $Q\subset \Pro^1$ which consists of a pair of points $\{q_0,q_1\} \in \Pro^1$.
Then
\begin{equation} \label{Kummerline}
h(L) = H^1(\Pro^1\backslash \{q_0,q_1\}, \{x_0,x_1\})\ ,\end{equation}
 is  a Kummer motive, {\it i.e.}, $\gr^W_\bullet h(L)\cong \Q(0)\oplus \Q(-1)$. The  period defined by integrating the form $\omega_Q$ over the interval $[x_0,x_1]$ is 
$$\int_{x_0}^{x_1} {1\over 2}\Big({dx\over x-q_0}- {dx\over x-q_1}\Big) =
{1\over 2}\log\Big({(x_1-q_0)(x_0-q_1) \over  (x_0-q_0)(x_1-q_1)  } \Big)={1\over 2}\log \big([x_1\, x_0\big| q_0\, q_1]\big)\ .$$
The real period of $h(L)$ is half the real part of the logarithm  $\log \big|[x_1\,x_0|q_0,q_1]\big|$, which,
up to a sign, is the hyperbolic length of the oriented line segment $\{x_0,x_1\}$.

\subsubsection{Hyperbolic triangles in the hyperbolic plane}
Now consider a finite triangle $T$ in $\Hyp^2$. It defines a set of 3 lines $L_0,L_1,L_2$ and a smooth quadric $Q$ in $\Pro^2$. Then
$$h(T)= H^2\big(\Pro^2\backslash Q, \bigcup_{0\leq i\leq 2} L_i \backslash (L_i\cap Q)\big)\ .$$
From $(\ref{finitemotivegradedpieces})$, we have $\gr^W_2 h(T) = \Q(-1)^3$, and $\gr^W_0 h(T)=\Q(0)$.
Each side of $T$ defines a motive $(\ref{Kummerline})$ whose period is  its hyperbolic length. It follows that $h(T)$ splits as a direct sum
of Kummer motives, one corresponding to each side. The periods of $h(T)$  are  $2\ell_0,2\ell_1, 2\ell_2$ where $\ell_i$ is  the hyperbolic length of each side. Note that the triple $\{\ell_0,\ell_1,\ell_2\}$ is a complete isometry invariant for $T$.
Since $\gr^W_4 h(T)=0$, there is no interesting framed motive to speak of   in this case.

Now consider what happens when one vertex $x=L_1\cap L_2$ of $T$ is at infinity. After blowing up the point $x$, we obtain a smooth quadric $\widetilde{Q}$ in $\widetilde{\Pro}^2$,
and a configuration of four lines $\widetilde{L}_{-1}, \widetilde{L}_0, \widetilde{L}_1, \widetilde{L}_2$ as shown below (left). Now,
$$h(T) = H^2(\widetilde{\Pro}^2\backslash \widetilde{Q}, \bigcup_{-1\leq i\leq 2} \widetilde{L}_i\backslash (  \widetilde{L}_i \cap \widetilde{Q}))\ ,$$
and $(\ref{infmotivegraded2piece})$ gives $\gr^W_2 h(T)\cong \Q(-1)^2$, $\gr^W_0 h(T)= \Q(0)$. It is therefore the direct sum of two Kummer motives.
 One of them  comes from the inclusion of the side $L_0$
of finite length, whose period is  its hyperbolic length. To compute the period of the other, consider the following diagram.
\begin{figure}[h!]
  \begin{center}
    \epsfxsize=12.5cm \epsfbox{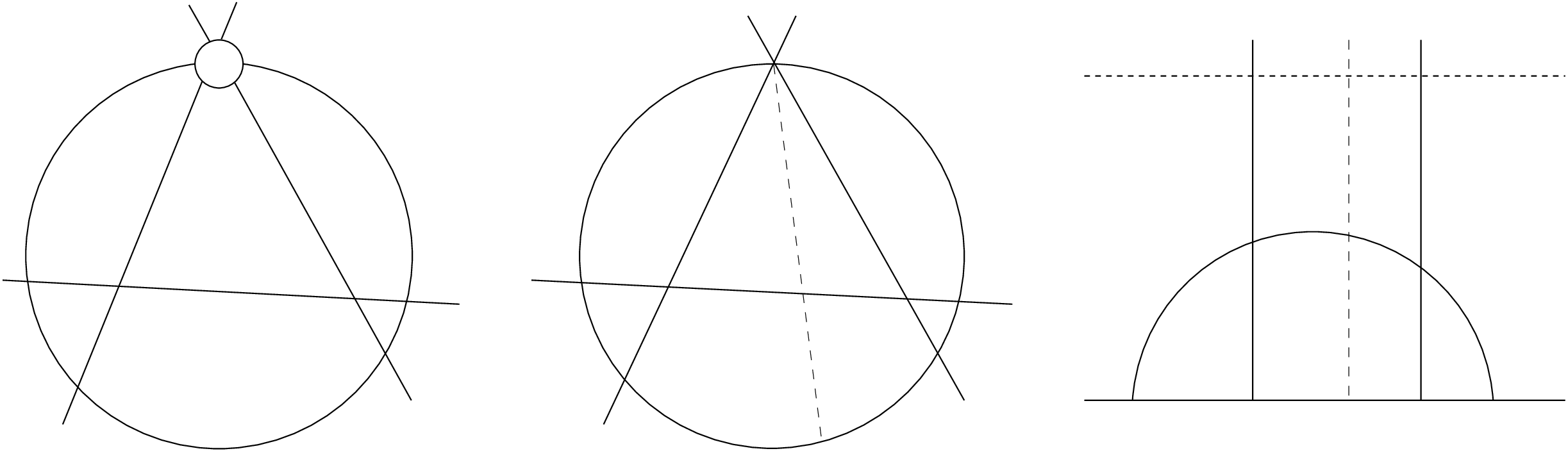}
\put(-278,85){${\tiny{\widetilde{Q}}}$}
\put(-280,55){${\tiny{\widetilde{L}_2}}$}
\put(-336,54){${\tiny{\widetilde{L}_1}}$}
\put(-308,22){${\tiny{\widetilde{L}_0}}$}
\put(-330,90){${\tiny{\widetilde{L}_{-1}}}$}
\put(-172,90){${\tiny{x}}$}
\put(-233,43){${\tiny{p}}$}
\put(-224,15){${\tiny{q}}$}
\put(-137,19){${\tiny{r}}$}
\put(-134,37){${\tiny{s}}$}
\put(-102,0){${\tiny{0}}$}
\put(-75,0){${\tiny{t_1}}$}
\put(-36,0){${\tiny{t_2}}$}
\put(-18,0){${\tiny{1}}$}
\put(-83,60){${\tiny{\widetilde{\ell}_1}}$}
\put(-28,60){${\tiny{\widetilde{\ell}_2}}$}
\put(-15,88){${\tiny{R}}$}
  \end{center}
\end{figure}
Note that the motive $h(T)$ is uniquely determined by the five points $x,p,q,r,s\in\partial \Hyp^2\cong \Pro^1(\R)$.

\begin{lem} The  periods of $h(T)$ are given by the two quantities
\begin{equation} \label{lxdef}
\ell_x={1\over 2}\log \Big({(x-q)^2 (p-r)(r-s) \over (x-r)^2(p-q)(q-s)}\Big) \quad \hbox{ and } \quad  \ell_0 = {1\over 2}\log \Big({(p-r)(q-s) \over (p-q)(r-s)}\Big)\ .
\end{equation}
\end{lem}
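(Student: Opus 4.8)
The plan is to use the structure of $h(T)$ established in Proposition $\ref{propmotiveofsimplex}$ with $m=2$, which exhibits $h(T)$ as built from two Kummer motives, and then to evaluate the two resulting periods by reducing each to the one-dimensional line-element computation carried out at the beginning of §$\ref{Sectexamples3space}$. \emph{Step 1 (shape of $h(T)$).} By $(\ref{infmotivegraded2piece})$ one has $\gr^W_0 h(T)\cong\Q(0)$ and $\gr^W_2 h(T)\cong\Q(-1)^{2}$, with a basis of the latter given by the class $e_{\widetilde{F}_0}$ attached to the finite side $F_0$ of $T$ and the class $\alpha_{\widetilde{T}}$ attached to $T$ itself, viewed as a $2$-dimensional face through the vertex at infinity. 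Since the only weights occurring are $0$ and $2$, $h(T)$ is the fibre product over $\Q(0)$ of two extensions of $\Q(0)$ by $\Q(-1)$, i.e. of two Kummer motives; equivalently, in a basis adapted to the weight filtration and to the framings $e_{\widetilde F_0},\alpha_{\widetilde T}$, the period matrix of $h(T)$ is lower triangular with diagonal $1,2\pi i,2\pi i$ and two off-diagonal logarithms $\ell_0,\ell_x$, which are exactly what the lemma computes.

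\emph{Step 2 (the period $\ell_0$).} By $(\ref{eFinfinitedef})$ the summand carrying $e_{\widetilde F_0}$ is the image under $i_{\widetilde F_0}$ of $h(F_0)=H^1\big(L_0\setminus(L_0\cap Q),\,\{L_0\cap L_1,\,L_0\cap L_2\}\big)$, which is precisely a line-element motive $(\ref{Kummerline})$ on the line $L_0\cong\Pro^1$. By the first example of §$\ref{Sectexamples3space}$ its period is $\tfrac12\log$ of the cross-ratio of the four points $L_0\cap Q$ and $L_0\cap L_1,\ L_0\cap L_2$ on $L_0$. Now $L_0\cap Q=\{q,r\}$, while $L_0\cap L_1$ lies on $\overline{xp}$ and $L_0\cap L_2$ on $\overline{xs}$; projecting the pencil of lines through the point $x\in Q$ onto $L_0$, and using that the cross-ratio of four points on a smooth conic equals the cross-ratio of the lines joining them to any fifth point of the conic, one identifies this cross-ratio with the cross-ratio of $p,q,r,s$ on $\partial\Hyp^2\cong\Pro^1(\R)$, giving $\ell_0=\tfrac12\log\frac{(p-r)(q-s)}{(p-q)(r-s)}$ up to orientation; by the second example of §$\ref{Sectexamples3space}$ this is also the hyperbolic length of the finite side of $T$.

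\emph{Step 3 (the period $\ell_x$, the crux).} From the spectral sequence in the proof of Proposition $\ref{propmotiveofsimplex}$, $\alpha_{\widetilde T}$ is the image of the generator $\eta_{\widetilde Q}$ of $E_1^{0,2}=H^2(\widetilde\Pro^2\setminus\widetilde Q)\cong\Q(-1)$, and the Betti cycle dual to it is the difference of the two edges of $\widetilde T$ meeting $x$, i.e. the class spanning $V_x$ in $(\ref{Vxasker})$. Hence $\ell_x$ is, up to the usual $2\pi i$ normalisation, the integral over $\widetilde T$ of the logarithmic $2$-form $\omega_{\widetilde Q}$ of $(\ref{etaG})$, $(\ref{omegaq})$. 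I would evaluate this by Stokes together with a residue along the exceptional divisor $\widetilde L_{-1}$: the contributions collapse to a single one-dimensional integral $\int\tfrac12\big(\tfrac{du}{u-a}-\tfrac{du}{u-b}\big)$ along the edge of $\widetilde T$ lying on $\widetilde L_{-1}$, between its endpoints $\widetilde L_{-1}\cap\widetilde L_1$ and $\widetilde L_{-1}\cap\widetilde L_2$, where $\{a,b\}$ is read off from $\widetilde L_{-1}\cap\widetilde Q$ and the local equation of $Q$ at $x$; blowing back down and re-expressing the four points in $p,q,r,s$ — with $x$ entering to the second power because $x$ lies on $Q$ — yields $\ell_x=\tfrac12\log\frac{(x-q)^2(p-r)(r-s)}{(x-r)^2(p-q)(q-s)}$, proving $(\ref{lxdef})$. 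The main obstacle is precisely this last step: tracking the blow-up carefully enough to identify both the correct one-dimensional cycle (the edge of $\widetilde T$ on $\widetilde L_{-1}$, paired with the right residue of $\omega_{\widetilde Q}$) and the four projective points in the cross-ratio, in particular extracting the squared factors $(x-q)^2,(x-r)^2$ from the geometry of $Q$ at the ideal vertex; once these are pinned down, the evaluation is the same elementary logarithm-of-a-cross-ratio computation as in the first example of §$\ref{Sectexamples3space}$.
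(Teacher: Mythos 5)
Your proposal attacks the lemma by directly computing the two Kummer periods — one from the finite side $L_0$ and one from the blown-up ideal vertex — whereas the paper's proof is an indirect variational argument: it identifies the configuration $(p,q,r,s,x)$ with a point of $\Mod_{0,5}(\R)$, observes that the period functions must be additive under subdivision of $T$ (Lemma~\ref{lemsubdiv}), computes that the space of such additive log-functions on $\Mod_{0,5}$ is exactly two-dimensional, and then reads off $\ell_0$, $\ell_x$ as a basis. So the two proofs really are different: the paper trades the hard direct integral for a soft finiteness argument plus a one-line normalisation, while yours would give a fully explicit geometric derivation of each period.

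Step~2 is sound: the summand framed by $e_{\widetilde F_0}$ is a line-element motive $(\ref{Kummerline})$ on $L_0\cong\Pro^1$, its period is half the log of the cross-ratio of $L_0\cap L_1,\,L_0\cap L_2,\,q,\,r$ on $L_0$, and projection from the conic point $x$ turns this into a cross-ratio of $p,q,r,s$, matching $\ell_0$ (and the hyperbolic length of $F_0$).

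Step~3 contains a genuine gap, which you yourself flag. The strategy — $\ell_x=\int_{\widetilde T}\omega_{\widetilde Q}$ up to normalisation, evaluated by Stokes/residue through the exceptional divisor $\widetilde L_{-1}$, with the factors $(x-q)^2,(x-r)^2$ coming from the local geometry of $Q$ at the ideal vertex — is the right shape of argument, but none of it is actually carried out: you do not identify the boundary $1$-cycle precisely, you do not compute the residue of $\omega_{\widetilde Q}$ along $\widetilde L_{-1}$, and you do not derive the squared factors, so the formula for $\ell_x$ remains unproven. There is also an error in the setup: you describe "the Betti cycle dual to $\alpha_{\widetilde T}$" as "the difference of the two edges of $\widetilde T$ meeting $x$, i.e.\ the class spanning $V_x$ in $(\ref{Vxasker})$." Edges are $1$-chains and cannot be Betti cycles for the $H^2$ of a surface pair; $V_x$ sits in the de Rham side via $(\ref{Vxdef})$--$(\ref{Vxasker})$, and the weight-$2$ Betti duals are, as in the $3$-simplex example, relative $2$-cycles given by tubular neighbourhoods around $Q$ (respectively, around $\widetilde L_{-1}$) — conflating these will lead the Stokes computation astray. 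Until the residue calculation along $\widetilde L_{-1}$ is done and the correct $1$-cycle on $\widetilde L_{-1}$ (paired against the residue of $\omega_{\widetilde Q}$, not the other way round) is identified, Step~3 does not establish $(\ref{lxdef})$; as an alternative, once you have Step~2 you could invoke the paper's additivity-under-subdivision argument to pin down $\ell_x$ as the unique additive companion, rather than computing it from scratch.
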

\begin{proof}
The periods of $h(T)$
define a Kummer variation 
on the configuration space of $5$ distinct points in $\Pro^1(\R)$ modulo the action of $\PSL_2(\R)$.
This is  the moduli space $\Mod_{0,5}(\R)$ of genus $0$ curves with 5 marked points. By projective transformation,
set $p=0,q=t_1,r=t_2,s=1,x=\infty$.
  The space of logarithms on $\Mod_{0,5}$ is spanned by $\log(t_1)$, $\log(t_2)$, $\log (1-t_1)$, $\log (1-t_2)$, $\log(t_2-t_1)$.
 The periods of $h(T)$ are additive with respect to subdivision (the dotted line above).
 A simple calculation shows that  the vector space of additive functions is spanned
by the two functions:
$$2\,\ell_x=\log\Big({t_2 (1-t_2) \over t_1(1-t_1)}\Big) \quad \hbox{ and } \quad 2\,\ell_0=\log\Big( {t_2(1-t_1)\over t_1(1-t_2)}\Big)\ .$$
Rewriting $t_1,t_2,1-t_1,1-t_2$  as cross-ratios, we obtain  formula $(\ref{lxdef})$.
\end{proof}
The quantity $\ell_x$ (resp. $\ell_0$)  is anti-invariant (resp. invariant) under the transformation $(p,q)\leftrightarrow (s,r)$.
One checks that $\ell_0$ is the hyperbolic length of the face defined by  $L_0$, and  $\ell_x$ is  the difference of  the regularised lengths
$\widetilde{\ell}_1-\widetilde{\ell}_2$ of the sides $L_1,L_2$. Here $\widetilde{\ell}_i$ is defined to be the length of the truncated line segment of side $L_i$
up to a horoball neighbourhood of $x$ (depicted by a horizontal dotted line at height $R$  in the figure above (right)).
The quantity $\widetilde{\ell}_1-\widetilde{\ell}_2$ is independent of $R$.

\subsubsection{A finite simplex in hyperbolic 3 space}
Consider a finite hyperbolic geodesic simplex $\Delta $ in $\Hyp^3$, which is given by four hyperplanes $L_0,\ldots, L_3$ in general position relative
to a smooth quadric $Q$ in $\Pro^3$. From $(\ref{finitemotivegradedpieces})$, we have
$$\gr^W_4 h(\Delta) \cong \Q(-2)\ , \ \quad \gr^W_2 h(\Delta) \cong \Q(-1)^6\ , \ \quad \gr^W_0 h(\Delta) \cong \Q(0)\ .$$
To compute the periods, choose an edge $L_{ij}$. It defines a complex of sheaves
\begin{equation}
\label{skeleton3}
\Q_{L_{ij}\backslash Q_{ij}}\rightarrow \bigoplus_{k\in \{0,1,2,3\}\backslash \{i,j\}} \Q_{L_{ijk}}\ ,
\end{equation}
whose hypercohomology is  the Kummer submotive of  the line element $L_{ij}$, relative to two points, whose real period is its hyperbolic length.
Now consider the analogue of    $(\ref{coskeleton})$ obtained from the set of faces containing the edge $L_{ij}$:
\begin{equation}
\label{coskeleton3}
\Q_{\Pro^{3}\backslash Q } \rightarrow  \Q_{L_{i}\backslash Q_{i}}  \oplus \Q_{ L_{j}\backslash Q_{j}}
\rightarrow   \Q_{L_{ij}\backslash Q_{ij}}\ .
\end{equation}
It defines  a Kummer motive  $H^3(\Pro^3\backslash Q, (L_i\cup L_j) \backslash Q)$  which maps to
$$H^3(\Pro^3\backslash Q, (L_i\cup L_j) \backslash Q) \To H^2(Q, (L_i\cup L_j)\cap Q)(-1)\ ,$$
via the residue. The period of the latter is computed as follows. The  two hyperplanes $L_i$, $L_j$  cut out a spherical lune on the quadric $Q(\R)$ (see figure 2).

\begin{figure}[h!]
  \begin{center}
    \epsfxsize=10.0cm \epsfbox{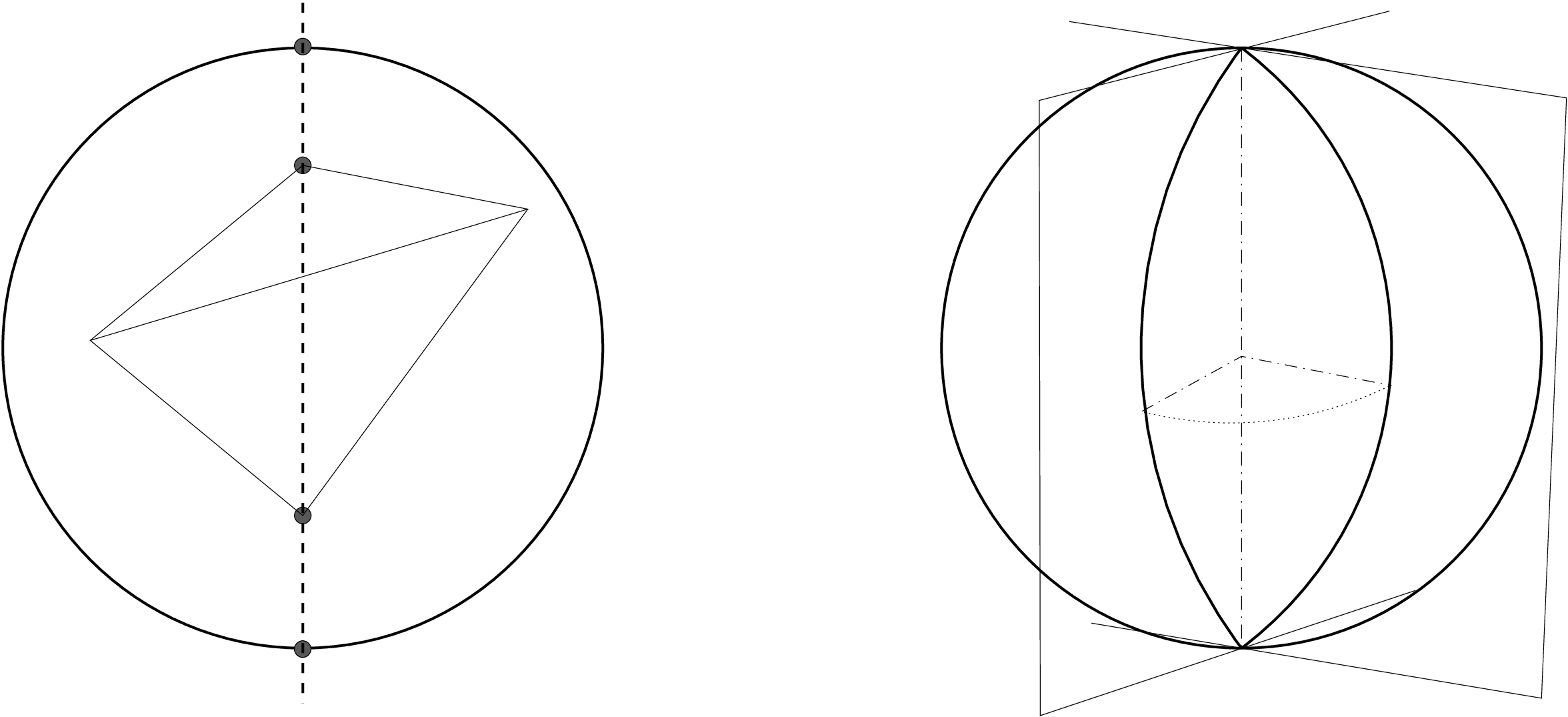}
\put(-242,60){\small{E}}
\put(-224,25){$\small{L_{ij}}$}
\put(-53,67){$\small{\theta_{ij}}$}
\put(-111,15){$\small{L_{i}}$}
\put(2,20){$\small{L_{j}}$}
 \label{Moving}
  \caption{Left: the complex $(\ref{skeleton3})$ corresponds to the edge $E$. Right: $L_i$ and $L_j$ define a spherical lune on $Q(\R)$ whose  real period is 
   twice the  dihedral angle  $\theta_{ij}$. }
  \end{center}
\end{figure}
The period is easily computed in spherical coordinates. Suppose that $Q$ is given by the affine equation $x^2+y^2+z^2=1$.  Then $\omega_Q= i(x^2+y^2+z^2-1)^{-2} dx\,dy\,dz$
is just $i\rho^2(1-\rho^2)^{-2} d\rho\, \sin (\phi) d\phi\, d\theta$. Its residue at $\rho=1$ is $i4^{-1}\sin (\phi)d\phi\, d\theta$,  which is $1/4i$ times the volume form on the sphere.
Thus the period obtained by integrating over the spherical lune is  ${1\over 2i} \theta_{ij}$, half  the dihedral angle between the hyperplanes $L_i,L_j$. 
If $X_{ij}$ is
a tubular
neighbourhood around $Q$ of the lune whose boundary is contained in $L_i \cup L_j$,  the relative homology classes $[X_{ij}]$ 
form a basis
for $\gr^W_2 h(\Delta)^\vee$.
In conclusion, we can write a (dual) period matrix for $h(\Delta)$ as follows:
$$\left(
    \begin{array}{ccccc}
      1 & 0 & \cdots & 0 & 0 \\
2\ell_{01} & 2i\pi & \cdots & 0 & 0 \\
\vdots &  & \ddots & & \vdots \\
2\ell_{23} & 0 & \cdots & 2i\pi & 0 \\
i\vol(\Delta) & 2\pi\theta_{01} & \cdots & 2\pi\theta_{23} & (2i\pi)^2 \\
\end{array}
  \right)
$$
where $\ell_{ij}$ is the hyperbolic length of the edge $L_{ij}$, for $0\leq i<j\leq 3$, and $\theta_{ij}$ is the dihedral angle subtended at that edge. The motive $\mot(\Delta)$ is obtained
from $h(\Delta)$ by a Tate twist by $\Q(3)$.
The reduced coproduct map $(\ref{coproddef1})$ on the level of period matrices, applied to $\mot(\Delta)$,  can therefore be written (ignoring Tate twists) as a Dehn invariant:
$$\sum_{0\leq i<j\leq 3} \left(
                                     \begin{array}{cc}
                                       1 & 0 \\
                                       2\ell_{ij} &  2i\pi  \\
                                     \end{array}
                                   \right)
\otimes
\left(
                                     \begin{array}{cc}
                                       1 & 0 \\
                                       i\,\theta_{ij} & 2 i\pi  \\
                                     \end{array}
                                   \right) \ .
 $$

\subsubsection{The case of a simplex in hyperbolic 3 space with a vertex at infinity}
Now consider the case when $\Delta$ has a single vertex at infinity $x=L_{123}$. After blowing-up this point, we obtain
 a new hyperplane $\widetilde{L}_{-1}$ which is the exceptional locus, and set
$$h(\Delta) = H^3(\widetilde{\Pro}^3\backslash {\widetilde{Q}}, \bigcup_{-1\leq i\leq 3} \widetilde{L}_i \backslash (\widetilde{Q}\cap \widetilde{L}_i) )\ .$$
From $(\ref{infmotivegraded2piece})$,  $\gr^W_4 h(\Delta)=\Q(-2)$, $\gr^W_2 h(\Delta)=\Q(-1)^5$ and $\gr^W_0 h(\Delta)=\Q(0)$.
The graded weight $2$ part is spanned by the Kummer submotives coming from each of the three finite-length edges $L_{01}, L_{02}, L_{03}$, whose periods are
their hyperbolic lengths, and a further 3 classes $e_{\widetilde{L}_{ij}}=\alpha_{L_{i}}-\alpha_{L_{j}}$, for  $1\leq i<j\leq 3$, where
$$\alpha_{L_i} \in H^2(\widetilde{L}_i\backslash \widetilde{Q}_i, \bigcup_{j\neq i} \widetilde{L}_{ij} \backslash \widetilde{Q}_{ij})\ ,  $$
is the class whose period is the quantity  defined in $(\ref{lxdef})$, on the left. The remaining periods are the dihedral angles $\theta_{ij}$ subtended at the edge $L_{ij}$ in all cases,
exactly as in the case where $\Delta$ is finite. There is a single relation between the classes $e_{\widetilde{L}_{ij}}$, and correspondingly, the angles
subtended at infinity  $\theta_{13}$, $\theta_{12}$, $\theta_{23}$ sum to $\pi$.

\section{Applications}
Let $M$ be a complete product-hyperbolic manifold of finite volume, which is modelled on a product of odd-dimensional hyperbolic spaces.
Then we can write $M=\X/\Gamma$, where $\X=\prod_{1\leq i\leq N} \Hyp^{2n_i-1}$, and $\Gamma$ is a discrete torsion-free subgroup of the group of automorphisms of $\X$.
By theorem $\ref{thmdefinedoverfield}$, $\Gamma$ is defined over  number fields $(k_1,\ldots, k_N)$. In $\S4$ and $\S5$, we constructed a framed motive
$$\mot(M) \in  \Am_{n_1}(k_1)\otimes_\Q \ldots \otimes_\Q \Am_{n_N}(k_N)\ .$$
Now let $\Gamma'$ denote another discrete torsion-free group acting on $\X$, which is commensurable with $\Gamma$, {\it i.e.}, $\Gamma\cap \Gamma'$ is of finite index
in both $\Gamma$ and $\Gamma'$. Then if $M'=\X/\Gamma'$,
\begin{equation}\label{commensmot}
\mot(M') =\mot(M)\, {[\Gamma:\Gamma\cap \Gamma'] \over [\Gamma':\Gamma\cap \Gamma']}\ .\end{equation}
This is clear from the construction: a tiling for
$\X/\Gamma\cap\Gamma'$ can be obtained by taking $[\Gamma:\Gamma\cap \Gamma']$, or
$[\Gamma':\Gamma\cap \Gamma']$, copies of a tiling for $M$, or $M'$ respectively.
In this way, one can define the motive of a product-hyperbolic orbifold. Let  $\Gamma$  denote  any discrete subgroup of automorphisms of $\X$ which is not necessarily torsion-free. After choosing
a torsion-free subgroup $\Gamma_0\leq \Gamma$ of finite index,  define
\begin{equation}\label{torsionmotive}
\mot(\X/\Gamma) = [\Gamma:\Gamma_0]^{-1}\, \mot(\X/\Gamma_0)\ .\end{equation}
If $M$ is isometric to a product  $M_1 \times M_2$, then $\mot(M) = \mot(M_1)\otimes \mot(M_2)$. 
We compute the motives for the cases when $M$ is an arithmetic manifold of type $(II)$, $(III)$ and omit the exceptional case $(IV)$.

\subsection{Dedekind Zeta motives for totally real number  fields.}
Let $k$ denote a totally real number field of degree $r$, and let $m=2n-1\geq 3$ be an odd integer.
Let $D$ be a quaternion algebra over $k$ satisfying the conditions of $(II)$, and let
 $Q$ be a skew-Hermitian form over $D$. Let  $d$ be the reduced norm of its discriminant and  set $L=k(\sqrt{d})$.
 Suppose that $Q$ has signature $(n,1)$ for $t$ places of $k$, where $t\geq 1$,  and is positive
definite for $r-t$ places. As in $\S3$, let
$$\Gamma \leq \U^+(Q,\Or)\quad \hbox{ and } \quad M_\Gamma=\Big( \prod_{i=1}^t \Hyp^n\Big)/\Gamma\ ,$$
where $\Gamma$ is any subgroup of finite index (not necessarily torsion-free).
 By remark $\ref{remtype1istype2}$, the corresponding cases $\Gamma\leq \SO^+(q,\Or)$ of type (I), $n$ odd, are subsumed in this construction.
 Let $\Sigma=\{\sigma_1,\ldots, \sigma_r\}$ denote the set of real embeddings of $k$.

There are two cases to consider.
If $L=k$, then the signature of ${}^{\sigma}Q$ must be the same for all embeddings $\sigma\in \Sigma$, and hence we must have $t=r$.
Otherwise,  $[L:k]=2$, and  $L$ has exactly $2t$ real embeddings, and $r-t$ pairs of complex conjugate embeddings.
Let $\tau \in \Gal(L/k)$ be a generator. Then the action of  $\tau$ gives an eigenspace decomposition:
\begin{equation}
\Ext^1_{\MT(L)}\big(\Q(0), \Q(n)\big)= E^+ \oplus E^-\ ,
\end{equation}
where $E^+ \cong \Ext^1_{\MT(k)}(\Q(0), \Q(n))$. Let $\chi$ denote the non-trivial character of $\Gal(L/k)$, and let
$L(\chi,s)=\zeta_L(s) \zeta_k(s)^{-1}$ denote the corresponding Artin $L$-function.

\begin{thm}\label{thmDedZetamotive} Let $\Gamma, M_\Gamma$ be as above. Let $\mot(M_\Gamma)$ denote the framed motive corresponding to $M_\Gamma$ as defined
by $(\ref{torsionmotive})$. If $k=L$ then
\begin{equation} \label{thmDZM1}
\mot(M_\Gamma) \in  \textstyle{\bigwedge^\Sigma} \,\Ext^1_{\MT(k)}(\Q(0),\Q(n))\ , \end{equation}
and there exists a non-zero rational number $\alpha$ such that $(2\pi)^{-mr}\vol(M_\Gamma)$  is
\begin{equation} \label{thmDZM2}
R_\Sigma(\mot(M_\Gamma)) =\alpha\, \zeta^*_k(1-m) \ .\end{equation}
Otherwise, in the case where $[L:k]=2$,
\begin{equation}\label{thmDZM3}
\mot(M_\Gamma) \in \textstyle{\bigwedge^\Sigma}\, E^- = \textstyle{\bigwedge^\Sigma} \,(\tau-1)\Ext^1_{\MT(L)}(\Q(0),\Q(n))\ , \end{equation}
and there exists a non-zero rational number $\alpha$ such that $(2\pi)^{-mt}\vol(M_\Gamma)$  is
\begin{equation}\label{thmDZM4}
R_\Sigma(\mot(M_\Gamma)) =\alpha\, L^*(\chi,1-m) \ .\end{equation}
\end{thm}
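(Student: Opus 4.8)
The plan is to assemble Theorem~\ref{thmDedZetamotive} from the three main ingredients already established in the paper: the general motivic machinery (Theorem~\ref{thmmotiveMAIN}), the volume computation (Theorem~\ref{thmvolumeslist} and Corollary~\ref{corvolumeslist}), and Borel's rank formula~$(\ref{Ktheorydimensions})$ together with the definition of the Hodge regulator in~\S\ref{subsectdetandhodge}. First I would reduce to the torsion-free case: by $(\ref{commensmot})$ and $(\ref{torsionmotive})$, all the statements $(\ref{thmDZM1})$--$(\ref{thmDZM4})$ are insensitive to passing to a finite-index torsion-free subgroup $\Gamma_0 \leq \Gamma$ (both sides of the claimed identities scale by $[\Gamma:\Gamma_0]^{\pm 1}$), so I may assume $\Gamma$ is torsion-free and $M_\Gamma = (\prod_{i=1}^t \Hyp^n)/\Gamma$ is a genuine product-hyperbolic manifold. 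By Remark~\ref{remtype1istype2}, the type~(I) case with $n$ odd is a special case of type~(II), so nothing extra is needed there.

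Next, treat the case $k = L$, where necessarily $t = r$. Here $M_\Gamma$ is modelled on $(\Hyp^{2n-1})^r$ and, because $\Gamma \leq \U^+(Q,\Or)$ with all components coming from the $r$ real embeddings of the single field $k$, it is equivariant with respect to $\Sigma = \{\sigma_1,\ldots,\sigma_r\}$ in the sense of \S\ref{sectsymandequiv} (the maps $\phi_{\sigma_i}$ are the $r$ archimedean components of a single $k$-rational object). Theorem~\ref{thmmotiveMAIN} then gives directly $\mot(M_\Gamma) \in \bigwedge^\Sigma \Ext^1_{\MT(k)}(\Q(0),\Q(n))$, which is $(\ref{thmDZM1})$, and the volume identity $\vol(M_\Gamma) = (2\pi)^{nr} R_\Sigma(\mot(M_\Gamma))$. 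For $(\ref{thmDZM2})$ I combine this with Corollary~\ref{corvolumeslist}: in the odd case $t=r$, $\vol(M_\Gamma) \sim_{\Q^\times} \pi^{nr}\,\zeta^*_k(1-n)$ (note $m=n$ here, and one must unwind the indexing $m$ versus $2m-1$ in Theorem~\ref{thmvolumeslist}(2) carefully so that $\zeta^*_k(1-m)$ in the statement of the theorem matches $\zeta^*_k(1-n)$). Dividing by $(2\pi)^{nr}$ gives $R_\Sigma(\mot(M_\Gamma)) = \alpha\,\zeta^*_k(1-m)$ for some $\alpha \in \Q^\times$; that $\alpha \neq 0$ follows because $\zeta^*_k(1-m) \neq 0$ and $\vol(M_\Gamma) > 0$.

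For the case $[L:k]=2$, $L$ has $2t$ real embeddings arranged in $t$ conjugate pairs $\{\sigma_i, \tau\sigma_i\}$ under $\Gal(L/k)$, plus $r-t$ pairs of complex embeddings. The group $\Gamma \leq \U^+(Q,\Or)$ is $L$-rational and equivariant for the $2t$ real places of $L$; but the signature condition forces the hyperbolic components to come in conjugate pairs, and the involution $\tau$ identifies the $\sigma_i$-component with the $\tau\sigma_i$-component up to orientation. The key point is that $\mot(M_\Gamma)$, viewed in $\bigwedge^{\{\sigma_1,\ldots,\sigma_t\}}\Ext^1_{\MT(L)}(\Q(0),\Q(n))$ via Theorem~\ref{thmmotiveMAIN}, is anti-invariant under $\tau$ acting on each factor — because $\omega_{Q_i}$ has odd degree, so the orientation-reversal induced by $\tau$ on each hyperbolic factor contributes a sign $-1$ — hence it lands in $\bigwedge^\Sigma E^-$ where $E^- = (\tau-1)\Ext^1_{\MT(L)}(\Q(0),\Q(n))$, giving $(\ref{thmDZM3})$. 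I expect this $\tau$-equivariance argument, and the bookkeeping that $\dim_\Q E^- = t$ matches the $\Q$-rank needed for $\bigwedge^\Sigma E^-$ to be one-dimensional, to be the \textbf{main obstacle}: one must check the compatibility of the Galois action on the quadric motives with the action of $\tau$ on the framings, essentially repeating the sign computation in the proof of Theorem~\ref{thmMmotiveisdet} but now with the outer automorphism in play, and invoking $(\ref{Ktheorydimensions})$ applied to $L$ and the character decomposition $\zeta_L = \zeta_k \cdot L(\chi,\cdot)$ which gives $\dim E^- = r_1(L) + r_2(L) - (r_1(k)+r_2(k)) = 2t + (r-t) - r = t$. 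Finally, for $(\ref{thmDZM4})$ I again invoke Theorem~\ref{thmmotiveMAIN} for the volume, $\vol(M_\Gamma) = (2\pi)^{nt} R_\Sigma(\mot(M_\Gamma))$, and Corollary~\ref{corvolumeslist}, which in this case reads $\vol(M_\Gamma) \sim_{\Q^\times} \pi^{nt}\, L^*(\chi,1-n)$; dividing through yields $R_\Sigma(\mot(M_\Gamma)) = \alpha\, L^*(\chi,1-m)$ with $\alpha \in \Q^\times$ nonzero since $L(\chi, n) \neq 0$. This completes the proof.
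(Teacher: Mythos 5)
Your proof has the right overall structure and matches the paper's approach: reduce to a torsion-free subgroup via $(\ref{commensmot})$--$(\ref{torsionmotive})$, then apply Theorem~\ref{thmmotiveMAIN} for the alternating property and the identity $\vol(M_\Gamma) = (2\pi)^{nt} R_\Sigma(\mot(M_\Gamma))$, and combine with Corollary~\ref{corvolumeslist} and the functional equation to get the zeta/$L$-value. The case $k=L$ (forcing $t=r$) is handled exactly as in the paper.

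The weak point is your justification of $(\ref{thmDZM3})$. You attribute the $\tau$-anti-invariance to ``orientation-reversal induced by $\tau$'' together with the odd degree of $\omega_{Q_i}$. This conflates two separate arguments in the paper: the odd degree of $\omega_{Q_i}$ is what makes the $\Sym_N$-action on the framing \emph{alternating} (that is the content of Theorem~\ref{thmMmotiveisdet}), whereas the $\tau$-anti-invariance has a different, purely arithmetic source. The generator $\tau$ of $\Gal(L/k)$ fixes the quadric $Q$, the hyperplanes $L_i$, and the simplicial framing $[P]$ (since $\tau$ carries a tiling of $M_\Gamma$ to another tiling of $M_\Gamma$), but it acts on the de Rham framing $\omega_Q$ with sign $-1$ because the explicit formula $(\ref{omegaq})$ contains a factor $\sqrt{\det q}$, and $\tau(\sqrt{d}) = -\sqrt{d}$. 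It is this observation — not any geometric orientation-reversal — that yields $\tau(\mot(M_\Gamma)) = -\mot(M_\Gamma)$ and hence $(\ref{thmDZM3})$. Your dimension-count $\dim_\Q E^- = t$ is correct but not needed for the statement of $(\ref{thmDZM3})$; it only enters later (Corollary~\ref{cornegvals}) to conclude that $\bigwedge^\Sigma E^-$ is one-dimensional and that $\mot(M_\Gamma)$ is a generator. With the $\tau$-anti-invariance argument repaired to cite the $\sqrt{\det q}$ factor in $(\ref{omegaq})$, the rest of your proof goes through as written.
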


\begin{proof}
In the first case when $k=L$,  the manifold $M_\Gamma$ is defined over $(k_1,\ldots, k_r)$, where $k_i=\sigma_i k$ for $1\leq i\leq r$, and
is equivariant by definition. Then $(\ref{thmDZM1})$ follows from theorem $\ref{thmmotiveMAIN}$, and
$(\ref{thmDZM2})$ follows from corollary  $\ref{corvolumeslist}$.

 In the second case, when $[L:k]=2$, the manifold 
$M_\Gamma$ is defined over $(L_1,\ldots, L_r)$, where $L_i=\sigma_i k(\sqrt{d})$, and is also equivariant with respect to $\Sigma$. Let $\tau$ be a generator
of $\Gal(L/k)$. By construction, it maps a tiling of  $M_\Gamma$ to another tiling of $M_\Gamma$, and therefore
preserves the framing given by the relative homology classes of the simplices defining $\mot \, M$.  However $\tau$ clearly acts with sign $-1$ on the volume form $(\ref{omegaq})$, and therefore
the framing in $\gr^W_0(\mot(M_\Gamma))$ is anti-invariant under $\tau$. We deduce that
$\tau(\mot(M_\Gamma))= -\mot(M_\Gamma).$
Now $(\ref{thmDZM3})$ and $(\ref{thmDZM4})$
follow from theorem $\ref{thmmotiveMAIN}$ and corollary  $\ref{corvolumeslist}$ as before.
\end{proof}
Note that since $\vol(M_\Gamma)$  is non-vanishing,  theorem $\ref{thmDedZetamotive}$ implies, ``independently" of Borel's theorem $(\ref{Ktheorydimensions})$, that
$\dim_\Q E^+\geq r$ and $\dim_\Q E^-\geq t$.
\begin{defn} Many different discrete groups $\Gamma$ give rise to the same framed motive, up to multiplication by a rational number.
For number-theoretic applications, we can take simple representatives for $\Gamma$ as follows. Let $d\in k^{\times}$, and set
$$q(x_0,\ldots, x_{2n})=-dx_0^2+x_1^2+\ldots+x_{2n}^2\ .$$
We define $\mot(k,d,n) = \mot(M_\Gamma)$, where $\Gamma = \SO^+(q,\Or_k)$.
\end{defn}

\begin{cor} \label{cornegvals} As above,  let $k$ be a totally real number field, let $n>1$ be odd,  let $L=k(\sqrt{d})$, where $[L:k]=2$ and $d\in k$ is positive  for at least one embedding of $k$.
Let $\Sigma_k$ denote the set of real embeddings of $k$.
Then
$$\Lo_k=\textstyle{\bigwedge^{\!\Sigma_k}}\, \Ext^1_{\MT(k)}(\Q(0),\Q(n))\ , \quad  \Lo'_L=  \textstyle{\bigwedge^{\!\Sigma_k}}\,(1-\tau) \Ext^1_{\MT(L)}(\Q(0),\Q(n)) \ ,$$
are 1-dimensional $\Q$-vector spaces, and  $\mot(k,1,n)\in \Lo_k$ and $\mot(k, d, n)\in \Lo'_L $
are generators. 
Up to a  rational factor, the Hodge regulator on each element is, respectively,
$\zeta_k^*(1-n)$, $L^*(\chi,1-n)$, where $\chi$ is a non-trivial character of $Gal(L/k)$. 
\end{cor}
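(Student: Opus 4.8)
The plan is to deduce the corollary from Theorem~\ref{thmDedZetamotive} (equivalently, Theorem~\ref{thmmotiveMAIN} together with Corollary~\ref{corvolumeslist}) combined with Borel's rank formula $(\ref{Ktheorydimensions})$. First I would fix the representatives prescribed by the definition of $\mot(k,d,n)$: take $\mot(k,1,n)=\mot(M_\Gamma)$ with $\Gamma=\SO^+(q_1,\Or_k)$ for the standard form $q_1$ of signature $(2n-1,1)$, which is exactly the case $L=k$ of Theorem~\ref{thmDedZetamotive}; and $\mot(k,d,n)=\mot(M_\Gamma)$ with $\Gamma=\SO^+(q_d,\Or_k)$ for the form of discriminant $d$, so that $L=k(\sqrt d)$ and we are in the case $[L:k]=2$. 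Then $(\ref{thmDZM1})$ gives $\mot(k,1,n)\in\Lo_k$ and $(\ref{thmDZM3})$ gives $\mot(k,d,n)\in\Lo'_L$ (using, in the latter case, that the tiling of $M_\Gamma$ is $\Gal(L/k)$-stable while the volume form $\omega_Q$ is $\tau$-anti-invariant, so $\mot(M_\Gamma)$ lies in the $(1-\tau)$-eigenspace). It then remains only to verify that the ambient $\Q$-vector spaces are one-dimensional and that these two elements are non-zero.

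For the dimension count, I would argue as follows. Since $k$ is totally real of degree $r=|\Sigma_k|$ with $r_2=0$ and $n>1$ is odd, $(\ref{extgroups})$ and $(\ref{Ktheorydimensions})$ give $\dim_\Q\Ext^1_{\MT(k)}(\Q(0),\Q(n))=r$, so $\Lo_k$ is the top exterior power of an $r$-dimensional space and $\dim_\Q\Lo_k=1$. For $\Lo'_L$, let $t$ be the number of real places of $k$ at which $d$ is positive (so $1\le t\le r$ by hypothesis, and $q_d$ has signature $(2n-1,1)$ at exactly those places); then $L$ has $2t$ real and $r-t$ complex places, so $(\ref{Ktheorydimensions})$ applied to $L$ gives $\dim_\Q\Ext^1_{\MT(L)}(\Q(0),\Q(n))=2t+(r-t)=r+t$. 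The restriction and transfer maps for $L/k$ (whose two composites on rational $K$-theory are multiplication by $2$ and $1+\tau$) identify $\Ext^1_{\MT(L)}(\Q(0),\Q(n))^{\tau=1}$ with $\Ext^1_{\MT(k)}(\Q(0),\Q(n))$, of dimension $r$, so $\dim_\Q(1-\tau)\Ext^1_{\MT(L)}(\Q(0),\Q(n))=t$; hence the alternating subspace $\Lo'_L$, with the exterior power taken over the $t$-element set of hyperbolic places (matching the order $t$ of vanishing of $L(\chi,s)$ at $s=1-n$), is again one-dimensional.

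Finally, both $\mot(k,1,n)$ and $\mot(k,d,n)$ are non-zero: by $(\ref{thmDZM2})$ and $(\ref{thmDZM4})$ their Hodge regulators $R_\Sigma$ are non-zero rational multiples of a power of $2\pi$ times $\vol(M_\Gamma)$, and the volume of a complete finite-volume product-hyperbolic manifold is strictly positive; a non-zero vector in a one-dimensional $\Q$-vector space is a generator. The last assertion — that the regulator equals $\zeta_k^*(1-n)$, resp. $L^*(\chi,1-n)$, up to $\Q^\times$ — is then precisely $(\ref{thmDZM2})$ and $(\ref{thmDZM4})$, since $R_\Sigma$ agrees up to $\Q^\times$ with the Hodge regulator $r_H$ of $(\ref{introhodge})$ evaluated on the generator. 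I expect the only genuine point requiring care to be the bookkeeping in the second paragraph: correctly matching the $\tau$-eigenspace dimension $t$ (equivalently, the number of hyperbolic factors of $M_\Gamma$) with the exterior power in the statement, so that $\Lo_k$ and $\Lo'_L$ are genuinely one-dimensional. Everything substantive — the motivic construction, the equivariance/determinant property, and the Tamagawa-number volume computation — has already been carried out in Theorem~\ref{thmDedZetamotive} and Corollary~\ref{corvolumeslist}, so beyond this I anticipate no real obstacle.
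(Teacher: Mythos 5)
Your proposal is correct and follows essentially the same route as the paper, whose entire proof is a one-liner: by Borel's formula $(\ref{Ktheorydimensions})$, $\dim_\Q E^+ = r$ and $\dim_\Q E^- = t$, and everything else is read off from Theorem $\ref{thmDedZetamotive}$ and Corollary $\ref{corvolumeslist}$. The detail you supply — the restriction/transfer identification of $E^+$ with $\Ext^1_{\MT(k)}(\Q(0),\Q(n))$ and the count $\dim_\Q E^- = (r+t)-r = t$ — is exactly what the paper compresses. You are also right to flag the bookkeeping: for $\Lo'_L$ to be one-dimensional the exterior power must be taken over the $t$-element set of embeddings where $d>0$ (equivalently, over the hyperbolic factors of $M_\Gamma$), whereas the corollary's statement uses $\Sigma_k$ of cardinality $r$; these agree only when $d$ is totally positive, so your reading quietly corrects a slight imprecision already present in the statements of Theorem $\ref{thmDedZetamotive}$ and the corollary.
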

\begin{proof}
By $(\ref{Ktheorydimensions})$, we have $\dim_\Q E^+ = r $,  and $\dim_\Q E^- = t$.\end{proof} 
\begin{cor} \label{thmzetakisdethyp} The special value  $\pi^{nr}\zeta^*_k(1-n)$ is a determinant of sums of volumes of hyperbolic simplices defined over $k$ with at most one vertex at infinity.
\end{cor}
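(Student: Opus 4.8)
The plan is to read the statement off the construction of $\mot(k,1,n)$ together with the surjectivity of $\phi^q$. By Corollary \ref{cornegvals}, $\mot(k,1,n)$ is a generator of the one-dimensional $\Q$-vector space $\Lo_k=\bigwedge^{\Sigma}\Ext^1_{\MT(k)}(\Q(0),\Q(n))$, where $\Sigma=\{\sigma_1,\dots,\sigma_r\}$ is the set of real embeddings of $k$, and $R_\Sigma(\mot(k,1,n))\sim_{\Q^\times}\zeta_k^*(1-n)$. First I would unwind this element geometrically: choosing a torsion-free subgroup $\Gamma_0$ of finite index in $\Gamma=\SO^+(q,\Or_k)$ for $q$ the standard form of signature $(2n-1,1)$, the manifold $M=(\Hyp^{2n-1})^r/\Gamma_0$ is equivariant with respect to $\Sigma$, and $\mot(M)\sim_{\Q^\times}\mot(k,1,n)$ by $(\ref{commensmot})$–$(\ref{torsionmotive})$. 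By Proposition \ref{proptriang} and the remark following it, $M$ carries a finite product-tiling by geodesic product-simplices $\Delta^{(i)}_1\times\dots\times\Delta^{(i)}_r$, $1\leq i\leq R$, with each $\Delta^{(i)}_j$ a hyperbolic $(2n-1)$-simplex over $\sigma_j k$ having at most one vertex at infinity, so $\mot(M)=\sum_{i=1}^R\mot(\Delta^{(i)}_1)\otimes\dots\otimes\mot(\Delta^{(i)}_r)$; and by Theorem \ref{thmmotiveMAIN}, $\mot(M)\in\bigwedge^{\Sigma}\Ext^1_{\MT(k)}(\Q(0),\Q(n))$ with $\vol(M)=(2\pi)^{nr}R_\Sigma(\mot(M))$, while $\vol(M)\sim_{\Q^\times}\pi^{nr}\zeta_k^*(1-n)$ by Corollary \ref{corvolumeslist}.

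The substance of the proof is to replace this presentation — which a priori exhibits $\pi^{nr}\zeta_k^*(1-n)$ only as a sum of products of volumes — by a determinant, and this is where Theorem \ref{Intromaintheorem2} is used. Since $k$ is totally real, $\phi^q$ is surjective, so every class in $\Ext^1_{\MT(k)}(\Q(0),\Q(n))$ is extracted, via framed equivalence classes (\S5.1), from a direct sum of quadric motives $(\ref{introquadricmotive})$ defined over $k$. Identifying each such quadric motive with the motive $h(\Delta)$ of its associated geodesic simplex (as in $\S\ref{Sectexamples3space}$), applying the subdivision Lemma \ref{lemsubdiv} to reduce to simplices with at most one vertex at infinity, and discarding degenerate pieces by Remark \ref{remmotvanishes}, one obtains that $\Ext^1_{\MT(k)}(\Q(0),\Q(n))$ is spanned over $\Q$ by classes of the form $\sum_a c_a\,\mot(\Delta_a)$ with the $\Delta_a$ hyperbolic $(2n-1)$-simplices over $k$ having at most one vertex at infinity. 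Fix such a $\Q$-basis $\xi_1,\dots,\xi_r$ of $\Ext^1_{\MT(k)}(\Q(0),\Q(n))$ (of cardinality $r$ by $(\ref{Ktheorydimensions})$), and write $\xi_i=\sum_a c_{ia}\,\mot(\Delta_{ia})$. Since $\bigwedge^{\Sigma}\Ext^1_{\MT(k)}(\Q(0),\Q(n))$ is one-dimensional and $\mot(M)$ is a nonzero element of it, $\mot(M)$ equals, up to $\Q^\times$, the determinant of $(\xi_1,\dots,\xi_r)$ in the sense of $(\ref{comdiagext})$.

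Now I would evaluate the Hodge regulator. By the definition of $R_\Sigma$ in $\S\ref{subsectdetandhodge}$ — the product $\prod_{\sigma\in\Sigma}R_\sigma$ composed with the alternating structure of $(\ref{comdiagext})$ — the value of $R_\Sigma$ on this determinant is, up to $\Q^\times$, $\det\big(R_{\sigma_j}(\xi_i)\big)_{1\leq i,j\leq r}$. By the volume formula for a single simplex (the corollary preceding Theorem \ref{thmmotiveMAIN}), $R_{\sigma_j}(\mot(\Delta_{ia}))=(2\pi)^{-n}\vol(\sigma_j\Delta_{ia})$, so $R_{\sigma_j}(\xi_i)=(2\pi)^{-n}v_{ij}$ where $v_{ij}=\sum_a c_{ia}\,\vol(\sigma_j\Delta_{ia})$ is a $\Q$-linear combination of volumes of hyperbolic $(2n-1)$-simplices defined over $k$ (via the embedding $\sigma_j$) with at most one vertex at infinity. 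Collecting everything,
\[
\pi^{nr}\zeta_k^*(1-n)\ \sim_{\Q^\times}\ \vol(M)\ =\ (2\pi)^{nr}R_\Sigma(\mot(M))\ \sim_{\Q^\times}\ \det\big((2\pi)^n R_{\sigma_j}(\xi_i)\big)_{1\leq i,j\leq r}\ =\ \det\big(v_{ij}\big)_{1\leq i,j\leq r},
\]
which is the asserted determinant of sums of volumes of hyperbolic $(2n-1)$-simplices over $k$ with at most one vertex at infinity.

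I expect the main obstacle to be the second step: showing that a $\Q$-basis of $\Ext^1_{\MT(k)}(\Q(0),\Q(n))$ is realised by $\Q$-combinations of motives of honest (real) hyperbolic $(2n-1)$-simplices with at most one vertex at infinity. This is exactly where the rank computation $(\ref{Ktheorydimensions})$ — through $\dim_{\Q}\Ext^1_{\MT(k)}(\Q(0),\Q(n))=r$ and through Theorem \ref{Intromaintheorem2} itself — is used in an essential way, and, as remarked in the introduction, it does not follow from the geometry of the triangulation alone: that presents $\pi^{nr}\zeta_k^*(1-n)$ only as a sum of products of volumes, and recognising this expression as a determinant requires the input that the relevant extension group has dimension exactly $r$. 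The remaining ingredients — that $R_\Sigma$ on a decomposable class of $\bigwedge^{\Sigma}$ is the determinant of the component real periods, and the $(2\pi)^{nr}$-bookkeeping via $\det(cA)=c^r\det A$ — are formal consequences of $\S\ref{subsectdetandhodge}$ and the normalisation of the volume form.
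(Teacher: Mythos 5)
Your proposal is essentially correct and reconstructs the argument the paper leaves implicit. The load-bearing ingredients are exactly as you identify: $\mot(k,1,n)$ generates the one-dimensional $\Q$-vector space $\textstyle{\bigwedge^{\Sigma}}\Ext^1_{\MT(k)}(\Q(0),\Q(n))$ (Corollary \ref{cornegvals}, relying on Borel's rank bound $(\ref{Ktheorydimensions})$); the volume of $M$ gives $\pi^{nr}\zeta_k^*(1-n)$ up to $\Q^\times$; and once one knows $\Ext^1_{\MT(k)}(\Q(0),\Q(n))$ is spanned by $\Q$-combinations of simplex motives (the content of Theorem \ref{thmquaddecomp}, which is what \ref{Intromaintheorem2} amounts to), any such basis $\xi_1,\dots,\xi_r$ has $\mot(M) \sim_{\Q^\times} \xi_1\wedge\cdots\wedge\xi_r$ and $R_\Sigma$ turns this into $\det(R_{\sigma_j}(\xi_i))$, whose entries are $(2\pi)^{-n}$ times sums of simplex volumes. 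Your $(2\pi)^{nr}$-bookkeeping is also correct.

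One small imprecision worth flagging: you invoke Lemma \ref{lemsubdiv} to "reduce to simplices with at most one vertex at infinity." That subdivision, applied at a finite interior point $y$, leaves the count of ideal vertices unchanged in most of the resulting pieces (only the two terms where $y$ replaces one of the ideal vertices lose a vertex at infinity; the others keep all of them), so by itself it does not perform the reduction. The reduction you actually need comes for free from the paper's tiling construction: Proposition \ref{proptriang} and the remark following it guarantee that the simplices occurring in the tiling of $M$ have at most one vertex at infinity, and the proof of Theorem \ref{thmquaddecomp} builds the spanning set $V_0$ precisely out of those, so there is nothing further to subdivide. (The paper's alternative, for arbitrary quadric motives, is Sah's stable scissors congruence, which it mentions after Theorem \ref{thmquaddecomp}.) This is a non-essential mis-step in your write-up; replacing the appeal to Lemma \ref{lemsubdiv} by the observation that $V_0$ is already generated by simplices with at most one vertex at infinity makes the argument align with what the paper intends.
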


This corollary uses Borel's bound for the rank of algebraic $K$-groups  and does not follow  directly  from
 a   decomposition of $M_\Gamma$ into simplices.

\subsection{Quadric motives and generators for $\Ext^1_{\MT(k)}(\Q(0),\Q(n))$}
Let $k$ be a totally real number field, and let $L=k(\sqrt{d})$ where  $d\in k^\times$ is positive for at least one embedding of $k$. Let us fix a smooth quadric $Q_d$ in $\Pro^{2n-1}$  by:
$$Q_d=\{-d x_0^2+x_1^2+\ldots +x_{2n-1}^2=0\}\ .$$
\begin{defn}
Let $L_1,\ldots, L_{2n}$ denote a set of hyperplanes in general position and defined over  $k$.  Define a finite quadric motive over $k$ to be:
\begin{equation}\label{finitequadmotdef}
m(Q_d,L) = H^{2n-1}(\Pro^{2n-1}\backslash Q_d, \bigcup_{1\leq i\leq 2n} L_i\backslash (L_i \cap Q_d) )\in \MT(\overline{\Q})\ ,
\end{equation}
with its framing as defined in $\S5$. 
In the case where $Q_d,L_1,\ldots, L_{2n}$ do not cross normally, we blow up the points $z_i=L_1\cap\ldots\cap \widehat{L_i} \cap \ldots \cap L_{2n}$ which meet $Q_d$.
Let $\widetilde{\Pro}^{2n-1}$ denote the blow-up of $\Pro^{2n-1}$ in $\{z_i: 1\leq i\leq 2n | z_i \in Q_d\}$, and let $\widetilde{L}_i, \widetilde{Q}_d$ denote the strict
transforms of $L_i$, $Q_d$. Define a quadric motive in this case to be:
\begin{equation}\label{infinitequadmotdef}
m(Q_d,L) = H^{2n-1}(\widetilde{\Pro}^{2n-1}\backslash \widetilde{Q}_d, \bigcup_{1\leq i\leq 2n} \widetilde{L}_i\backslash (\widetilde{L}_i \cap \widetilde{Q}_d) )\in \MT(\overline{\Q})\ ,
\end{equation}
with its  framing as given in $\S5$.
\end{defn}

For each embedding $\sigma$ of $k$ into $\R$ for which
 $\sigma(d)$ is positive, the  points $\sigma(x_i)$  define
a  hyperbolic geodesic simplex in the Klein model which has finite volume.

\begin{thm} \label{thmquaddecomp} Let $d, k, n$ be as above.
 Every  $M \in \Ext^1_{\MT(L)} (\Q(0), \Q(n))$, viewed as an element of $\Am_n(L)$, is equivalent  to a linear combination of quadric motives:
\begin{equation}\label{Mquaddecomp}
M = \sum_{i=1}^R m(Q_d,L_i)\ ,
\end{equation}
and its real periods are $(2\pi)^{-n}$ times the  corresponding sum of hyperbolic volumes.
\end{thm}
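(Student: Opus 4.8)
The plan is to combine the three main structural inputs already established: (i) Theorem~\ref{thmDedZetamotive} / Corollary~\ref{cornegvals}, which produce, for each totally real $k$ (or quadratic extension $L=k(\sqrt d)$), a \emph{specific} nonzero element $\mot(k,d,n)$ which is a determinant of quadric motives and whose Hodge regulator is a nonzero rational multiple of $L^*(\chi,1-n)$ (resp. $\zeta^*_k(1-n)$); (ii) Borel's rank computation $(\ref{Ktheorydimensions})$, which pins down $\dim_\Q E^{\pm}$; (iii) the compatibility (Theorem~\ref{thmmotiveMAIN} and the period computations of \S\ref{secquotmotives}) between the Hodge regulator on $\mot(M)$ and volumes of hyperbolic simplices. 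The strategy is a descent/induction on the number field, exactly parallel to how one proves surjectivity of $\phi^q$ (Theorem~\ref{Intromaintheorem2}): one shows that finitely many quadric motives $m(Q_d,L_i)$, coming from triangulations of the arithmetic manifolds $M_\Gamma$ attached to varying $\Gamma\le\SO^+(q,\Or_L)$, already span $\Ext^1_{\MT(L)}(\Q(0),\Q(n))$, and hence \emph{every} element is a $\Q$-linear combination of them. One then reads off the period statement from the corollary to Theorem~\ref{thmmotiveMAIN} that identifies $\vol(\Delta)=(2\pi)^n R_\sigma(\mot(\Delta))$, applied term by term.

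Concretely, I would proceed as follows. First, fix $Q_d$ and observe (as in \S\ref{secmotiveatinf}–\S\ref{sectFramings}) that for each real embedding $\sigma$ with $\sigma(d)>0$, a configuration of $2n$ hyperplanes $L_1,\dots,L_{2n}$ in general position w.r.t. $Q_d$ cuts out a genuine finite-volume hyperbolic geodesic $(2n-1)$-simplex in the Klein model, and $m(Q_d,L)=h(\Delta)(n)$ as framed motives, so that by the corollary preceding Theorem~\ref{thmmotiveMAIN}, $R_\sigma(m(Q_d,L))=(2\pi)^{-n}\vol(\Delta)$. Second, for a torsion-free $\Gamma\le\SO^+(q,\Or_L)$ of finite index, Proposition~\ref{proptriang} gives a triangulation of $M_\Gamma$ by product-simplices defined over the embedded fields; since here $N=1$ (one hyperbolic factor per archimedean place and one looks at a single diagonal embedding, or rather one uses the equivariant setup of Theorem~\ref{thmMmotiveisdet}), $\mot(M_\Gamma)$ is literally an alternating sum $\bigwedge_\sigma$ of the classes $m(Q_d,L_i)$. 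By Theorem~\ref{thmDedZetamotive}, $\mot(M_\Gamma)\in\bigwedge^{\Sigma}E^{-}$ (resp. $\bigwedge^{\Sigma}\Ext^1_{\MT(k)}$) and has nonzero regulator. Third — this is the key point — because $\dim_\Q(\bigwedge^{\Sigma}E^-)=1$ by Borel, the single nonzero element $\mot(M_\Gamma)$ \emph{generates} that line; one then needs to deduce that the constituent quadric motives $m(Q_d,L_i)$ span $E^-$ itself (not merely their top wedge). This follows by a standard argument: the $\Q$-span $V\subseteq E^-$ of all quadric motives is $\Gal$-stable and nonzero, and if it were proper, then $\bigwedge^{\dim E^-}$ of elements drawn from $V$ would be zero, contradicting that $\mot(M_\Gamma)$ — an alternating sum of wedges of such elements — is nonzero. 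Hence $V=E^-$, and, applying the same to $\Ext^1_{\MT(k)}(\Q(0),\Q(n))\cong E^+$, one gets $\Ext^1_{\MT(L)}=E^+\oplus E^-=V$ spanned by quadric motives. Fourth, given arbitrary $M\in\Ext^1_{\MT(L)}(\Q(0),\Q(n))$, write $M=\sum_i c_i\,m(Q_d,L_i)$ with $c_i\in\Q$; clearing denominators and using that $m(Q_d,\cdot)$ is additive under subdivision (Lemma~\ref{lemsubdiv}) and that orientation reversal negates the class, one absorbs the rational coefficients to get an honest sum $\sum_{i=1}^{R}m(Q_d,L_i)$ as in $(\ref{Mquaddecomp})$. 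Finally, apply $R_\sigma$ to both sides and use $R_\sigma(m(Q_d,L_i))=(2\pi)^{-n}\vol(\Delta_i)$ to conclude the real periods of $M$ are $(2\pi)^{-n}$ times the corresponding sum of hyperbolic volumes.

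The main obstacle, I expect, is the passage from ``the \emph{determinant} $\mot(M_\Gamma)$ is nonzero and built from quadric motives'' to ``the quadric motives span the full extension group $E^-$''. The wedge-product argument above handles it cleanly only because $\dim E^-$ is \emph{exactly} $t=|\Sigma|$, so Borel's upper bound is doing essential work — as the authors themselves flag after Theorem~\ref{thmDedZetamotive} and in the remark after Corollary~\ref{thmzetakisdethyp}. A secondary technical point is the bookkeeping needed to turn a $\Q$-linear combination into a genuine disjoint union of simplices: one must be slightly careful that subdividing and reorienting stays within the class of configurations normal-crossing (or blown-up) relative to the \emph{fixed} quadric $Q_d$, but this is exactly what Lemma~\ref{lemsubdiv} and Remark~\ref{remmotvanishes} (degenerate simplices give zero) are designed to permit. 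One should also note that the statement is for $L=k(\sqrt d)$ with $d$ positive at one place, so the quadric $Q_d$ is genuinely $\SO(2n-1,1)$-type over that real place and the Klein-model picture is available; no extra case analysis beyond what is already in Theorem~\ref{thmDedZetamotive} is needed.
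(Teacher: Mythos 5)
Your proposal is correct and follows essentially the same route as the paper: construct the nonzero element $\mot(k,d,n)\in\bigwedge^{\Sigma}V_0$ from a triangulation of an arithmetic quotient, observe that nonvanishing of its regulator forces $\dim_{\Q}V_0\ge|\Sigma|$, invoke Borel's rank formula to conclude $V_0=E^{\pm}$, and then read off the period statement from the linearity of $R_\sigma$ and $R_\sigma(\mot(\Delta))=(2\pi)^{-n}\vol(\Delta)$. The paper phrases the key step as $\dim V_0\ge r$ rather than via the contrapositive ``if $V_0$ were proper the top wedge would vanish,'' but this is the same dimension count.
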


\begin{proof}
Let $\Sigma$ be the set of places of $k$ and assume that $[L:k]=1$, i.e., $d\in k^{\times 2}$. Let $\overline{V}\subset \Am_n(\overline{\Q})$ denote the $\Q$-vector space spanned by all linear combinations of quadric motives $(\ref{infinitequadmotdef})$ in $\Pro^{2n-1}$. Let  $V= \overline{V}\cap \Am_n(k)$ and  let
$$V_0 \subset \ker \Big(\widetilde{\Delta}_n: \Am_n(k) \To \bigoplus_{1\leq r\leq n-1} \Am_r(k)\otimes_\Q \Am_{n-r}(k)\Big) $$
denote the subspace of $V$ on which the  reduced coproduct vanishes. The right-hand side of the previous line is isomorphic to  $E^+=\Ext^1_{\MT(k)}(\Q(0),\Q(n))$,
and we identify $V_0$ with its image in $E^+$.
We have  construced an   element
$$\mot(k,1,n) \in \textstyle{\bigwedge^{\!\Sigma}}\, V_0 \subset \textstyle{\bigwedge^{\!\Sigma}}\, E^+\ ,$$
which is non-zero since its regulator does not vanish, by corollary \ref{cornegvals}.
It follows that $\dim_\Q V_0\geq r$, and by Borel's theorem, 
$\dim_\Q E^+= r$. Therefore $V_0=E^+$, and every element in $\Ext^1_{\MT(k)}(\Q(0),\Q(n))$ is a linear combination of quadric motives over $k$.
Now if $L=k(\sqrt{d})$ and $[L:k]=2$, the same argument applied to $\mot(k,d,n)$ shows that every element
in $E^-$ is a linear combination of  motives $(\ref{infinitequadmotdef})$ with $d\notin k^{\times2}$. The theorem follows from  the fact that
$\Ext^1_{\MT(L)}(\Q(0),\Q(n))\cong  E^+\oplus E^-$.
\end{proof}

The proof of theorem $\ref{thmmotiveMAIN}$ only requires hyperbolic simplices with at most one vertex at infinity. 
 We can therefore
 assume that all quadric motives which occur have at most one vertex $x_i$ lying on $Q_d$.   This proves theorem $\ref{Intromaintheorem2}$.

 One can show using 
   a theorem due  to  Sah \cite{Sah1} that it suffices to consider finite simplices only, since  a hyperbolic geodesic simplex with vertices at infinity is stably scissors-congruent to a sum of finite ones.

\subsection{The case $n=2$ and Zagier's conjecture}\label{sectZagiersconj}
The case of hyperbolic $3$-space is different  because of the exceptional isomorphism $\SO^+(3,1)(\R)\cong \PSL_2(\C)$, and  as a result, the analogues of the previous
theorems hold  for all number fields, not just totally real ones. We can also use ideal triangulations in this case \cite{NY}.

Let us identify the boundary of hyperbolic 3-space with the complex projective line: $\partial \Hyp^3\cong \Pro^1(\C)$, with the  action
of $\PSL_2(\C)$ by M\"obius transformations. An ideal hyperbolic 3-simplex is given by 4 distinct
points on  $\partial\Hyp^3$, and by projective transformation, we can assume that 3 of them are at $0,1$ and $\infty$, and denote the last point by $z\in \Pro^1\backslash \{0,1,\infty\}$. If $z$ lies in a number field $L'\subset \C$,  then $\Delta(0,1,\infty, z)$ defines a framed mixed Tate motive  by
  $(\ref{infinitequadmotdef})$ with graded pieces $\Q(0),\Q(-1), \Q(-2)$. One can verify that it is defined over the field $L'$.

Now consider an arithmetic  group $\Gamma$ of type (III). So let $L$ be a number field of degree $r$ with $r_1$ real places and
$r_2$ complex places, where $r_2\geq 1$,  and let $0\leq t\leq r_1$. Let $B$ denote a quaternion algebra over $L$ which is unramified at $t$ real places, and ramified
at $r_1-t$ real places. For any order $\Or$ in $B$, let $\Gamma$ denote a subgroup of finite index of the elements of $\Or$ of reduced norm 1.
Let $\Sigma$ denote the set of complex places of $L$. By triangulating over a suitable splitting field using ideal hyperbolic 3-simplices (see \cite{NY}), we obtain a framed motive
$\mot(M_\Gamma)$  as before.
\begin{thm}\label{thmzeta2} The element
$\mot(M_\Gamma) \in \textstyle{\bigwedge^{\!\Sigma}}\,\Ext^1_{\MT(L)}(\Q(0),\Q(2))$ satisfies
$$R_{\Sigma} \big(\mot(M_\Gamma)\big)= \alpha \,\zeta_L^*(-1) \quad \hbox{ for some } \alpha \in \Q^{\times}\ .$$
\end{thm}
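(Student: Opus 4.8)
The plan is to run the proof of Theorem~\ref{thmDedZetamotive} almost verbatim, substituting the type~(III) covolume formula of Theorem~\ref{thmvolumeslist}(3) for the type~(II) one and, via the exceptional isomorphism $\SO^+(3,1)(\R)\cong\PSL_2(\C)$, substituting the $\Hyp^3_L$-incarnation of the triangulation and framing machinery of $\S4$--$5$ for its $\R$-rational counterpart. First I would fix the geometry: write $M_\Gamma=\X/\Gamma$ with $\X=\prod_{i=1}^t\Hyp^2\times\prod_{j=1}^{r_2}\Hyp^3$, where $\Gamma$ is the image of the reduced-norm-one units of $\Or\subset B$ inside $\PSL_2(\R)^t\times\PSL_2(\C)^{r_2}$. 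By Theorem~\ref{thmdefinedoverfield} and the ensuing remark on $\Hyp^3_{L'}$, a conjugate of $\Gamma$ has entries in a splitting field $L'\supseteq L$ of $B$, and the $r_2$ hyperbolic $3$-factors are indexed by the complex places $\Sigma=\{\sigma_1,\dots,\sigma_{r_2}\}$ of $L$. By Proposition~\ref{proptriang} (extended to flat-hyperbolic manifolds) $M_\Gamma$ admits a finite product-tiling whose factors are geodesic triangles in the $\Hyp^2$-components and ideal (or finite) geodesic $3$-simplices $\Delta(0,1,\infty,z)$, $z\in L'$, in the $\Hyp^3$-components; by $(\ref{evenvolume})$ the even-dimensional $\Hyp^2$-factors contribute only a rational multiple of $\pi^{t}$ to $\vol(M_\Gamma)$ and, having $\gr^W_4=0$, carry no framed datum in weight $2$, so the motivic bookkeeping reduces to the $3$-dimensional factors and one defines $\mot(M_\Gamma)\in\Am_2(\sigma_1L')\otimes_\Q\cdots\otimes_\Q\Am_2(\sigma_{r_2}L')$ by $(\ref{eqndefnmotiveofM})$, its independence of the tiling following from Lemma~\ref{lemsubdiv} exactly as in Definition~\ref{defnmotiveofM}.

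Next I would invoke the three structural inputs of $\S5$. For the coproduct: the proof of Theorem~\ref{thmMinext} uses only that $M_\Gamma$ has no boundary, so the exhaustion argument by products of balls forces every partial reduced coproduct of $\mot(M_\Gamma)$ to vanish, giving $\mot(M_\Gamma)\in\bigotimes_{j=1}^{r_2}\Ext^1_{\MT(\sigma_jL')}(\Q(0),\Q(2))$; since the tiling is $\Gal(\overline{L}/L)$-stable (the ideal vertices lie in $\Pro^1(L')$ and the whole configuration descends to $L$), Galois descent, \cite{DG}, $(2.16.2)$, brings this down to $\bigotimes^{r_2}\Ext^1_{\MT(L)}(\Q(0),\Q(2))$. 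For the determinant property: the cusps of $M_\Gamma$ are permuted compatibly by $\Sym_{r_2}$ acting through $\Sigma$ (Lemma~\ref{lemequivtile}), the fundamental-class framing is $\Sym_{r_2}$-invariant, and each $\omega_{Q_j}$ has odd degree $3$ so that the weight-$0$ framing $\omega_{Q_1}\wedge\cdots\wedge\omega_{Q_{r_2}}$ is alternating; Theorem~\ref{thmMmotiveisdet} then yields $\mot(M_\Gamma)\in\textstyle{\bigwedge^\Sigma}\Ext^1_{\MT(L)}(\Q(0),\Q(2))$. For the period: the corollary computing $\vol(\Delta)=(2\pi)^2R_\sigma(\mot(\Delta))$ for a single $3$-simplex (ideal case included), together with its product version in Theorem~\ref{thmmotiveMAIN}, identifies $(2\pi)^{2r_2}R_\Sigma(\mot(M_\Gamma))$ with the $\Hyp^3$-part of $\vol(M_\Gamma)$, so that $\vol(M_\Gamma)\sim_{\Q^\times}\pi^{t}(2\pi)^{2r_2}R_\Sigma(\mot(M_\Gamma))$.

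It then remains to feed in the arithmetic. Theorem~\ref{thmvolumeslist}(3) gives $\vol(M_\Gamma)\sim_{\Q^\times}|d_L|^{3/2}\pi^{t-2r_1-2r_2}\zeta_L(2)$, and by Corollary~\ref{corvolumeslist} (the functional equation for $\zeta_L$) this is $\sim_{\Q^\times}\pi^{t+2r_2}\zeta_L^*(-1)$; combining with the previous display and dividing through by the rational power of $\pi$ shows $R_\Sigma(\mot(M_\Gamma))=\alpha\,\zeta_L^*(-1)$ for some $\alpha\in\Q^\times$, with $\alpha\neq0$ because $\vol(M_\Gamma)>0$.

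The part I expect to be genuinely delicate is legitimizing the $\Hyp^3_{L'}$-incarnation of the whole $\S4$--$5$ apparatus together with the separation of the even-dimensional $\Hyp^2$-factors: one must check that an ideal $3$-simplex with vertices in $\Pro^1(L')$ really defines a framed mixed Tate motive over $L'$ with graded pieces $\Q(0),\Q(-1),\Q(-2)$ via $(\ref{infinitequadmotdef})$ after the exceptional isomorphism, that the subdivision and gluing lemmas transfer, and above all that, in the mixed $\Hyp^2\times\Hyp^3$ setting, the construction of $\mot(M_\Gamma)$ is well defined and still coproduct-primitive rather than acquiring spurious $\Ext^1_{\MT(L)}(\Q(0),\Q(1))$ contributions from the surface factors; making that reduction clean is where I expect the real work to lie, everything else being a transcription of the totally real case.
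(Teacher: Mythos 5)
Your strategy faithfully transcribes the totally real case to type (III), and you correctly flag the genuinely delicate point, but your proposed reduction of the $\Hyp^2$-factors does not hold up. You argue that by $(\ref{evenvolume})$ the $\Hyp^2$-factors contribute only $\Q^\times\pi^t$ to the volume and, since $\gr^W_4 h(T)=0$ for a triangle $T\subset\Hyp^2$, carry no weight-$2$ framing, hence can be stripped out of the motivic bookkeeping. This is not correct as stated: $(\ref{evenvolume})$ is a Gauss--Bonnet statement about a closed manifold, whereas an \emph{individual} geodesic triangle occurring in the tiling has area $\pi-\alpha-\beta-\gamma$, which is not a rational multiple of $\pi$ for generic algebraic vertices. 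The vanishing $H^2(\Pro^2\setminus Q)=0$ means there is no analogue of the algebraic volume form $(\ref{omegaq})$ in projective dimension $2$, so the natural frame for $h(T)$ lies in graded weight $2$, and $\mot(T)$ lands in $\Am_1(\overline{\Q})$, not in $\Q$. The per-simplex coefficients you need in order to reduce to $\Am_2^{\otimes r_2}$ are therefore genuine Kummer periods, not rational numbers; and the exhaustion proof of Theorem~$\ref{thmMinext}$, which is written for products of \emph{odd}-dimensional $\Hyp^{2n_i-1}$, does not directly apply in the mixed $\Hyp^2\times\Hyp^3$ setting. The danger you yourself name---spurious $\Ext^1_{\MT(L)}(\Q(0),\Q(1))$ contributions from the surface factors---is precisely the obstruction, and your sketch does not eliminate it.

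The cleanest repair is to take $t=0$. By Hasse's classification of quaternion algebras over $L$, one can choose $B$ ramified at every real place (adding one finite prime if $r_1$ is odd so the ramification set has even cardinality), and for such $B$ the group $\Gamma$ acts on $(\Hyp^3)^{r_2}$ alone, with no $\Hyp^2$-factors. The $\S4$--$\S5$ machinery then applies verbatim after the remark on $\Hyp^3_L$-rationality, giving $\mot(M_\Gamma)\in\bigwedge^{\Sigma}\Ext^1_{\MT(L)}(\Q(0),\Q(2))$ and, via $(\ref{Finalvolumeformula})$, the identity $R_\Sigma(\mot(M_\Gamma))\in\Q^\times\,\zeta_L^*(-1)$. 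Since the target space is $1$-dimensional over $\Q$ by Borel's theorem, this version suffices for the theorem and for all the ensuing corollaries (generation by quadric motives, the Zagier formula for $n=2$). The rest of your proposal---the alternation argument via the odd-degree volume forms, Galois descent from the splitting field $L'$ to $L$, and the Neumann--Yang ideal triangulation---is sound, and in fairness the paper itself leaves the $t>0$ case at the level of ``as before'' without addressing this point.
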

Borel's theorem  $(\ref{Ktheorydimensions}$) implies that the rank of $\textstyle{\bigwedge^{\!\Sigma}}\,\Ext^1_{\MT(L)}(\Q(0),\Q(2))$ is exactly $r_2$,
and hence it is generated by the class $\mot(M_\Gamma)$.
More simply, one can take $\Gamma$ to be precisely $\PSL_2(\Or_L)$, where $\Or_L$ is the ring of integers of $L$. Then
$\Gamma$ acts on $(\Hyp^3)^{r_2}$ and the quotient is a Bianchi orbifold. We denote its motive by $\mot(L,2)$.
\begin{cor} The element $\mot(L,2)$ is a generator of $\textstyle{\bigwedge^{\!\Sigma}}\,\Ext^1_{\MT(L)}(\Q(0),\Q(2))$.
\end{cor}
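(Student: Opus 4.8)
The plan is to deduce the corollary directly from Theorem~\ref{thmzeta2} together with Borel's rank computation $(\ref{Ktheorydimensions})$. First I would observe that $\Gamma=\PSL_2(\Or_L)$ is, up to commensurability, an arithmetic group of type~(III): it arises from the split quaternion algebra $B=M_{2\times 2}(L)$, the order $\Or=M_{2\times 2}(\Or_L)$, and $t=r_1$, so that $\Gamma$ acts properly discontinuously on $(\Hyp^3)^{r_2}$ with quotient the Bianchi orbifold whose motive is $\mot(L,2)$, defined via $(\ref{torsionmotive})$ after passing to a torsion-free subgroup of finite index. By $(\ref{commensmot})$ this motive differs from $\mot(M_\Gamma)$, for any $\Gamma$ appearing in Theorem~\ref{thmzeta2}, only by a factor in $\Q^\times$, so all the conclusions of that theorem apply verbatim to $\mot(L,2)$.

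Next, by $(\ref{Ktheorydimensions})$ we have $\dim_\Q \Ext^1_{\MT(L)}(\Q(0),\Q(2)) = r_2$, since $n=2$ is even, and by definition $|\Sigma|=r_2$ (the number of complex places of $L$). Hence $\bigwedge^{\Sigma}\Ext^1_{\MT(L)}(\Q(0),\Q(2))$ is the top exterior power of an $r_2$-dimensional $\Q$-vector space, so it is a one-dimensional $\Q$-vector space. By Theorem~\ref{thmzeta2}, $\mot(L,2)$ lies in this line and $R_\Sigma(\mot(L,2)) = \alpha\,\zeta_L^*(-1)$ for some $\alpha\in\Q^\times$. Since $\zeta_L^*(-1)$, being the leading coefficient of the Taylor expansion of $\zeta_L(s)$ at $s=-1$, is a nonzero real number, the regulator of $\mot(L,2)$ does not vanish, and in particular $\mot(L,2)\ne 0$. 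A nonzero vector in a one-dimensional $\Q$-vector space spans it, so $\mot(L,2)$ is a generator, as claimed.

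The only point requiring any care is the identification of $\PSL_2(\Or_L)$, up to commensurability, with one of the arithmetic groups of type~(III) to which Theorem~\ref{thmzeta2} applies, and the attendant bookkeeping with the normalizations in $(\ref{commensmot})$ and $(\ref{torsionmotive})$; the remainder is immediate from the nonvanishing of $\zeta_L^*(-1)$ and the one-dimensionality coming from Borel's theorem. I do not expect a genuine obstacle here — the substance of the argument has already been carried out in the proof of Theorem~\ref{thmzeta2} (and, underneath it, in Theorem~\ref{thmmotiveMAIN} and Corollary~\ref{corvolumeslist}).
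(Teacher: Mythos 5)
Your proof is correct and takes essentially the same route as the paper: the corollary is deduced exactly as you say, by combining Theorem~\ref{thmzeta2} (nonvanishing of the regulator on $\mot(L,2)$, up to the commensurability and torsion-normalization bookkeeping of $(\ref{commensmot})$ and $(\ref{torsionmotive})$) with Borel's rank formula $(\ref{Ktheorydimensions})$, which makes $\textstyle\bigwedge^{\Sigma}\Ext^1_{\MT(L)}(\Q(0),\Q(2))$ a one-dimensional $\Q$-vector space.
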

The analogue of theorem $\ref{thmquaddecomp}$ is the following.

\begin{cor} Every element $M\in\Ext^1_{\MT(L)}(\Q(0),\Q(2))$ is framed equivalent to a linear combination of motives of hyperbolic simplices
$\Delta(0,1,\infty,z)$ where $z \in L$.
\end{cor}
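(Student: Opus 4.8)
The plan is to transcribe the proof of Theorem~\ref{thmquaddecomp}, with the Bianchi motive $\mot(L,2)$ playing the role of $\mot(k,1,n)$ and Theorem~\ref{thmzeta2} replacing Corollary~\ref{cornegvals}. We may assume $r_2\geq 1$, the statement being vacuous otherwise. Let $V\subseteq \Am_2(L)$ denote the $\Q$-vector space spanned by the framed motives $\mot(\Delta(0,1,\infty,z))$ for $z\in L$ (degenerate $z$ contributing $0$ by Remark~\ref{remmotvanishes}), and set
$$V_0 = V\cap \ker\Big(\widetilde{\Delta}_2:\Am_2(L)\To \Am_1(L)\otimes_{\Q}\Am_1(L)\Big),$$
which by $(\ref{kerdeltaisext})$ we regard as a subspace of $\Ext^1_{\MT(L)}(\Q(0),\Q(2))$. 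Since the elements of $\Am_2(L)$ are by definition framed-equivalence classes, the corollary amounts to the equality $V_0 = \Ext^1_{\MT(L)}(\Q(0),\Q(2))$.

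The key step is to show that $\mot(L,2)\in \textstyle{\bigwedge^{\!\Sigma}}V_0$, where $\Sigma$ is the set of complex places of $L$. The cusps of the Bianchi orbifold $(\Hyp^3)^{r_2}/\PSL_2(\Or_L)$ lie at the $\PSL_2(\Or_L)$-orbits of $\Pro^1(L)$, so (after passing to a torsion-free finite-index cover, which by $(\ref{torsionmotive})$ only rescales $\mot$ by a nonzero rational factor) the quotient admits a $\Sigma$-equivariant ideal triangulation all of whose tetrahedra are $\Delta(a,b,c,d)$ with $a,b,c,d\in\Pro^1(L)$. Each such tetrahedron is carried by an element of $\PSL_2(L)$ to $\Delta(0,1,\infty,z)$ with $z$ the cross-ratio of $a,b,c,d$, hence $z\in L$. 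Summing over the triangulation via $(\ref{eqndefnmotiveofM})$ gives $\mot(L,2)=\sum_i\mot(\Delta(0,1,\infty,z_i))$ with $z_i\in L$, so $\mot(L,2)\in V_0$; the equivariance of the triangulation together with Theorem~\ref{thmMmotiveisdet} then places it in $\textstyle{\bigwedge^{\!\Sigma}}V_0$.

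A dimension count concludes the argument. By Theorem~\ref{thmzeta2}, $R_\Sigma(\mot(L,2))\sim_{\Q^\times}\zeta_L^*(-1)\neq 0$, so $\mot(L,2)\neq 0$; since $\textstyle{\bigwedge^{\!\Sigma}}V_0$ is isomorphic to $\textstyle{\bigwedge^{r_2}}V_0$ by $(\ref{comdiagext})$, a nonzero element forces $\dim_{\Q}V_0\geq r_2$. By Borel's theorem $(\ref{Ktheorydimensions})$ one has $\dim_{\Q}\Ext^1_{\MT(L)}(\Q(0),\Q(2))=r_2$, and since $V_0$ is a subspace of it we conclude $V_0 = \Ext^1_{\MT(L)}(\Q(0),\Q(2))$. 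Hence every element of $\Ext^1_{\MT(L)}(\Q(0),\Q(2))$ lies in $V$, that is, is framed equivalent to a linear combination of motives $\mot(\Delta(0,1,\infty,z))$ with $z\in L$, as claimed.

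The step I expect to be the main obstacle is establishing the $\Sigma$-equivariant ideal triangulation of the Bianchi orbifold with vertices in $\Pro^1(L)$ and verifying its compatibility with the framed-motive formalism of $\S5$ — in particular that the well-definedness mechanism of Lemma~\ref{lemsubdiv} and Definition~\ref{defnmotiveofM} goes through for ideal triangulations in the sense of \cite{NY}, so that $\mot(L,2)$ really is the sum of the $\mot(\Delta(0,1,\infty,z_i))$ over any such triangulation. The remainder is a routine adaptation of the totally real case.
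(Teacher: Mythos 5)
Your proposal is correct and is essentially the paper's own argument: the corollary is flagged in the text as ``the analogue of theorem~\ref{thmquaddecomp},'' and you have transcribed that proof faithfully, with $\mot(L,2)$ in place of $\mot(k,1,n)$, the Bianchi orbifold and its ideal triangulation in place of the arithmetic tiling of $\S4$, and theorem~\ref{thmzeta2} in place of corollary~\ref{cornegvals} for the nonvanishing of the regulator. One small notational slip: after summing over the triangulation $\mot(L,2)$ lands in $V_0^{\otimes\Sigma}$ (a tensor of $r_2$ factors, one per complex place), not in $V_0$ itself, but your next clause invoking equivariance and theorem~\ref{thmMmotiveisdet} arrives at the correct conclusion $\mot(L,2)\in\bigwedge^{\Sigma}V_0$, so the dimension count goes through unchanged.
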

A closer analysis of the gluing equations between hyperbolic 3-simplices (or, by looking at the coproduct on the corresponding framed motives, which can be written
explicitly), one verifies that every such element is a linear combination $\sum_{i=1}^R n_i\, \mot(\Delta(0,1,\infty,{z_i}))$ where $n_i \in \Z$, and the elements $z_i \in L\backslash \{0,1\}$ satisfy:
\begin{equation}\label{admissible}
\sum_{i=1}^R n_i z_i\wedge (1-z_i) =0 \in \textstyle{\bigwedge^2} L^\times\ .\end{equation}
Using the well-known fact (see below) that the volume of $\Delta(0,1,\infty,z)$ is given by the Bloch-Wigner dilogarithm
$D(z) =\Image( \mathrm{Li}_2(z) + \log |z|\log(1-z)),$
 the analogue of theorem $\ref{thmzetakisdethyp}$ is precisely Zagier's conjecture for $n=2$.

\begin{cor}
There exist finite formal linear combinations  $\xi_j=\sum n_{i,j} [z_{i,j}]$ for $1\leq i\leq r$, where $n_{i,j} \in \Z, z_{i,j} \in L$ satisfying $(\ref{admissible})$, and a non-zero rational number $\alpha$
such that:
$$\zeta^*_L(-1) = \alpha \det(D(\sigma_i(\xi_j)))\ .$$
\end{cor}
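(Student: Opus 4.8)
The plan is to assemble the statement from the previous corollary together with the known dilogarithm formula for the volume of an ideal tetrahedron. First I would invoke the corollary immediately preceding, which says that every element of $\Ext^1_{\MT(L)}(\Q(0),\Q(2))$ is framed-equivalent to a $\Z$-linear combination $\sum n_i\,\mot(\Delta(0,1,\infty,z_i))$ with $z_i\in L\setminus\{0,1\}$ satisfying the admissibility relation $(\ref{admissible})$. Since $\textstyle{\bigwedge^{\!\Sigma}}\,\Ext^1_{\MT(L)}(\Q(0),\Q(2))$ is one-dimensional over $\Q$ with generator $\mot(L,2)$, I can pick a $\Q$-basis $\xi_1,\ldots,\xi_r$ of $\Ext^1_{\MT(L)}(\Q(0),\Q(2))$ (so $r=r_2$, using Borel's rank computation $(\ref{Ktheorydimensions})$, i.e. the dimension is exactly the number of complex places), and write each $\xi_j=\sum_i n_{i,j}[z_{i,j}]$ as such an admissible combination.

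Next I would identify the Hodge regulator with the Bloch--Wigner dilogarithm. The key input is the classical fact, recalled in the excerpt, that $\vol(\Delta(0,1,\infty,z))=D(z)$, where $D$ is the Bloch--Wigner function. By theorem $\ref{thmmotiveMAIN}$ (or the corollary computing the period of a simplex), for a single complex place $\sigma$ the real period satisfies $R_\sigma(\mot(\Delta(0,1,\infty,z)))=(2\pi)^{-2}\vol(\sigma\Delta(0,1,\infty,z))=(2\pi)^{-2}D(\sigma(z))$. Hence, up to the harmless power of $2\pi$ which is absorbed into the normalisation of $r_H$ (and up to a global $\Q^\times$), the regulator applied to $\xi_j$ at the place $\sigma_i$ is $D(\sigma_i(\xi_j))=\sum_\ell n_{\ell,j}D(\sigma_i(z_{\ell,j}))$.

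Then I would compare two computations of the covolume of the regulator lattice. On one hand, by theorem $\ref{thmzeta2}$ the image of the generator $\mot(L,2)$ under $R_\Sigma$ is a nonzero rational multiple of $\zeta_L^*(-1)$; equivalently, the covolume $R_2(L)$ of the lattice $r_H(\Ext^1_{\MT(L)}(\Q(0),\Q(2)))$ is $\zeta_L^*(-1)$ mod $\Q^\times$. On the other hand, that same covolume is, by definition of the determinant map, $\det\big(R_{\sigma_i}(\xi_j)\big)_{1\le i,j\le r}$ up to $\Q^\times$ (the change of $\Q$-basis only rescales by a nonzero rational). Putting the two together gives $\zeta_L^*(-1)=\alpha\det\big(D(\sigma_i(\xi_j))\big)$ for some $\alpha\in\Q^\times$, which is exactly the claim, provided I note that at complex places the regulator is literally $D$ up to a rational constant and the compatibility of the various normalisations is accounted for.

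The main obstacle is the bookkeeping of normalisations: matching the Hodge regulator $R_\sigma$ on a quadric/ideal-simplex motive with the standard Bloch--Wigner dilogarithm $D$ exactly, including the constant relating $\mathrm{Li}_2$-periods to the entries of the period matrix of $\mot(\Delta(0,1,\infty,z))$, and confirming that the determinant of the period matrix of $\mot(L,2)$ in the chosen basis really computes the covolume of $\Lambda_2(L)$ up to $\Q^\times$ rather than some extra transcendental factor. All of these are rational-factor issues which the $\sim_{\Q^\times}$ formalism and the defined-over-$L$ statement for $\Delta(0,1,\infty,z)$ are designed to handle, so once they are checked the corollary follows formally; none of the genuinely hard work (the construction of $\mot(L,2)$, the volume--regulator identity, Borel's rank theorem) needs to be redone here.
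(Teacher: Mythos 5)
Your proposal is correct and takes essentially the same route as the paper: use the preceding corollary to express a $\Q$-basis $\xi_1,\ldots,\xi_r$ of $\Ext^1_{\MT(L)}(\Q(0),\Q(2))$ as admissible $\Z$-combinations of ideal simplex motives, invoke the volume--period identity $R_{\sigma}\bigl(\mot(\Delta(0,1,\infty,z))\bigr) = (2\pi)^{-2}D(\sigma z)$, and compare $\det\bigl(R_{\sigma_i}(\xi_j)\bigr)$ with $R_\Sigma(\mot(L,2))\sim_{\Q^\times}\zeta_L^*(-1)$ from theorem \ref{thmzeta2}, with Borel's rank formula giving $r=r_2$. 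One caveat on your normalisation remark: the factor $(2\pi)^{-2r_2}$ that comes out of the $r_2\times r_2$ determinant is transcendental and is \emph{not} absorbed into $\Q^\times$; what the argument actually yields, consistently with corollary \ref{thmzetakisdethyp}, is $\zeta_L^*(-1) = \alpha\det\bigl(\pi^{-2}D(\sigma_i\xi_j)\bigr)$ for some $\alpha\in\Q^\times$, so the power of $\pi$ in the displayed statement should be taken as implicit rather than genuinely absorbed by the $\sim_{\Q^\times}$ formalism.
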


\begin{rem}
This corollary is well-known by the work of Bloch, Suslin and many others, by  relating  the Borel regulator on $K_3(L)\otimes \Q$ to the Bloch-Wigner dilogarithm $D$ on the Bloch group of $L$. The relation with the zeta-value comes from Borel's theorem $(\ref{introBorelzeta})$, which is a separate argument.  Note that in the proof outlined above, Zagier's conjecture and theorem $\ref{thmzeta2}$ are proved by the same argument, 
since quadric motives in dimension 3 are equivalent to dilogarithmic motives $P_2(z)$.
\end{rem}

\subsubsection{The volume of an ideal hyperbolic 3-simplex}
The moduli space of ideal 3-simplices is
$\Mod_{0,4} \cong \Pro^1\backslash \{0,1,\infty\},$ parameterized by a single coordinate $z$. The volume is a function of $z$ we denote by $v(z)$.
By definition $(\ref{infinitequadmotdef})$, each ideal 3-simplex defines a mixed Tate motive with graded pieces $\Q(0)$, $\Q(-1)$ and $\Q(-2)$ only. It
therefore defines a
unipotent variation of mixed Hodge structure on $\Mod_{0,4}$. In particular, the function $v(z)$ satisfies the following properties:
\begin{enumerate}
  \item It is a unipotent function on $\Mod_{0,4}$ of weight 2.
  \item It is single-valued and extends continuously to $\overline{\Mod}_{0,4}\cong \Pro^1$.
  \item It vanishes at the points $0,1$ and $\infty$.
\end{enumerate}
The algebra of all single-valued unipotent functions on $\Mod_{0,4}$ was explicitly constructed in \cite{Br}. A  basis for the vector-space of such  functions is $\Lo_w(z)$, where
$w$ is a  word in the alphabet on two letters $x_0,x_1$.
 It follows that an arbitrary  single-valued unipotent function $F$ of weight 2 is of the form:
$$F(z)= a_{x_0^2}\,\Lo_{x_0^2}(z) + a_{x_0x_1}\,\Lo_{x_0x_1}(z) +a_{x_1x_0}\,\Lo_{x_1x_0}(z) +a_{x_1^2}\,\Lo_{x_1^2}(z) \qquad a_w\in \R\ ,$$
where $\Lo_{x_0^n}(z) = {1 \over n} \log^n |z|^2$, $\Lo_{x_1^n}(z) = {1\over n} \log^n|1-z|^2$, and, for example,
$$\Lo_{x_0x_1}(z) = 2i \,\Image( \mathrm{Li}_2(z) + \log |z|\log(1-z))-2\log|z|\log|1-z|\ .$$
One also has the shuffle relation $\Lo_{x_0}(z)\Lo_{x_1}(z) = \Lo_{x_0x_1}(z) + \Lo_{x_1x_0}(z)$, and many other properties \cite{Br}. From property (3), we conclude
that $v(z)= a (\Lo_{x_0x_1}(z)- \Lo_{x_1x_0}(z))$. The constant $a=(4i)^{-1}$ is easily determined from a special case, giving
$$v(z) =\Image( \mathrm{Li}_2(z) + \log |z|\log(1-z))\ ,$$
which is none other than the Bloch-Wigner dilogarithm. The same result has been proved many times over by a wide variety of methods.

\subsubsection{Volumes of  higher-dimensional  hyperbolic simplices} \label{sectvolhigher} A similar variational argument allows one to show that the volumes of 
hyperbolic simplices in $\Hyp^{2n-1}$ are expressible in terms of (logarithms and products of) multiple polylogarithms
$$\mathrm{Li}_{n_1,\ldots, n_r}(x_1,\ldots, x_r) = \sum_{1\leq k_1 < \ldots < k_r} {x_1^{k_1}\ldots x_r^{k_r} \over k_1^{n_1} \ldots k_r^{n_r}}\ ,$$
of  weight   $n=n_1+\ldots+n_r.$ 
It is possible to construct single-valued versions of these functions in the same way as for the multiple polylogarithms in one variable  \cite{Br}.
By  corollary $\ref{thmzetakisdethyp}$, we deduce that the special  value $\zeta^*_k(1-n)$, where $k$ is a totally real number field, is expressible in terms of powers of $\pi$ and a determinant of values of multiple polylogarithms evaluated in possibly an extension of $k$.

In the case $n=3$, it is relatively easy to show that every multiple polylogarithm  of weight 3 can be expressed in terms of the trilogarithm $\mathrm{Li}_3(z)$. From this fact, one could presumably  obtain a proof of Zagier's conjecture for $n=3$ for totally real number fields.  The case $n=4$ is the first  interesting case, since 
every multiple polylogarithm of weight 4 can be expressed in terms of two functions:
$$ \mathrm{Li}_4(z) \qquad \hbox{ and } \qquad \mathrm{Li}_{2,2}(x,y)$$
 since $\mathrm{Li}_4(z)$ does not suffice on its own. The expectation is that if $M$ is an arithmetic product-hyperbolic manifold modelled on products of $\Hyp^{7}$, then the  vanishing of the reduced coproduct for $\mot(M)$ (i.e.,  the gluing equations for the simplices) should imply that  the sum of the  volume terms of the form $\mathrm{Li}_{2,2}(x,y)$ over all simplices in  the triangulation should be expressible in terms of $\mathrm{Li}_4(z)$.  In short, $M$ should define a functional equation for combinations of $\mathrm{Li}_{2,2}(x,y)$ in terms of $\mathrm{Li}_4(z)$.

\subsection{Some open questions} \label{sectOpen}
\begin{enumerate}
  \item  Since our proof of theorem $\ref{Intromaintheorem}$ is motivic, it is natural to ask what happens in other realisations, and in particular the $p$-adic case.
  \item We have only considered arithmetic product-hyperbolic manifolds, but there exists an abundance of non-arithmetic manifolds $\Hyp^n/\Gamma$, which define elements in $\Ext^1_{\MT(\overline{\Q})}(\Q(0),\Q(n))$.
  Does there exist a volume  formula for non-arithmetic hyperbolic manifolds $M=\Hyp^n/\Gamma$ as a Dirichlet-type series?  To the author's knowledge, there does not seem to exist a formula, conjectured or otherwise,  for  the regulator on single elements of $\Ext^1_{\MT(k)}(\Q(0),\Q(n))$
 in the case where $k$ is a non-abelian Galois extension of $\Q$.
  \item Does the construction of $\mot_n(k)$  generalise to   other symmetric spaces? The case of the special linear group would 
  give a result for  values of Dedekind zeta functions for all number fields,  not just the totally real ones.
  \item Can one  construct non-trivial iterated extensions in the category $MT(k)$ by considering manifolds with boundary? One can dream of proving the freeness conjecture (\S\ref{sectintroexpl})  by defining nested families of manifolds with boundary yielding elements of  $\Am_n(k)$ which are coindependent by construction. The simplest non-trivial example  would be a manifold whose period is a multiple of  $\zeta(3,5)$, and its boundary would relate in some way  to the corresponding manifolds for  $\zeta(3)$ and $\zeta(5)$.
  \end{enumerate}

\section{Example: A Coxeter motive for $L(\chi,3)$}
The following  example of a fundamental domain, for an arithmetic reflection group acting on $\Hyp^5$, is due to Bugaenko \cite{Bug}.
Only exceptionally few examples can hope to  have such  a simple and explicit description.
 Let $\phi={1+\sqrt{5} \over 2}$, and let $k=\Q(\sqrt{5})$. Its ring of integers $\Or_k$ is $\Z[\phi]$.  Consider the quadratic form:
$$q(x_0,\ldots, x_5)=-\phi\, x_0^2 +x_1^2+\ldots +x_5^2\ ,$$
and let $\Gamma=\SO^+(\Or_k, q)$ be the group of  $\Or_k$-valued matrices which preserve  $q$ and which map each
 connected component of $\{x: \,q(x)<0\}$ to itself. Then $\Gamma$ is a discrete group of automorphisms of $\Hyp^5$ of type (I) as defined in $\S2$.
Note, however, that $\Gamma$ has torsion. Consider the seven hyperplanes in $\Pro^5$:
$$\begin{array}{crll}\label{6hyps}
L_1\ :& x_2-x_1&=& 0\ ,\nonumber \\
L_2\ :&x_3-x_2&=&0\ , \nonumber \\
L_3\ :&x_4-x_3&=& 0\ ,\nonumber \\
L_4\ :&x_5-x_4&=& 0\ ,\nonumber \\
L_5\ :&x_5&=&0 \ ,\nonumber \\
L_6\ :& (\phi-1)\,x_0+\phi \,x_1&= &0 \ ,\\
L_7\ :&(1+\phi)\, x_0 + \phi\,(x_1+x_2+x_3+x_4+x_5)&=&0 \nonumber\ .
\end{array}
$$
These hyperplanes bound a convex polytope $P$.  If $Q=\{x\in \Pro^5: q(x)=0\}$ denotes the quadric defined by $q$, the set of real points of $\Pro^5\backslash Q$ (more precisely
$\{x\in \Pro^5(\R): q(x)<0\}$) is a projective model for $\Hyp^5$.
One proves \cite{Bug}, that the group generated by hyperbolic reflections in the $L_i$, for $1\leq i\leq 7$
generates $\Gamma$, and therefore that the interior of $P$ is an open fundamental domain for $\Gamma$. The Coxeter diagram for $\Gamma$ is the following:

\begin{picture}(70,40)(-100,-20)
\multiput(2,0)(20,0){4}{\line(1,0){16}}

\multiput(0,0)(20,0){7}{\circle{4}}

\put(1,2){\line(1,0){18}}
\put(1,-2){\line(1,0){18}}

\put(81,2){\line(1,0){18}}
\put(81,-2){\line(1,0){18}}

\put(102,0){\line(1,0){4}}
\put(108,0){\line(1,0){4}}
\put(114,0){\line(1,0){4}}

\put(-2,6){\tiny{6}}
\put(18,6){\tiny{1}}
\put(38,6){\tiny{2}}
\put(58,6){\tiny{3}}
\put(78,6){\tiny{4}}
\put(98,6){\tiny{5}}
\put(118,6){\tiny{7}}

\end{picture}

\noindent
The polytope $P$ has the combinatorial structure of a prism, {\it i.e.}, the product of a 5-simplex with an interval, and has no non-trivial symmetries.

Now consider the motive
$$h(\Gamma)=H^3\big(\Pro^5\backslash Q, \textstyle{ \bigcup_{i=1}^7} L_i \backslash (L_i\cap Q)\big)\in \MT(\overline{\Q})\ . $$
It has canonical framings  $[P]\in \gr^W_6 H_3(\Pro^5, \bigcup_{i=1}^7 L_i) \cong (\gr^W_0 h(\Gamma))^\vee $ given by the class of the  polytope $P$, and by the class of the
 volume form
$$\omega_Q= \sqrt{\phi} \,{\sum_{i=0}^5 (-1)^i x_i dx_0\wedge \ldots  \widehat{dx_i} \ldots  \wedge dx_5 \over q(x_0,\ldots, x_5)^3} \ ,\quad  \hbox{ and } \quad [\omega_Q]
 \in \gr^W_6 H^3(\Pro^5\backslash Q) \cong \gr^W_6 h(\Gamma) .$$
Let $\Gamma'$ denote a torsion-free subgroup of $\Gamma$ of finite index. We can construct a fundamental domain for $\Gamma'$ by gluing $[\Gamma:\Gamma']$ polytopes $P$ together. By $\S4$, we know that the framed equivalence class of the corresponding motive, which is
an integral multiple of $[h(\Gamma), [P], [\omega_Q] ]$, has vanishing coproduct. It therefore defines an extension of $\Q(-3)$ by $\Q(0)$, and hence its Tate twist:
 $$\mot(\Gamma)=[h(\Gamma), [P], [\omega_Q] ] (3)\in \Ext^1_{\MT(\overline{\Q})}(\Q(0),\Q(3))\ .$$
The motive $h(\Gamma)$ is clearly defined over $k$, and its framing is defined over $L=k(\sqrt{\phi})$.
It is clear, therefore, that $\mot(\Gamma) \in  \Ext^1_{\MT(L)}(\Q(0),\Q(3))$,
and furthermore, that it is anti-invariant under the action of $\tau$, the non-trivial generator of  $\Gal(L/k)$.
This is because $\tau$ fixes $Q$, the $L_i$ and $[P]$, but sends $[\omega_Q]$ to $-[\omega_Q]$. Thus
\begin{equation}
\mot(\Gamma) \in (1-\tau)\, \Ext^1_{\MT(L)}(\Q(0),\Q(3))\ .
\end{equation}
By $(\ref{extgroups})$  and $(\ref{Ktheorydimensions})$, the right-hand side is a $\Q$-vector space of dimension $1$, and the framed motive $\mot(\Gamma)$ is therefore a generator.

Next, by the volume computations given in $\S3$, we deduce that
\begin{equation} \label{appendixvol}
\vol(\Hyp^5/\Gamma') \sim_{\Q^\times}   \sqrt{|d_{L/k} d_k|} {L(\chi,3) \over \pi^3} \sim_{\Q^\times}\pi^3 L^*(\chi,-2)\ , \end{equation}
where $\chi$ is the non-trivial quadratic character of $\Gal(L/k)$.
We conclude using the fact that $\vol(\Hyp^5/\Gamma) = \int_{[P]} [\omega]$ is the real period of $\mot(\Gamma)$.
\begin{cor} We have explicitly constructed  $\mot(\Gamma) \in \Ext^1_{\MT(L)}\big(\Q(0),\Q(3)\big) $ such that
$\tau(\mot(\Gamma))=-\mot(\Gamma)$ and
$L^*(\chi,-2) =  R_\tau\,  \mot(\Gamma) ,$ where $R_\tau$ is the real period map (Hodge regulator).
\end{cor}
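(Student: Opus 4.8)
The plan is to assemble results already established in the previous sections. First I would pass to a torsion-free finite-index subgroup $\Gamma'\leq\Gamma$, so that $M'=\Hyp^5/\Gamma'$ is a smooth, compact product-hyperbolic manifold with a single hyperbolic factor ($N=1$, $n=3$). Since the interior of $P$ is a fundamental domain for $\Gamma$, gluing $[\Gamma:\Gamma']$ copies of $P$ and subdividing into geodesic simplices (Proposition \ref{proptriang}, Lemma \ref{lemsubdiv}) realises $[\Gamma:\Gamma']\cdot[\,h(\Gamma),[P],[\omega_Q]\,](3)$ as $\mot(M')$; Proposition \ref{propmotiveofsimplex} and $\S\ref{sectFramings}$ give $\gr^W_0 h(\Gamma)\cong\Q(0)$ and $\gr^W_6 h(\Gamma)\cong\Q(-3)$, so the twist by $(3)$ places the framings in weights $-6$ and $0$. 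Then Theorem \ref{thmMinext}, which uses only that $M'$ has empty boundary, gives the vanishing of the reduced coproduct, whence $\mot(M')\in\Ext^1_{\MT(\overline{\Q})}(\Q(0),\Q(3))$ by Corollary \ref{corkerdelta}; since $h(\Gamma)$, $P$ and $\omega_Q$ are all defined over $L$, Galois descent (\cite{DG}, (2.16.2)) descends this to $\Ext^1_{\MT(L)}(\Q(0),\Q(3))$, and dividing by $[\Gamma:\Gamma']$ as in $(\ref{torsionmotive})$ yields the class $\mot(\Gamma)$.

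Next I would compute the action of $\tau$. The group $\Gal(L/k)$ is generated by $\tau$ with $\tau(\sqrt{\phi})=-\sqrt{\phi}$, and the quadric $Q$ and the hyperplanes $L_1,\dots,L_7$ have coefficients in $k=\Q(\sqrt{5})$, so $\tau$ fixes the relative homology class $[P]$, i.e.\ the framing on $\gr^W_0$; but $\omega_Q$ contains the factor $\sqrt{\phi}$, so $\tau(\omega_Q)=-\omega_Q$. Hence $\tau$ acts by $-1$ on the pair of framings, $\tau(\mot(\Gamma))=-\mot(\Gamma)$, and $\mot(\Gamma)\in(1-\tau)\Ext^1_{\MT(L)}(\Q(0),\Q(3))$, which by $(\ref{extgroups})$ and $(\ref{Ktheorydimensions})$ is a one-dimensional $\Q$-vector space. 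Finally, for the period, I would apply Theorem \ref{thmmotiveMAIN} to $M'$ to get $\vol(M')=(2\pi)^3 R_\tau(\mot(M'))$, divide by $[\Gamma:\Gamma']$, and combine with the Tamagawa-number computation of $\S3$: Theorem \ref{thmvolumeslist}(2) and Corollary \ref{corvolumeslist} give $\vol(\Hyp^5/\Gamma)\sim_{\Q^\times}\sqrt{|d_{L/k}d_k|}\,L(\chi,3)/\pi^3$, which by the functional equation (Corollary \ref{cornegvals}) is $\sim_{\Q^\times}\pi^3 L^*(\chi,-2)$. Comparing the two expressions yields $R_\tau(\mot(\Gamma))\sim_{\Q^\times}L^*(\chi,-2)$; with the normalisation of $R_\tau$ of $\S\ref{subsectdetandhodge}$ and the sign conventions on the framings of $\S\ref{sectFramings}$ this becomes the stated identity, and since $L^*(\chi,-2)\neq0$ the regulator is nonzero on $\mot(\Gamma)$, so $\mot(\Gamma)$ generates the one-dimensional space.

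I expect the only genuinely delicate point to be the first step, specifically the appeal to Theorem \ref{thmMinext}: the vanishing of the reduced coproduct — which is what makes the construction motivic rather than merely $K$-theoretic — rests on the volume-growth and excision argument of $\S\ref{sectcoproductvanishes}$ together with the fact that $\Hyp^5/\Gamma'$ is boundaryless, and on the bookkeeping needed to pass from the prism $P$ to a genuine simplicial subdivision. The two remaining steps are routine: the $\tau$-eigenvalue is read off the presence of $\sqrt{\phi}$ in $\omega_Q$, and the period identity is an appeal to the volume formula already derived in $\S3$.
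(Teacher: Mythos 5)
Your proposal reconstructs the paper's own argument essentially verbatim: same passage to a torsion-free finite-index subgroup, same appeal to the triangulation and coproduct-vanishing results of \S4--\S5, the same reading of the $\tau$-eigenvalue off the $\sqrt{\phi}$ factor in $\omega_Q$, and the same comparison with the volume formula $(\ref{appendixvol})$. You also correctly identify the crux (the coproduct vanishing via Theorem \ref{thmMinext}) and the fact that the quotient is compact here, so the proposal matches the paper both in route and in emphasis.
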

Note that it is possible in principle to compute the exact volume of $P$, using the methods of \cite{Pr}.
It is  also known that the volume of a hyperbolic 5-simplex can be written as a linear combination of values of (single-valued) trilogarithms (\S\ref{sectvolhigher}).
\begin{rem} For the main result of the paper we need instead to take the group of $\Or_k$-automorphisms of the quadratic form $q(x)=-x_0^2+x_1^2+\ldots +x_5^2$
whose automorphism group  now acts on the pair of hyperbolic spaces $\Hyp^5\times \Hyp^5$.
Finding an explicit fundamental domain in this case should be possible
using the methods of Epstein and Penner \cite{EP}, but is complicated in practice.

The above example  has a very simple fundamental domain precisely  because it is a reflection group. Reflection groups are known only to exist in hyperbolic
spaces $\Hyp^n$ for bounded $n$ and for number fields of bounded discriminant (when $n\geq 4$). 
\end{rem}

\end{document}